\title{Multiple Packing:\\Lower Bounds via Error Exponents}
\author{
\IEEEauthorblockN{
Yihan Zhang\IEEEauthorrefmark{1}
Shashank Vatedka\IEEEauthorrefmark{2}
}\\
\IEEEauthorblockA{
\IEEEauthorrefmark{1}Institute of Science and Technology Austria \\
\IEEEauthorrefmark{2}Department of Electrical Engineering, Indian Institute of Technology Hyderabad
}
}
\begin{document}
\maketitle

\begin{abstract}
We derive lower bounds on the maximal rates for multiple packings in high-dimensional Euclidean spaces. Multiple packing is a natural generalization of the sphere packing problem. For any $ N>0 $ and $ L\in\mathbb{Z}_{\ge2} $, a multiple packing is a set $\mathcal{C}$ of points in $ \mathbb{R}^n $ such that any point in $ \mathbb{R}^n $ lies in the intersection of at most $ L-1 $ balls of radius $ \sqrt{nN} $ around points in $ \mathcal{C} $. We study this problem for both bounded point sets whose points have norm at most $\sqrt{nP}$ for some constant $P>0$ and unbounded point sets whose points are allowed to be anywhere in $ \mathbb{R}^n $. Given a well-known connection with coding theory, multiple packings can be viewed as the Euclidean analog of list-decodable codes, which are well-studied for finite fields. We derive the best known lower bounds on the optimal multiple packing density. This is accomplished by establishing a curious inequality which relates the list-decoding error exponent for additive white Gaussian noise channels, a quantity of average-case nature, to the list-decoding radius, a quantity of worst-case nature. We also derive various bounds on the list-decoding error exponent in both bounded and unbounded settings which are of independent interest beyond multiple packing.
\end{abstract}


\section{Introduction}
\label{sec:intro}
We {study} the problem of \emph{multiple packing} {in Euclidean space}, a natural generalization of the sphere packing problem~\cite{conway-sloane-book}. 
Let $ P>0,N>0 $ and $ L\in\bZ_{\ge2} $. 
We say that a point set $\cC$ in\footnote{Here we use $ \cB^n(r) $ to denote an $n$-dimensional Euclidean ball of radius $r$ centered at the origin.} $ \cB^n(\sqrt{nP}) $ forms a \emph{$(P,N,L-1)$-multiple packing}\footnote{We choose to stick with $L-1$ rather than $L$ for notational convenience. 
This is because in the proof, we need to examine the violation of $(L-1)$-packing, i.e., the existence of an $L$-sized subset that lies in a ball of radius $\sqrt{nN}$. } if any point in $ \bR^n $ lies in the intersection of at most $L-1$ balls of radius $ \sqrt{nN} $ around points in $\cC$. 
Equivalently, the radius of the smallest ball containing any size-$L$ subset of $\cC$ is larger than $ \sqrt{nN} $. 
This radius is known as the \emph{Chebyshev radius} of the $L$-sized subset. 
If $ L = 2 $, then $\cC$ forms a \emph{sphere packing}, i.e., a point set such that balls of radius $ \sqrt{nN} $ around points in $\cC$ are disjoint, or equivalently, the pairwise distance of points in $\cC$ is larger than $ 2\sqrt{nN} $. 
The density of $\cC$ is measured by its \emph{rate} defined as  
\begin{align}
R(\cC) \coloneqq \frac{1}{n}\ln|\cC|. 
\label{eqn:intro-rate}
\end{align}
Denote by $ C_{L-1}(P,N) $ the largest rate of a $(P,N,L-1)$-multiple packing as $ n\to\infty $. We will also refer to this as the \emph{adversarial list-decoding capacity}, or simply the \emph{list-decoding capacity}.
Note that $ C_{L-1}(P,N) $ depends on $P$ and $N$ only through their ratio $ N/P $ which we call the \emph{noise-to-signal ratio}.
The goal of this paper is to derive lower bounds on $ C_{L-1}(P,N) $.

The problem of multiple packing is closely related to the \emph{list-decoding} problem \cite{elias-1957-listdec,wozencraft-1958-listdec} in coding theory. 
Indeed, a multiple packing can be seen exactly as the \emph{Euclidean} analog of a list-decodable code.
We will interchangeably use the terms ``packing'' and ``code'' to refer to the point set of interest. 
To see the connection, note that if any point/codeword in a multiple packing is transmitted through an \emph{adversarial} omniscient jamming\footnote{An \emph{omniscient} adversary is one who can choose the jamming/additive noise vector that must satisfy a power constraint but otherwise be any function of the codebook and the transmitted codeword (available noncausally to the jammer). This is more powerful than an \emph{oblivious} jammer, who can transmit a jamming vector that can only depend on the codebook but not the transmitted codeword.} channel that can inflict an arbitrary additive noise of length at most $ \sqrt{nN} $, then given the distorted transmission, one can decode to a list of the nearest $L-1$ points which is guaranteed to contain the transmitted one. 
The quantity $ C_{L-1}(P,N) $ can therefore be interpreted as the capacity of this channel. 
Moreover, it is well known that with a small amount of shared secret key between the transmitter and receiver, list-decodable codes can be turned into unique-decodable codes so that the receiver can \emph{uniquely decode} to the correct codeword with a vanishingly small probability of error~\cite{langberg-focs2004,sarwate-thesis,bhattacharya2019shared}. 
List-decoding also serves as a proof technique for deriving bounds on the (unique-decoding) capacity for various adversarial jamming channels; see, e.g., \cite{zhang2022quadratically,zhang-2020-twoway}. 

\subsection{Bounded packings}
Let us start with the $L=2$ case. 
The best known lower bound is due to Blachman in 1962 \cite{blachman-1962} using a simple volume packing argument. 
The best known upper bound is due to Kabatiansky and Levenshtein in 1978 \cite{kabatiansky-1978} using the seminal Delsarte's linear programming framework \cite{delsarte-1973} from coding theory. 
These bounds meet nowhere except at two points: $ N/P = 0 $ (where $ C_{L-1}(P,N) = \infty $), and $ N/P = 1/2 $ (where $ C_{L-1}(P,N) = 0 $).

For $ L>2 $, Blinovsky~\cite{blinovsky-1999-list-dec-real} claimed a lower bound (\Cref{eqn:compare-lb-ee}) on $ C_{L-1}(P,N) $, and in fact our results are closely related to this work. Unfortunately, there were some gaps in the proof of~\cite{blinovsky-1999-list-dec-real} that we were not able to resolve, and we therefore use an alternate approach to proving this result which could be of wider interest. Please see \Cref{sec:bli-mistake-ee} for an in-depth discussion of the connection to~\cite{blinovsky-1999-list-dec-real}. 
To the best of our knowledge, the bound that we derive in this paper is the best known lower bound on $ C_{L-1}(P,N) $. 
Our high-level ideas of connecting error exponents to the list-decoding radius is in fact inspired by \cite{blinovsky-1999-list-dec-real}. However, we use a different approach to achieving the same.
In the same paper, Blinovsky \cite{blinovsky-1999-list-dec-real} also derived an upper bound using the ideas of the Plotkin bound \cite{plotkin-1960} and the Elias--Bassalygo bound \cite{bassalygo-pit1965} in coding theory. 
The same upper bound was originally shown by Blachman and Few \cite{blachman-few-1963-multiple-packing} using a more involved approach. 
Blinovsky and Litsyn \cite{blinovsky-litsyn-2011} later improved this bound in the low-rate regime by a recursive application of a bound on the distance distribution by Ben-Haim and Litsyn \cite{benhaim-litsyn-2006-reliability-gaussian}. 
The latter bound in turn relies on the Kabatiansky--Levenshtein linear programming bound \cite{kabatiansky-1978}. 
Blinovsky and Litsyn \cite{blinovsky-litsyn-2011} numerically verified that their bounds improve previous ones when the rate is sufficiently low, but no explicit expression was provided.  More recently, Zhang and Vatedka~\cite{zhang-split-misc} various upper and lower bounds on the list-decoding capacity and a related notion known as the \emph{average-radius} list-decoding\footnotemark{} capacity. 
\footnotetext{A set $ \cC $ of $ \bR^n $-valued points is called an average-radius multiple packing if for any $(L-1)$-subset of $\cC$, the maximum distance from any point in the subset to the centroid of the subset is less than $ \sqrt{nN} $. 
Here the centroid of a subset is defined as the average of the points in the subset. }

\subsection{Unbounded packings}
The above notion of $ (P,N,L-1) $-multiple packing is  well defined even if we remove the restriction that all points lie in $ \cB^n(\sqrt{nP}) $ and allow the packing to contain points anywhere in $ \bR^n $. 
The codebook can now be countably infinite, and this leads to the notion of $ (N,L-1) $-multiple packing. 
The density of such an unbounded packing is measured by the (normalized) number of points per volume 
\begin{align}
R(\cC) \coloneqq \limsup_{K\to\infty}\frac{1}{n}\ln\frac{\card{\cC\cap\cB^n(K)}}{\card{\cB^n(K)}}. 
\label{eqn:intro-rate-unbdd}
\end{align}
With slight abuse of terminology, we call $ R(\cC) $ the \emph{rate} of the unbounded packing $\cC$, a.k.a.\ the \emph{normalized logarithmic density (NLD)}. 
The largest density of unbounded multiple packings as $ n\to\infty $ is denoted by $ C_{L-1}(N) $.

For $L=2$, the unbounded sphere packing problem 
has a long history since at least the Kepler conjecture \cite{kepler-1611} in 1611. 
The best known lower bound is given by a straightforward volume packing argument \cite{minkowski-sphere-pack}. 
The best known upper bound is obtained by reducing it to the bounded case for which we have the Kabatiansky--Levenshtein linear programming-type bound \cite{kabatiansky-1978}. 
For $ L>2 $, Blinovsky \cite{blinovsky-2005-random-packing} described a lower bound by analyzing an (expurgated) Poisson Point Process (PPP). Further results along similar lines can be found in Zhang and Vatedka~\cite{zhang-split-ppp}.


For $ L\to\infty $, Zhang and Vatedka \cite{zhang2022listtrans-it} determined the limiting value of $ C_{L-1}(N) $. 
The limit of $ C_{L-1}(P,N) $ as $ L\to\infty $ is a folklore in the literature and a proof can be found in \cite{zhang2022quadratically}. 

Very little is known about structured packings. 
Grigorescu and Peikert \cite{grigorescu-peikert-2012-list-dec-barnes-wall} initiated the study of list-decodability of lattices. 
Some recent work can be found in Mook and Peikert~\cite{mook-peikert-2020-lattice}, and Zhang and Vatedka \cite{zhang2022listtrans-it} on list-decodability of random lattices and infinite constellations. 


\subsection{Error exponents}
Our lower bounds on $C_{L-1}(P,N)$ and $C_{L-1}(N)$ are derived by making an interesting connection between list-decodable codes for adversarial (omnsicient jamming) channels and list-decodable codes for the additive white Gaussian noise (AWGN) channel. 

Loosely speaking, we show that any code that is $(L-1)$-list-decodable over the AWGN $\cN(0,\sigma^2)$ channel with exponentially decaying probability of error $e^{-nE+o(n)}$ for some $E>0$ can be expurgated without loss of rate to give a code with Chebyshev radius $\sqrt{2n\sigma^2 E + o(n)}$. We then derive bounds on the list-decoding random coding and expurgated error exponents for the AWGN channel, and use these to obtain lower bounds on the (adversarial) list-decoding capacity. A similar approach was used to derive lower bounds on the zero-rate threshold of \emph{binary} channels under (adversarial) list-decoding in~\cite{d2021new}.
However, no lower bounds on the list-decoding capacity were derived below the zero-rate threshold. 

List-decoding error exponents for discrete memoryless channels (DMCs) were originally studied by Gallager~\cite{gallager} and Viterbi and Omura~\cite{viterbi2013principles}. A more systematic study of list-decoding error exponents for DMCs was made by Merhav~\cite{merhav2014list}. Merhav~\cite{merhav2014list} gave bounds on the list-decoding random coding and expurgated error exponents for both constant and exponential (in $n$) list sizes. In this work, we derive expressions for the list-decoding error exponents for discrete memoryless channels and AWGN channels with constant list sizes. We also derive these bounds in the case where input constraints are imposed on the channel through an extension of the same ideas. The techniques used are standard, following~\cite{gallager} and in fact, our expressions for the DMC without input constraints numerically match those in Gallager~\cite{gallager} and Merhav~\cite{merhav2014list}. However, previous results obtain the error exponent in terms of an optimization problem or in a form which unfortunately does not allow us to derive explicit lower bounds on the achievable Chebyshev radius \cite[Eqn.\ (47) and (48)]{merhav2014list}. For the AWGN channel, we derive explicit expressions for the list-decoding random coding and expurgated exponents which could be of independent interest. We also solve the optimization problem in an alternate form that allows us to get a simple closed form expression for the achievable (adversarial) list-decoding rate.

\subsection{List-decoding}
\label{sec:related}
For $L=2$, the problem of (unbounded) sphere packing has a long history and has been extensively studied, especially for small dimensions. 
The largest packing density is open for almost every dimension, except for $ n = 1 $ (trivial), $2$ (\cite{thue1911-2dspherepacking,toth-1940-2dspherepacking}), $ 3 $ (the Kepler conjecture, \cite{hales1998kepler,hales2017formal}), $ 8 $ (\cite{viazovska-2017-8dspherepacking}) and $24$ (\cite{cohn-2017-24spherepacking}). 
For $ n\to\infty $, the best lower and upper bounds remain the trivial sphere packing bound and Kabatiansky--Levenshtein's linear programming bound \cite{kabatiansky-1978}. 
This paper is only concerned with (multiple) packings in high dimensions and we measure the density in the normalized way as mentioned in \Cref{sec:intro}. 

There is a parallel line of research in combinatorial coding theory. 
Specifically, a uniquely-decodable code (resp.\ list-decodable code) is nothing but a sphere packing (resp.\ multiple packing) which has been extensively studied for $ \bF_q^n $ equipped with the Hamming metric. 

We first list the best known results for sphere packing (i.e., $L=2$) in Hamming spaces. 
For $q=2$, the best lower and upper bounds are the Gilbert--Varshamov bound \cite{gilbert1952,varshamov1957} proved using a trivial volume packing argument and the second MRRW bound \cite{mrrw2} proved using the seminal Delsarte's linear programming framework \cite{delsarte-1973}, respectively. 
Surprisingly, the Gilbert--Varshamov bound can be improved using algebraic geometry codes \cite{goppa1977,tvz} for $ q\ge49 $. 
Note that such a phenomenon is absent in $ \bR^n $; as far as we know, no algebraic constructions of Euclidean sphere packings are known to beat the greedy/random constructions. 
For $ q\ge n $, the largest packing density is known to exactly equal the Singleton bound \cite{komamiya1953singleton,joshi1958singleton,singleton1964} which is met by, for instance, the Reed--Solomon code \cite{reed-solomon}. 

Less is known for multiple packing in Hamming spaces. 
We first discuss the binary case (i.e., $q=2$). 
For every $ L\in\bZ_{\ge2} $, the best lower bound appears to be Blinovsky's bound \cite[Theorem 2, Chapter 2]{blinovsky2012book} proved under the stronger notion of average-radius list-decoding. 
The best upper bound for $L=3$ is due to Ashikhmin, Barg and Litsyn \cite{abl-2000-list-size-2} who combined the MRRW bound \cite{mrrw2} and Litsyn's bound \cite{litsyn-1999} on distance distribution. 
For any $ L\ge4 $, the best upper bound is essentially due to Blinovsky again \cite{blinovsky-1986-ls-lb-binary}, \cite[Theorem 3, Chapter 2]{blinovsky2012book}, though there are some partial improvements. 
In particular, the idea in \cite{abl-2000-list-size-2} was recently generalized to larger $L$ by Polyanskiy \cite{polyanskiy-2016-list-dec} who improved Blinovsky's upper bound for \emph{even} $L$ (i.e., odd $L-1$) and sufficiently large $R$. 
Similar to \cite{abl-2000-list-size-2}, the proof also makes use of a bound on distance distribution due to Kalai and Linial \cite{kalai-linial-1995-distance-distribution} which in turn relies on Delsarte's linear programming bound. 
For larger $q$, Blinovsky's lower and upper bounds \cite{blinovsky-2005-ls-lb-qary,blinovsky-2008-ls-lb-qary-supplementary}, \cite[Chapter III, Lecture 9, \S 1 and 2]{ahlswede-blinovsky-2008-book} remain the best known.

As $L\to\infty$, the limiting value of the largest multiple packing density is a folklore in the literature known as the ``list-decoding capacity'' theorem\footnote{It is an abuse of terminology to use ``list-decoding capacity'' here to refer to the large $L$ limit of the $(L-1)$-list-decoding capacity.}.
Moreover, the limiting value remains the same under a more general notion of average-radius list-decoding. 

The problem of list-decoding was also studied for settings beyond the Hamming errors, e.g., list-decoding against erasures \cite{guruswami-it2003,ben-aroya-doron-ta-shma-2018-explicit-erasure-ld}, insertions/deletions \cite{guruswami-2020-listdec-insdel}, asymmetric errors \cite{polyanskii-zhang-2021-z}, etc. 
Zhang et al.\ considered list-decoding over general adversarial channels \cite{zhang-2019-list-dec-general}. 
List-decoding against other types of adversaries with \emph{limited} knowledge such as oblivious or myopic adversaries were also considered in the literature \cite{hughes-1997-list-avc,sarwate-gastpar-2012-listdec,zhang-2020-obli-list-dec,hosseinigoki2019list,zhang2022quadratically}. 

\subsection*{Relation to conference version}
This work was presented in part at the 2022 IEEE International Symposium on Information Theory~\cite{zhang2022lowerisit}. All proofs were omitted in the published 6-page conference paper. The current article contains complete proofs of all results, and also includes several novel results on error exponents and list-decoding for Euclidean codes without power constraints.

\section{Our results}
\label{sec:results}

In this paper, we derive  lower bounds on the largest multiple packing density for the bounded and the unbounded case.
Let $ C_{L-1}(P,N) $ and $ C_{L-1}(N) $ denote the largest possible density of bounded and unbounded multiple packings, respectively.

\subsection{{Bounded packings}}
\label{sec:results-bdd}
In \Cref{thm:lb-ee}, we derive the following lower bound on the $(P,N,L-1)$-list-decoding capacity: 
\begin{align}
C_{L-1}(P,N) &\ge \frac{1}{2}\sqrbrkt{\ln\frac{(L-1)P}{LN} + \frac{1}{L-1}\ln\frac{P}{L(P-N)}}. \label{eqn:compare-lb-ee}
\end{align}
The above bound was also claimed in~\cite{blinovsky-1999-list-dec-real} by connecting list-decoding for adversarial channels with the probability of error of list-decoding over AWGN channels. However, there were some gaps in the proof that we could not fully resolve. Our work uses similar high-level ideas, but we use a different approach in connecting the Chebyshev radius of a code with the list-decoding error exponent for communication over AWGN channels. A more detailed discussion of the connections between these two works can be found in \Cref{sec:bli-mistake-ee}.

It is a folklore (whose proof can be found in \cite{zhang2022quadratically}) that as $ L\to\infty $, $ C_{L-1}(P,N) $ converges to the following expression:
\begin{align}
C_{\mathrm{LD}}(P,N) &= \frac{1}{2}\ln\frac{P}{N}. \label{eqn:compare-ld-cap} 
\end{align}

This bound, and the bounds derived in~\cite{zhang-split-misc} for $ (P,N,L-1) $-multiple packing are plotted in \Cref{fig:ld-bdd} with $ L = 5 $. 
The horizontal axis is the noise-to-signal ratio $ N/P $ and the vertical axis is the value of various bounds. 
\Cref{eqn:compare-lb-ee} turns out to be the largest lower bound for all $N,P\ge0$ and $ L\in\bZ_{\ge2} $. 
Furthermore, it was shown in \cite{zhang-split-misc} via a completely different approach (Gallager's bounding trick and large deviation principle) that the same bound also holds for expurgated spherical codes under average-radius list-decoding. 
We also plot our lower bound together with an Elias-Bassalygo-type upper bound 
\begin{align}
C_{L-1}(P,N) &\le \frac{1}{2}\ln\frac{(L-1)P}{LN} \label{eqn:compare-eb} 
\end{align}
on the capacity from~\cite{zhang-split-misc} for $ L=3,4,5 $. 
They both converge from below to \Cref{eqn:compare-ld-cap} as $ L $ increases. 

\begin{figure}[htbp]
	\centering
		\includegraphics[width=0.95\textwidth]{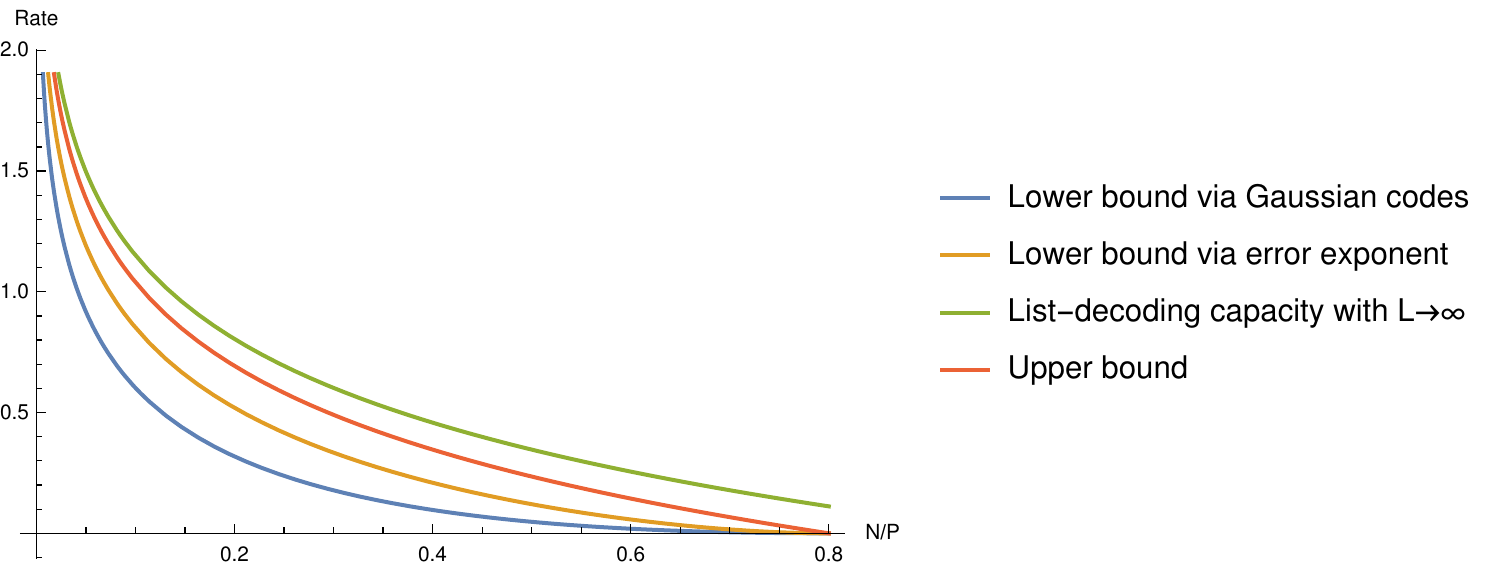}
	\caption{Comparison of different bounds for the $(P,N,L-1)$-list-decoding problem. The horizontal axis is $ N/P $ and the vertical axis is the value of various bounds. Recall that the rate (\Cref{eqn:intro-rate}) of a bounded packing is defined as its normalized cardinality. We plot bounds for $ L = 5 $. As can be seen from the plots, the results in this paper (\Cref{eqn:compare-lb-ee}) give the best known lower bounds on the capacity (Lower bound via error exponent). The lower bound using Gaussian codebooks and the upper bound (\Cref{eqn:compare-eb}) are derived in~\cite{zhang-split-misc}. 
	}
	\label{fig:ld-bdd}
\end{figure}

\begin{figure}[htbp]
	\centering
	\begin{subfigure}[b]{\linewidth}
		\centering
		\includegraphics[width=0.95\textwidth]{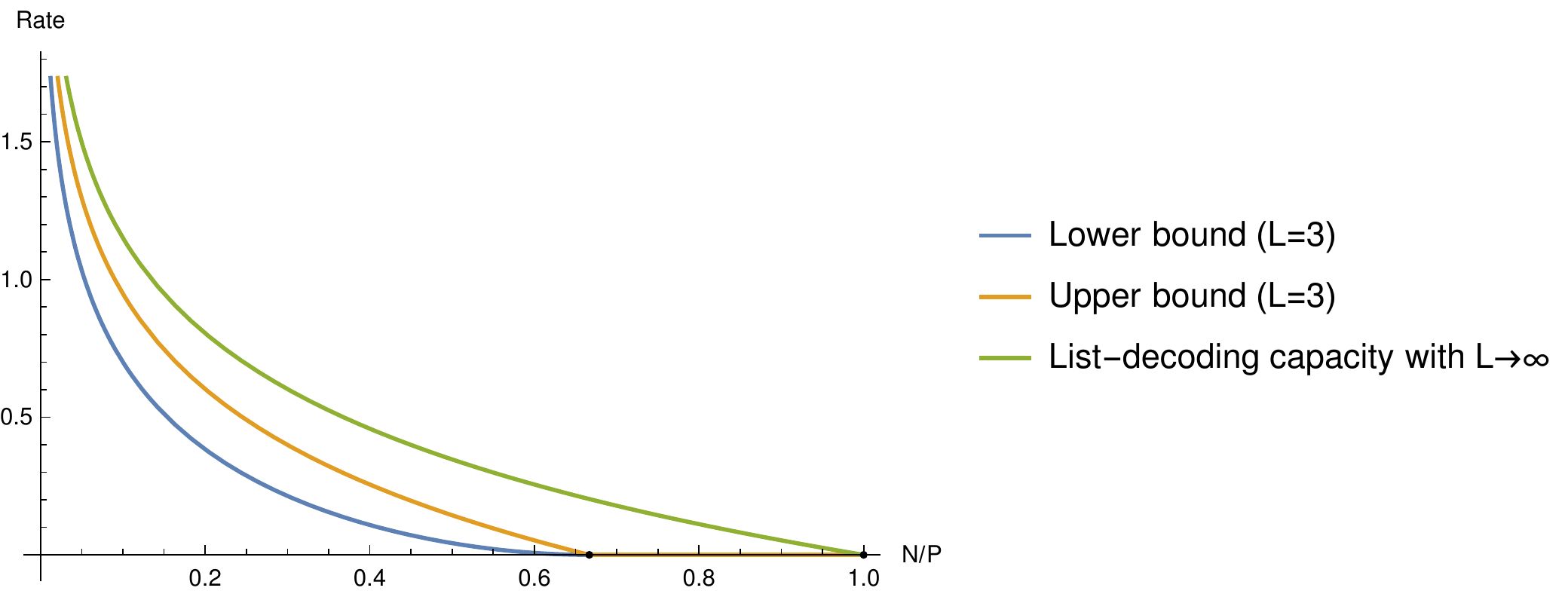}
		\caption{}
		\label{fig:EEEB3}
	\end{subfigure}
	\\
	\begin{subfigure}[b]{\linewidth}
		\centering
		\includegraphics[width=0.95\textwidth]{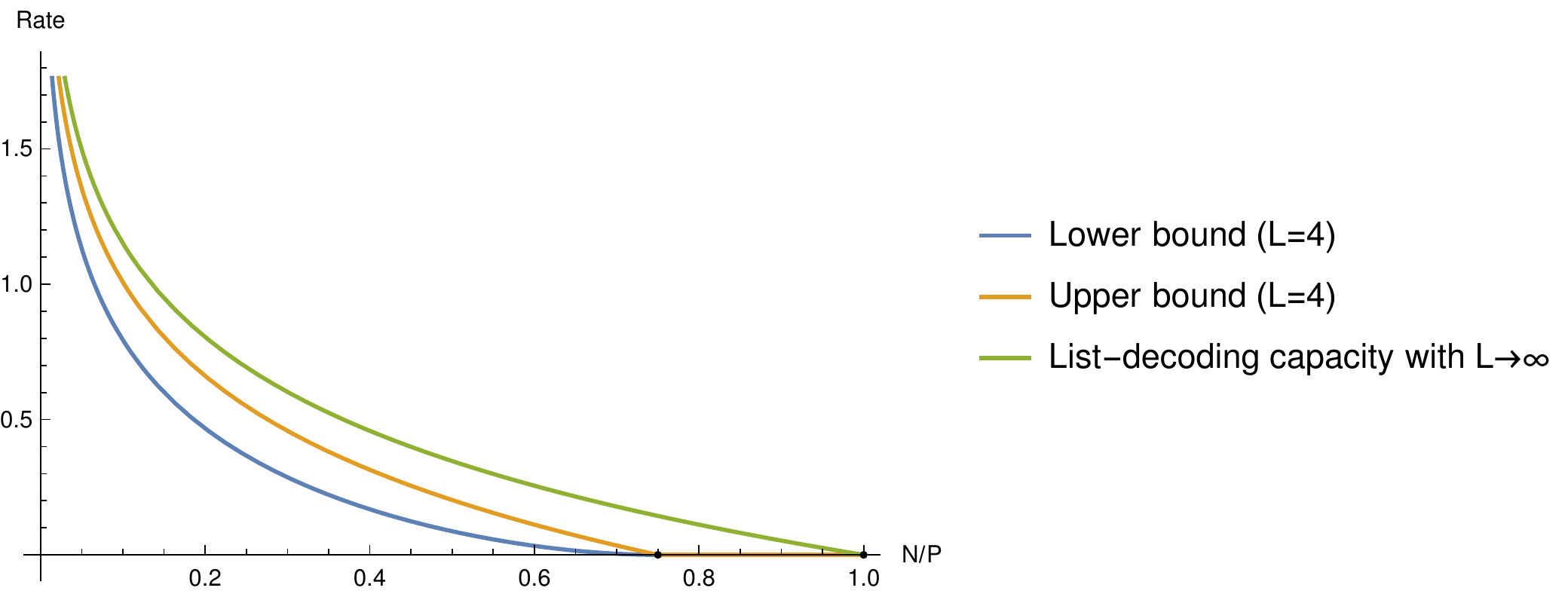}
		\caption{}
		\label{fig:EEEB4}
	\end{subfigure}
	\\
	\begin{subfigure}[b]{\linewidth}
		\centering
		\includegraphics[width=0.95\textwidth]{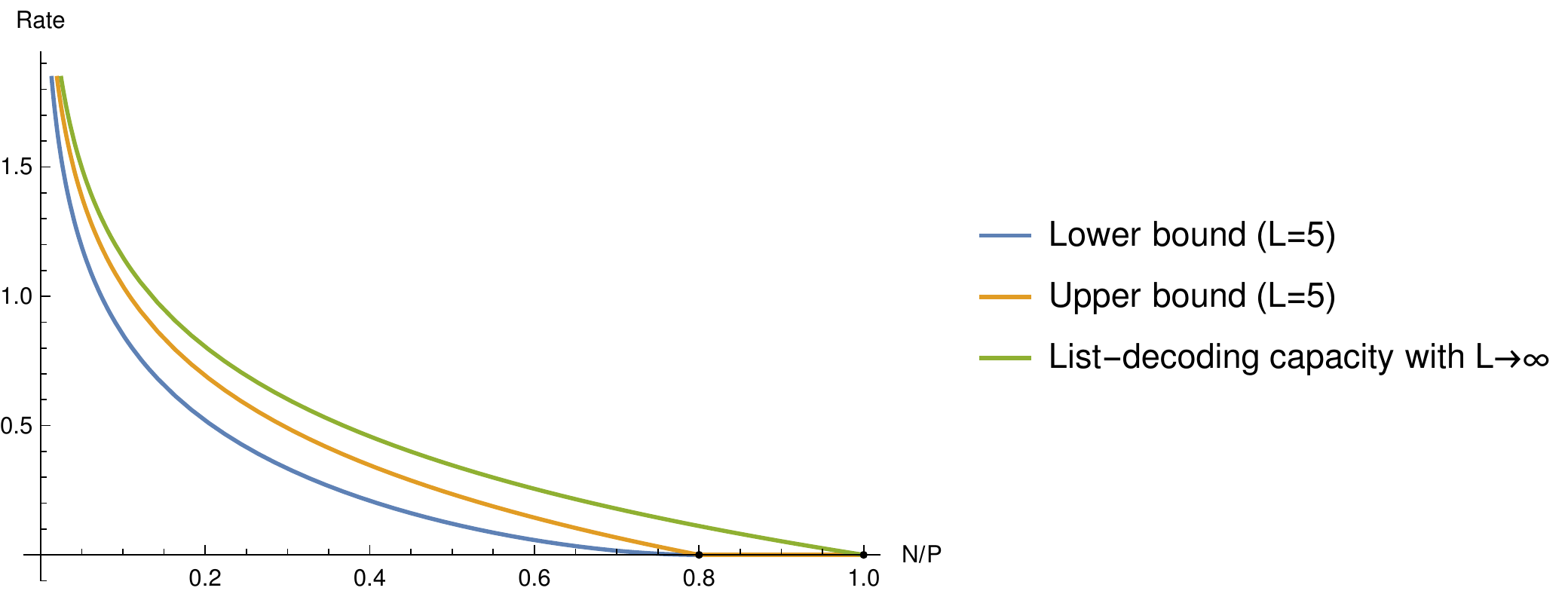}
		\caption{}
		\label{fig:EEEB5}
	\end{subfigure}
	\caption{Plots of the lower bounds in \Cref{eqn:compare-lb-ee} for $ C_{L-1}(P,N) $ derived in this paper and the Elias-Bassalygo-type upper bound (\Cref{eqn:compare-eb}) from~\cite{zhang-split-misc} for $ L=3,4,5 $. As $L$ increases, they both converge from below to $ C_{\mathrm{LD}}(P,N) $ (\Cref{eqn:compare-ld-cap}).  }
	\label{fig:EEEB}
\end{figure}

\subsection{{Unbounded packings}}
\label{sec:results-unbdd}
We then juxtapose various bounds for the $(N,L-1)$-multiple packing problem. 
In \Cref{thm:lb-ee-unbdd}, the following lower bound on $ C_{L-1}(N) $ 
\begin{align}
 C_{L-1}(N) &\ge \frac{1}{2}\ln\frac{L-1}{2\pi eNL} - \frac{\ln L}{2(L-1)} \label{eqn:compare-lb-ppp}
\end{align}
is obtained via the connection with error exponents for the AWGN channel using a codebook generated using Poisson Point Processes (PPPs).
In \cite{zhang-split-ppp} it is shown that the same bound is in fact the \emph{exact} asymptotics of a certain ensemble of infinite constellations under $(N,L-1)$-\emph{average-radius} list-decoding (which is stronger than $ (N,L-1) $-list-decoding). 

It is known (see, e.g., \cite{zhang2022listtrans-it}) that as $ L\to\infty $, $ C_{L-1}(N) $ converges to the following expression: 
\begin{align}
C_{\mathrm{LD}}(N) &= \frac{1}{2}\ln\frac{1}{2\pi eN}. \label{eqn:compare-ld-cap-unbdd}
\end{align}
Therefore, our bound converges to $C_{\mathrm{LD}}(N)$ as $L\to\infty$.

The bound in \Cref{eqn:compare-lb-ppp} together with the Elias-Bassalygo-type upper bound~\cite{zhang-split-misc} 
\begin{align}
C_{L-1}(N) &\le \frac{1}{2}\ln\frac{L-1}{2\pi eNL} \label{eqn:compare-eb-unbdd}
\end{align}
are plotted in \Cref{fig:PPPEB} for $ L=3,4,5 $. 
The horizontal axis is $N$ and the vertical axis is the value of various bounds. 
\Cref{eqn:compare-lb-ppp} turns out to be the largest known lower bound for all $N\ge0$ and $ L\in\bZ_{\ge2} $.
\Cref{eqn:compare-lb-ppp,eqn:compare-eb-unbdd} both converge from below to \Cref{eqn:compare-ld-cap-unbdd} as $ L $ increases.


\begin{figure}[htbp]
	\centering
	\includegraphics[width=0.95\textwidth]{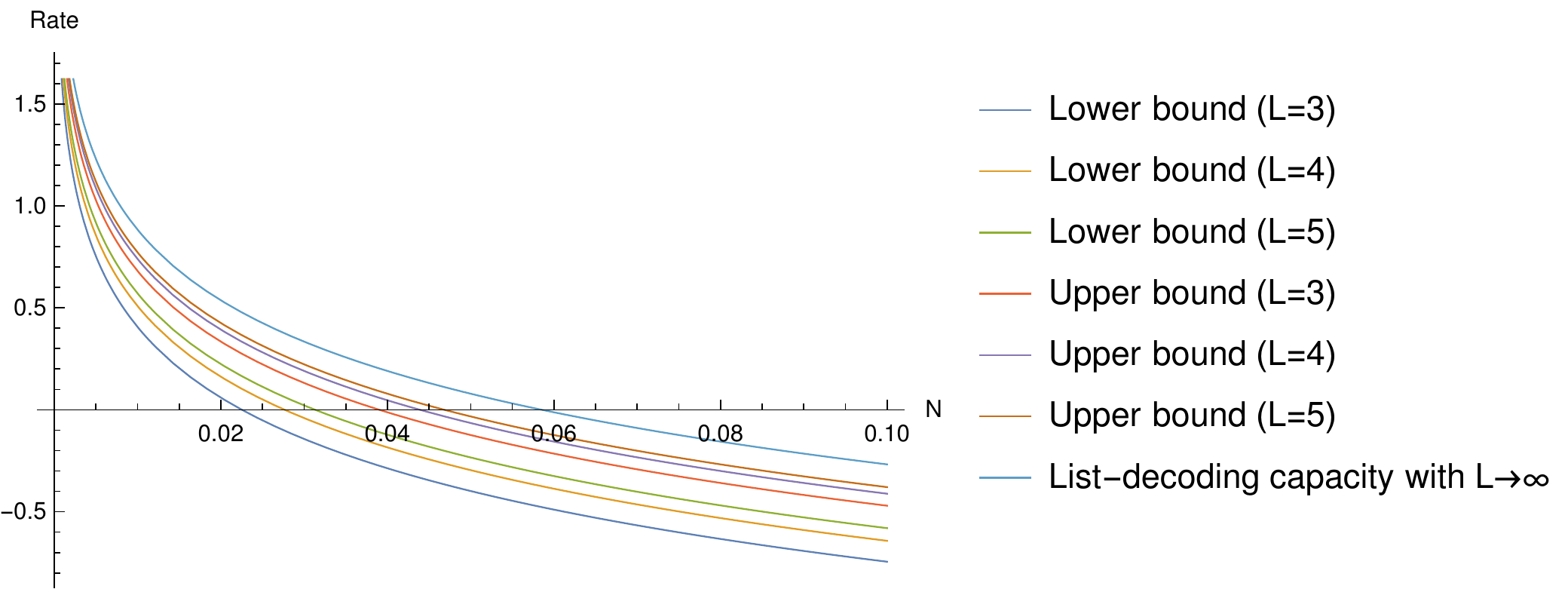}
	\caption{Plots of the best known lower bound (\Cref{eqn:compare-lb-ppp}) on $ C_{L-1}(N) $ and the Elias--Bassalygo-type upper bound (\Cref{eqn:compare-eb-unbdd} derived in~\cite{zhang-split-misc}) for $ L=3,4,5 $. The horizontal axis is $N$ and the vertical axis is the value of bounds. Recall that the rate (\Cref{eqn:intro-rate-unbdd}) of an unbounded packing is defined as the (normalized) number of points per volume which can be negative. As $ L $ increases, they both converge from below to $ C_{\mathrm{LD}}(N) $ (\Cref{eqn:compare-ld-cap-unbdd}). The lower bound \Cref{eqn:compare-lb-ppp} is obtained in this paper using the connection with error exponents. Moreover, it is actually the exact asymptotics of a certain ensemble of infinite constellations under the \emph{average-radius} notion of unbounded multiple packing (see~\cite{zhang-split-misc}). }
	\label{fig:PPPEB}
\end{figure}

\subsection{List-decoding error exponents}
\label{sec:results-ld-ee}

As alluded to above, our bounds on the  multiple packing density (\Cref{eqn:compare-lb-ee,eqn:compare-lb-ppp}) are obtained via a curious connection to list-decoding error exponents of Additive White Gaussian Noise (AWGN) channels.  
Informally, the error exponent of a code $\cC$ used over an AWGN channel is the asympototic value of $ -\frac{1}{n}\ln(P_{\e,\avg}(\cC)) $, where $P_{\e,\avg}(\cC)$ is the average probability of error when the code is used to communicate over an AWGN channel. 
See \Cref{sec:def-ld-ee} for formal definitions and \Cref{sec:ld-ee} for analogous definitions for more general channels. 
Deriving tight bounds on the best achievable list-decoding error exponents is of independent interest in information theory. 
Another part of the contribution of this paper consists in the derivation of explicit lower bounds on the maximal error exponents for AWGN channels under list-decoding. 
(We also have results on list-decoding error exponents for more general channels; see \Cref{sec:rce,sec:expurg-expo,sec:ee-input-constr,sec:ee-cts-alpha}.)

Let $ \sigma>0 $ and $ L\in\bZ_{\ge2} $. 
Consider a channel which takes as input an $ \bR^n $-valued vector and adds to it an $n$-dimensional independent Gaussian noise vector each entry i.i.d.\ with mean $0$ and variance $ \sigma^2 $. 
We prove the existence of codes for such a channel attaining certain error exponents under $ (L-1) $-list-decoding (i.e., the receiver decodes the channel output to the list of $L-1$ nearest codewords).

\subsubsection{Input constrained case}
\label{sec:results-ld-ee-bdd}

In the input constrained case, the channel input $\vx$ is subject to a power constraint $ \normtwo{\vx}\le\sqrt{nP} $ for some $P>0$. 
Let $ \snr \coloneqq P/\sigma^2 $ denote the signal-to-noise ratio (SNR). 
The capacity of an AWGN channel with SNR $\snr$ was shown by Shannon \cite{shannon1948mathematical} to be $ \frac{1}{2}\ln(1+\snr) $. 
In \Cref{thm:awgn-rce,thm:awgn-exe}, we prove that there exist codes of rate (as per \Cref{eqn:intro-rate}) $ 0\le R\le\frac{1}{2}\ln(1+\snr) $ that under maximum likelihood $(L-1)$-list-decoding attain an error exponent $ E_{L-1}(R,\snr) $ defined as follows:
\begin{align}
E_{L-1}(R,\snr) &\ge \begin{cases}
E_{\mathrm{r},L-1}(R,\snr), & R_{\crit,L-1}(\snr) \le R\le\frac{1}{2}\ln(1+\snr) \\
E_{\mathrm{sl},L-1}(R,\snr), & R_{\x,L-1}(\snr) \le R\le R_{\crit,L-1}(\snr) \\
E_{\ex,L-1}(R,\snr), & 0\le R\le R_{\x,L-1}(\snr)
\end{cases}, \notag 
\end{align}
where $ E_{\mathrm{r},L-1},E_{\mathrm{sl},L-1},E_{\ex,L-1} $ denote the random coding exponent, the straight line bound and the expurgated exponent, respectively. 
These bounds read as follows:
\begin{align}
E_{\mathrm{r},L-1}(R,\snr) &\coloneqq \frac{1}{2}\ln\sqrbrkt{e^{2R} - \frac{\snr(e^{2R} - 1)}{2}\paren{\sqrt{1+\frac{4e^{2R}}{\snr(e^{2R} - 1)}} - 1}} \notag \\
&\quad + \frac{\snr}{4e^{2R}}\paren{e^{2R} + 1 - (e^{2R} - 1)\sqrt{1+\frac{4e^{2R}}{\snr(e^{2R} - 1)}}}, \label{eqn:er-ip-constr} \\
E_{\mathrm{sl},L-1}(R,\snr) &\coloneqq -R(L-1) + \frac{L-1}{2}\ln\paren{L + \snr + \sqrt{(L-\snr)^2 + 4\snr}} + \frac{1}{2}\ln\paren{L-\snr + \sqrt{(L-\snr)^2 + 4\snr}} \notag \\
&\quad + \frac{1}{4}\paren{L + \snr - \sqrt{(L-\snr)^2 + 4\snr}} - \frac{L}{2}\ln(2L), \label{eqn:esl-ip-constr} \\
E_{\ex,L-1}(R,\snr) &\coloneqq \frac{\snr(Lt-1)}{2Lt}, \label{eqn:eex-ip-constr}
\end{align}
where $ t\in[1,1/L] $ is the unique solution to the equation $ (Lt - 1)e^{2R} = (L-1)t^{\frac{L}{L-1}} $. 
Moreover, 
\begin{align}
R_{\crit,L-1}(\snr) &\coloneqq \frac{1}{2}\ln\paren{\frac{1}{2} + \frac{\snr}{2L} + \frac{1}{2}\sqrt{1 - \frac{2(L-2)}{L^2}\snr + \frac{\snr^2}{L^2}}}, \label{eqn:rcrit-ip-constr} \\
R_{\x,L-1}(\snr) &\coloneqq \frac{1}{2}\paren{\ln\frac{\sqrt{L^2+\snr^2 - 2\snr(L-2)} + L + \snr}{2L} + \frac{1}{L-1}\ln\frac{\sqrt{L^2+\snr^2 - 2\snr(L-2)} + L - \snr}{2L}}. \label{eqn:rx-ip-constr}
\end{align}

When specialized to $L-1=1$, the above bounds recover the Gallager's exponents \cite{gallager-1965-simple-deriv}, \cite[Theorem 7.4.4]{gallager} for unique-decoding.
The above bounds are plotted in \Cref{fig:LDUDEEFig} for $ L-1=1 $ and $ L-1=2 $, both with $ \snr=1 $ fixed. 

\begin{figure}[htbp]
	\centering
	\begin{subfigure}[b]{\linewidth}
		\centering
		\includegraphics[width=0.95\textwidth]{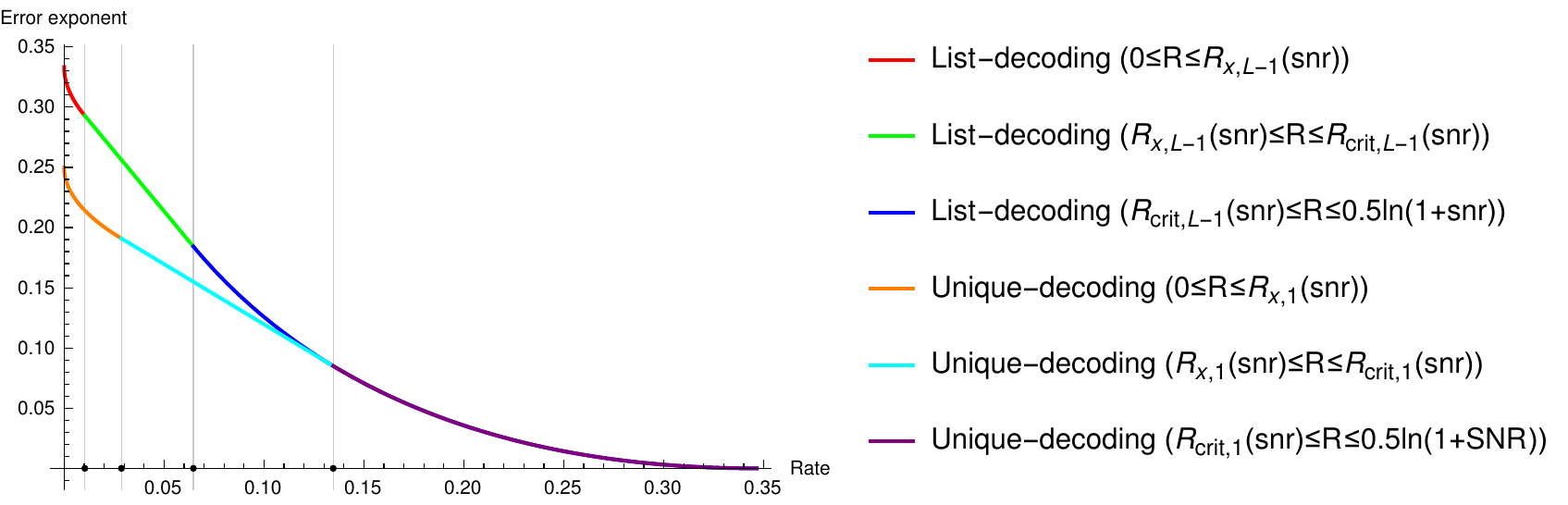}
		\label{fig:LDUDEE}
		\caption{}
	\end{subfigure}
	\\
	\begin{subfigure}[b]{\linewidth}
		\centering
		\includegraphics[width=0.95\textwidth]{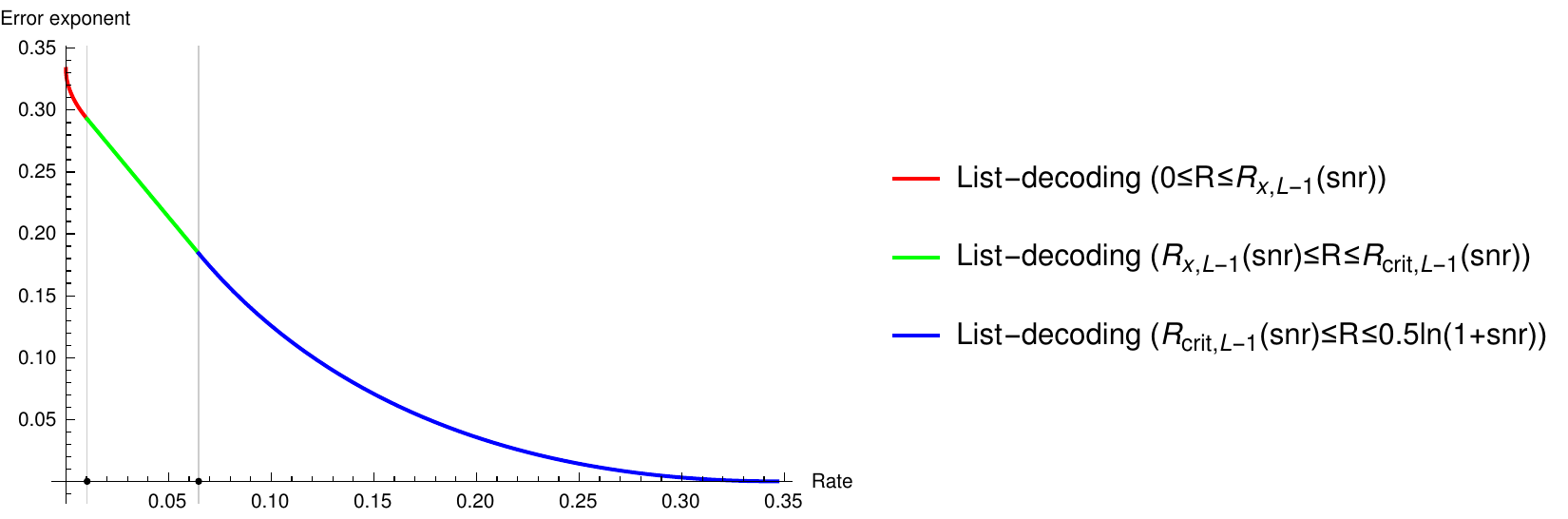}
		\caption{}
		\label{fig:LDEE}
	\end{subfigure}
	\\
	\begin{subfigure}[b]{\linewidth}
		\centering
		\includegraphics[width=0.95\textwidth]{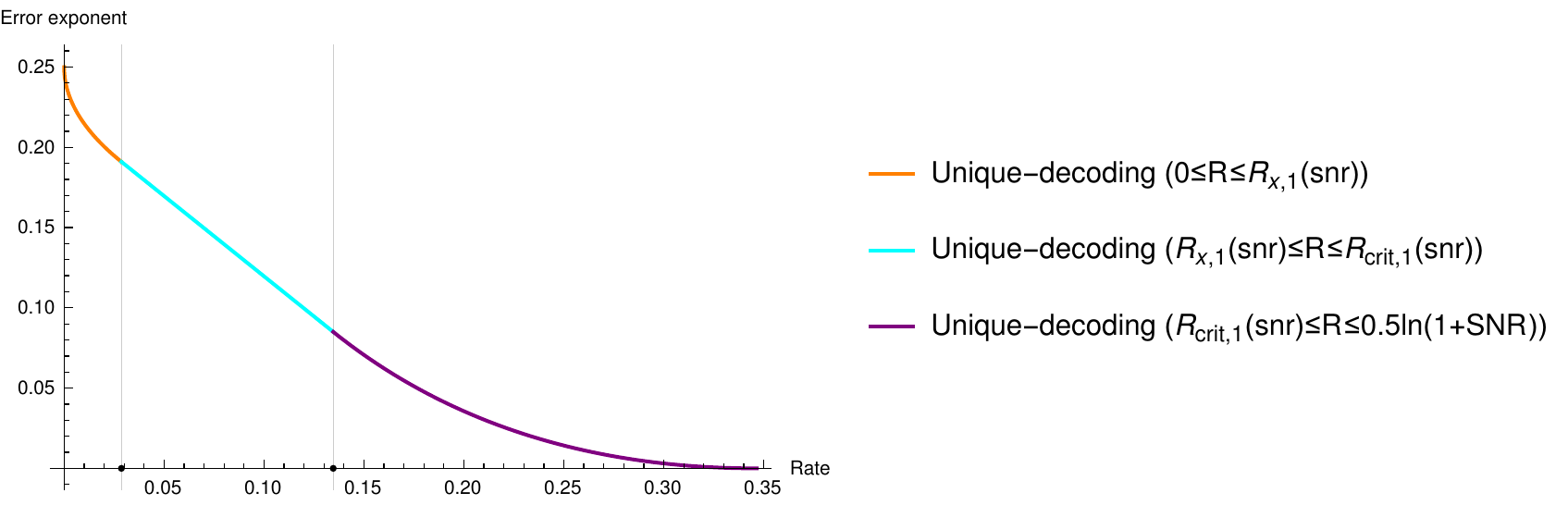}
		\caption{}
		\label{fig:UDEE}
	\end{subfigure}
	\caption{Comparison of Gallager's unique-decoding error exponents (\Cref{eqn:ee-low-ud,eqn:ee-mid-ud,eqn:ee-high-ud}) and our list-decoding error exponents (\Cref{{eqn:er-ip-constr,eqn:esl-ip-constr,eqn:eex-ip-constr}}) for AWGN channels with $ \snr = 1 $. We plot our bounds for $ L=3 $. The horizontal axis is the rate $ 0\le R\le\frac{1}{2}\ln(1+\snr) $ and the vertical axis is the values of the exponents. The list-decoding error exponents and the unique-decoding error exponents are plotted jointly in \Cref{fig:LDUDEE} and are plotted separately in \Cref{fig:LDEE,fig:UDEE}, respectively. Interestingly, the error exponent under list-decoding remains the same for sufficiently large rate, i.e., $ R\ge R_{\x,1}(\snr) $. However, for any rate less than $ R_{\x,1}(\snr) $, list-decoding does increase the error exponent. Moreover, the critical rates $ R_{\x,L-1}(\snr) $ and $ R_{\x,L-1}(\snr) $ (see \Cref{eqn:rcrit-ip-constr,eqn:rx-ip-constr}) become smaller than $ R_{\x,1}(\snr) $ and $ R_{\x,1}(\snr) $ (see \Cref{eqn:rx-ud,eqn:rcrit-ud}), respectively, under $(L-1)$-list-decoding. }
	\label{fig:LDUDEEFig}
\end{figure}

\subsubsection{Input unconstrained case}
\label{sec:results-ld-ee-unbdd}

In the input unconstrained case, the capacity of an AWGN channel with noise variance $ \sigma^2 $ was shown by Poltyrev \cite{poltyrev1994coding} to be $ \frac{1}{2}\ln\frac{1}{2\pi e\sigma^2} $. 
In \Cref{thm:rce-ppp,thm:exe-matern}, we prove that there exist codes of rate (as per \Cref{eqn:intro-rate-unbdd}) $ R=\frac{1}{2}\ln\frac{1}{2\pi e\sigma^2\alpha^2} $ for some $ \alpha\ge1 $ that under maximum likelihood $(L-1)$-list-decoding attain an error exponent $ E_{L-1}(\alpha) $ defined as follows:
\begin{align}
E_{L-1}(\alpha) &\ge \begin{cases}
E_{\mathrm{r},L-1}(\alpha), & 1\le\alpha\le\sqrt{L} \\
E_{\mathrm{sl},L-1}(\alpha), & \sqrt{L}\le\alpha\le \sqrt{2L}\\
E_{\ex,L-1}(\alpha), & \alpha\ge\sqrt{2L}
\end{cases}, \label{eqn:ld-ee-ip-unconstr} 
\end{align}
where $ E_{\mathrm{r},L-1},E_{\mathrm{sl},L-1},E_{\ex,L-1} $ denote the random coding exponent, the straight line bound and the expurgated exponent, respectively. 
These bounds read as follows:
\begin{align}
E_{\mathrm{r},L-1}(\alpha) &\coloneqq \frac{\alpha^2}{2} - \ln\alpha - \frac{1}{2}, \notag \\
E_{\mathrm{sl},L-1}(\alpha) &\coloneqq \frac{L-1}{2} - \frac{L}{2}\ln L + (L-1)\ln\alpha, \notag \\
E_{\ex,L-1}(\alpha) &\coloneqq \frac{\alpha^2}{16} + \frac{1}{16}\sqrt{\alpha^4 + 8\alpha^2(2L-3)+16} - \frac{L-1}{2}\ln\paren{\sqrt{\alpha^4 + 8\alpha^2(2L-3) + 16} - \alpha^2 + 4} \notag \\
&\quad + \frac{L-2}{2}\ln\paren{\sqrt{\alpha^4 + 8\alpha^2(2L-3) + 16} + \alpha^2 + 4} + \frac{3}{2}\ln2 - \frac{1}{4} . \notag 
\end{align}

When specialized to $L-1=1$, the above bounds recover the Poltyrev's exponents \cite[Theorem 3]{poltyrev1994coding} for unique-decoding.
The above bounds are plotted in \Cref{fig:LDUDEEunbddFig} for $ L-1=1 $ and $ L-1=2 $. 

\begin{figure}[htbp]
	\centering
	\begin{subfigure}[b]{\linewidth}
		\centering
		\includegraphics[width=0.9\textwidth]{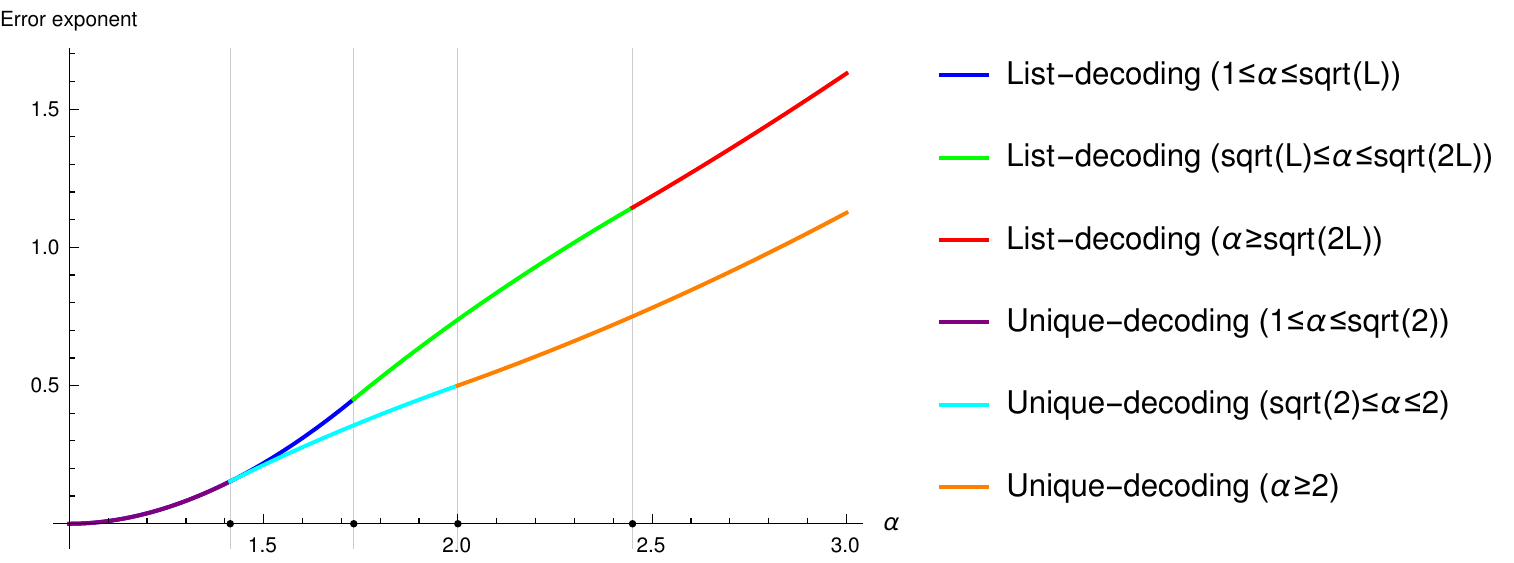}
		\label{fig:LDUDEEunbdd}
		\caption{}
	\end{subfigure}
	\\
	\begin{subfigure}[b]{\linewidth}
		\centering
		\includegraphics[width=0.9\textwidth]{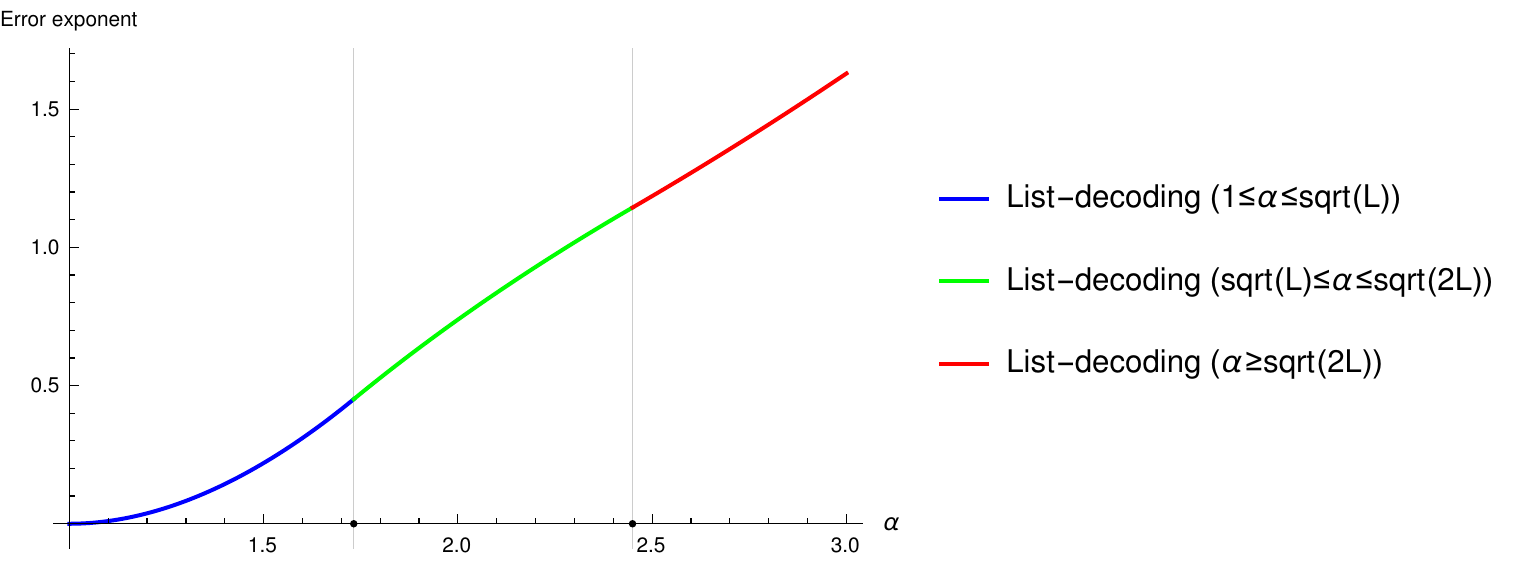}
		\caption{}
		\label{fig:LDEEunbdd}
	\end{subfigure}
	\\
	\begin{subfigure}[b]{\linewidth}
		\centering
		\includegraphics[width=0.9\textwidth]{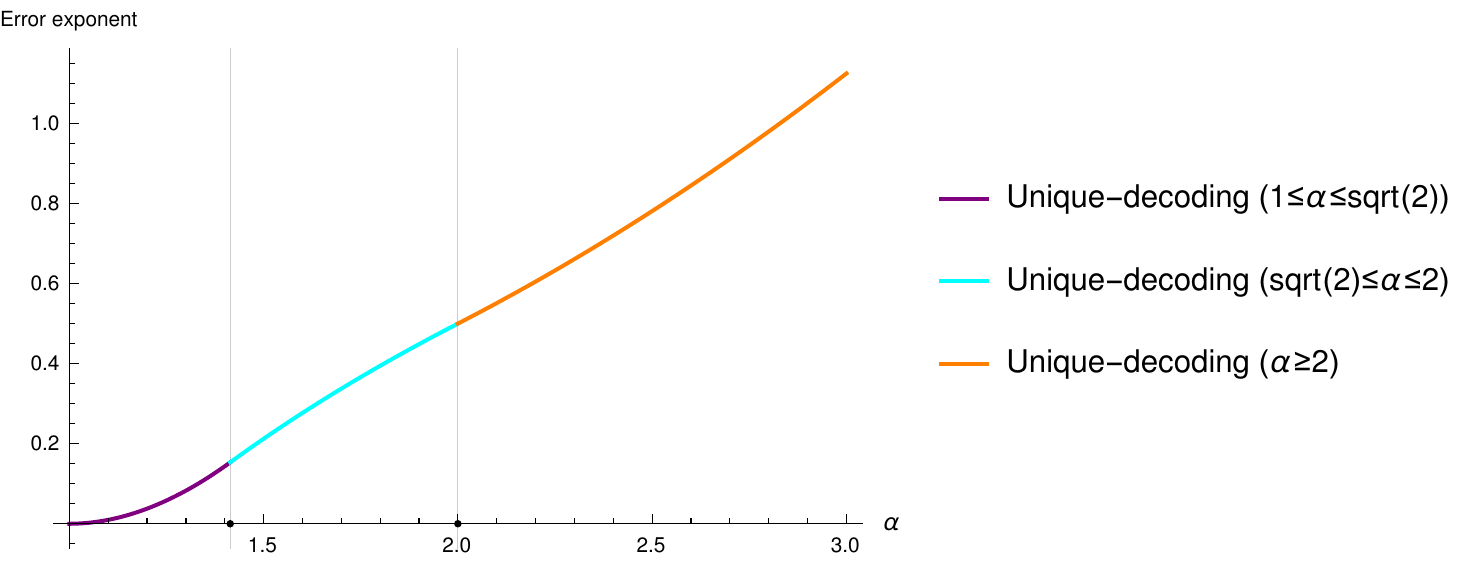}
		\caption{}
		\label{fig:UDEEunbdd}
	\end{subfigure}
	\caption{Comparison of Poltyrev's unique-decoding error exponents (\Cref{eqn:ee-ud-unbdd}) and our list-decoding error exponents (\Cref{eqn:ld-ee-ip-unconstr}) for input unconstrained AWGN channels with noise variance $ \sigma^2 $. We plot our bounds for $L=3$. The horizontal axis is $ \alpha\ge1 $ which parameterizes the rate $R$ through the relation $ R = \frac{1}{2}\ln\frac{1}{2\pi e\sigma^2\alpha^2} $. The vertical axis is the values of various exponents. The list-decoding error exponents and the unique-decoding error exponents are plotted jointly in \Cref{fig:LDEEunbdd} and are plotted separately in \Cref{fig:LDEEunbdd,fig:UDEEunbdd}, respectively. We observe that list-decoding does not increase the error exponent for any $ 1\le\alpha\le\sqrt{2} $. However, for any $\alpha>\sqrt{2}$, list-decoding does increase the error exponent. Moreover, the critical values of $ \alpha $ move from $ \sqrt{2} $ and $ 2 $ to $ \sqrt{L} $ and $ \sqrt{2L} $, respectively, under $(L-1)$-list-decoding.  
	}
	\label{fig:LDUDEEunbddFig}
\end{figure}

\section{List-decoding capacity for large $L$}
\label{sec:listdec-cap-large}

All bounds in this paper hold for any \emph{fixed} $L$. 
In this section, we discuss the impact of our finite-$L$ bounds on the understanding of the limiting values of the largest multiple packing density as $ L\to\infty $. 
Some of these results were known previously and others follow from the bounds in the current paper.

Characterizing $ C_{L-1}(P,N) $ or $ C_{L-1}(N) $ is a difficult task that is out of reach given the current techniques. 
However, if the list-size $L$ is allowed to grow, we can actually characterize 
\begin{align}
C_{\mathrm{LD}}(P,N) &\coloneqq \lim_{L\to\infty} C_{L-1}(P,N), \quad
C_{\mathrm{LD}}(N) \coloneqq \lim_{L\to\infty} C_{L-1}(N),  \notag 
\end{align}
where the subscript $ \mathrm{LD} $ denotes List-Decoding.

It is well-known that $ C_{\mathrm{LD}}(P,N) = \frac{1}{2}\ln\frac{P}{N} $. 
Specifically, the following theorem appears to be a folklore in the literature and a complete proof can be found in \cite{zhang2022quadratically}. 
\begin{theorem}[Folklore, \cite{zhang2022quadratically}]
\label{thm:listdec-cap-large-bdd}
Let $ 0<N\le P $. 
Then for any $ \eps>0 $, 
\begin{enumerate}
	\item There exist $ (P,N,L-1) $-multiple packings of rate $ \frac{1}{2}\ln\frac{P}{N} - \eps $ for some $ L = \cO\paren{\frac{1}{\eps}\ln\frac{1}{\eps}} $; 
	
	\item Any $ (P,N,L-1) $-multiple packing of rate $ \frac{1}{2}\ln\frac{P}{N} + \eps $ must satisfy $ L = e^{\Omega(n\eps)} $. 
\end{enumerate}
Therefore, $ C_{\mathrm{LD}}(P,N) = \frac{1}{2}\ln\frac{P}{N} $. 
\end{theorem}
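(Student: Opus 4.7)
Both parts follow by combining the error-exponent lower bound \Cref{eqn:compare-lb-ee} with the Elias--Bassalygo-type upper bound \Cref{eqn:compare-eb}, carrying out a careful asymptotic analysis in the list size $L$.

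For Part~1 (achievability), I start from \Cref{eqn:compare-lb-ee} and expand each of its two summands around its $L\to\infty$ limit:
\[
\frac{1}{2}\ln\frac{(L-1)P}{LN} = \frac{1}{2}\ln\frac{P}{N} + \frac{1}{2}\ln\!\left(1 - \frac{1}{L}\right) = \frac{1}{2}\ln\frac{P}{N} - \Theta(1/L),
\]
\[
\frac{1}{2(L-1)}\ln\frac{P}{L(P-N)} = -\frac{\ln L}{2(L-1)} + \frac{1}{2(L-1)}\ln\frac{P}{P-N} = -\Theta\!\left(\frac{\ln L}{L}\right).
\]
The total deviation of the bound from $\frac{1}{2}\ln(P/N)$ is $-\Theta((\ln L)/L)$. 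Hence choosing $L = C\cdot\frac{1}{\eps}\ln\frac{1}{\eps}$ with a sufficiently large constant $C = C(P/N)$ makes the bound at least $\frac{1}{2}\ln(P/N) - \eps$, which proves the $L = \cO\!\left(\frac{1}{\eps}\ln\frac{1}{\eps}\right)$ claim.

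For Part~2 (converse), the plan is to use a finite-$n$, finite-$L$ refinement of the Elias--Bassalygo upper bound \Cref{eqn:compare-eb}. Draw $y$ uniformly on the sphere in $\bR^n$ of radius $\sqrt{n(P-N)}$; this is the radius that maximizes the relevant spherical cap measure. For every codeword $c$ with $\|c\|^2 = nP$, the event $\|y - c\|^2 \le nN$ is equivalent to $\cos\theta \ge \sqrt{(P-N)/P}$ (equivalently $\sin^2\theta = N/P$), where $\theta$ is the angle between $y$ and $c$; the standard spherical-cap estimate then gives normalized measure $p \ge (N/P)^{n/2}/\mathrm{poly}(n)$. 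On the other hand, the $(P,N,L-1)$-packing property implies that every ball of radius $\sqrt{nN}$ in $\bR^n$ contains at most $L-1$ codewords of $\cC$. Summing $\mathbf{1}[\|y-c\|\le\sqrt{nN}]$ over $c\in\cC$ and taking expectation over $y$ yields $|\cC|\cdot p \le L-1$, hence
\[
R = \frac{1}{n}\ln|\cC| \le \frac{1}{2}\ln\frac{P}{N} + \frac{\ln(L-1)}{n} + o(1).
\]
A rate $R \ge \frac{1}{2}\ln(P/N) + \eps$ therefore forces $L \ge e^{n\eps - o(n)} = e^{\Omega(n\eps)}$.

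The main technical obstacle is the converse: the asymptotic statement \Cref{eqn:compare-eb} only gives $C_{L-1}(P,N) \le \frac{1}{2}\ln\frac{(L-1)P}{LN}$ for fixed $L$ and $n \to \infty$, whereas here one needs a non-asymptotic version with polynomial factors in $n$ tracked explicitly so that $L = L(n)$ is allowed to grow with $n$. A secondary subtlety is that codewords may lie strictly inside the ball $\cB^n(\sqrt{nP})$; one handles this either by verifying that the cap-measure lower bound persists for interior codewords or by a radial projection onto the sphere at negligible rate cost.
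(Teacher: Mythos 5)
Your Part~1 is essentially identical to what the paper does: the paper does not give a standalone proof of this theorem but, in the remark immediately following it, recovers the achievability direction by exactly your calculation, namely that \Cref{eqn:compare-lb-ee} equals $\frac{1}{2}\ln\frac{P}{N}-\Theta\bigl(\frac{\ln L}{L}\bigr)$, so $L=\Theta\bigl(\frac{1}{\eps}\ln\frac{1}{\eps}\bigr)$ suffices. (One small point worth stating explicitly: \Cref{thm:lb-ee} carries the hypothesis $N\le\frac{L-1}{L}P$, which for fixed $N<P$ holds once $L\ge P/(P-N)$, and is thus automatic for small $\eps$.) For Part~2 the paper proves nothing in this article --- it defers entirely to the cited reference --- so your converse is genuinely additional content. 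Your double-counting argument is the standard one and is sound for spherical codes: with $y$ uniform on $\cS^{n-1}(\sqrt{n(P-N)})$, the cap condition is indeed $\cos\theta\ge\sqrt{(P-N)/P}$, the cap has normalized measure $(N/P)^{n/2}/\mathrm{poly}(n)$, and $|\cC|\,p\le L-1$ gives the claimed bound. The one gap you correctly flag, interior codewords, is real and not fixable by radial projection (projection can destroy the packing property, and a codeword near the origin is never covered by your $y$ when $P>2N$); the standard repair is to partition $\cB^n(\sqrt{nP})$ into $\mathrm{poly}(n)$ thin shells, keep the densest shell (losing only $\frac{1}{n}\ln\mathrm{poly}(n)=o(1)$ in rate), and run the cap argument with $y$ drawn on the sphere of radius $\sqrt{n(P''-N)}$ adapted to that shell's radius $\sqrt{nP''}$ (the case $P''\le N$ being trivial since one ball then covers the whole shell). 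With that fix spelled out, your proof plan is complete and correct.
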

A simple calculation reveals that \Cref{eqn:compare-lb-ee} equals $C_{\mathrm{LD}}(P,N)-\Theta(\frac{1}{L}\ln\frac{1}{L})$ for large $L$. This implies that we can construct $(P,N,L-1)$ multiple packings of rate $C_{\mathrm{LD}}(P,N)-\varepsilon$ and $L=\Theta(\frac{1}{\varepsilon}\ln\frac{1}{\varepsilon})$, thereby recovering the above result. 
It is an interesting open question to resolve whether this is indeed the right scaling.

The unbounded version $ C_{\mathrm{LD}}(N) $ is characterized in \cite{zhang2022listtrans-it} which equals $ \frac{1}{2}\ln\frac{1}{2\pi eN} $. 
\begin{theorem}[\cite{zhang2022listtrans-it}]
\label{thm:listdec-cap-large-unbdd}
Let $ N>0 $. 
Then for any $ \eps>0 $, 
\begin{enumerate}
	\item There exist $ (N,L-1) $-multiple packings of rate $ \frac{1}{2}\ln\frac{1}{2\pi eN} - \eps $ for some $ L = \cO\paren{\frac{1}{\eps}\ln\frac{1}{\eps}} $; 
	
	\item Any $ (N,L-1) $-multiple packing of rate $ \frac{1}{2}\ln\frac{1}{2\pi eN} + \eps $ must satisfy $ L = e^{\Omega(n\eps)} $. 
\end{enumerate}
Therefore, $ C_{\mathrm{LD}}(N) = \frac{1}{2}\ln\frac{1}{2\pi eN} $. 
\end{theorem}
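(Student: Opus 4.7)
The plan is to prove the two directions separately: achievability via the explicit finite-$L$ lower bound \Cref{eqn:compare-lb-ppp} (established in \Cref{thm:lb-ee-unbdd}), and the converse via a direct Plotkin-style volume argument that does not require any error-exponent machinery.

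For the achievability (item (1)), I would start from the already-established bound
\begin{equation*}
C_{L-1}(N) \;\ge\; \tfrac{1}{2}\ln\tfrac{L-1}{2\pi e N L} \;-\; \tfrac{\ln L}{2(L-1)}
\end{equation*}
and compute the gap to $C_{\mathrm{LD}}(N) = \tfrac{1}{2}\ln\tfrac{1}{2\pi e N}$:
\begin{equation*}
C_{\mathrm{LD}}(N) - \left(\tfrac{1}{2}\ln\tfrac{L-1}{2\pi e NL} - \tfrac{\ln L}{2(L-1)}\right) \;=\; \tfrac{1}{2}\ln\tfrac{L}{L-1} + \tfrac{\ln L}{2(L-1)} \;=\; \Theta\!\left(\tfrac{\ln L}{L}\right)
\end{equation*}
as $L\to\infty$. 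Setting this gap $\le\eps$ and inverting yields $L = \cO(\tfrac{1}{\eps}\ln\tfrac{1}{\eps})$, which is exactly the scaling claimed.

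For the converse (item (2)), let $\cC\subseteq\bR^n$ be any $(N,L-1)$-multiple packing. The defining condition $|\cC\cap\cB^n(y,\sqrt{nN})|\le L-1$ for every $y\in\bR^n$, integrated against $y\in\cB^n(K)$ with sum and integral swapped, gives
\begin{equation*}
\sum_{c\in\cC}\mathrm{Vol}\!\left(\cB^n(c,\sqrt{nN})\cap\cB^n(K)\right) \;\le\; (L-1)\,\mathrm{Vol}(\cB^n(K)).
\end{equation*}
Restricting the sum to codewords with $\|c\|\le K-\sqrt{nN}$ (for which $\cB^n(c,\sqrt{nN})\subseteq\cB^n(K)$ and the intersection has full volume $\mathrm{Vol}(\cB^n(\sqrt{nN}))$), dividing by $\mathrm{Vol}(\cB^n(K-\sqrt{nN}))$, and taking $\limsup_{K\to\infty}$ using that $\mathrm{Vol}(\cB^n(K))/\mathrm{Vol}(\cB^n(K-\sqrt{nN}))\to 1$ for fixed $n$, the NLD definition \Cref{eqn:intro-rate-unbdd} produces
\begin{equation*}
R(\cC) \;\le\; \tfrac{1}{n}\ln(L-1) \;-\; \tfrac{1}{n}\ln\mathrm{Vol}(\cB^n(\sqrt{nN})).
\end{equation*}
By Stirling, $\tfrac{1}{n}\ln\mathrm{Vol}(\cB^n(\sqrt{nN})) = \tfrac{1}{2}\ln(2\pi e N) + o(1) = -C_{\mathrm{LD}}(N)+o(1)$, so $R(\cC) - C_{\mathrm{LD}}(N) \le \tfrac{1}{n}\ln(L-1) + o(1)$. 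If $R(\cC)\ge C_{\mathrm{LD}}(N)+\eps$, this forces $L \ge 1 + e^{n(\eps - o(1))} = e^{\Omega(n\eps)}$, as desired.

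Both halves are largely mechanical. The achievability is a short asymptotic expansion of \Cref{eqn:compare-lb-ppp}, and the converse is essentially the one-line ``list-decoding Plotkin bound'' combined with the standard asymptotic volume of a Euclidean ball. The only place that needs slight care is the boundary correction (passing from $\cB^n(K-\sqrt{nN})$ to $\cB^n(K)$ when applying the rate definition), but this introduces only lower-order terms that vanish for each fixed $n$ as $K\to\infty$. All the substantive work, namely the construction attaining \Cref{eqn:compare-lb-ppp}, is already carried out in \Cref{thm:lb-ee-unbdd}, so the present theorem is essentially a corollary of that result together with an elementary volume argument.
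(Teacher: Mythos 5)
Your proposal is correct, but it is worth noting that the paper does not actually prove this theorem itself: the statement is imported wholesale from \cite{zhang2022listtrans-it}, and the only argument the paper offers is the one-line remark following the theorem that \Cref{eqn:compare-lb-ppp} equals $C_{\mathrm{LD}}(N)-\Theta(\frac{1}{L}\ln\frac{1}{L})$, which recovers item (1). Your achievability argument is exactly that remark made explicit — the gap computation $\frac{1}{2}\ln\frac{L}{L-1}+\frac{\ln L}{2(L-1)}=\Theta(\frac{\ln L}{L})$ and its inversion to $L=\cO(\frac{1}{\eps}\ln\frac{1}{\eps})$ are both right, and they correctly lean on \Cref{thm:lb-ee-unbdd} for the substantive construction. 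Your converse, by contrast, is content the paper omits entirely (deferring to the citation), and it is a correct, self-contained double-counting argument: the Tonelli swap is justified since $\cC$ is countable, the restriction to codewords in $\cB^n(K-\sqrt{nN})$ and the boundary ratio $(K/(K-\sqrt{nN}))^n\to1$ for fixed $n$ are handled properly, and the Stirling estimate $\frac{1}{n}\ln|\cB^n(\sqrt{nN})|=\frac{1}{2}\ln(2\pi eN)+o(1)$ gives exactly $R(\cC)\le\frac{1}{n}\ln(L-1)+\frac{1}{2}\ln\frac{1}{2\pi eN}+o(1)$, hence $L=e^{\Omega(n\eps)}$. The upshot is that your version makes the theorem genuinely self-contained within this paper (modulo \Cref{thm:lb-ee-unbdd}), whereas the paper's treatment is a citation plus an asymptotic remark; the elementary Plotkin-style converse is the piece you have added.
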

For large $L$, our lower bound in \Cref{eqn:compare-lb-ppp} reduces to $C_{\mathrm{LD}}(N)-\Theta(\frac{1}{L}\ln\frac{1}{L})$. Once again, we get that for rates that are $\varepsilon$-close to capacity, the list size scales as $\Theta(\frac{1}{\varepsilon}\ln\frac{1}{\varepsilon})$ thereby recovering the above result.



\section{Our techniques}
\label{sec:techniques}

To derive lower bounds on list-decoding capacity, the most popular strategy is random coding with expurgation \cite{zhang-split-misc}, a standard tool from information theory. 
To show the existence of a list-decodable code of rate $R$, we can simply randomly sample $ e^{nR} $ points independently each according to a certain distribution.  
We then throw away (a.k.a.\ expurgate) one point from each of the bad lists. 
By carefully analyzing the error event and choosing a proper rate, we can guarantee that the remaining code has essentially the same rate after the removal process. 
We then get a list-decodable code of rate $R$ by noting that the remaining code contains no bad lists. 

The challenge is, however, that analyzing the error event involving the Chebyshev radius is a tricky task. 
In this paper, we take a different approach via a proxy known as the \emph{error exponent} for an AWGN channel. 
The latter quantity is the optimal exponent of the probability of list-decoding error of a code used over a Gaussian channel which inflicts an additive white Gaussian noise. 
We establish a curious inequality which relates the Chebyshev radius of lists in a code to the error exponent of the code. 
This inequality and connection originally appeared in \cite{blinovsky-1999-list-dec-real}, but some of the details were missing (see \Cref{sec:bli-mistake-ee}). 
We use different ideas to and provide a complete alternate proof in \Cref{sec:lb-ee}, which is a major contribution of this work. 
Towards this end, we provide geometric understanding of the higher-order Voronoi partition induced by $L$-lists which naturally arises as the error regions under maximum likelihood list-decoding. 
We obtain sharp estimates on the Gaussian measure of the higher-order Voronoi region associated with a list which relates the error probability to the Chebyshev radius of the list. 
This inequality bridges two quantities of fundamentally different natures. 
The Chebyshev radius is a combinatorial characteristic of a code against worst-case errors, whereas the error exponent is a probabilistic characteristic of a code against average-case errors. 
The multiple packing problem then reduces to bounding the error exponent. 

Our results on list-decoding error exponents of Gaussian channels are of independent interest beyond the study of multiple packing. 
We borrow standard techniques from information theory to prove bounds on list-decoding error exponents. 
Specifically, in the bounded case, we follow Gallager's approach \cite{gallager-1965-simple-deriv,gallager} and analyze random spherical codes; in the unbounded case, we mix the ideas in \cite{ingber-2012-finite-dim-infinite-const,anantharam-baccelli-2010-pp-long} and analyze PPPs and their expurgated versions (known as \matern processes) using tools from stochastic geometry, e.g., the Slivnyak's theorem and the Campbell's theorem. 
It has been long known that list-decoding with any subexponential (in $n$) list-sizes does not increase the capacity of any discrete memoryless channel (DMC) or Gaussian channel. 
Our results further show that list-decoding with constant list-sizes does not even improve the error exponent of capacity-achieving codes. 
In fact, for any $ L\in\bZ_{\ge2} $ and any rate $R$ above a certain critical rate $ R_{\crit,1} $ below the capacity, the $(L-1)$-list-decoding error exponent coincides with the unique-decoding error exponent (i.e., when $ L=2 $). 
However, the error exponent \emph{does} strictly increase under list-decoding when $ R $ is below $ R_{\crit,1} $. 
By carefully analyzing the aforementioned ensembles of random codes and solving delicate optimization problems coming out of the analysis, we obtain explicit bounds on the list-decoding error exponent of Gaussian channels with or without input constraints. 
These expressions, to the best of our knowledge, are not known before. 
Moreover, they recover prior results by Gallager \cite{gallager-1965-simple-deriv}, \cite[Theorem 7.4.4]{gallager} (in the bounded case) and Poltyrev \cite{poltyrev1994coding} (in the unbounded case) for $L=2$.

\section{Organization of the paper}
\label{sec:org}

This paper is a collection of lower and upper bounds on the largest multiple packing density. 
The rest of the paper is organized as follows. 
Notational conventions are listed in \Cref{sec:notation}, and some useful facts/lemmas are listed in \Cref{sec:prelim}. 
After that, we present in \Cref{sec:def} the formal definitions of multiple packing and pertaining notions. 
We also discuss different notions of density of codes used in the literature. 

In \Cref{sec:lb-ee}, we prove the inequality that relates the Chebyshev radius to error exponent and combine it with bounds on error exponent to obtain lower bounds on the largest multiple packing density. 
The bounds on error exponent used in this section are proved in \Cref{sec:ld-ee} for the bounded case and in \Cref{sec:ld-ee-unbdd} for the unbounded case. 
We end the paper with several open questions in \Cref{sec:open}.

\section{Notation}
\label{sec:notation}
\noindent\textbf{Conventions.}
Sets are denoted by capital letters in calligraphic typeface, e.g., $\cC,\cB$, etc. 
Random variables are denoted by lower case letters in boldface or capital letters in plain typeface, e.g., $\bfx,S$, etc. Their realizations are denoted by corresponding lower case letters in plain typeface, e.g., $x,s$, etc. Vectors (random or fixed) of length $n$, where $n$ is the blocklength without further specification, are denoted by lower case letters with  underlines, e.g., $\vbfx,\vbfg,\vx,\vg$, etc. 
Vectors of length different from $n$ are denoted by an arrow on top and the length will be specified whenever used, e.g., $ \vec t, \vec\alpha $, etc. 
The $i$-th entry of a vector $\vx\in\cX^n$ is denoted by $\vx(i)$ since we can alternatively think of $\vx$ as a function from $[n]$ to $\cX$. Same for a random vector $\vbfx$. Matrices are denoted by capital letters, e.g., $A,\Sigma$, etc. Similarly, the $(i,j)$-th entry of a matrix $G\in\bF^{n\times m}$ is denoted by $G(i,j)$. We sometimes write $G_{n\times m}$ to explicitly specify its dimension. For square matrices, we write $G_n$ for short. Letter $I$ is reserved for identity matrix.  

\noindent\textbf{Functions.}
We use the standard Bachmann--Landau (Big-Oh) notation for asymptotics of real-valued functions in positive integers. 

For two real-valued functions $f(n),g(n)$ of positive integers, we say that $f(n)$ \emph{asymptotically equals} $g(n)$, denoted $f(n)\asymp g(n)$, if 
\[\lim_{n\to\infty}\frac{f(n)}{g(n)} = 1.\]
For instance, $2^{n+\log n}\asymp2^{n+\log n}+2^n$, $2^{n+\log n}\not\asymp2^n$.
 We write $f(n)\doteq g(n)$ (read $f(n)$ dot equals $g(n)$) if the coefficients of the dominant terms in the exponents of $f(n)$ and $g(n)$ match,
\[\lim_{n\to\infty}\frac{\log f(n)}{\log g(n)} = 1.\]
For instance, $2^{3n}\doteq2^{3n+n^{1/4}}$, $2^{2^n}\not\doteq2^{2^{n+\log n}}$. Note that $f(n)\asymp g(n)$ implies $f(n)\doteq g(n)$, but the converse is not true.

For any $q\in\bR_{>0}$, we write $\log_q(\cdot)$ for the logarithm to the base $q$. In particular, let $\log(\cdot)$ and $\ln(\cdot)$ denote logarithms to the base $2$ and $e$, respectively.

For any $\cA\subseteq\Omega$, the indicator function of $\cA$ is defined as, for any   $x\in\Omega$, 
\[\one_{\cA}(x)\coloneqq\begin{cases}1,&x\in \cA\\0,&x\notin \cA\end{cases}.\]
At times, we will slightly abuse notation by saying that $\one_{\sfA}$ is $1$ when event $\sfA$ happens and 0 otherwise. Note that $\one_{\cA}(\cdot)=\indicator{\cdot\in\cA}$.

\noindent\textbf{Sets.}
For any two nonempty sets $\cA$ and $\cB$ with addition and multiplication by a real scalar, let $\cA+\cB$ 
denote the Minkowski sum 
of them which is defined as
$\cA+\cB\coloneqq\curbrkt{a+b\colon a\in\cA,b\in\cB}$.
If $\cA=\{x\}$ is a singleton set, we write $x+\cB$ and
 for $\{x\}+\cB$.
For any $ r\in\bR $, the $r$-dilation of $\cA$ is defined as $ r\cA \coloneqq \curbrkt{r\va:\va\in\cA} $. 
In particular, $ -\cA\coloneqq (-1)\cA $. 

For $M\in\bZ_{>0}$, we let $[M]$ denote the set of first $M$ positive integers $\{1,2,\cdots,M\}$.

\noindent\textbf{Geometry.}
Let $ \normtwo{\cdot} $ denote the Euclidean/$\ell_2$-norm. Specifically, for any $\vx\in\bR^n$,
\[ \normtwo{\vx} \coloneqq\paren{\sum_{i=1}^n\vx(i)^2}^{1/2}.\]

With slight abuse of notation, we let $|\cdot|$ denote the ``volume'' of a set w.r.t.\ a measure that is obvious from the context. 
If $ \cA $ is a finite set, then $ |\cA| $ denotes the cardinality of $\cA$ w.r.t.\ the counting measure. 
For a set $ \cA\subset\bR^n $, let
\begin{align}
\aff(\cA) &\coloneqq \curbrkt{\sum_{i = 1}^k\lambda_i\va_i:k\in\bZ_{\ge1};\;\forall i\in[k], \va_i\in\cA,\lambda_i\in\bR,\sum_{i = 1}^k\lambda_i = 1} \notag 
\end{align}
denote the \emph{affine hull} of $\cA$, i.e., the smallest affine subspace containing $\cA$. 
If $ \cA $ is a connected compact set in $ \bR^n $ with nonempty interior and $ \aff(\cA) = \bR^n $, then $ |\cA| $ denotes the volume of $\cA$ w.r.t.\ the $n$-dimensional Lebesgue measure. 
If $ \aff(\cA) $ is a $k$-dimensional affine subspace for $ 1\le k<n $, then $ |\cA| $ denotes the $k$-dimensional Lebesgue volume of $\cA$. 

The closed $n$-dimensional Euclidean unit ball is defined as
\[\cB^n \coloneqq \curbrkt{\vy\in\bR^n\colon \normtwo{\vy} \le 1}.\]
The $(n-1)$-dimensional Euclidean unit sphere is defined as
\[\cS^{n-1} \coloneqq \curbrkt{\vy\in\bR^n\colon \normtwo{\vy} = 1}.\]
For any $ \vx\in\bR^n $ and $ r\in\bR_{>0} $, let $ \cB^n(r) \coloneqq r\cB^n, \cS^{n-1}(r) \coloneqq r\cS^{n-1} $ and $ \cB^n(\vx,r) \coloneqq \vx + r\cB^n, \cS^{n-1}(\vx,r) \coloneqq \vx + r\cS^{n-1} $. 

Let $ V_n \coloneqq |\cB^n| $.

\section{Basic definitions and facts}
\label{sec:def}

Given the intimate connection between packing and error-correcting codes, we will interchangeably use the terms ``multiple packing'' and ``list-decodable code''. 
The parameter $ L\in\bZ_{\ge2} $ is called the \emph{multiplicity of overlap} or the \emph{list-size}. 
The parameters $N$ and $P$ (in the case of bounded packing) are called the \emph{input and noise power constraints}, respectively. 
Elements of a packing are called either \emph{points} or \emph{codewords}. 
We will call a size-$L$ subset of a packing an \emph{$L$-list}. 
This paper is only concerned with the fundamental limits of multiple packing for asymptotically large dimension $n\to\infty$. 
When we say ``a'' code $ \cC $, we always mean an infinite sequence of codes $ \curbrkt{\cC_i}_{i\ge1} $ where $ \cC_i\subset\bR^{n_i} $ and $ \curbrkt{n_i}_{i\ge1} $ is an increasing sequence of positive integers. 
We call $\cC$ a \emph{spherical code} if $ \cC\subset\cS^{n-1}(\sqrt{nP}) $ and we call it a \emph{ball code} if $ \cC\subset\cB^n(\sqrt{nP}) $. 

In the rest of this section, we list a sequence of formal definitions and some facts associated with these definitions. 

\begin{definition}[Bounded multiple packing]
\label{def:packing-ball}
Let $ N,P>0 $ and $ L\in\bZ_{\ge2} $. 
A subset $ \cC \subseteq \cB^n( \sqrt{nP}) $ is called a \emph{$ (P,N,L-1) $-list-decodable code} (a.k.a.\ a \emph{$(P,N,L-1)$-multiple packing}) if for every $ \vy\in\bR^n $,
\begin{align}
\card{\cC\cap\cB^n(\vy, \sqrt{nN})} \le L-1 .
\label{eqn:packing-ball}
\end{align}
The \emph{rate} (a.k.a.\ \emph{density}) of $ \cC $ is defined as
\begin{align}
R(\cC) \coloneqq \frac{1}{n}{\ln\cardC} .
\label{eqn:density-bounded}
\end{align}
\end{definition}

\begin{definition}[Unbounded multiple packing]
\label{def:packing-euclidean}
Let $ N>0 $ and $ L\in\bZ_{\ge2} $. 
A subset $ \cC \subseteq \bR^n $ is called a \emph{$ (N,L-1) $-list-decodable code} (a.k.a.\ an \emph{$(N,L-1)$-multiple packing}) if for every $ \vy\in\bR^n $,
\begin{align}
\card{\cC\cap\cB^n(\vy, \sqrt{nN})} \le L-1 .
\label{eqn:packing-ball-repeat}
\end{align}
The \emph{rate} (a.k.a.\ \emph{density}) of $ \cC $ is defined as 
\begin{align}
R(\cC) \coloneqq& \limsup_{K\to\infty} \frac{1}{n}\ln\frac{\card{\cC\cap (K\cB)}}{\card{K\cB}}, 
\label{eqn:density-unbounded}
\end{align}
where $ \cB $ is an arbitrary centrally symmetric connected compact set in $ \bR^n $ with nonempty interior. 
\end{definition}

\begin{remark}
Common choices of $ \cB $ include the unit ball $ \cB^n $, the unit cube $ [-1,1]^n $, the fundamental Voronoi region $ \cV_\Lambda $ of a (full-rank) lattice $ \Lambda\subset\bR^n $, etc. 
Some choices of $\cB$ may be more convenient than the others for analyzing certain ensembles of packings. 
Therefore, we do not fix the choice of $\cB$ in \Cref{def:packing-euclidean}. 
\end{remark}

\begin{remark}
It is a slight abuse of notation to write $ R(\cC) $ to refer to the rate of either a bounded packing or an unbounded packing. 
However, the meaning of $ R(\cC) $ will be clear from the context. 
The rate of an unbounded packing (as per \Cref{eqn:density-unbounded}) is also called the \emph{normalized logarithmic density} in the literature. 
It measures the rate (w.r.t.\ \Cref{eqn:density-bounded}) per unit volume. 
\end{remark}

Note that the condition given by \Cref{eqn:packing-ball,eqn:packing-ball-repeat} is equivalent to that for any $ (\vx_1,\cdots,\vx_L)\in\binom{\cC}{L} $, 
\begin{align}
\bigcap_{i = 1}^L\cB^n(\vx_i,\sqrt{nN}) = \emptyset. \label{eqn:packing-ball-alternative} 
\end{align}

\begin{definition}[Chebyshev radius of a list]
\label{def:cheb-rad-avg-rad}
Let $ \vx_1,\cdots,\vx_L $ be $L$ points in $ \bR^n $. 
Then the \emph{squared Chebyshev radius} $ \rad^2(\vx_1,\cdots,\vx_L) $ of $ \vx_1,\cdots,\vx_L $ is defined as the (squared) radius of the smallest ball containing $ \vx_1,\cdots,\vx_L $, i.e., 
\begin{align}
\rad^2(\vx_1,\cdots,\vx_L) \coloneqq& \min_{\vy\in\bR^n} \max_{i\in[L]} \normtwo{\vx_i - \vy}^2. \label{eqn:cheb-rad} 
\end{align}
\end{definition}

\begin{remark}
One should note that for an $L$-list $\cL$ of points, the smallest ball containing $\cL$ is not necessarily the same as the \emph{circumscribed ball}, i.e., the ball such that all points in $\cL$ live on the boundary of the ball. 
The circumscribed ball of the polytope $ \conv\curbrkt{\cL} $ spanned by the points in $\cL$ may not exist. 
If it does exist, it is not necessarily the smallest one containing $ \cL $. 
However, whenever it exists, the smallest ball containing $\cL$ must be the circumscribed ball of a certain \emph{subset} of $\cL$. 
\end{remark}

\begin{definition}[Chebyshev radius of a code]
\label{def:cheb-rad-avg-rad-code}
Given a code $ \cC\subset\bR^n $ of rate $R$, the \emph{squared $(L-1)$-list-decoding radius} of $\cC$ is defined as
\begin{align}
\rad^2_L(\cC) \coloneqq&  \min_{\cL\in\binom{\cC}{L}} \rad^2(\cL). \label{eqn:rad-code}
\end{align}
\end{definition}

Note that $ (L-1) $-list-decodability defined by \Cref{eqn:packing-ball} or \Cref{eqn:packing-ball-alternative} is equivalent to $ \rad^2_L(\cC) > nN$. 
We also define the \emph{$(P,N,L-1)$-list-decoding capacity} (a.k.a.\ \emph{$(P,N,L-1)$-multiple packing density})
$$ C_{L-1}(P,N) \coloneqq \limsup_{n\to\infty} \limsup_{\cC \subseteq \cB^n( \sqrt{nP})\colon \rad^2_L(\cC)>nN} R(\cC) ,$$ 
and the \emph{squared $(L-1)$-list-decoding radius} at rate $R$ with input constraint $P$
\begin{align}
\rad^2_L(P,R) \coloneqq& \limsup_{n\to\infty} \limsup_{\cC \subseteq \cB^n( \sqrt{nP}) \colon R(\cC)\ge R} \rad^2_L(\cC), \notag 
\end{align}
and their unbounded analogues \emph{$(N,L-1)$-list-decoding capacity} (a.k.a.\ \emph{$(N,L-1)$-multiple packing density}) $ C_{L-1}(N) $ and the \emph{squared $(L-1)$-list-decoding radius} $ \rad^2_L(R) $ at rate $R$: 
\begin{align}
C_{L-1}(N) &\coloneqq \limsup_{n\to\infty} \limsup_{\cC \subseteq \bR^n\colon \rad^2_L(\cC)>nN} R(\cC) , \notag \\
\rad^2_L(R) &\coloneqq \limsup_{n\to\infty} \limsup_{\cC \subseteq \bR^n \colon R(\cC)\ge R} \rad^2_L(\cC). \notag 
\end{align}

\section{Lower bounds on list-decoding capacity via error exponents}
\label{sec:lb-ee}
In this section, we will show the following lower bound on $ C_{L-1}(P,N) $. 
\begin{theorem}
\label{thm:lb-ee}
For any $ P,N>0 $ such that $ N\le\frac{L-1}{L}P $ and any $ L\in\bZ_{\ge2} $, the $(P,N,L-1)$-list-decoding capacity $ C_{L-1}(P,N) $ is at least 
\begin{align}
C_{L-1}(P,N) &\ge \frac{1}{2}\sqrbrkt{\ln\frac{(L-1)P}{LN} + \frac{1}{L-1}\ln\frac{P}{L(P-N)}}. \label{eqn:lb-ee} 
\end{align}
\end{theorem}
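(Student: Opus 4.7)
My plan is to transfer the $(L-1)$-list-decoding error-exponent achievability for the AWGN channel (\Cref{thm:awgn-exe}) into a worst-case Chebyshev-radius achievability for adversarial multiple packings, and then to instantiate the resulting inequality so that the Chebyshev radius exceeds $\sqrt{nN}$. The AWGN noise variance $\sigma^2$ will play the role of a free tuning parameter. By \Cref{thm:awgn-exe}, for every rate $R<\tfrac{1}{2}\ln(1+P/\sigma^2)$ there exists a sequence of codes $\cC_n\subseteq\cB^n(\sqrt{nP})$ of rate $R-o(1)$ whose average probability of maximum-likelihood $(L-1)$-list-decoding error over AWGN$(\sigma^2)$ is at most $\exp\bigl(-n E_{\ex,L-1}(R,P/\sigma^2) + o(n)\bigr)$.

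The crucial step is to convert this average-case guarantee into a worst-case Chebyshev-radius guarantee. For every $L$-list $\cL=\{\vx_1,\ldots,\vx_L\}\subseteq\cC_n$ with Chebyshev center $\vy^\ast$ and Chebyshev radius $r=\rad(\cL)$, the ML $(L-1)$-list-decoding error region for transmitted codeword $\vx_1$ contains the higher-order Voronoi cell $\bigcap_{i\neq 1}\{\vy:\|\vy-\vx_i\|<\|\vy-\vx_1\|\}$, which is an intersection of $L-1$ open half-spaces all passing through $\vy^\ast$ on their boundary and whose unique nearest point to $\vx_1$ is $\vy^\ast$ at distance exactly $r$. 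A standard Laplace-type analysis (decomposing the noise along the direction $\vy^\ast-\vx_1$ and along the $(L-1)$-dimensional subspace spanned by the half-space normals) lower-bounds the Gaussian measure of this region under $\vy\sim\cN(\vx_1,\sigma^2 I_n)$ by $\exp\bigl(-r^2/(2\sigma^2) + o(n)\bigr)$, with the sub-exponential factors depending only on $L$ and not on $n$ or on the geometry of $\cL$. A Markov-type expurgation (removing the at-most-half of codewords whose individual ML error probability exceeds twice the average) then produces a subcode $\cC'_n$ of rate $R-o(1)$ in which every $L$-list $\cL$ satisfies
\[\rad^2(\cL) \;\geq\; 2n\sigma^2 E_{\ex,L-1}(R, P/\sigma^2) - o(n),\]
i.e.\ $\rad^2_L(\cC'_n)/n \geq 2\sigma^2 E_{\ex,L-1}(R, P/\sigma^2) - o(1)$.

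The code $\cC'_n$ is therefore a $(P,N,L-1)$-multiple packing provided $2\sigma^2 E_{\ex,L-1}(R, P/\sigma^2) \geq N$. Using \eqref{eqn:eex-ip-constr} with $\snr=P/\sigma^2$, this condition reduces to $\tfrac{P(Lt-1)}{Lt}\geq N$, whose largest-rate saturation is $t=\tfrac{P}{L(P-N)}$; the hypothesis $N\leq\tfrac{L-1}{L}P$ guarantees $t\in[1/L,1]$, the admissible parameter range for the expurgated branch. Substituting this value of $t$ into the defining equation $(Lt-1)e^{2R}=(L-1)t^{L/(L-1)}$ and solving algebraically for $R$ yields exactly the right-hand side of \eqref{eqn:lb-ee}. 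The free parameter $\sigma^2$ can then be chosen small enough so that $R\leq R_{\x,L-1}(P/\sigma^2)$, ensuring that the expurgated branch of \Cref{thm:awgn-exe} is applicable at the target rate.

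The main technical obstacle is the Gaussian-measure lower bound in the second paragraph: one must show that the dominant exponent $r^2/(2\sigma^2)$ is attained irrespective of the configuration of $\vx_1,\ldots,\vx_L$, which demands a careful understanding of the higher-order Voronoi partition and of the tangent cones it induces at circumcenters of $L$-subsets. The accompanying expurgation bookkeeping — keeping the rate to leading order while eliminating every $L$-list of small Chebyshev radius — is precisely the step at which the earlier argument of \cite{blinovsky-1999-list-dec-real} appears to be incomplete, and where the present approach will provide the full details in \Cref{sec:lb-ee}.
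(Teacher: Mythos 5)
Your overall plan coincides with the paper's proof: combine the expurgated AWGN list-decoding exponent (\Cref{thm:awgn-exe}) with a Markov-type expurgation and a Gaussian-measure lower bound $P_{\e,\avg,L-1}^{\mathrm{ML}}(\cL)\ge\exp\paren{-\rad^2(\cL)/(2\sigma^2)-o(n)}$, then saturate $2\sigma^2E_{\ex,L-1}\ge N$ at $t=P/(L(P-N))$; your closing algebra and the choice of $\sigma^2$ small enough to remain in the expurgated branch match \Cref{sec:lb-ee-together} exactly.

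There is, however, a genuine gap in your key geometric claim, and it sits at exactly the point where the argument of \cite{blinovsky-1999-list-dec-real} broke down. For a generic codeword $\vx_1$ of the list it is \emph{not} true that the $L-1$ bisector hyperplanes all pass through the Chebyshev center $\vy^*$, nor that the nearest point of the error region $\bigcap_{i\ne1}\{\vy:\normtwo{\vy-\vx_i}<\normtwo{\vy-\vx_1}\}$ to $\vx_1$ is $\vy^*$ at distance exactly $r$. The bisector of $\vx_1$ and $\vx_i$ passes through $\vy^*$ only if both points lie on the boundary sphere of the smallest enclosing ball, and when $\vx_1$ lies strictly inside that ball its distance to its error region is strictly \emph{larger} than $r$ — see the obtuse-triangle case in \Cref{sec:three-points-obtuse}, where the smallest enclosing ball touches only two of the three points. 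The repair, which the paper implements, is to lower-bound only the error probability of a codeword $\vx_0$ lying on the boundary of the smallest enclosing ball (such a point always exists, and this suffices since the average error probability is at least $\frac{1}{L}$ of any single term); for that codeword one shows that a cone of \emph{constant} angular radius with apex at the center, pointing away from $\vx_0$, is contained in the order-$(L-1)$ Voronoi region (\Cref{lem:cone-in-voronoi}), and then applies Laplace's method. Relatedly, your assertion that the subexponential factors are independent of the geometry of $\cL$ requires a minimum pairwise distance $\ge\sqrt{nc}$ within the code — otherwise the cone angle can degenerate — and this condition must be (and can be, at no rate cost, by Shannon's expurgation) imposed before invoking the radius-versus-exponent inequality. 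With these two repairs your outline becomes the paper's proof.
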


\begin{remark}
When $ L\to\infty $, the above bound (\Cref{eqn:lb-ee}) converges to the list-decoding capacity $ \frac{1}{2}\ln\frac{P}{N} $ for $L\to\infty$ (see \Cref{sec:listdec-cap-large}). 
For $ L=2 $, it recovers the best known bound $ \frac{1}{2}\ln\frac{P^2}{4N(P-N)} $ (see, e.g.,~\cite{zhang-split-misc}). 
Furthermore, it is tight at $ N/P=0 $ where the optimal density is $\infty$ and $ N/P=\frac{L-1}{L} $ where the optimal density is $ 0 $ (see~\cite{zhang-split-misc} for the \emph{Plotkin point}). 
\end{remark}

To handle the Chebyshev radius, we follow an indirect approach which relates the Chebyshev radius to a quantity called \emph{error exponent}. 
To this end, we take a detour by first introducing the notion of error exponent and then presenting bounds on it. 
We find it curious that the $(P,N,L-1)$-list-decodability against \emph{worst-case} errors can be related to the error exponent of a \emph{Gaussian} channel that only inflicts \emph{average-case} errors.

\subsection{Basic definitions regarding list-decoding error exponents}
\label{sec:def-ld-ee}

We first introduce maximum likelihood list-decoding and error exponents in the context of transmission over AWGN channels. 
Relevant definitions for more general channels can be found in \Cref{sec:ld-ee}. 

Consider a Gaussian channel $ \vbfy = \vbfx + \vbfg $ where the input $ \vbfx $ satisfies $ \normtwo{\vbfx}\le\sqrt{nP} $ and $ \vbfg\sim\cN(\vzero,\sigma^2I_n) $ is an additive white Gaussian noise with mean zero and variance $\sigma^2$. 
Let $ \cC = \curbrkt{\vx_i}_{i = 1}^M $ be a codebook for the above Gaussian channel, that is, $ \normtwo{\vx_i}\le\sqrt{nP} $ for all $ 1\le i\le M $.

We are interested in the \emph{probability of $(L-1)$-list-decoding error} of $ \cC $ under the \emph{maximum likelihood (ML) $(L-1)$-list-decoder}. 
Formally, let $ \dec^{\mathrm{ML}}_{L-1,\cC}\colon\bR^n\to\binom{\cC}{L-1} $ denote the ML $(L-1)$-list-decoder. 
Given $\vbfy$, the ML list-decoder outputs the list of the nearest $L-1$ codewords in $ \cC $ to $ \vbfy $. 
We say that an \emph{$(L-1)$-list-decoding error} occurs if the transmitted codeword $ \vx_i $ does not lie within the list $ \dec^{\mathrm{ML}}_{L-1,\cC}(\vx_i + \vbfg) $. 
Let us define $P_{\e,L-1}^{\mathrm{ML}}(i,\cC)$ to be the conditional probability of a decoding error when the $i$-th codeword is transmitted, i.e., the probability that the decoder outputs a list of codewords that does not contain $\vx_i$, conditioned on the event that $\vx_i$ was sent:
\begin{align}
P_{\e,L-1}^{\mathrm{ML}}(i,\cC) 
&\coloneqq \prob{\dec^{\mathrm{ML}}_{L-1,\cC}(\vx_i + \vbfg) \not\ni\vx_i} \notag \\
&= \prob{\exists \curbrkt{i_1,\cdots,i_{L-1}}\in\binom{[M]\setminus\curbrkt{i}}{L-1},\;\forall j\in[L-1],\;\normtwo{\vx_{i_j} - (\vx_i + \vbfg)} < \normtwo{\vbfg}}. \notag 
\end{align}
Occasionally, we also write $ P_{\e,L-1}^{\mathrm{ML}}(\vx_i,\cC) $ to denote the same quantity above.
Then, the \emph{average (over codewords) probability of $(L-1)$-list-decoding error} of $\cC$ under $ \dec^{\mathrm{ML}}_{L-1,\cC} $ is defined as 
\begin{align}
P_{\e,\avg,L-1}^{\mathrm{ML}}(\cC) &\coloneqq \frac{1}{M} \sum_{i = 1}^M P_{\e,L-1}^{\mathrm{ML}}(i,\cC). \notag 
\end{align}

\subsection{Connection between list-decoding error exponents and Chebyshev radius}
\label{sec:connection-ld-ee-cheb-rad} 

In this subsection, we present a connection between list-decoding error exponents of a code used over an AWGN channel to the Chebyshev radius of the same code. We show that the Chebyshev radius of a code can be bounded by a quantity that depends on the probability of error of the code for transmission over a suitable AWGN channel.

\begin{lemma}
\label{thm:ee-lb}
For any code $ \cC = \curbrkt{\vx_i}_{i = 1}^M $, there exists a subcode $ \cC'\subset\cC $ of size $ M'\coloneqq|\cC'|\ge M/2 $ such that for all $ \cL\in\binom{\cC'}{L} $, 
\begin{align}
P_{\e,\avg,L-1}^{\mathrm{ML}}(\cL) &\le 2 P_{\e,\avg,L-1}^{\mathrm{ML}}(\cC), \notag 
\end{align}
where 
\begin{align}
P_{\e,\avg,L-1}^{\mathrm{ML}}(\cL) &\coloneqq \frac{1}{L} \sum_{\vx\in\cL} P_{\e,L-1}^{\mathrm{ML}}(\vx,\cL), \notag
\end{align}
and 
\begin{align}
P_{\e,L-1}^{\mathrm{ML}}(\vx,\cL) &\coloneqq 
\prob{\dec^{\mathrm{ML}}_{L-1,\cL}(\vx + \vbfg) \not\ni\vx}
= \prob{\forall \vx'\in\cL\setminus\curbrkt{\vx},\;\normtwo{\vx' - (\vx + \vbfg)} < \normtwo{\vbfg}}. \notag 
\end{align}
\end{lemma}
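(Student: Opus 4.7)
The plan is to run a standard Markov-type expurgation on the per-codeword error probabilities for the full code $\cC$, exploiting the monotonicity of the maximum-likelihood list-decoding error event under shrinking of the codebook.

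First, I would observe a monotonicity inequality. Fix any $L$-list $\cL \subseteq \cC$ and any $\vx \in \cL$. The event defining $P_{\e,L-1}^{\mathrm{ML}}(\vx,\cL)$ is that every one of the $L-1$ codewords in $\cL\setminus\{\vx\}$ is strictly closer to $\vx + \vbfg$ than $\vx$. Whenever this event occurs, those same $L-1$ codewords, now regarded as a subset of $\cC\setminus\{\vx\}$, witness that $\vx$ is not among the $L-1$ nearest points of $\cC$ to $\vx+\vbfg$, which is exactly the event defining $P_{\e,L-1}^{\mathrm{ML}}(\vx,\cC)$. Hence
\[
P_{\e,L-1}^{\mathrm{ML}}(\vx,\cL) \;\le\; P_{\e,L-1}^{\mathrm{ML}}(\vx,\cC).
\]

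Second, I would apply Markov's inequality to the codeword-wise error probabilities. Since
\[
\frac{1}{M}\sum_{i=1}^M P_{\e,L-1}^{\mathrm{ML}}(\vx_i,\cC) \;=\; P_{\e,\avg,L-1}^{\mathrm{ML}}(\cC),
\]
strictly fewer than $M/2$ of the codewords $\vx_i$ can satisfy $P_{\e,L-1}^{\mathrm{ML}}(\vx_i,\cC) > 2\,P_{\e,\avg,L-1}^{\mathrm{ML}}(\cC)$. I would define $\cC'$ to be the subcode obtained by deleting every such ``bad'' codeword; then $|\cC'| \ge M/2$ and, by construction, every $\vx \in \cC'$ obeys $P_{\e,L-1}^{\mathrm{ML}}(\vx,\cC) \le 2\,P_{\e,\avg,L-1}^{\mathrm{ML}}(\cC)$.

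Finally, combining the two steps, for any $L$-list $\cL \subseteq \cC'$,
\[
P_{\e,\avg,L-1}^{\mathrm{ML}}(\cL) \;=\; \frac{1}{L}\sum_{\vx \in \cL} P_{\e,L-1}^{\mathrm{ML}}(\vx,\cL) \;\le\; \frac{1}{L}\sum_{\vx \in \cL} P_{\e,L-1}^{\mathrm{ML}}(\vx,\cC) \;\le\; 2\,P_{\e,\avg,L-1}^{\mathrm{ML}}(\cC),
\]
which is exactly the claim. There is no real technical obstacle in this argument; the one point worth flagging is the direction of the monotonicity step, namely that restricting attention to fewer competing codewords can only \emph{decrease} (not increase) the conditional error probability at a fixed transmitted codeword, which is immediate once one writes the two events side by side.
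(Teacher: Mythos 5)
Your proposal is correct and follows essentially the same argument as the paper: a Markov-inequality expurgation keeping the better half of the codewords, combined with the monotonicity of the ML list-decoding error event under restriction to fewer competitors. The only cosmetic difference is that the paper routes the comparison through the intermediate quantity $P_{\e,L-1}^{\mathrm{ML}}(\vx,\cC')$ while you compare $P_{\e,L-1}^{\mathrm{ML}}(\vx,\cL)$ directly to $P_{\e,L-1}^{\mathrm{ML}}(\vx,\cC)$; both steps rest on the same containment of events.
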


\begin{proof}
Without loss of generality, assume that the codewords in $ \cC $ are listed according to ascending order of $ P_{\e,L-1}^{\mathrm{ML}}(i,\cC) $, that is, 
\begin{align}
P_{\e,L-1}^{\mathrm{ML}}(1,\cC) \le P_{\e,L-1}^{\mathrm{ML}}(2,\cC) \le \cdots \le P_{\e,L-1}^{\mathrm{ML}}(M,\cC). \notag 
\end{align} 
By Markov's inequality (\Cref{lem:markov}), each of the first (at least) $ M/2 $ codewords has probability of error at most $ 2 P_{\e,\avg,L-1}^{\mathrm{ML}}(\cC) $. 
Let $ \cC'\coloneqq \curbrkt{\vx_i}_{i = 1}^{M/2} \subset \cC $. 
Take any $ \cL\in\binom{\cC'}{L} $ and any $ \vx\in\cL $. 
\begin{align}
P_{\e,L-1}^{\mathrm{ML}}(\vx,\cL) &= 
\prob{\forall \vx'\in\cL\setminus\curbrkt{\vx},\;\normtwo{\vx' - (\vx + \vbfg)} < \normtwo{\vbfg}} \notag \\
&\le \prob{\bigcup_{\cL'\in\binom{\cC'\setminus\curbrkt{\vx}}{L-1}}\curbrkt{\forall\vx'\in\cL',\;\normtwo{\vx' - (\vx + \vbfg)} < \normtwo{\vbfg}}} \notag \\
&= P_{\e,L-1}^{\mathrm{ML}}(\vx,\cC') \notag \\
&\le 2P_{\e,\avg,L-1}^{\mathrm{ML}}(\cC). \notag
\end{align}
Therefore
\begin{align}
P_{\e,\avg,L-1}^{\mathrm{ML}}(\cL) &\le 2 P_{\e,\avg,L-1}^{\mathrm{ML}}(\cC), \notag 
\end{align}
which finishes the proof. 
\end{proof}

\begin{theorem}
\label{thm:blinovsky-identity}
Let $ \cL = \curbrkt{\vx_1,\cdots,\vx_L}\subset\bR^n $ be an arbitrary set of $L$ (where $L\ge2$) points in $ \bR^n $ satisfying $ (i) $ there exists a constant $ C>0 $ independent of $n$ such that $ \normtwo{\vx_i}\le\sqrt{nC} $ for all $ 1\le i\le L $; $ (ii) $ there exists a constant $ c>0 $ independent of $n$ such that $ \normtwo{\vx_i - \vx_j}\ge\sqrt{nc} $ for all $ 1\le i\ne j\le L $. 
Then 
\begin{align}
P_{\e,\avg,L-1}^{\mathrm{ML}}(\cL) &\ge \exp\paren{-\frac{\rad^2(\cL)}{2\sigma^2}-o(n)}. \label{eqn:bli-identity} 
\end{align}
\end{theorem}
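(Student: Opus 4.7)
The plan is to lower bound the error probability via a Laplace-type estimate on an explicit integral representation, exploiting the geometric fact that the Chebyshev center lies in $\aff(\cL)$ to decouple the integral across two orthogonal subspaces. Because the ``farthest-point'' regions $\cR_i \coloneqq \curbrkt{\vz \in \bR^n : \normtwo{\vz - \vx_i} = \max_{j \in [L]} \normtwo{\vz - \vx_j}}$ tile $\bR^n$ up to a null set, a change of variable $\vz = \vx_i + \vbfg$ followed by summation over $i$ transforms the error probability into
\[ P_{\e,\avg,L-1}^{\mathrm{ML}}(\cL) = \frac{1}{L(2\pi\sigma^2)^{n/2}} \int_{\bR^n} \exp\paren{-\frac{\max_{j \in [L]} \normtwo{\vz - \vx_j}^2}{2\sigma^2}} d\vz, \]
and the task reduces to lower bounding this integral by $(2\pi\sigma^2)^{n/2}L \cdot \exp(-\rad^2(\cL)/(2\sigma^2) - o(n))$.

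The key geometric ingredient is that the Chebyshev center $\vy^\star$ of $\cL$ lies in $\aff(\cL)$: otherwise, orthogonally projecting $\vy^\star$ onto $\aff(\cL)$ would strictly decrease $\max_j \normtwo{\vy^\star - \vx_j}$ by Pythagoras. Setting $W \coloneqq \mathrm{span}\curbrkt{\vx_j - \vx_1 : j = 2, \ldots, L}$ with $k \coloneqq \dim W \le L - 1$ and $U \coloneqq W^\perp$, it follows that $\vy^\star - \vx_j \in W$ for every $j$. Writing any perturbation as $\vu = \vu_U + \vu_W$ with $\vu_U \in U$ and $\vu_W \in W$, the Pythagorean theorem kills all cross terms and delivers
\[ \normtwo{\vy^\star + \vu - \vx_j}^2 = \normtwo{\vu_U}^2 + \normtwo{\vy^\star - \vx_j + \vu_W}^2, \]
so $\max_j \normtwo{\vy^\star + \vu - \vx_j}^2 = \normtwo{\vu_U}^2 + g(\vu_W)$ where $g(\vw) \coloneqq \max_j \normtwo{\vy^\star - \vx_j + \vw}^2$ and $g(\mathbf{0}) = \rad^2(\cL)$. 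The integrand thus factorizes, and Fubini together with the standard $(n-k)$-dimensional Gaussian normalization for the $U$-integral gives
\[ \int_{\bR^n} \exp\paren{-\frac{\max_j \normtwo{\vz - \vx_j}^2}{2\sigma^2}} d\vz = (2\pi\sigma^2)^{(n-k)/2} \int_{W} \exp\paren{-\frac{g(\vw)}{2\sigma^2}} d\vw. \]
The remaining integral is only $k$-dimensional; restricting to the unit ball around the origin in $W$ and using $g(\vw) \le (\rad(\cL) + \normtwo{\vw})^2$ together with $\rad(\cL) = O(\sqrt{n})$ (a consequence of assumption~(i) and, e.g., Jung's inequality) produces a lower bound of $V_k \cdot \exp(-\rad^2(\cL)/(2\sigma^2) - O(\sqrt{n}))$. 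Combining everything and dividing by $L(2\pi\sigma^2)^{n/2}$ yields the claimed $\exp(-\rad^2(\cL)/(2\sigma^2) - o(n))$, since the leftover prefactor $V_k / [L(2\pi\sigma^2)^{k/2}]$ is a constant independent of $n$ and $O(\sqrt n) = o(n)$.

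The main obstacle is arranging the estimate so that only an $o(n)$ exponential loss is incurred. A naive Laplace estimate on a Euclidean ball around $\vy^\star$ is doomed: matching the ambient Gaussian normalization would require a ball of radius $\Theta(\sqrt{n})$, but the cross term $2\rad(\cL)\normtwo{\vu}$ in the crude bound $\normtwo{\vy^\star + \vu - \vx_j}^2 \le (\rad(\cL) + \normtwo{\vu})^2$ then costs an unacceptable $\Theta(n)$ in the exponent. The crucial observation is that the Chebyshev geometry forces $\vy^\star \in \aff(\cL)$, which annihilates these cross terms along the subspace $U$ of codimension at most $L-1$, allowing the $U$-integration to be performed for free and reducing the analysis to a constant-dimensional integral in $W$. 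Assumption~(ii) plays no role in the above beyond ensuring $\rad(\cL) = \Omega(\sqrt{n})$ so that the stated bound is non-trivial.
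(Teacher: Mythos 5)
Your proof is correct, and it takes a genuinely different route from the paper's. The paper singles out one codeword $\vx_0$ on the boundary of the smallest enclosing ball, shows that a cone of constant angular radius with apex at the Chebyshev center is contained in the ML error region $\cV_{\cL,L-1}(\cL\setminus\curbrkt{\vx_0})$ (this is where hypothesis $(ii)$ enters, to keep the cone's opening angle bounded away from $0$), and then evaluates the Gaussian measure of that cone in polar coordinates via a Laplace estimate with degenerate derivatives. You instead use the fact that the farthest-point regions tile $\bR^n$ up to a Lebesgue-null set to turn the \emph{average} error probability into the exact identity
$P_{\e,\avg,L-1}^{\mathrm{ML}}(\cL) = \frac{1}{L}(2\pi\sigma^2)^{-n/2}\int_{\bR^n}\exp\paren{-\max_j\normtwo{\vz-\vx_j}^2/(2\sigma^2)}\diff\vz$,
and then exploit $\vy^\star\in\aff(\cL)$ to factor this integral into a free $(n-k)$-dimensional Gaussian normalization times a $k$-dimensional integral with $k\le L-1$. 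This buys two things the paper's argument does not: the loss in the exponent is an explicit $O(\sqrt n)$ rather than a generic $o(n)$, and hypothesis $(ii)$ is needed only to guarantee that the points are distinct, so that the tie sets $\curbrkt{\vz:\normtwo{\vz-\vx_i}=\normtwo{\vz-\vx_j}}$ are hyperplanes, hence null, which is exactly what lets the per-codeword error integrals reassemble into an integral over all of $\bR^n$. Every step checks out: the projection argument for $\vy^\star\in\aff(\cL)$, the Pythagorean factorization $\max_j\normtwo{\vy^\star+\vu-\vx_j}^2=\normtwo{\vu_U}^2+g(\vu_W)$, and the restriction of the $W$-integral to a unit ball, which costs only $\paren{\sqrt{\rad^2(\cL)}+1}^2-\rad^2(\cL)=O(\sqrt n)$ in the exponent. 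One cosmetic remark: Jung's inequality is overkill for $\rad(\cL)=O(\sqrt n)$, since taking $\vy=\vzero$ as a candidate center in \Cref{eqn:cheb-rad} already gives $\sqrt{\rad^2(\cL)}\le\sqrt{nC}$ directly from hypothesis $(i)$.
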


Note that the case where $ L-1=1 $ is trivial which corresponds to unique-decoding. 
Indeed, suppose $ \cL = \curbrkt{\vx_1,\vx_2} $. 
Without loss of generality, assume $ \vx_1 = \vzero\in\bR^n $ and $ \vx_2 = [a,0,\cdots,0]\in\bR^n $ for some $ a \ge \sqrt{nc} $. 
It is not hard to see that
\begin{align}
P_{\e,1}^{\mathrm{ML}}(\vx_1,\cL) &= 
\prob{\normtwo{\vx_2 - (\vx_1+\vbfg)}<\normtwo{\vbfg}} \notag \\
&= \prob{\normtwo{\vx_2 - \vbfg}^2 < \normtwo{\vbfg}^2} \notag \\
&= \prob{(a - \vbfg(1))^2<\vbfg(1)^2} \notag \\
&= \prob{\vbfg(1) > a/2} \notag \\
&= \exp\paren{- \frac{(a/2)^2}{2\sigma^2} - o(n)}. \notag 
\end{align}
The last equality is by \Cref{lem:q-fn-bd}. 
By symmetry, $ P_{\e,1}^{\mathrm{ML}}(\vx_1,\cL) = P_{\e,1}^{\mathrm{ML}}(\vx_2,\cL) $ both of which are equal to $ P_{\e,\avg,1}^{\mathrm{ML}}(\cL) $. 
Since $ \sqrt{\rad^2(\curbrkt{\vx_1,\vx_2})} = \frac{1}{2}\normtwo{\vx_1 - \vx_2} = a/2 $, we see that \Cref{thm:blinovsky-identity} holds for $L-1=1$. 

We prove the above theorem in two subsequent subsections. The special case of $ L-1=2 $ is easier to handle as it exhibits a simpler geometric structure and admits more explicit calculations. We give a proof of \Cref{thm:blinovsky-identity} for this special case in \Cref{sec:bli-identity-three}. 
In fact we will prove a stronger statement:
\begin{align}
P_{\e,\avg,2}^{\mathrm{ML}}(\{\vx_1,\vx_2,\vx_3\}) &= \exp\paren{-\frac{\rad^2(\vx_1,\vx_2,\vx_3)}{2\sigma^2} - o(n)}. \notag 
\end{align}
We then prove \Cref{thm:blinovsky-identity} in \Cref{sec:bli-identity-general} for general $ L-1\ge2 $ using the Laplace's method (\Cref{thm:lap-fine}). 

\subsection{Proof of \Cref{thm:blinovsky-identity}  when $ L-1 = 2 $}
\label{sec:bli-identity-three}

\subsubsection{Voronoi partition and higher-order Voronoi partition}
\label{sec:voronoi-partition}
We first introduce the notion of a \emph{Voronoi partition} induced by a point set and its \emph{higher-order} generalization. 

Let $ \cC\subset\bR^n $ be a discrete set of points. 
The \emph{Voronoi region} $ \cV_\cC(\vx) $ associated with $ \vx\in\cC $ is defined as the region in which any point is closer to $ \vx $ than to any other points in $\cC$, i.e., 
\begin{align}
\cV_\cC(\vx) &\coloneqq \curbrkt{\vy\in\bR^n: \forall \vx'\in\cC\setminus\curbrkt{\vx},\;\normtwo{\vy - \vx'} > \normtwo{\vy - \vx} }. \notag 
\end{align}
When the underlying point set $ \cC $ is clear from the context, we write $ \cV(\vx) $ for $ \cV_\cC(\vx) $. 
Clearly, $ \cV_\cC(\vx)\cap\cV_\cC(\vx') = \emptyset $ for $ \vx\ne\vx'\in\cC $ and $ \bigcup\limits_{\vx\in\cC}\cV_\cC(\vx)$ is different from $\bR^n $ by a set of zero Lebesgue measure. 
The collection of Voronoi regions induced by $\cC$ is called the \emph{Voronoi partition} induced by $\cC$. 
It is not hard to see that for any $\cC\subset\bR^n$ and any $ \vx\in\bR^n $, the Voronoi region $ \cV_\cC(\vx) $ contains exactly one point  from $\cC$, which is $ \vx $ itself. 

Every Voronoi region can be written as an intersection of halfspaces.
To compute $ \cV(\vx) $ for any $ \vx\in\cC $, one can draw a hyperplane bisecting and perpendicular to the segment connecting $ \vx $ and $ \vx' $ for each $ \vx'\in\cC\setminus\curbrkt{\vx} $. 
Let $ \cH_{\vx'}(\vx) $ be the halfspace induced by the hyperplane that contains $\vx$, i.e., 
\begin{align}
\cH_{\vx'}(\vx) &\coloneqq \curbrkt{\vy\in\bR^n:\inprod{\vy}{\vx - \vx'} \ge \frac{\normtwo{\vx}^2 - \normtwo{\vx'}^2}{2}}. \notag 
\end{align} 
Then $ \cV(\vx) $ is nothing but the intersection of all such halfspaces, i.e., 
\begin{align}
\cV_\cC(\vx) &= \bigcap_{\vx'\in\cC\setminus\curbrkt{\vx}} \cH_{\vx'}(\vx). \notag 
\end{align}

More generally, one can define Voronoi regions associated with \emph{subsets} of points in $ \cC $. 
Let $ L\in\bZ_{\ge1} $. 
The \emph{order-$L$ Voronoi region} $ \cV_{\cC,L}(\cL) $ associated with $ \cL\in\binom{\cC}{L} $ is defined as the region such that the set of the nearest $L$ points from $\cC$ to any point in the region is $\cL$, i.e., 
\begin{align}
\cV_{\cC,L}(\cL) &\coloneqq \curbrkt{\vy\in\bR^n:\forall \vx'\in\cC\setminus\cL,\;\normtwo{\vy - \vx'} > \max_{\vx\in\cL}\normtwo{\vy - \vx} }. \label{eqn:def-high-ord-voronoi} 
\end{align}
Again, we will ignore the subscripts if they are clear. 
If $ \cL = \curbrkt{\vx} $ is a singleton set, $ \cV_{\cC,1}(\{\vx\}) = \cV_\cC(\vx) $. 
Clearly, $ \cV_{\cC,L}(\cL)\cap\cV_{\cC,L}(\cL') = \emptyset $ for $ \cL\ne\cL'\in\binom{\cC}{L} $ and $ \bigcup\limits_{\cL\in\binom{\cC}{L}}\cV_\cC(\cL) = \bR^n $ (up to a set of measure zero). 
The collection of order-$L$ Voronoi regions induced by all $L$-subsets of $\cC$ is called the \emph{order-$L$ Voronoi partition} induced by $\cC$. 

Computing the order-$L$ Voronoi partition of a point set $\cC\subset\bR^n $ is in general not easy for $ L>1 $. 
Even when $ n = 2 $, i.e., all points in $\cC$ are on a plane, the problem is not trivial and the resulting order-$L$ Voronoi partition may exhibit significantly different behaviours from the $L=1$ case \cite[Fig.\ 2-5]{lee-1982-high-order-voronoi}. 

However, if one is given the order-$(L-1)$ Voronoi partition of $\cC$ and the (first order) Voronoi partition for all sets $ \cC\setminus\cL' $ (where $ \cL'\in\binom{\cC}{L-1} $), then the order-$L$ Voronoi partition of $ \cC $ can be computed in the following way. 
For $ \cL\in\binom{\cC}{L} $, to compute $ \cV_{\cC,L}(\cL) $, for each $ \vx\in\cL $, compute the following set $ \cV_{\cC,L-1}(\cL\setminus\curbrkt{\vx})\cap\cV_{\cC\setminus\paren{\cL\setminus\curbrkt{\vx}}}(\vx) $. 
Then $ \cV_{\cC,L}(\cL) $ is nothing but their unions, i.e., 
\begin{align}
\cV_{\cC,L}(\cL) &= \bigcup_{\vx\in\cL} \cV_{\cC,L-1}(\cL\setminus\curbrkt{\vx})\cap\cV_{\cC\setminus\paren{\cL\setminus\curbrkt{\vx}}}(\vx). \notag
\end{align}

\subsubsection{Connection to list-decoding error probability for AWGN channels}
Let us return to the task of estimating the probability of $(L-1)$-list-decoding error of an $L$-list $\cL\subset\bR^n $. Given the order-$(L-1)$ Voronoi partition of $\cL$, the error probability of any $\vx\in\cL$ can be written as 
\begin{align}
P_{\e,L-1}^{\mathrm{ML}}(\vx,\cL) = \prob{\vx + \vbfg \in \cV_{\cL,L-1}(\cL\setminus\{\vx\})}, \label{eqn:voronoi-as-dec-region} 
\end{align}
i.e., the probability that $ \vx $ is the furthest point to $ \vx+\vbfg $ among $ \cC $.

Let $ \vx_1,\vx_2,\vx_2 $ be three distinct points in $ \bR^n $. 
In the proceeding two subsections, we divide the analysis of \Cref{eqn:voronoi-as-dec-region} into two cases according to the largest angle of the triangle spanned by $ \vx_1,\vx_2,\vx_3 $.

\subsubsection{Case 1: The largest angle of the triangle spanned by $ \vx_1,\vx_2,\vx_3 $ is acute or right}
\label{sec:three-points-acute-right}
As shown in \Cref{fig:voronoi-acute}, in this case, the smallest ball containing $ \vx_1,\vx_2,\vx_3 $ coincides with the circumscribed ball. 
As explained in \Cref{sec:voronoi-partition}, the Voronoi partition induced by $ \curbrkt{\vx_1,\vx_2,\vx_3} $ can be easily computed and is depicted in the first figure of \Cref{fig:voronoi-acute}. 
The second order Voronoi partition can be computed given the (first order) Voronoi partition. 
For example, $ \cV(\vx_1,\vx_2) $ is comprised of the subregion in $ \cV(\vx_1) $ whose points are closer to $ \vx_2 $ (such a subregion can be computed by computing the Voronoi partition with $ \vx_1 $ removed) and the subregion in $ \cV(\vx_2)$ whose points are closer to $ \vx_1 $ (such a subregion can be computed by computing the Voronoi partition with $ \vx_2 $ removed). 
One observes that each of the resulting second order Voronoi regions may contain no (see $ \cV(\vx_2,\vx_3) $), one (see $ \cV(\vx_1,\vx_3) $) or two points (see $ \cV(\vx_1,\vx_2) $) from the point set. 
This is in contrast with the (first order) Voronoi regions which only contain one point from the point set. 
In general, points can also be on the boundary of the higher-order Voronoi regions. 
This happens when, e.g., $ \vx_1,\vx_2,\vx_3 $ span an equilateral triangle. 
\begin{figure}[htbp]
	\centering
	\includegraphics[width=0.95\textwidth]{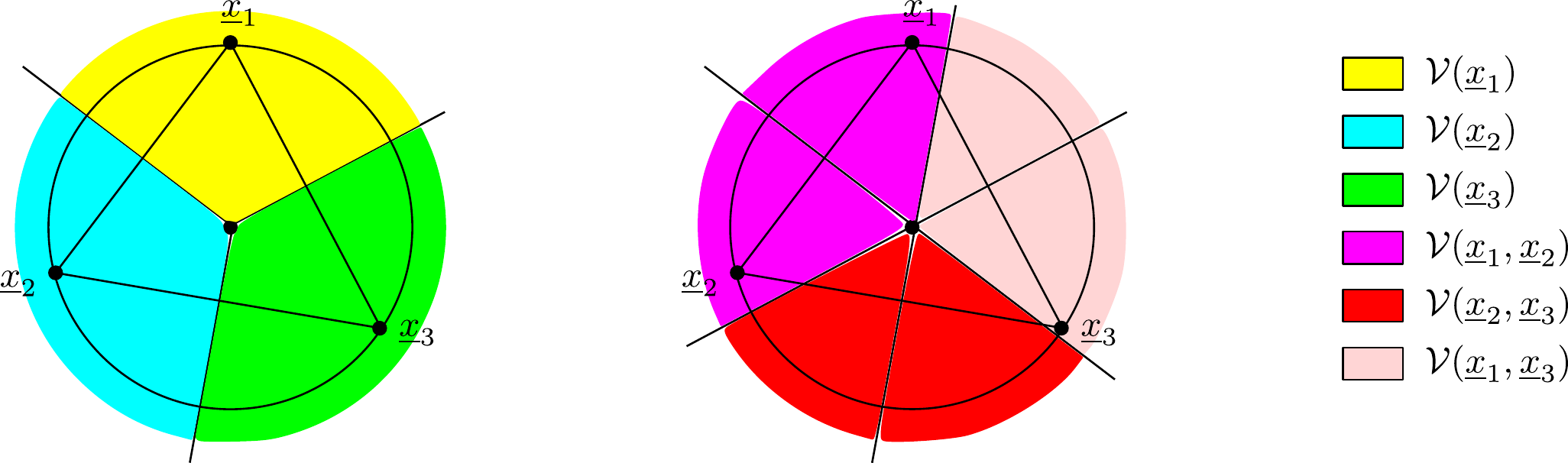}
	\caption{The Voronoi partition (left) and the second order Voronoi partition (right) of $ \{\vx_1,\vx_2,\vx_3\} $ when $ \vx_1,\vx_2,\vx_3 $ span an acute/right triangle. Note that in this case, the smallest ball containing $ \vx_1,\vx_2,\vx_3 $ coincides with the circumscribed ball. That is, all points lie on the boundary of the ball. We use the shorthand notation $ \cV(\vx_i) = \cV_{\{\vx_1,\vx_2,\vx_3\}}(\vx_i) $ and $ \cV(\vx_i,\vx_j) = \cV_{\{\vx_1,\vx_2,\vx_3\}, 2}(\{\vx_i,\vx_j\}) $. }
	\label{fig:voronoi-acute}
\end{figure}

To show \Cref{thm:blinovsky-identity} in this case, we need to estimate $ P_{\e,\avg,2}^{\mathrm{ML}}(\{\vx_1,\vx_2,\vx_3\}) $. 
Consider the plane containing $ \vx_1,\vx_2,\vx_3 $. 
As depicted in \Cref{fig:voronoi-acute-1}, let the center of the smallest ball containing $ \vx_1,\vx_2,\vx_3 $ be the origin, denoted by $ O $. 
Let the ray going from $ \vx_1 $ to $O$ be the $ x_1 $ axis and the line perpendicular to it be the $ x_2 $ axis. 
Under this parameterization, $ \normtwo{\vx_1}^2 = \normtwo{\vx_2}^2 = \normtwo{\vx_3}^2 = \rad^2(\vx_1,\vx_2,\vx_3) $ and $ \vx_1(i) = \vx_2(i) = \vx_3(i) = 0 $ for every $ 3\le i\le n $.

\begin{figure}[htbp]
	\centering
	\includegraphics[width=0.35\textwidth]{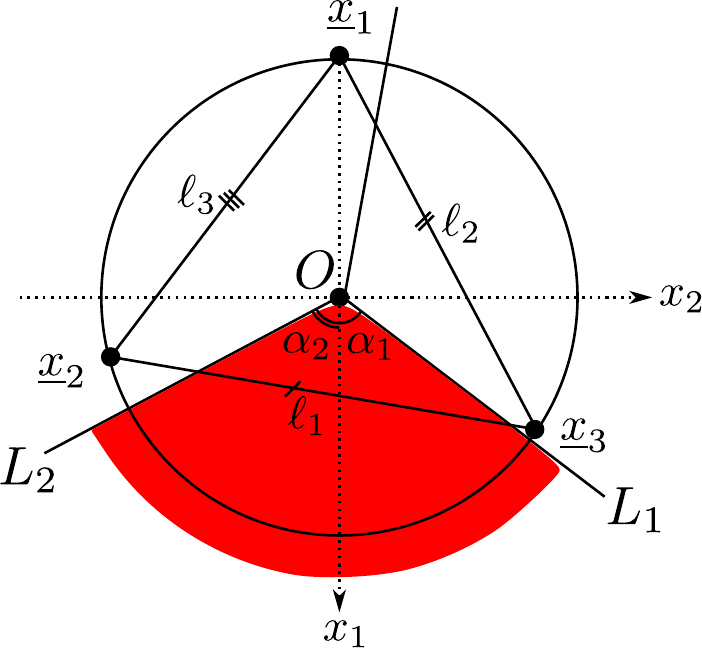}
	\caption{Suppose that the largest angle of the triangle spanned by $ \vx_1,\vx_2,\vx_3 $ is acute or right. The origin $O$ is set to be the center of the smallest ball containing $ \vx_1,\vx_2,\vx_3 $. The $ x_1 $ axis is set to be the ray going from $ \vx_1 $ to $O$ and the $ x_2 $ axis is the ray perpendicular to the $ x_1 $ axis. The circumradius coincides with the Chebyshev radius which equals $ r = \sqrt{\rad^2(\vx_1,\vx_2,\vx_3)} = \normtwo{\vx_1} = \normtwo{\vx_2} = \normtwo{\vx_3} $. The (second order) Voronoi region $ \cV(\vx_2,\vx_3) $ has two boundaries, denoted by the rays $ L_1 $ and $ L_2 $. The angle between the $ x_1 $ axis and the rays $ L_1,L_2 $ are denoted by $ \alpha_1,\alpha_2 $, respectively. The pairwise distances of $ \vx_1,\vx_2,\vx_3 $ are denoted by $ \ell_1,\ell_2,\ell_3 $. }
	\label{fig:voronoi-acute-1}
\end{figure}

Let us first estimate $ P_{\e,2}^{\mathrm{ML}}(\vx_1,\{\vx_1,\vx_2,\vx_3\}) $. 
Suppose that in the plane spanned by $ \vx_1,\vx_2,\vx_3 $, the boundaries of $ \cV(\vx_2,\vx_3) $ are given by two rays $ L_1 $ and $ L_2 $ as depicted in \Cref{fig:voronoi-acute-1}. 
It is not hard to check that if the largest angle of the triangle spanned by the three points is acute or right, then $ \cV(\vx_2,\vx_2) $ belongs to the halfspace $ \{x_2\ge0\} $ whereas $ \vx_1 $ belongs to the other halfspace $ \{x_2\le0\} $. 
Suppose $ L_1 $ and $ L_2 $ are parameterized by $ x_2 = a_1x_1 $ and $ x_2 = -a_2x_1 $ for some constants\footnote{ We explain below why the slopes $ a_1>0,a_2>0 $ must be lower bounded by some constant independent of $n$. Let $ \ell_1 \coloneqq \normtwo{\vx_2 - \vx_3},\ell_2 \coloneqq \normtwo{\vx_1 - \vx_3},\ell_3 \coloneqq\normtwo{\vx_1 - \vx_2} $. Under the assumptions in \Cref{thm:blinovsky-identity}, it is guaranteed that $ \ell_1,\ell_2,\ell_3=\Theta(\sqrt{n}) $. It is a well-known fact that the circumradius of a triangle with side lengths $ \ell_1,\ell_2,\ell_3 $ is equal to $ r=\frac{\ell_1\ell_2\ell_3}{4\sqrt{s(s - \ell_1)(s - \ell_2)(s - \ell_3)}} $ where $ s = \frac{\ell_1 + \ell_2 + \ell_3}{2} $. Under the assumptions in \Cref{thm:blinovsky-identity}, $ r = \Theta(\sqrt{n}) $. Let $ \alpha_1,\alpha_2 $ denote the angles between the $ x_1 $ axis and the rays $ L_1,L_2 $, respectively. Then $ \sin\alpha_i = \frac{a_i}{\sqrt{1+a_i^2}} $ for $ i=1,2 $. On the other hand, $ \sin\alpha_1 = \frac{\ell_3/2}{r},\sin\alpha_2 = \frac{\ell_2/2}{r} $. We therefore get the relations $ \frac{a_1}{\sqrt{1+a_1^2}} = \frac{\ell_3/2}{r},\frac{a_2}{\sqrt{1+a_2^2}} = \frac{\ell_2/2}{r} $, the RHSs of which are on the order of $ \Theta(1) $. Hence $ a_1 = \frac{\ell_3}{\sqrt{4r^2 - \ell_3^2}} = \Theta(1),a_2 = \frac{\ell_2}{\sqrt{4r^2 - \ell_2^2}} = \Theta(1) $. }
$ a_1>0,a_2>0 $ respectively. 
Let $ \cV \coloneqq \curbrkt{[x_1,x_2]\in\bR^2:x_1\ge0, -a_2x_1\le x_2\le a_1x_1} $, $ r \coloneqq \sqrt{\rad^2(\vx_1,\vx_2,\vx_3)} $ and $ a\coloneqq\max\{a_1,a_2\}>0 $. 
We are now ready to estimate $ P_{\e,2}^{\mathrm{ML}}(\vx_1,\{\vx_1,\vx_2,\vx_3\}) $. 
\begin{align}
P_{\e,2}^{\mathrm{ML}}(\vx_1,\{\vx_1,\vx_2,\vx_3\}) &= \prob{\vx_1 + \vbfg \in\cV(\vx_2,\vx_3)} \notag \\
&= \prob{[-r,0] + [\vbfg(1), \vbfg(2)]\in\cV} \notag \\
&= \prob{[\bfg_1, \bfg_2] \in [r,0] + \cV} \label{eqn:def-g1-g2} \\
&= \prob{\bfg_1\ge r, -a_2(\bfg_1 - r) \le \bfg_2 \le a_1(\bfg_1 - r)} \notag \\
&= \int_{r}^\infty \int_{-a_2(x_1 - r)}^{a_1(x_1 - r)} \frac{1}{2\pi\sigma^2}\exp\paren{-\frac{x_1^2 + x_2^2}{2\sigma^2}} \diff x_2\diff x_1 \notag \\
&\ge \int_{r}^\infty \int_{0}^{a(x_1 - r)} \frac{1}{2\pi\sigma^2}\exp\paren{-\frac{x_1^2 + x_2^2}{2\sigma^2}} \diff x_2\diff x_1 \notag \\
&= \int_{r}^\infty \frac{1}{\sqrt{2\pi\sigma^2}}\exp\paren{-\frac{x_1^2}{2\sigma^2}}\int_{0}^{a(x_1 - r)} \frac{1}{\sqrt{2\pi\sigma^2}}\exp\paren{-\frac{x_2^2}{2\sigma^2}} \diff x_2\diff x_1 \notag \\
&= \int_r^\infty \frac{1}{\sqrt{2\pi\sigma^2}}\exp\paren{-\frac{x_1^2}{2\sigma^2}}\sqrbrkt{\frac{1}{2} - \int_{a(x_1-r)}^\infty \frac{1}{\sqrt{2\pi\sigma^2}}\exp\paren{-\frac{x_2^2}{2\sigma^2}}\diff x_2} \diff x_1 \notag \\
&\approx \int_r^\infty \frac{1}{\sqrt{2\pi\sigma^2}}\exp\paren{-\frac{x_1^2}{2\sigma^2}}\sqrbrkt{\frac{1}{2} - \frac{1}{12}\exp\paren{-\frac{a^2(x_1-r)^2}{2\sigma^2}}} \diff x_1 \label{eqn:use-gauss-tail-bd} \\
&\approx \frac{1}{2}\cdot\frac{1}{12}\exp\paren{-\frac{r^2}{2\sigma^2}} - \frac{1}{12} \int_r^\infty \frac{1}{\sqrt{2\pi\sigma^2}} \exp\paren{-\frac{x_1^2 + a^2(x_1-r)^2}{2\sigma^2}}\diff x_1. \label{eqn:eqn:use-gauss-tail-bd-2}
\end{align}
In \Cref{eqn:def-g1-g2}, $ \bfg_1 $ and $ \bfg_2 $ are two independent Gaussians with mean zero and variance $ \sigma^2 $. 
In \Cref{eqn:use-gauss-tail-bd,eqn:eqn:use-gauss-tail-bd-2}, we use (twice) the bound on the $Q$-function (\Cref{lem:q-fn-bd}). 

We then proceed to estimate the integral in \Cref{eqn:eqn:use-gauss-tail-bd-2}. 
\begin{align}
& \int_r^\infty \frac{1}{\sqrt{2\pi\sigma^2}} \exp\paren{-\frac{x_1^2 + a^2(x_1-r)^2}{2\sigma^2}}\diff x_1 \notag \\
&= \int_r^\infty \frac{1}{\sqrt{2\pi\sigma^2}} \exp\paren{-\frac{1}{2\sigma^2}\paren{(1+a^2)x_1^2 - 2a^2rx_1 + a^2r^2}}\diff x_1 \notag \\
&= \int_r^\infty \frac{1}{\sqrt{2\pi\sigma^2}} \exp\paren{-\frac{1}{2\sigma^2}\sqrbrkt{\paren{\sqrt{1+a^2}x_1 - \frac{a^2r}{\sqrt{1+a^2}}}^2 + a^2r^2 - \frac{a^4r^2}{1+a^2} }}\diff x_1 \notag \\
&= \exp\paren{-\frac{1}{2\sigma^2}\sqrbrkt{a^2r^2 - \frac{a^4r^2}{1+a^2}}} \int_{\sqrt{1+a^2}r - \frac{a^2r}{\sqrt{1+a^2}}}^\infty \frac{1}{\sqrt{2\pi\sigma^2}}\exp\paren{-\frac{s^2}{2\sigma^2} }\frac{1}{\sqrt{1+a^2}}\diff s \notag \\
&\approx \frac{1}{12\sqrt{1+a^2}}\exp\paren{-\frac{1}{2\sigma^2}\sqrbrkt{a^2r^2 - \frac{a^4r^2}{1+a^2}}} \exp\paren{-\frac{1}{2\sigma^2}\sqrbrkt{\sqrt{1+a^2}r - \frac{a^2r}{\sqrt{1+a^2}}}^2} \label{eqn:eqn:use-gauss-tail-bd-3} \\
&= \frac{1}{12\sqrt{1+a^2}}\exp\paren{-\frac{r^2}{2\sigma^2}} . \notag
\end{align}
\Cref{eqn:eqn:use-gauss-tail-bd-3} follows again from \Cref{lem:q-fn-bd}.

Continuing with \Cref{eqn:eqn:use-gauss-tail-bd-2}, we have 
\begin{align}
P_{\e,2}^{\mathrm{ML}}(\vx_1,\{\vx_1,\vx_2,\vx_3\})
&\gtrsim \frac{1}{2}\cdot\frac{1}{12}\exp\paren{-\frac{r^2}{2\sigma^2}} - \frac{1}{12}\cdot\frac{1}{12\sqrt{1+a^2}}\exp\paren{-\frac{r^2}{2\sigma^2}} 
= \frac{1}{24}\paren{1 - \frac{1}{6\sqrt{1+a^2}}} \exp\paren{-\frac{r^2}{2\sigma^2}}. \notag 
\end{align}
By the geometry of the second order Voronoi partition in \Cref{fig:voronoi-acute}, the same bound also holds for $ P_{\e,2}^{\mathrm{ML}}(\vx_2,\{\vx_1,\vx_2,\vx_3\}) $ and $ P_{\e,2}^{\mathrm{ML}}(\vx_3,\{\vx_1,\vx_2,\vx_3\}) $. 
Therefore \Cref{thm:blinovsky-identity} holds in this case.

\subsubsection{Case 2: The largest angle of the triangle spanned by $ \vx_1,\vx_2,\vx_3 $ is obtuse or flat}
\label{sec:three-points-obtuse}
In this case, the largest angle of the triangle spanned by $ \vx_1,\vx_2,\vx_3 $ is obtuse or flat. 
One can similarly compute the (first order) Voronoi partition and the second order Voronoi partition induced by $ \vx_1,\vx_2,\vx_3 $, as depicted in the first and second figures of \Cref{fig:voronoi-obtuse}, respectively. 
Note that in this case the smallest ball containing all three points is \emph{different} from the circumscribed ball. 
In fact, the former one only touches \emph{two} points among three whereas the latter one by definition touches all three points and is larger than the former one. 
Note that the Chebyshev radius of the triangle is now equal to half of the length of the longest edge. 
In the example depicted in \Cref{fig:voronoi-obtuse}, $ \rad^2(\vx_1,\vx_2,\vx_3) = \paren{\frac{1}{2}\normtwo{\vx_2 - \vx_3}}^2 $. 

\begin{figure}[htbp]
	\centering
	\includegraphics[width=0.95\textwidth]{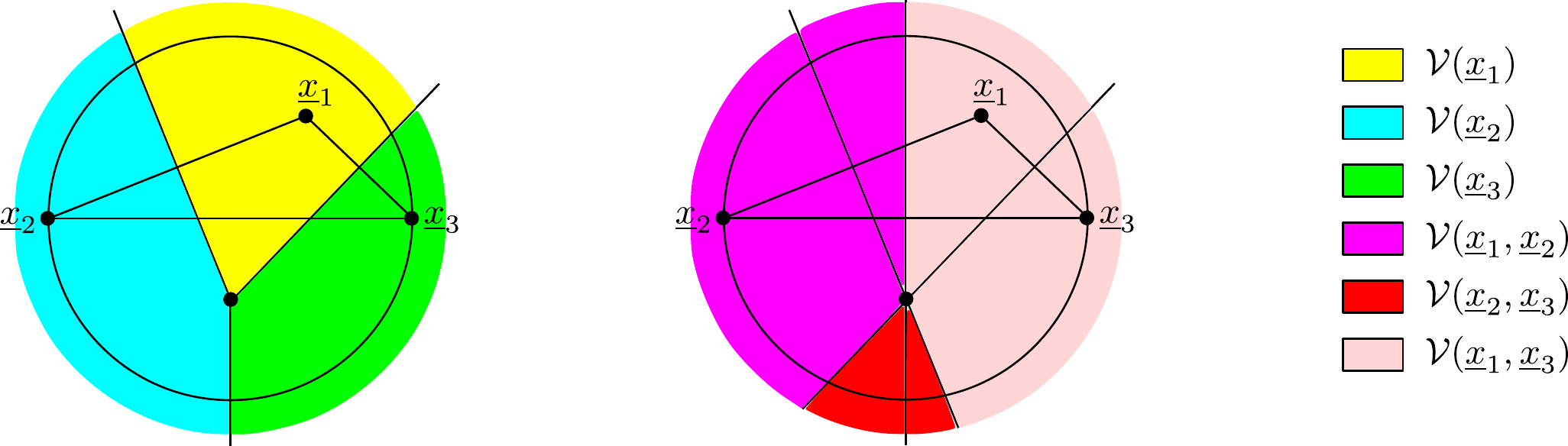}
	\caption{The Voronoi partition (left) and the second order Voronoi partition (right) of $ \{\vx_1,\vx_2,\vx_3\} $ when $ \vx_1,\vx_2,\vx_3 $ span an obtuse/flat triangle. Note that in this case, the smallest ball containing $ \vx_1,\vx_2,\vx_3 $ is strictly smaller than the circumscribed ball. In particular, the former ball only touches two points which are $ \vx_2,\vx_3 $ in the first subfigure. We use the shorthand notation $ \cV(\vx_i) = \cV_{\{\vx_1,\vx_2,\vx_3\}}(\vx_i) $ and $ \cV(\vx_i,\vx_j) = \cV_{\{\vx_1,\vx_2,\vx_3\}, 2}(\{\vx_i,\vx_j\}) $. }
	\label{fig:voronoi-obtuse}
\end{figure}

Following similar calculations as done in \Cref{sec:three-points-acute-right}, we can estimate $ P_{\e,2}^{\mathrm{ML}}(\vx_i,\{\vx_1,\vx_2,\vx_3\}) $ for each $ i = 1,2,3 $. 
Note that, as depicted in \Cref{fig:voronoi-obtuse-1}, the distance from $ \vx_2 $ to $ \cV(\vx_1,\vx_3) $ and the distance from $ \vx_3 $ to $ \cV(\vx_1,\vx_2) $ are both equal to $ \sqrt{\rad^2(\vx_1,\vx_2,\vx_3)} $, and both $ \cV(\vx_1,\vx_3) $ and $ \cV(\vx_1,\vx_2) $ contain a full quadrant. 
Therefore the same calculations as those in \Cref{sec:three-points-acute-right} yield 
\begin{align}
P_{\e,2}^{\mathrm{ML}}(\vx_2,\{\vx_1,\vx_2,\vx_3\}) &= \exp\paren{-\frac{r^2}{2\sigma^2}-o(n)}, \quad
P_{\e,2}^{\mathrm{ML}}(\vx_3,\{\vx_1,\vx_2,\vx_3\}) = \exp\paren{-\frac{r^2}{2\sigma^2}-o(n)}, \notag
\end{align}
where $ r = \sqrt{\rad^2(\vx_1,\vx_2,\vx_3)} $. 
However, the distance from $ \vx_1 $ to $ \cV(\vx_2,\vx_3) $ is strictly \emph{larger} than $ r $. 
To see this, we note that in the first subfigure of \Cref{fig:voronoi-obtuse-1}, the distance equals $ \normtwo{\vx_1} $ and $ \normtwo{\vx_1} = \normtwo{\vx_2} = \normtwo{\vx_3} $, the later two quantities of which are obviously larger than the radius of the ball. 
Hence 
\begin{align}
P_{\e,2}^{\mathrm{ML}}(\vx_1,\{\vx_1,\vx_2,\vx_3\}) &= \exp\paren{-\frac{d^2}{2\sigma^2}-o(n)}
\ll\exp\paren{-\frac{r^2}{2\sigma^2}-o(n)}, \notag
\end{align}
where $ d\coloneqq d_{\ell_2}(\vx_1,\cV(\vx_2,\vx_3)) = \normtwo{\vx_1} >\sqrt{\rad^2(\vx_1,\vx_2,\vx_3)} $. 
Overall, \Cref{thm:blinovsky-identity} still holds in this case. 

\begin{figure}[htbp]
	\centering
	\includegraphics[width=0.95\textwidth]{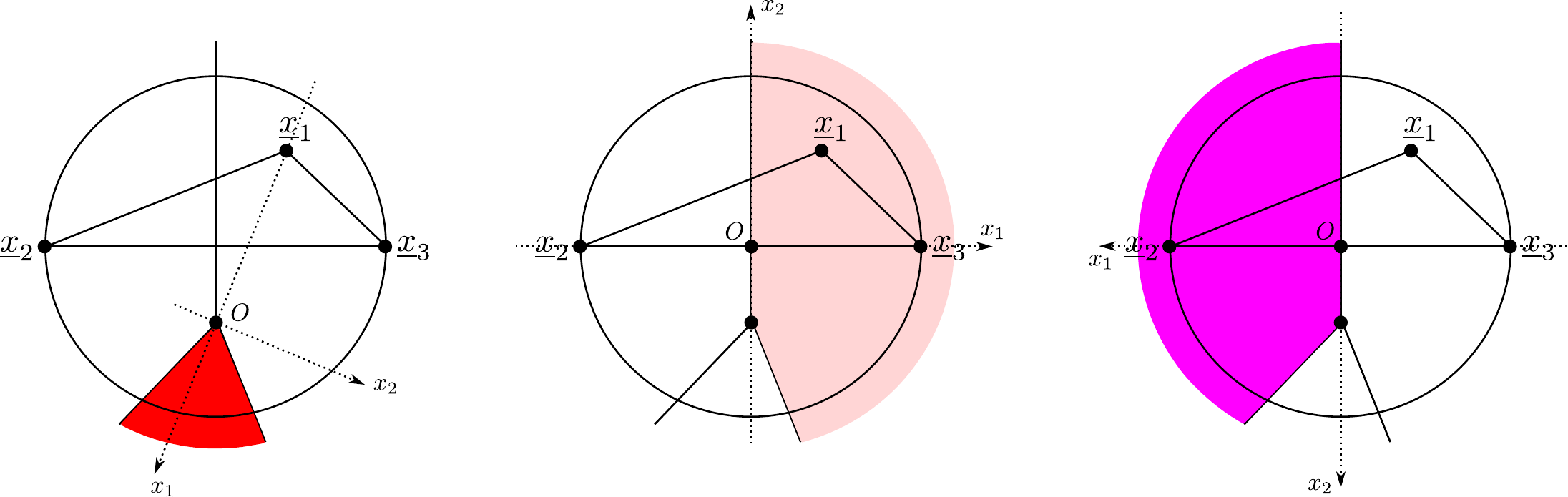}
	\caption{Suppose that $ \vx_1,\vx_2,\vx_3 $ span an obtuse/flat triangle and the length of the longest edge is given by $ \normtwo{\vx_2 - \vx_3} $. The radius of the smallest ball containing $ \vx_1,\vx_2,\vx_3 $ is equal to $ r = \frac{1}{2}\normtwo{\vx_2 - \vx_3} = \sqrt{\rad^2(\vx_1,\vx_2,\vx_3)} $. Then the distance from $ \vx_2 $ to $ \cV(\vx_1,\vx_3) $ and the distance from $ \vx_3 $ to $ \cV(\vx_1,\vx_2) $ are both equal to $ r $. However, the distance $d$ from $ \vx_1 $ to $ \cV(\vx_2,\vx_3) $ is strictly larger than $r$. }
	\label{fig:voronoi-obtuse-1}
\end{figure}

\subsection{Proof of \Cref{thm:blinovsky-identity} for general $ L-1\ge2 $}
\label{sec:bli-identity-general}
We now prove \Cref{thm:blinovsky-identity} in the general case where $ L-1\ge2 $. 
Let $ \cL\subset\bR^n $ be an arbitrary set of distinct $L$ points in $ \bR^n $. 
We assume that $\cL$ satisfies $ (i) $ a mild minimum distance condition: there exists a constant $ c>0 $ such that $ \normtwo{\vx - \vx'}\ge\sqrt{nc} $ for every distinct pair $ \vx\ne\vx' $ in $\cL$; $ (ii) $ a mild maximum norm condition: $ \cL\subset\cB^n(\sqrt{nC}) $ for some constant $ C>0 $. 
Let $ \cB_\cL $ be the smallest ball containing $\cL$. 
It is clear that there must be a point in $\cL$ that lies on the boundary of $ \cB_\cL $, otherwise $ \cB_\cL $ can be shrunk yet still contains $\cL$, which violates the minimality of $ \cB_\cL $. 
Let $ \vx_0 $ denote a point on the boundary of $ \cB_\cL $, as depicted in the first subfigure of \Cref{fig:voronoi-general}. 
\begin{figure}[htbp]
	\centering
	\includegraphics[width=0.95\textwidth]{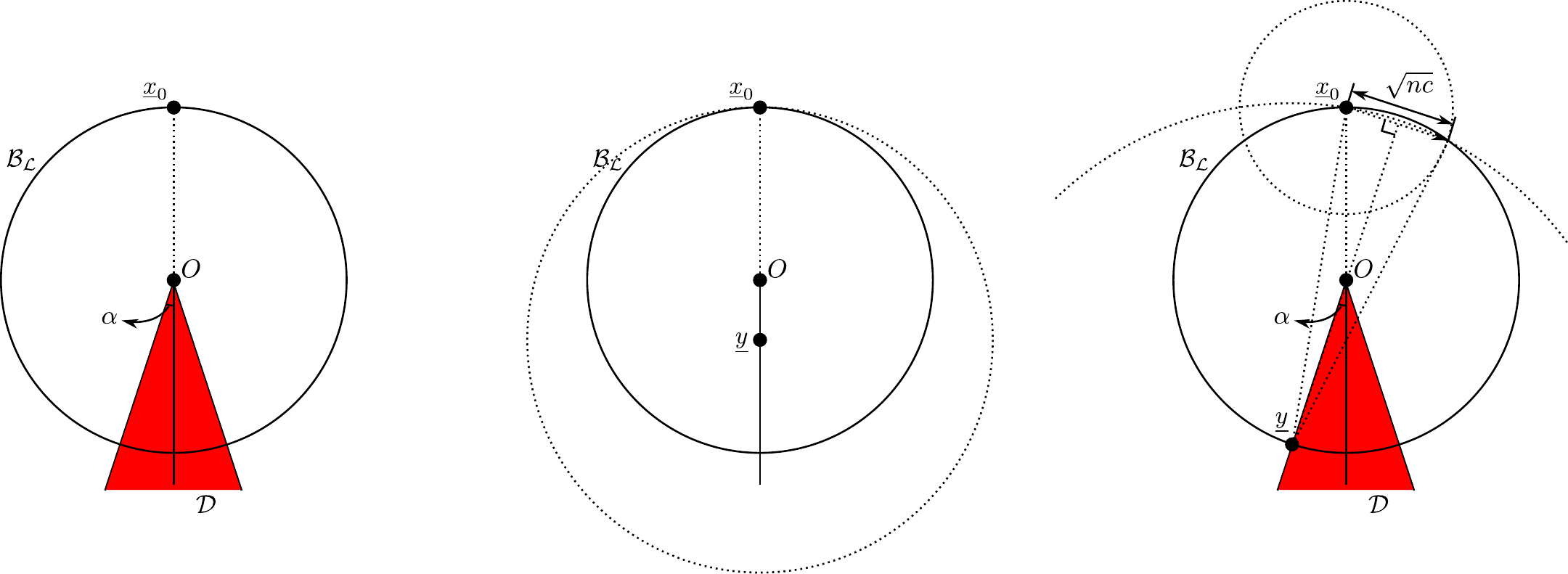}
	\caption{Suppose $ \cL\subset\bR^n $ is a set of $L$ points each of length $ \cO(\sqrt{n}) $ and the minimum pairwise distance is on the order of $ \Theta(\sqrt{n}) $. Let $ \cB_\cL $ be the smallest ball containing $ \cL $. Then there must exist a point $ \vx_0\in\cL $ on the boundary of $ \cB_\cL $. We show that the $(L-1)$-list-decoding error of $ \vx_0 $ under ML list-decoder is large. We do so by lower bounding the Gaussian measure of the ML $(L-1)$-list-decoding error region of $ \vx_0 $ by that of a cone $ \cD $ of angular radius $ \alpha $ for some constant $ \alpha>0 $. Indeed, from the geometry of the second and third subfigures, we show in \Cref{lem:cone-in-voronoi} that any received vector $\vy$ in $ \cD $ will result in a list-decoding error under ML $(L-1)$-list-decoder. }
	\label{fig:voronoi-general}
\end{figure}

Since there are only $L$ points in $ \cL $, $ \dim(\aff\curbrkt{\cL})\le L-1 $. 
By translating $\cL$ such that $ \aff(\cL) $ becomes a subspace, we can therefore parameterize $ \bR^n $ using the orthonormal basis of $ \aff\curbrkt{\cL} $ (with its extension to $ \bR^n $). 
Under this parameterization, for any $ \vx\in\cL $, we have $ \vx(i) = 0 $ for all $ L\le i\le n $. 
In the analysis we will only work with vectors in $ \bR^{L-1} $ which are obtained by restricting vectors in $ \bR^n $ to the first $L-1$ coordinates and stick with the same notation. 

As mentioned in \Cref{eqn:voronoi-as-dec-region}, for an $L$-list $\cL$, the complement of the ML $(L-1)$-list-decoding region of $ \vx_0 $ is given by the order-$(L-1)$ Voronoi region $ \cV_{\cL,L-1}(\cL\setminus\{\vx_0\}) $ of $ \cL\setminus\{\vx_0\} $. 
For $ L-1>2 $, the shape of $ \cV_{\cL,L-1}(\cL\setminus\{\vx_0\}) $ seems delicate. 
However, we manage to prove the following lemma (\Cref{lem:cone-in-voronoi}) which helps us estimate the probability that the a Gaussian noise brings $ \vx_0 $ to the ML $(L-1)$-list-decoding error region $ \cV_{\cL,L-1}(\cL\setminus\{\vx_0\}) $. 

To state the lemma, we need the following set of definitions. 
Let $ \vx_0 $ be a point in $\cL$ that lies on the boundary of $ \cB_\cL $. 
As argued above, such an $ \vx_0 $ must exist. 
Let $ O $ be the center of $ \cB_\cL $. 
We also set $O$ to be the origin of our coordinate system. 
Let $ \alpha $ be such that $ \sin\alpha = \frac{\sqrt{nc}/2}{\sqrt{\rad^2(\cL)}} $ (see the third subfigure of \Cref{fig:voronoi-general}). 
Note that under the assumptions in \Cref{thm:blinovsky-identity}, it is guaranteed that $ \alpha $ is a constant (independent of $n$).\footnote{To see this, it suffices to show $ \sqrt{\rad^2(\cL)} = \Theta(\sqrt{n}) $. Apparently, $ \sqrt{\rad^2(\cL)}\le\sqrt{nC} $ since $ \cL\subset\cB^n(\sqrt{nC}) $. Also, $ \sqrt{\rad^2(\cL)}\ge\frac{1}{2}\sqrt{nc} $ which is tight for $ L = 2 $. Therefore $ \sqrt{\rad^2(\cL)} = \Theta(\sqrt{n}) $ and $ \alpha = \sin^{-1}\frac{\sqrt{nc}/2}{\sqrt{\rad^2(\cL)}} = \Theta(1) $. } 
Let $ \cD\subset\bR^{L-1} $ be the cone of angular radius $ \alpha $ with apex at $O$ and axis along the direction of $ -\vx_0 $. 
The cone $\cD$ is depicted in \Cref{fig:voronoi-general}. 
With these parameters/objects at hands, we claim that $ \cD $ is a subset of $ \cV_{\cL,L-1}(\cL\setminus\{\vx_0\}) $ (the latter of which, by the notational convention of this section, is also a subset of $ \bR^{L-1} $ obtained by projecting the original $n$-dimensional (order-$(L-1)$) Voronoi region to its first $L-1$ coordinates).

\begin{lemma}
\label{lem:cone-in-voronoi}
Let $ C>c>0 $ be constants. 
Let $ \cL\subset\cB^n(\sqrt{nC}) $ be a set of $L$ points with minimum pairwise distance at least $ \sqrt{nc} $. 
Let $ \cB_\cL $ be the smallest ball containing $ \cL $. 
Let $ \cD\subset\bR^{L-1} $ be the $(L-1)$-dimensional cone of angular radius $ \alpha = \sin^{-1}\frac{\sqrt{nc}/2}{\sqrt{\rad^2(\cL)}} $ depicted in \Cref{fig:voronoi-general}. 
Let $ \vx_0\in\cL $ be on the boundary of $ \cB_\cL $. 
Then $ \cD\subset\cV_{\cL,L-1}(\cL\setminus\{\vx_0\}) $. 
\end{lemma}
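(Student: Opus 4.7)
The plan is to verify the containment directly from the definition (\Cref{eqn:def-high-ord-voronoi}) of the order-$(L-1)$ Voronoi region. Since $\cL \setminus (\cL \setminus \{\vx_0\}) = \{\vx_0\}$, what has to be shown is that every $\vy \in \cD$ satisfies $\normtwo{\vy - \vx_0} > \normtwo{\vy - \vx}$ for every $\vx \in \cL \setminus \{\vx_0\}$; in other words, $\vx_0$ is the strict farthest codeword from $\vy$.

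I would begin by placing the origin at the center $O$ of the smallest enclosing ball $\cB_\cL$, so that $\normtwo{\vx_0} = r := \sqrt{\rad^2(\cL)}$ and $\normtwo{\vx} \leq r$ for every $\vx \in \cL$. Expanding the squared distances, the inequality above rearranges to
\[(r^2 - \normtwo{\vx}^2) + 2\inprod{\vy}{\vx - \vx_0} > 0.\]
The first summand is already non-negative, so the whole argument reduces to establishing $\inprod{\vy}{\vx - \vx_0} \geq 0$.

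The core step is an angular estimate: the angle $\beta$ between the vector $\vx - \vx_0$ and the direction $-\vx_0$ satisfies $\beta \leq \pi/2 - \alpha$. This follows from the law of cosines together with $\normtwo{\vx_0} = r$, $\normtwo{\vx} \leq r$, and the minimum-distance hypothesis $\normtwo{\vx - \vx_0} \geq \sqrt{nc}$: one computes
\[\cos\beta = \frac{r^2 - \normtwo{\vx}^2 + \normtwo{\vx - \vx_0}^2}{2r\,\normtwo{\vx - \vx_0}} \geq \frac{\normtwo{\vx - \vx_0}}{2r} \geq \frac{\sqrt{nc}}{2r} = \sin\alpha.\]
By definition of the cone, every $\vy \in \cD$ makes angle at most $\alpha$ with $-\vx_0$; the spherical triangle inequality then bounds the angle between $\vy$ and $\vx - \vx_0$ by $\alpha + \beta \leq \pi/2$, yielding $\inprod{\vy}{\vx - \vx_0} \geq 0$, which combined with $r^2 - \normtwo{\vx}^2 \geq 0$ gives the required distance comparison. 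Strictness away from the measure-zero apex follows from strict slack in either of the two ``$\geq$'' steps.

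The main obstacle is pinpointing the angular estimate: the definition $\sin\alpha = (\sqrt{nc}/2)/r$ is tuned precisely so that the two inequalities in the $\cos\beta$ chain combine, and it is essential here that $\vx_0$ lies on $\partial\cB_\cL$ (giving the exact value $\normtwo{\vx_0} = r$) while the other codewords only satisfy the weaker $\normtwo{\vx} \leq r$. Everything else --- the spherical triangle inequality and the rearrangement of the squared-distance identity --- is essentially formal.
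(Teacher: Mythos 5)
Your proof is correct, and it reaches the containment by a genuinely different route from the paper's. The paper argues synthetically: for $\vy$ on the cone it draws the ball $\cB^{L-1}(\vy,\normtwo{\vx_0-\vy})$, observes that $\cB_\cL$ sits inside it except near the tangency point $\vx_0$, and invokes the separation guarantee $\cB^{L-1}(\vx_0,\sqrt{nc})\cap\cL=\{\vx_0\}$ to conclude no other codeword is farther from $\vy$. You instead reduce the farthest-point condition to the algebraic inequality $(r^2-\normtwo{\vx}^2)+2\inprod{\vy}{\vx-\vx_0}>0$ and control the sign of the inner product via the law-of-cosines estimate $\cos\beta\ge\normtwo{\vx-\vx_0}/(2r)\ge\sin\alpha$ combined with the spherical triangle inequality; I checked the computation of $\cos\beta$ and the chain of bounds, and they are right. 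Both arguments rest on exactly the same two facts --- $\normtwo{\vx_0}=r\ge\normtwo{\vx}$ because $\vx_0$ lies on $\partial\cB_\cL$, and the $\sqrt{nc}$ minimum distance --- but your version makes explicit why the aperture $\sin\alpha=\frac{\sqrt{nc}/2}{r}$ is precisely the right choice, something the paper delegates to its figure. One caveat applies equally to both proofs: in the degenerate configuration where some $\vx\in\cL\setminus\{\vx_0\}$ lies on $\partial\cB_\cL$ at distance exactly $\sqrt{nc}$ from $\vx_0$, equality can occur for $\vy$ on a lower-dimensional portion of $\partial\cD$, so the inclusion into the strictly-defined Voronoi region holds only up to a Lebesgue-null set; this is immaterial for the Gaussian-measure lower bound the lemma feeds into, and your closing remark about strictness is an adequate acknowledgment of it.
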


\begin{proof}
We first note that all points on the ray shooting from $O$ along the direction of $ -\vx_0 $ are in $ \cV_{\cL,L-1}(\cL\setminus\{\vx_0\}) $. 
To see this, take any point $ \vy $ on that ray and draw a ball of radius $ \normtwo{\vx_0 - \vy} $ around $ \vy $ (see the second subfigure of \Cref{fig:voronoi-general}). 
Then $ \cB^{L-1}(O,\sqrt{\rad^2(\cL)})\subset\cB^{L-1}(\vy,\normtwo{\vx_0 - \vy}) $ and they are tangent at $ \vx_0 $. 
Therefore $ \vx_0 $ is the unique furthest point to $ \vy $ in $ \cB_\cL $. 
That is, given $ \vy $ on the ray, the ML $(L-1)$-list-decoder will \emph{not} output $ \vx_0 $. 

The above argument for the ray can be extended to hold for the cone $ \cD $ given the $\sqrt{nc}$-minimum distance guarantee. 
Clearly, to show that $\cD$ is a subset of $ \cV_{\cL,L-1}(\cL\setminus\{\vx_0\}) $, it suffices to consider points on the boundary of $ \cD $. 
Now take any point $\vy\ne O$ on the boundary of $ \cD $. 
(The case $ \vy = O $ was already handled in the above paragraph.)
Again, draw the ball $ \cB^{L-1}(\vy, \normtwo{\vx_0 - \vy}) $ (see the third subfigure of \Cref{fig:voronoi-general}). 
It is not hard to see that there is no point from $ \cL $ other than $ \vx_0 $ that is in $ \cB_\cL\setminus\cB^{L-1}(\vy,\normtwo{\vx_0 - \vy}) $, since by the $\sqrt{nc}$-minimum distance guarantee, $ \cB^{L-1}(\vx_0,\sqrt{nc})\cap\cL = \{\vx_0\} $. 
Therefore, $ \vx_0 $ is the furthest point in $ \cL $ from $\vy$, and given $\vy$, the ML $(L-1)$-list-decoder will not output $ \vx_0 $. 
This finishes the proof of the lemma. 
\end{proof}

Provided \Cref{lem:cone-in-voronoi}, we are finally ready to estimate the probability of ML $(L-1)$-list-decoding error (\Cref{eqn:voronoi-as-dec-region}). 
As before, let $ r\coloneqq\sqrt{\rad^2(\cL)} $. 
We work with polar coordinates. 
Let the apex of the cone $ \cD $ be the origin $ O $. 
Parameterize $ \vx_0 $ as $ [-r,\vu_0]\in\bR^{L-1} $ for some $ \vu_0\in\cS^{L-2} $. 
\begin{align}
P_{\e,L-1}^{\mathrm{ML}}(\vx_0, \cL)
&= \prob{\vx_0 + \vbfg \in \cV_{\cL,L-1}(\cL\setminus\{\vx_0\})} \notag \\
&\ge \prob{\vx_0 + \vbfg\in\cD} \notag \\
&= \int_{-\vx_0 + \cD} \frac{1}{(2\pi\sigma^2)^{(L-1)/2}} \exp\paren{-\frac{\normtwo{\vg}^2}{2\sigma^2}} \diff\vg \notag \\
&= \int_{\cS^{L-2}} \int_0^\infty \frac{1}{(2\pi\sigma^2)^{(L-1)/2}} \exp\paren{-\frac{\normtwo{\rho\vu}^2}{2\sigma^2}} \rho^{L-2}\cdot |\cS^{L-2}|\cdot \one_{-\vx_0+\cD}(\rho\vu) \diff\rho\diff\mu(\vu) \label{eqn:use-int-polar} \\
&= \int_{\cS^{L-2}} \int_r^\infty \frac{1}{(2\pi\sigma^2)^{(L-1)/2}} \exp\paren{-\frac{\rho^2}{2\sigma^2}} \rho^{L-2} \cdot|\cS^{L-2}|\cdot\one_{\rho^{-1}(-\vx_0+\cD)}(\vu) \diff\rho\diff\mu(\vu) \label{eqn:unit-u} \\
&= \int_r^\infty\frac{1}{(2\pi\sigma^2)^{(L-1)/2}} \exp\paren{-\frac{\rho^2}{2\sigma^2}} \rho^{L-2}\cdot|\cS^{L-2}|\cdot \paren{\int_{\cS^{L-2}}\one_{\rho^{-1}(-\vx_0+\cD)}(\vu) \diff\mu(\vu)} \diff\rho \label{eqn:interchange-int} \\
&= \int_r^\infty \frac{1}{(2\pi\sigma^2)^{(L-1)/2}} \exp\paren{-\frac{\rho^2}{2\sigma^2}} \rho^{L-2}\cdot|\cS^{L-2}|\cdot \frac{|\cS^{L-2}\cap\rho^{-1}(-\vx_0+\cD)|}{|\cS^{L-2}|}\diff\rho \label{eqn:cap-area} \\
&= \int_r^\infty\frac{1}{(2\pi\sigma^2)^{(L-1)/2}} \exp\paren{-\frac{\rho^2}{2\sigma^2}}\cdot| \cS^{L-2}(\rho)\cap(-\vx_0+\cD) | \diff\rho. \label{eqn:vol-scale} 
\end{align}
In \Cref{eqn:use-int-polar}, we switch to polar coordinates using \Cref{lem:int-polar} where $ \mu(\cdot) $ denotes the uniform probability measure on $ \cS^{L-2} $. 
\Cref{eqn:unit-u} follows since $ \normtwo{\vu}^2 = 1 $ for $ \vu\in\cS^{L-2} $ and the inner integral vanishes for any $ \rho $ such that $ \rho = \normtwo{\rho\vu}\le\normtwo{-\vx_0} = r $. 
In \Cref{eqn:interchange-int}, we interchange the inner and outer integrations. 
\Cref{eqn:cap-area} follows by noting that the inner integral is nothing but the normalized surface area of the cap obtained by taking the intersection of $ \cS^{L-2} $ and the (shifted and rescaled) cone $ \rho^{-1}(-\vx_0+\cD) $. 
\Cref{eqn:vol-scale} follows from the fact that the $(L-2)$-dimensional volume scales like $ |\rho\cS^{L-2}| = \rho^{L-2}|\cS^{L-2}| $.

To proceed, we bound the volume of the cap by first computing its radius $s = s(\rho,\alpha,r)$ as a function of $\rho$ (and $ \alpha,r $ as well). 
The geometry is depicted in \Cref{fig:cap-rad}. 
\begin{figure}[htbp]
	\centering
	\includegraphics[width=0.4\textwidth]{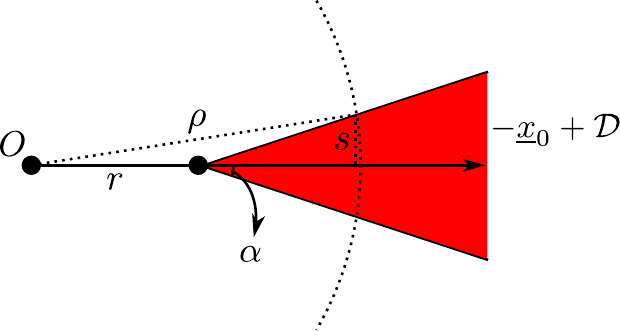}
	\caption{In the above figure, $ -\vx_0+\cD $ is a cone of angular radius $\alpha$, the apex of which is $r$ away from the origin $O$. To integrate using polar coordinates, for each radius $\rho\ge r$, we need to compute the surface measure of the cap obtained by taking the intersection of $ \cS^{L-2}(\rho) $ and $ -\vx_0+\cD $. It suffices to compute the radius $s$ of the cap. This can be done by examining the elementary geometry depicted above. }
	\label{fig:cap-rad}
\end{figure}

By Pythagorean theorem, it is not hard to see that
\begin{align}
\paren{\frac{s}{\tan\alpha} + r}^2 + s^2 &= \rho^2. \notag 
\end{align}
Solving $s$, we get
\begin{align}
s &= s(\rho,\alpha,r) = \frac{(\tan\alpha)\paren{\sqrt{(1 + \tan^2\alpha)\rho^2 - (\tan^2\alpha)r^2} - r}}{\tan^2\alpha + 1}
= (\sin\alpha)\paren{\sqrt{\rho^2 - r^2\sin^2\alpha} - r\cos\alpha}
\end{align}
Since the volume of an $(L-2)$-dimensional cap is lower bounded by that of an $(L-2)$-dimensional ball of the same radius, continuing with \Cref{eqn:vol-scale}, we have
\begin{align}
P_{\e,L-1}^{\mathrm{ML}}(\vx_0, \cL)
&\ge \int_r^\infty\frac{1}{(2\pi\sigma^2)^{(L-1)/2}} \exp\paren{-\frac{\rho^2}{2\sigma^2}}\cdot| \cB^{L-2}(s(\rho,\alpha,r)) | \diff\rho \notag \\
&= (2\pi\sigma^2)^{-(L-1)/2} V_{L-2} (\sin^{L-2}\alpha) \int_r^\infty \exp\paren{-\frac{\rho^2}{2\sigma^2}} \paren{\sqrt{\rho^2 - r^2\sin^2\alpha} - r\cos\alpha}^{L-2} \diff\rho \notag \\
&= (2\pi\sigma^2)^{-(L-1)/2} V_{L-2} (\sin^{L-2}\alpha) \int_1^\infty \exp\paren{-\frac{r^2t^2}{2\sigma^2}} \paren{\sqrt{r^2t^2 - r^2\sin^2\alpha} - r\cos\alpha}^{L-2} r\diff t \notag \\
&= (2\pi\sigma^2)^{-(L-1)/2} V_{L-2} (\sin^{L-2}\alpha) r^{L-1} \int_1^\infty \exp\paren{-\frac{r^2t^2}{2\sigma^2}} \paren{\sqrt{t^2 - \sin^2\alpha} - \cos\alpha}^{L-2} \diff t. \label{eqn:int-to-eval}
\end{align}

Define the following two functions
\begin{align}
f(t) &\coloneqq \frac{t^2}{2\sigma^2}, \quad 
g(t) \coloneqq \paren{\sqrt{t^2 - \sin^2\alpha} - \cos\alpha}^{L-2}. \notag 
\end{align}
We note that $ f'(t) = t/\sigma^2 $ and in the domain $ [1,\infty) $, $ f(t) $ attains its unique minimum $ \frac{1}{2\sigma^2} $ at $ t^* = 1 $. 
Furthermore, $ g(t^*) = g^{(1)}(t^*) = g^{(2)}(t^*) = \cdots = g^{(L-3)}(t^*) = 0 $ where $ g^{(k)}(t) $ denotes the $k$-th derivative of $g(t)$. 
However, the $(L-2)$-st derivative of $ g(t) $ does not vanish at $ t^* $ and in fact one can check that it equals
\begin{align}
g^{(L-2)}(t^*) &= \left.\frac{(L-2)!t^{L-2}}{(t^2 - \sin^2\alpha)^{(L-2)/2}}\right|_{t = t^*}
= \frac{(L-2)!}{\cos^{L-2}\alpha}. \notag 
\end{align}

Now, we apply Laplace's method (\Cref{thm:lap-fine}) to compute the integral above (\Cref{eqn:int-to-eval}). 
As $n\to\infty$, we have $ r = \Theta(\sqrt{n})\to\infty $ and therefore
\begin{align}
\int_1^\infty \exp\paren{-\frac{r^2t^2}{2\sigma^2}} \paren{\sqrt{t^2 - \sin^2\alpha} - \cos\alpha}^{L-2} \diff t
&= \int_1^\infty \exp\paren{-r^2f(t)} g(t) \diff t \notag \\
& \xrightarrow{n\to\infty} \exp\paren{-r^2f(t^*)}\cdot\frac{g^{(L-2)}(t^*)}{(r^2f^{(1)}(t^*))^{L-1}} \notag \\
&= \frac{\sigma^{2(L-1)}(L-2)!}{(\cos^{L-2}\alpha)r^{2(L-1)}}\exp\paren{-\frac{r^2}{2\sigma^2}}. \notag 
\end{align}
Putting this back to \Cref{eqn:int-to-eval}, we have, as $ n\to\infty $
\begin{align}
P_{\e,L-1}^{\mathrm{ML}}(\vx_0, \cL) 
&\gtrsim \paren{\frac{\sigma^2}{2\pi}}^{(L-1)/2}V_{L-2}(\tan^{L-2}\alpha)\frac{(L-2)!}{r^{L-1}}\exp\paren{-\frac{r^2}{2\sigma^2}}. \notag 
\end{align}
Since $ \sigma,L,\alpha $ are all constants independent of $n$, we have shown
\begin{align}
P_{\e,\avg,L-1}^{\mathrm{ML}}(\cL) 
&\ge \frac{1}{L}P_{\e,L-1}^{\mathrm{ML}}(\vx_0,\cL)
= \exp\paren{-\frac{r^2}{2\sigma^2}-o(n)}, \notag 
\end{align}
as desired.

\subsection{Putting things together}
\label{sec:lb-ee-together}

\Cref{thm:ee-lb,thm:blinovsky-identity} imply the following corollary which gives a lower bound on the error probability of a code $\cC$ in terms of the Chebyshev radius. 
\begin{corollary}
\label{cor:ee-lb}
Let $ P,\sigma>0 $ and $ L\in\bZ_{\ge2} $. 
For any code $ \cC\subset\cB^n(\sqrt{nP}) $ of size $ M $ and minimum pairwise distance at least $ \sqrt{nc} $ for some constant $ c>0 $, there exists a subcode $ \cC'\subset\cC $ of size at least $ M'\ge M/2 $ such that for all $ \cL\in\binom{\cC'}{L} $, 
\begin{align}
P_{\e,\avg,L-1}^{\mathrm{ML}}(\cC) &\ge \exp\paren{-\frac{\rad^2(\cL)}{2\sigma^2} - o(n)}. \notag 
\end{align}
\end{corollary}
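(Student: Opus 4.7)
The plan is to obtain the corollary by chaining together the two ingredients already established in this section: the Markov-type expurgation in \Cref{thm:ee-lb} and the geometric lower bound on the list-decoding error probability of a single $L$-list in \Cref{thm:blinovsky-identity}. Neither step requires new analysis.

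First, I would apply \Cref{thm:ee-lb} to the given code $\cC$ to obtain a subcode $\cC' \subset \cC$ with $|\cC'| \geq M/2$ and the property that
\begin{align}
P_{\e,\avg,L-1}^{\mathrm{ML}}(\cL) \;\le\; 2\,P_{\e,\avg,L-1}^{\mathrm{ML}}(\cC) \notag
\end{align}
for every $\cL \in \binom{\cC'}{L}$. Next, I would verify that every such $\cL$ satisfies the two hypotheses of \Cref{thm:blinovsky-identity}: the norm bound $\normtwo{\vx} \le \sqrt{nP}$ holds for each $\vx \in \cL \subset \cC \subset \cB^n(\sqrt{nP})$ (so we may take $C = P$), and the minimum distance assumption on $\cC$ is inherited by $\cC'$ and hence by $\cL$ (so the constant $c$ is the same).

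Then I would invoke \Cref{thm:blinovsky-identity} on the list $\cL$, yielding
\begin{align}
P_{\e,\avg,L-1}^{\mathrm{ML}}(\cL) \;\ge\; \exp\!\paren{-\frac{\rad^2(\cL)}{2\sigma^2} - o(n)}. \notag
\end{align}
Combining this with the inequality from \Cref{thm:ee-lb} gives
\begin{align}
2\,P_{\e,\avg,L-1}^{\mathrm{ML}}(\cC) \;\ge\; \exp\!\paren{-\frac{\rad^2(\cL)}{2\sigma^2} - o(n)}, \notag
\end{align}
and the factor of $2$ is absorbed into the $o(n)$ term in the exponent, delivering the claimed bound.

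There is no genuine obstacle here: all the technical content lives in \Cref{thm:ee-lb} (standard Markov expurgation on conditional error probabilities) and \Cref{thm:blinovsky-identity} (the Laplace-method estimate of the Gaussian mass of the order-$(L-1)$ Voronoi region via the cone $\cD$). The only thing worth double-checking is the verification that expurgation preserves both hypotheses of \Cref{thm:blinovsky-identity} uniformly across all $L$-subsets, which is immediate since expurgation is a restriction and both the norm and minimum-distance conditions are hereditary.
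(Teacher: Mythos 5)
Your proposal is correct and matches the paper's intent exactly: the paper gives no separate proof of \Cref{cor:ee-lb}, stating only that \Cref{thm:ee-lb,thm:blinovsky-identity} imply it, and your chaining of the two (with the hypothesis checks and absorption of the factor $2$ into the $o(n)$ term) is precisely that implication.
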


On the other hand, one can construct codes whose error probability is small. 
By carefully analyzing a random code (with expurgation) in \Cref{sec:awgn-exe}, we have the following upper bound on the $(L-1)$-list-decoding error probability under ML list-decoder. 
(Many other related results on list-decoding error exponents will also be proved in \Cref{sec:awgn-exe}.)

\begin{theorem}
\label{thm:ee-ub}
Let $ P,\sigma>0 $ and $ L\in\bZ_{\ge2} $. 
There exist codes $ \cC\subset\cS^{n-1}(\sqrt{nP}) $ of rate $R$ such that when used over an AWGN channel with input constraint $P$ and noise variance $ \sigma^2 $, it attains the following expurgated error exponent under ML $(L-1)$-list-decoding. 
\begin{align}
P_{\e,\avg,L-1}^{\mathrm{ML}}(\cC) &\le 
\exp\paren{-nE_{\ex,L-1}(R)+o(n)}, \notag 
\end{align}
where 
\begin{align}
E_{\ex,L-1}(R) 
&\coloneqq -\min_{s\ge0,\rho\ge1} R (L-1)\rho - \rho\sqrbrkt{sLP + \frac{1}{2}\ln(1-2sP) + \frac{L-1}{2}\ln\paren{1-2sP + \frac{P}{\sigma^2L\rho}}}. \label{eqn:ee-ub-min} 
\end{align}
\end{theorem}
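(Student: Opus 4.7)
The plan is to adapt Gallager's expurgation bound to $(L-1)$-list decoding on the AWGN channel, applied to an ensemble of $M=\lceil e^{nR}\rceil$ codewords $\vbfx_1,\ldots,\vbfx_M$ drawn iid uniformly from the power shell $\cS^{n-1}(\sqrt{nP})$. A list-decoding error at $\vbfx_1$ occurs iff at least $L-1$ other codewords are at least as likely as $\vbfx_1$. For any $s>0$, the pointwise Chernoff bound $\one_{\text{error}}\le \sum_{\{i_2,\ldots,i_L\}\subset[M]\setminus\{1\}}\prod_{j=2}^L \paren{p(\vy|\vbfx_{i_j})/p(\vy|\vbfx_1)}^s$ holds, so integrating against $p(\vy|\vbfx_1)$ yields
\[
P_{\e,L-1}^{\mathrm{ML}}(\vbfx_1,\cC) \le \sum_{\{i_2,\ldots,i_L\}\subset [M]\setminus\{1\}} G_s(\vbfx_1,\vbfx_{i_2},\ldots,\vbfx_{i_L}),
\]
where $G_s(\vx_1,\ldots,\vx_L) \coloneqq \int p(\vy|\vx_1)^{1-(L-1)s}\prod_{j=2}^L p(\vy|\vx_j)^s\,d\vy$ is the $L$-tuple Bhattacharyya-type functional.

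Next, for any $\rho\ge 1$, the subadditivity of $x\mapsto x^{1/\rho}$ on $[0,\infty)$ gives $P_{\e,L-1}^{\mathrm{ML}}(\vbfx_1,\cC)^{1/\rho}\le \sum G_s^{1/\rho}(\cdots)$. Taking the ensemble expectation and averaging over $i\in[M]$ (using iid-ness of the codewords) yields
\[
\mathbb{E}\sqrbrkt{\frac{1}{M}\sum_{i=1}^M P_{\e,L-1}^{\mathrm{ML}}(\vbfx_i,\cC)^{1/\rho}} \le \binom{M-1}{L-1}\mathbb{E}\sqrbrkt{G_s^{1/\rho}(\vbfx_1,\ldots,\vbfx_L)}.
\]
Picking a codebook realization meeting this ensemble average and then discarding the worst half of its codewords via Markov's inequality produces a deterministic subcode $\cC'\subset\cC$ with $|\cC'|\ge M/2$ (hence rate $R-o(1)$) in which every codeword satisfies $P_{\e,L-1}^{\mathrm{ML}}(\vx,\cC')\le 2^\rho \binom{M-1}{L-1}^\rho\mathbb{E}[G_s^{1/\rho}]^\rho$, since pruning the codebook can only reduce the error probability.

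Completing the square in the AWGN kernel collapses $G_s$ on the sphere ($\normtwo{\vx_j}^2=nP$) to the closed form $G_s(\vx_1,\ldots,\vx_L)=\exp\paren{-(nP-\normtwo{\vz}^2)/(2\sigma^2)}$, where $\vz\coloneqq(1-(L-1)s)\vx_1+s\sum_{j\ge 2}\vx_j$. Thus computing $\mathbb{E}[G_s^{1/\rho}]$ reduces to evaluating the exponential moment $\mathbb{E}\sqrbrkt{\exp\paren{\normtwo{\vz}^2/(2\sigma^2\rho)}}$ under the iid uniform-on-sphere ensemble. The plan is to condition on $\vbfx_2,\ldots,\vbfx_L$, integrate out the coupling $\inprod{\vbfx_1}{\sum_{j\ge 2}\vbfx_j}$ first, then integrate out each remaining codeword. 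Since on the exponential scale the uniform measure on $\cS^{n-1}(\sqrt{nP})$ is interchangeable with the iid Gaussian $\cN(\vzero,P\cdot I_n)$ measure (via standard $\ell_2$-norm concentration), the iterated integration becomes a chain of one-dimensional Gaussian MGF calculations; tracking coefficients carefully produces
\[
\tfrac{1}{n}\ln\mathbb{E}\sqrbrkt{G_s^{1/\rho}} = -\sqrbrkt{sLP+\tfrac{1}{2}\ln(1-2sP)+\tfrac{L-1}{2}\ln\paren{1-2sP+\tfrac{P}{\sigma^2 L\rho}}}+o(1).
\]
Substituting this back and taking $\max_{s\ge 0,\rho\ge 1}$ recovers exactly the claimed exponent $E_{\ex,L-1}(R)$.

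The main obstacle is this MGF computation: the pairwise inner products among codewords on the sphere are dependent and lack a clean joint distribution. The key simplification is that only the first exponential order of the MGF affects the exponent, and the uniform-on-sphere and Gaussian ensembles agree on that order. The two logarithmic terms in the final expression arise respectively from the distinguished $\vbfx_1$-integration (producing $\tfrac{1}{2}\ln(1-2sP)$) and from the $L-1$ symmetric integrations over $\vbfx_2,\ldots,\vbfx_L$ (producing $\tfrac{L-1}{2}\ln(1-2sP+P/(\sigma^2 L\rho))$); the $\rho^{-1}$ appears only in the latter because the weight $1/(2\sigma^2\rho)$ multiplies the quadratic form $\normtwo{\vz}^2$ whose $L-1$ symmetric modes are precisely the ones integrated out through the $\vbfx_j$'s for $j\ge 2$. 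Tracking these coefficients cleanly through the chain of Gaussian integrals is the crux of the proof.
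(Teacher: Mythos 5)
Your overall architecture (union bound with a fractional likelihood-ratio exponent, the $\rho\ge1$ subadditivity step, Markov plus discarding the worst half) is sound and parallels the Gallager-style route the paper takes. The gap is concentrated entirely in the step you defer: the evaluation of $\mathbb{E}\bigl[\exp\bigl(\normtwo{\vz}^2/(2\sigma^2\rho)\bigr)\bigr]$ for codewords drawn uniformly on the sphere. Your justification — that the uniform-on-sphere and i.i.d.\ $\cN(\vzero,P I_n)$ ensembles agree at first exponential order — is false for exponential moments of order-$n$ quadratic forms. Under the Gaussian ensemble $\vz\sim\cN(\vzero,\tau^2 I_n)$ with $\tau^2=((1-(L-1)s)^2+(L-1)s^2)P$, and the moment equals $(1-\tau^2/(\sigma^2\rho))^{-n/2}$, which is \emph{infinite} whenever $\tau^2\ge\sigma^2\rho$; under the sphere ensemble $\normtwo{\vz}^2$ is bounded and the moment is always finite. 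So the two ensembles do not even agree on finiteness, let alone on the exponent. This is precisely the difficulty the paper circumvents: it samples from a Gaussian \emph{truncated to a thin power shell} and upper-bounds the truncation indicator by $e^{s(\normtwo{\vx}^2-nP+\delta)}$ (Theorem~\ref{thm:ee-dmc-ip-constr}, \Cref{eqn:elem-indicator-3}), which both keeps every Gaussian integral finite and introduces the extra free parameter that must be optimized.

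A symptom that your claimed intermediate identity cannot be literally correct is a unit mismatch: your $s$ is a dimensionless exponent on a likelihood ratio (constrained, e.g., by $1-(L-1)s\ge0$), whereas the $s$ in \Cref{eqn:ee-ub-min} must carry units of inverse power for $1-2sP$ to be meaningful. In the paper's proof these are two \emph{different} parameters: the likelihood-ratio weight is fixed at the symmetric value $1/L$ (this is where the $L$ in $P/(\sigma^2 L\rho)$ originates, via the inner integral $\int(2\pi\sigma^2)^{-1/2}e^{-\sum_i(y-x_i)^2/(2\sigma^2 L)}\diff y$), and the $s$ appearing in the final formula is the separate shell-tilting parameter. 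Your proposal has only one parameter playing both roles, so even granting some version of a sphere-measure computation, you would not land on the two-parameter family $F(s,\rho)$ in \Cref{eqn:def-f-s-rho} that the downstream optimization in \Cref{sec:lb-ee-together} requires. To repair the proof, replace the uniform-on-sphere ensemble by the truncated Gaussian, fix the Chernoff weight to $1/L$, bound the shell indicator by the tilted exponential, and evaluate the resulting $L$-dimensional Gaussian integral (the paper does this via the matrix determinant lemma, \Cref{lem:sherman-morrison}).
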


\begin{remark}
The above theorem follows from the intermediate result given by \Cref{eqn:exprg-ee-to-opt} in \Cref{sec:awgn-exe}. 
We did not take the eventual explicit expression (without the minimization) in \Cref{thm:awgn-exe} since for the purpose of this section, the minimization in \Cref{eqn:ee-ub-min} can be solved in a simpler manner when combined with \Cref{cor:ee-lb}. 
\end{remark}

\Cref{cor:ee-lb} requires the minimum distance of the code to be at least $ \sqrt{nc} $ for an arbitrarily small constant $ c>0 $. 
This turns out to be a mild condition and can be met without sacrificing the rate by taking a sufficiently small $c>0$. 
Indeed, it was shown by Shannon \cite{shannon-1959-ee-gaussian} (see also Eqn.\ (45) in \cite{swannack-erez-wornell-2013-geometric-relation}) that even under unique-decoding, no rate loss is incurred if the code is expurgated so that the minimum distance is at least $ \sqrt{nc} $ for any $ 0\le c\le c(R) $ where $ c(R) \coloneqq \sqrt{2-2\sqrt{1-e^{-2R}}} $. 
Therefore, \Cref{thm:ee-ub} continues to hold even under the $\sqrt{nc}$-minimum distance condition for any $ 0\le c\le c(R) $.

Now, combining \Cref{cor:ee-lb} and \Cref{thm:ee-ub}, we get a code $ \cC\subset\cS^{n-1}(\sqrt{nP}) $ of size $M = e^{nR} $ which contains a subcode $ \cC'\subset\cC $ of size at least $ M/2 $ satisfying: for every $ \cL\in\binom{\cC'}{L} $,
\begin{align}
\exp\paren{-\frac{\rad^2(\cL)}{2\sigma^2} - o(n)} &\le P_{\e,\avg,L-1}^{\mathrm{ML}}(\cC)
\le \exp\paren{-nE_{\ex,L-1}(R) + o(n)}. \label[ineq]{eqn:ee-ub-lb} 
\end{align}

For the subcode $\cC'$ to be $(P,N,L-1)$-list-decodable, we have $ \rad^2_L(\cC') = \min\limits_{\cL\in\binom{\cC'}{L}}\rad^2(\cL) > nN $. 
Therefore, by \Cref{eqn:ee-ub-lb}, 
\begin{align}
\frac{N}{2\sigma^2} \ge E_{\ex,L-1}(R) - o(1). 
\label[ineq]{eqn:n-vs-ee}
\end{align}
We then ignore the $o(1)$ factor and optimize out the ancillary parameters $ s $ and $\rho$ to get an explicit bound on $R$ in terms of $P,N,L$. 
To this end, let 
\begin{align}
F(s,\rho) &\coloneqq R (L-1)\rho - \rho\sqrbrkt{sLP + \frac{1}{2}\ln(1-2sP) + \frac{L-1}{2}\ln\paren{1-2sP + \frac{P}{\sigma^2L\rho}}}. \label{eqn:def-f-s-rho} 
\end{align}
The critical point $s$ and $\rho$ in the minimization of $ E_{\ex,L-1}(R) $ satisfies:
\begin{align}
\frac{\partial}{\partial s}F(s,\rho) &= -P\rho\paren{L-\frac{1}{1-2Ps} - \frac{L-1}{1-2Ps + P/(\sigma^2L\rho)}} = 0, \label{eqn:crit-s} \\
\frac{\partial}{\partial \rho}F(s,\rho) &= R(L-1) - sLP - \frac{1}{2}\ln(1-2Ps)-\frac{L-1}{2}\ln\paren{1-2Ps+\frac{P}{\sigma^2L\rho}} - \frac{(L-1)P}{2(P+L(1-2Ps)\rho\sigma^2)} = 0. \label{eqn:crit-rho}
\end{align}
From \Cref{eqn:crit-s}, we have
\begin{align}
\rho&= \frac{(L-1) - 2LPs}{2L^2s(1-2Ps)\sigma^2}. \label{eqn:rho-expr} 
\end{align}
Substitute $\rho$ into \Cref{eqn:crit-rho}, we have
\begin{align}
-\ln(1-2Ps) + (L-1)\sqrbrkt{2R - \ln\frac{(L-1)(1-2Ps)}{L(1-2Ps) - 1}} &= 0. \label{eqn:eqn-r}
\end{align}
Solving $R$ from \Cref{eqn:eqn-r}, we get 
\begin{align}
R &= \frac{1}{2}\sqrbrkt{ \ln\frac{(L-1)(1-2Ps)}{L(1-2Ps) - 1} + \frac{1}{L-1}\ln(1-2Ps)}. \label{eqn:r-expr}
\end{align}
Note that for \Cref{eqn:r-expr} to be valid, we need one additional constraint on $s$, i.e., $ s<\frac{1-1/L}{2P} $ which implies $ L(1-2Ps)-1>0 $. 
Now, putting the expressions of the critical $\rho$ (\Cref{eqn:rho-expr}) and the critical $R$ (\Cref{eqn:r-expr}) into $ F(s,\rho) $ (\Cref{eqn:def-f-s-rho}), we have
\begin{align}
F(s,\rho) &= -\frac{P(L(1-2Ps) - 1)}{2L\sigma^2(1-2Ps)}. \label{eqn:crit-f-s-rho} 
\end{align}
Substitute \Cref{eqn:crit-f-s-rho} back to the relation between $N$ and $ E_{\ex,L-1}(R) $ (\Cref{eqn:n-vs-ee}), we have 
\begin{align}
N &\ge \frac{P(L(1-2Ps) - 1)}{L(1-2Ps)}. \label[ineq]{eqn:ineq-s}
\end{align}
Note that there is no $ \sigma^2 $ in the above relation as it is cancelled out. 
Since the RHS increases as $s$ decreases, to maximize the list-decoding radius $N$, we need to take the minimum $s$. 
Therefore, we take $s$ that saturates \Cref{eqn:ineq-s}: 
\begin{align}
s &= \frac{(L-1)P - LN}{2L(P-N)P}. \label{eqn:final-s}
\end{align}
Finally, putting $s$ (\Cref{eqn:final-s}) to the expression of $R$ (\Cref{eqn:r-expr}), we get the desired bound 
\begin{align}
R &= \frac{1}{2}\sqrbrkt{\ln\frac{(L-1)P}{LN} + \frac{1}{L-1}\ln\frac{P}{L(P-N)}}. \label{eqn:ld-ee-final-rate-bd} 
\end{align}

As a sanity check, the critical value $s$ given by \Cref{eqn:final-s} is indeed nonnegative since $ N $ is less than the Plotkin point $ \frac{L-1}{L}P $. 
Also, it is not hard to check that $ s<\frac{1-1/L}{2P} $. 
Putting the critical value of $s$ (\Cref{eqn:final-s}) into the expression of $\rho$ (\Cref{eqn:rho-expr}), we get
\begin{align}
\rho &= \frac{N(P-N)}{(L(P-N) - P)\sigma^2}. \notag 
\end{align}
We note that $\rho$ is nonnegative for the same reason. 
Moreover, since $ \sigma^2 $ does not show up in the final bound on $R$, one can take a sufficiently small $\sigma^2$ to make $ \rho\ge1 $. 
In particular, it suffices to take $ 0<\sigma \le \sqrt{\frac{N(P-N)}{L(P-N) - P}} $. 
Finally, to double check, we note that the expurgated exponent given by \Cref{eqn:ee-ub-min} is achievable if $ R\le R_{\x,L-1}(\snr) $ where $ R_{\x,L-1}(\snr) $ is defined by \Cref{eqn:def-rx}. 
Since $ R_{\x,L-1}(\snr) $ is increasing as $ \snr = P/\sigma^2 $ increases, that is, as $ \sigma^2 $ decreases, the condition $ R_{\x,L-1}(\snr)\ge R $ can be satisfied if we take $ \sigma^2 $ to be sufficiently small so that $ R_{\x,L-1}(\snr) $ becomes larger than \Cref{eqn:ld-ee-final-rate-bd}. 
The exact threshold is given by
\begin{align}
\snr &\ge \frac{P(L(P-N) - P)}{N(P-N)}, \notag 
\end{align}
or
\begin{align}
\sigma &\le \sqrt{\frac{N(P-N)}{L(P-N) - P}}, \label{eqn:bound-on-awgn-var}
\end{align}
which is the same as what we obtained before. 

At a first glance, it may appear that the rate in \Cref{eqn:ld-ee-final-rate-bd} is achieved by any $\sigma$ satisfying \Cref{eqn:bound-on-awgn-var} above. It turns out that this is not true. The reason why $\sigma^2$ does not appear in the final expression is because we chose $\rho$ to maximize $R$. In this process, $\sigma^2$
was conveniently canceled out. However, a numerical evaluation of $2\sigma^2E_{\ex,L-1}$ reveals that this is in fact decreasing in $\sigma^2$, and the maximum is in fact achieved by taking $\sigma^2\to 0$.

\subsection{Connections to \cite{blinovsky-1999-list-dec-real}}
\label{sec:bli-mistake-ee}
The paper \cite{blinovsky-1999-list-dec-real} originally tried to build the connection between list-decoding radius and error exponent (\Cref{eqn:bli-identity}) and used it to obtain the same bound (\Cref{eqn:lb-ee}) as ours. 
However, there were some gaps in the proof. The proof presented in the current paper uses the same high-level idea as that presented in~\cite{blinovsky-1999-list-dec-real}, but we deviate in our approach towards characterizing the order-$(L-1)$ Voronoi regions.  

To the best of our understanding, the main idea in \cite{blinovsky-1999-list-dec-real} is to lower bound a higher-order Voronoi region (which arises as the list-decoding error region) by a (first order) Voronoi region whose Gaussian measure is then estimated. 
Therefore, \cite{blinovsky-1999-list-dec-real} takes a different perspective than ours on a higher-order Voronoi region. 
Let $ \cC\subset\bR^n $ and $ \cL\in\binom{\cC}{L-1} $. 
In \cite{blinovsky-1999-list-dec-real}, it was claimed that the order-$(L-1)$ Voronoi region $ \cV_{\cC,L-1}(\cL) $ associated with $\cL$ can be written as $ \cV_{\cC,L-1}(\cL) = \bigcap\limits_{\vx\in\cL}\cU(\vx) $ for a collection of $ \cU(\vx) $ each associated with a point $ \vx $. 
It was then claimed that $ \cU(\vx) = \cV_\cC(\vx) $, the RHS of which is the (first order) Voronoi region associated with $\vx$. 
However, it is not clear why this should be the case since different $ \cV_\cC(\vx) $'s are disjoint and their intersection is always empty. 
On the other hand, $ \cV_{\cC,L-1}(\cL) $ is never empty. 
In fact, $ \cU(\vx) $ also depends on $ \cL $ and had better be denoted by $ \cU_{\cL}(\vx) $. 
To see this, note that the original definition of $ \cV_{\cC,L-1}(\cL) $ (\Cref{eqn:def-high-ord-voronoi}) can be rewritten as 
\begin{align}
\cV_{\cC,L-1}(\cL) &= \curbrkt{\vy\in\bR^n:\forall\vx'\in\cC\setminus\cL,\forall\vx\in\cL,\;\normtwo{\vy - \vx'}>\normtwo{\vy - \vx}} \notag \\
&= \bigcap_{\vx'\in\cC\setminus\cL}\bigcap_{\vx\in\cL}\curbrkt{\vy\in\bR^n:\normtwo{\vy - \vx'}>\normtwo{\vy - \vx}} \notag \\
&= \bigcap_{\vx\in\cL}\bigcap_{\vx'\in\cC\setminus\cL}\curbrkt{\vy\in\bR^n:\normtwo{\vy - \vx'}>\normtwo{\vy - \vx}}.  \notag
\end{align}
Therefore, one can take 
\begin{align}
\cU_{\cL}(\vx) &\coloneqq \bigcap_{\vx'\in\cC\setminus\cL}\curbrkt{\vy\in\bR^n:\normtwo{\vy - \vx'}>\normtwo{\vy - \vx}},  \notag
\end{align}
and it holds that $ \cV_{\cC,L-1}(\cL) = \bigcap\limits_{\vx\in\cL}\cU_{\cL}(\vx) $.

Secondly, it was also claimed that $ \cV_{\cC,L-1}(\cL)\subset\cV_\cC(\vx) $ for any $ \vx\in\cL $. 
This seems to be inconsistent with the geometry even in the case of $L=3$. 
Indeed, in \Cref{fig:voronoi-acute,fig:voronoi-obtuse}, neither $ \cV(\vx_1,\vx_2) $ nor $ \cV(\vx_1) $ is a subset of each other. 

It is then claimed that that
\begin{align}
\prob{\vx + \vbfg\notin\cV_{\cL\cup\{\vx\},L-1}(\vx)}
\ge \prob{\vx + \vbfg\notin\cV_{\cL\cup\{\vx\}}(\vx)} 
\doteq\exp\paren{-\frac{\rad^2(\cL)}{2\sigma^2}}. \label{eqn:bli-wrong-ineq}
\end{align}
However, if we consider the example in \Cref{fig:voronoi-acute}, there seems to be an issue with the above. 
From the geometry therein, if we take $ \vx = \vx_1 $ and $ \cL = \{\vx_2,\vx_3\} $, the second probability in \Cref{eqn:bli-wrong-ineq} should be larger than the RHS since the distance from $ \vx_1 $ to $ \bR^n\setminus\cV(\vx_1) $ is strictly less than $ \sqrt{\rad^2(\vx_1,\vx_2,\vx_3)} $. 
Moreover, the reason for \cite{blinovsky-1999-list-dec-real} to look at this probability is solely a result of the preceding arguments. 
We instead study \Cref{eqn:voronoi-as-dec-region} in which is really the higher-order Voronoi region.

Finally, \cite{blinovsky-1999-list-dec-real} takes $\sigma\to 0$ in order to obtain \Cref{eqn:bli-identity}. 
In our alternate approach, this step seems to be avoided since the $\sigma^2$ term conveniently cancels out. However, as pointed out earlier, it so happens that  $2\sigma^2nE_{\ex,L-1}$ is decreasing in $\sigma^2$ in the parameter regime of interest although explicit maximization is bypassed because we chose the $\rho$ to maximize $R$. 


The fundamental difference between \cite{blinovsky-1999-list-dec-real} and the results presented above is in handling the higher-order Voronoi region.  In \cite{blinovsky-1999-list-dec-real}, an attempt is made to write the higher-order Voronoi region as the intersection of several conventional Voronoi regions. 
To the best of our understanding, there is no simple relation (even inclusions) between the conventional Voronoi partition and the higher-order Voronoi partition.

\subsection{Unbounded packings} 
\label{sec:lb-ee-unbdd}
We now adapt the techniques developed above for \emph{unbounded} packings. 
The two key ingredients are: $(i)$ a lower bound on the list-decoding error probability in terms of the Chebyshev radius; $(ii)$ an upper bound on the list-decoding error probability. 
For $ (ii) $, we have bounds in \Cref{thm:exe-matern} on the list-decoding error exponent of AWGN channels \emph{without} input constraints. Unfortunately, $(i)$ which was proved for finite codebooks cannot directly be generalized to the the setting of infinite codebooks. While \Cref{thm:blinovsky-identity} is valid for arbitrary countable codebooks, \Cref{thm:ee-lb} is true only for finite codebooks. One approach is to derive list decoding error exponents for infinite constellations under maximum probability of error. 

An easier approach is to consider a \emph{finite} codebook $\cC_a$ of sufficiently large size but restricted to lie within $ [-a/2,a/2]^n $ for a sufficiently large $a$. We construct an infinite constellation by tiling the codebook
\[
  \cC = \cC_a+a(1+o(1))\bZ^n.
\]
We then lower bound the Chebyshev radius of this infinite constellation $\cC$ with the list decoding error exponent of $\cC_a$ under maximum probability of error.

\subsection*{From infinite constellations to finite codebooks and back}

Consider any infinite constellation $\cC_\infty$ of rate $R$. Recall that
\[
  R=\limsup_{a\to\infty}\frac{1}{n}\ln\frac{|\cC_\infty\cap [-a/2,a/2]^n|}{a^n}.
\]
Fix $ a = n^2 $. Then, there exists $\vx\in\bR^n$ such that $|(\cC_\infty+\vx)\cap [-a/2,a/2]^n|\geq a^ne^{nR}$. Let us define the finite codebook 
\begin{align}
	\cC_a \coloneq (\cC_\infty+\vx)\cap [-a/2,a/2]^n , \label{eqn:def-C-a}
\end{align}
and the infinite constellation 
\begin{align}
	\cC \coloneq \cC_a + a(1+n^{-1.4})\bZ^n. \label{eqn:def-Ca-ic}
\end{align}
The above infinite constellation has rate $R(1-\delta_n)$ where $\lim_{n\to\infty}\delta_n=0$. Any two distinct shifts $\cC_a+\vz_1$ and $\cC_a+\vz_2,$ where $\vz_1\ne\vz_2\in a(1+n^{-1.4})\bZ^n$, are separated by a distance of at least $n^{0.6}$. This immediately implies the following. 
\begin{lemma}\label{lemma:chebrad_cf_c}
Let $\cC_a$ and $\cC$ be as defined in \Cref{eqn:def-C-a,eqn:def-Ca-ic}, respectively. If $ \rad^2_L(\cC_a)=\Theta(\sqrt{n}) $ (as per \Cref{def:cheb-rad-avg-rad-code}), then 
\[
  \rad^2_L(\cC_a) = \inf_{\cL\in\binom{\cC}{L}}\rad^2(\cL) . 
\]
\end{lemma}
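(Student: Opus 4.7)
The plan is to prove the two inequalities separately. The easy direction is $\inf_{\cL\in\binom{\cC}{L}}\rad^2(\cL) \le \rad^2_L(\cC_a)$: since $\cC_a \subseteq \cC$ (take $\vz = \vzero$ in the lattice $a(1+n^{-1.4})\bZ^n$), any minimizing $L$-subset of $\cC_a$ is a valid $L$-subset of $\cC$, and Chebyshev radius is monotone under taking supersets of the ground set in the sense of infima over $L$-subsets. The main content is therefore the reverse inequality: every $L$-subset $\cL\in\binom{\cC}{L}$ satisfies $\rad^2(\cL) \ge \rad^2_L(\cC_a)$.

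To prove this reverse inequality, I would split into two cases according to whether the points of $\cL$ all come from a single translate or from multiple translates of $\cC_a$. In the first case, there exists $\vz\in a(1+n^{-1.4})\bZ^n$ with $\cL \subseteq \cC_a + \vz$; translating by $-\vz$ gives an $L$-subset of $\cC_a$, and since Chebyshev radius is translation-invariant we obtain $\rad^2(\cL) = \rad^2(\cL - \vz) \ge \rad^2_L(\cC_a)$. In the second case, $\cL$ contains two points $\vy_1 \in \cC_a + \vz_1$ and $\vy_2 \in \cC_a + \vz_2$ with $\vz_1 \ne \vz_2$; since both $\cC_a + \vz_1$ and $\cC_a + \vz_2$ are contained in cubes of side $a = n^2$ centered at $\vz_1$ and $\vz_2$ respectively, and $\|\vz_1 - \vz_2\|_\infty \ge a(1+n^{-1.4}) = n^2 + n^{0.6}$, we conclude $\|\vy_1 - \vy_2\| \ge n^{0.6}$. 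Since the Chebyshev radius of any $L$-list containing two points at distance $d$ is at least $d/2$, this yields $\rad^2(\cL) \ge n^{1.2}/4$.

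Combining the two cases, $\rad^2(\cL) \ge \min\{\rad^2_L(\cC_a),\, n^{1.2}/4\}$ for every $\cL\in\binom{\cC}{L}$. Under the hypothesis $\rad^2_L(\cC_a) = \Theta(\sqrt{n})$ (or more generally any $o(n^{1.2})$ scaling), the first term dominates for sufficiently large $n$, so the second case never binds and we obtain $\inf_{\cL\in\binom{\cC}{L}} \rad^2(\cL) \ge \rad^2_L(\cC_a)$, as required. I expect no serious obstacle here: the construction is specifically engineered so that the inter-translate separation $n^{0.6}$ is polynomially larger than the intra-translate Chebyshev radius scale, making the case analysis essentially automatic once one verifies the geometric separation bound from the $\ell_\infty$ spacing of the shift lattice.
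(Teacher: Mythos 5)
Your proposal is correct and follows essentially the same argument as the paper: the containment $\cC_a\subseteq\cC$ for one direction, and the case split on whether $\cL$ lies in a single translate (translation invariance) or spans two translates (inter-translate separation of $n^{0.6}$ forcing a large Chebyshev radius) for the other. Your version is in fact slightly more careful than the paper's, which writes $\rad^2(\cL)\ge n^{0.6}/2$ where the squared radius should be $n^{1.2}/4$ as you have it, and you make explicit the final comparison against the $\Theta(\sqrt{n})$ hypothesis that the paper leaves implicit.
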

\begin{proof}
Clearly, 
\[
  \rad^2_L(\cC_a) = \min_{\cL\in\binom{\cC_a}{L}}\rad^2(\cL) \geq \inf_{\cL\in\binom{\cC}{L}}\rad^2(\cL) . 
\]
Consider any $\cL\subset \cC$. If $\cL\subset \cC_a+\vz$ for some $\vz\in a(1+n^{-1.4})\bZ^n$, then $\rad^2(\cL)\geq \min\limits_{\cL\in\binom{\cC_a}{L}}\rad^2(\cL) = \rad^2_L(\cC_a) $. If not, then there are at least two points $\vx_1,\vx_2$ in $\cL$ such that $\vx_1\in\cC_a+\vz_1$ and $\vx_2\in\cC_a+\vz_2$ where $\vz_2\neq\vz_1\in a(1+n^{-1.4})\bZ^n$. But this implies that $\Vert\vx_1-\vx_2\Vert\geq n^{0.6}$ and $\rad^2(\cL)\geq n^{0.6}/2$. This completes the proof.
\end{proof}

Let $ \alpha\ge1 $ and $ R = \frac{1}{2}\ln\frac{1}{2\pi e\sigma^2\alpha^2} $. 
Or equivalently, $ \alpha = \sqrt{\frac{e^{-2R}}{2\pi e\sigma^2}} $. 
In \Cref{thm:exe-matern}, we prove lower bounds on the achievable expurgated list decoding error exponents of infinite constellations. This is obtained by choosing the codebook to be a Mat\'{e}rn point process derived from a Poisson point process. This means that the average probability of error is upper bounded by $\exp\paren{-nE_{\ex,L-1}(\alpha)+o(n)}$.

Let us take $\cC_\infty$ to be the Mat\'{e}rn point process above, $\cC_a=\cC_\infty\cap [-a/2,a/2]^n$ for $ a=n^2 $. Using standard tail bounds for PPPs,
\[
  \Pr[|\cC_a|<a^n(e^{nR}-n^3)]\leq \exp(-\Theta(n^2)),
\]
or
\[
  \Pr[|\cC_a|<a^ne^{nR(1-o(1))}]\leq \exp(-\Theta(n^2)).
\]
Therefore, with probability $1-e^{-\Theta(n^2)}$, the rate of $\cC$ is
\begin{align}
	R(\cC)\geq \frac{a^ne^{nR(1-o(1))}}{a^n(1+n^{-1.4})^n}=e^{nR(1-o(1))}. \label{eqn:rate-tiling}
\end{align}
Combining \Cref{eqn:rate-tiling} above with \Cref{lemma:chebrad_cf_c}, we get that for every $ \cL\in\binom{\cC}{L} $, 
\begin{align}
\exp\paren{-\frac{nN}{2\sigma^2}} \le \exp\paren{-\frac{\rad^2(\cL)}{2\sigma^2} - o(n)} &\le \exp\paren{-nE_{\ex,L-1}(\alpha)+o(n)}. \notag 
\end{align}
Hence, 
\begin{align}
N &\ge 2\sigma^2 E_{\ex,L-1}(\alpha) = 2\sigma^2 E_{\ex,L-1}\paren{\sqrt{\frac{e^{-2R}}{2\pi e\sigma^2}}}. \label[ineq]{eqn:unbdd-relation} 
\end{align}
It can be verified that the RHS as a function of $ \sigma $ is maximized at 
\begin{align}
\sigma &= \sqrt{\exp\paren{ -\frac{L}{L-1}\ln L - \ln(2\pi e) - 2R}}, \label{eqn:sigma-crit}
\end{align}
which corresponds to 
\begin{align}
\alpha &= \sqrt{\frac{e^{-2R}}{2\pi e\sigma^2}} = \sqrt{L^{\frac{L}{L-1}}}\in\sqrbrkt{\sqrt{L},\sqrt{2L}}. \notag
\end{align}
Substituting the critical $\sigma$ (\Cref{eqn:sigma-crit}) into \Cref{eqn:unbdd-relation}, we get the following inequality relating $N$ to $R$:
\begin{align}
N &\ge 2\exp\paren{ -\frac{L}{L-1}\ln L - \ln(2\pi e) - 2R}\cdot E_{\ex,L-1}\paren{\sqrt{L^{\frac{L}{L-1}}}} \notag \\
&= 2\exp\paren{ -\frac{L}{L-1}\ln L - \ln(2\pi e) - 2R}\cdot\frac{L-1}{2} \notag \\
&= (L-1)\exp\paren{-\sqrbrkt{\frac{\ln L}{L-1} + \ln(2\pi eL) + 2R}}. \notag 
\end{align}
Solving $R$, we get the following lower bound on the $(N,L-1)$-list-decoding capacity:
\begin{align}
R &= \frac{1}{2}\ln\frac{L-1}{2\pi eNL} - \frac{\ln L}{2(L-1)}, \notag 
\end{align}
We summarize our finding in the following theorem. 
\begin{theorem}
\label{thm:lb-ee-unbdd}
Let $ N>0 $ and $ L\in\bZ_{\ge2} $. 
The $ (N,L-1) $-list-decoding capacity $ C_{L-1}(N) $ is at least 
\begin{align}
C_{L-1}(N) &\ge \frac{1}{2}\ln\frac{L-1}{2\pi eNL} - \frac{\ln L}{2(L-1)}. \notag 
\end{align}
\end{theorem}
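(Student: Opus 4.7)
The plan is to mimic the proof strategy of \Cref{thm:lb-ee} but adapted to unbounded codebooks. The core idea is to combine two complementary bounds on the average $(L-1)$-list-decoding error probability of a codebook used over an AWGN channel with noise variance $\sigma^2$: a universal Chebyshev-radius lower bound (via \Cref{thm:blinovsky-identity,thm:ee-lb}) and an achievability upper bound from \Cref{thm:exe-matern} using a Mat\'ern-based infinite constellation. The main obstacle is that \Cref{thm:ee-lb} is inherently a finite-codebook statement: its Markov/expurgation step discards half the codewords, which is a vacuous operation on a countably infinite constellation. Thus the technical heart of the proof will be a reduction that allows us to apply a finite-codebook Chebyshev-radius inequality to bound the Chebyshev radius of an infinite constellation.

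The reduction I propose is a tiling argument. Let $\alpha \ge 1$ and $R = \frac{1}{2}\ln\frac{1}{2\pi e \sigma^2 \alpha^2}$. Starting from a Mat\'ern point process $\cC_\infty$ furnished by \Cref{thm:exe-matern} with expurgated exponent $E_{\ex,L-1}(\alpha)$, I would choose a side length $a$ growing polynomially in $n$ (e.g. $a = n^2$), shift $\cC_\infty$ generically so that $\cC_a \coloneqq (\cC_\infty + \vx)\cap[-a/2,a/2]^n$ has cardinality at least $a^n e^{nR(1-o(1))}$, which holds with probability $1 - e^{-\Theta(n^2)}$ by standard Poisson concentration. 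Then tile $\cC_a$ with spacing $a(1+n^{-1.4})$ to form $\cC \coloneqq \cC_a + a(1+n^{-1.4})\bZ^n$. The rate of $\cC$ is $R(1 - o(1))$, and the tiling spacing is calibrated so that distinct copies of $\cC_a$ in $\cC$ are separated by distance $\Omega(n^{0.6}) = \omega(\sqrt{n})$. This forces every $L$-list of $\cC$ whose Chebyshev radius is $O(\sqrt{n})$ to lie inside a single shifted copy, so $\rad^2_L(\cC) = \rad^2_L(\cC_a)$ (this is \Cref{lemma:chebrad_cf_c}).

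With the reduction in hand, I would run the finite-codebook machinery on $\cC_a$ exactly as in the bounded case. \Cref{thm:ee-lb} produces a subcode of $\cC_a$ (of essentially the same rate) whose every $L$-list has error probability at most $2\,P_{\e,\avg,L-1}^{\mathrm{ML}}(\cC_a) \le \exp(-nE_{\ex,L-1}(\alpha) + o(n))$, while \Cref{thm:blinovsky-identity} lower-bounds each such list's error probability by $\exp(-\rad^2(\cL)/(2\sigma^2) - o(n))$. Combining and transferring through the tiling, every $L$-list in $\cC$ satisfies
\begin{align}
N \;\ge\; 2\sigma^2\, E_{\ex,L-1}(\alpha) \;=\; 2\sigma^2\, E_{\ex,L-1}\!\paren{\sqrt{\tfrac{e^{-2R}}{2\pi e\sigma^2}}}. \notag
\end{align}

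Finally, I would optimize over the free parameter $\sigma^2 > 0$ (equivalently over $\alpha$) to extract the strongest rate bound. The expurgated exponent in \Cref{thm:exe-matern} has three regimes; the maximizer is expected to fall in the straight-line regime $\alpha \in [\sqrt{L},\sqrt{2L}]$, where $E_{\ex,L-1}(\alpha) = \frac{L-1}{2} - \frac{L}{2}\ln L + (L-1)\ln\alpha$ is clean. Differentiating in $\sigma^2$ and setting to zero should yield the critical choice $\alpha = \sqrt{L^{L/(L-1)}}$, which indeed lies in that window, and substitution should give $N \ge (L-1)\exp\bigl(-[\tfrac{\ln L}{L-1} + \ln(2\pi e L) + 2R]\bigr)$. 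Solving for $R$ produces the claimed lower bound $\frac{1}{2}\ln\frac{L-1}{2\pi eNL} - \frac{\ln L}{2(L-1)}$. Beyond the tiling reduction, the remaining steps are a direct port of the bounded-case argument and a routine single-variable optimization.
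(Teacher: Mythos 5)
Your proposal is correct and follows essentially the same route as the paper: the same tiling reduction (restricting the Mat\'ern constellation to $[-a/2,a/2]^n$ with $a=n^2$, tiling with spacing $a(1+n^{-1.4})$, and the Chebyshev-radius transfer lemma), the same combination of the Chebyshev-radius lower bound with the expurgated exponent from \Cref{thm:exe-matern}, and the same optimization over $\sigma^2$ landing at $\alpha=\sqrt{L^{L/(L-1)}}$ in the straight-line regime. No gaps.
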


\subsection{Remark on the $\sigma^2$ that maximizes the Chebyshev radius}
\label{rk:ee-regime}
To prove \Cref{thm:lb-ee,thm:lb-ee-unbdd} for the bounded and unbounded cases, respectively, we combine \Cref{thm:blinovsky-identity} with bounds on error exponents. 
This combination then gives rise to an inequality relating $N$ to $R$. 
See \Cref{eqn:n-vs-ee,eqn:unbdd-relation} for the bounded and unbounded cases.
In \Cref{eqn:n-vs-ee}, the variance $ \sigma^2 $ of the Gaussian noise happens to cancel on both sides. 
To maximize $R$, one then needs to take the largest possible error exponent which occurs in the expurgated regime $ R\le R_{\x,L-1} $ (the latter quantity is defined in \Cref{eqn:def-rx}). 
However, in \Cref{eqn:unbdd-relation}, the Gaussian variance $ \sigma^2 $ does not cancel and one should optimize it out. 
It turns out that the optimal $ \sigma^2 $ does \emph{not} lie in the expurgated regime. 
Instead, one should use the error exponent in the ``straight line'' regime $ \sqrt{L}<\alpha\le\sqrt{2L} $ (under the parameterization of \Cref{thm:exe-matern}). 
Unfortunately we do not have intuition of this phenomenon.

\section{List-decoding error exponents}
\label{sec:ld-ee}
\subsection{DMCs with input constraints}
Consider a \emph{discrete memoryless channel (DMC)} $ W_{\bfy|\bfx}\in\Delta(\cY|\cX) $ with discrete input alphabet $ \cX $ and discrete output alphabet $ \cY $. 
The probability of the reciever seeing $\vy\in\cY^n$ at the output of the channel  when $\vx\in\cX^n$ is sent by the transmitter is equal to 
\begin{align}
\prob{\vbfy = \vy\condon\vbfx = \vx} &= \prod_{i = 1}^n W_{\bfy|\bfx}(\vy(i)|\vx(i)), \notag 
\end{align} 
for every $\vx\in\cX^n$ and $ \vy\in\cY^n $. We also impose input constraints at the transmitter.
This is specified by a set $\cP \subset\Delta(\cX) $. 
The constraints require that the empirical distribution of any codeword $ \vx\in\cX^n $ sent by the transmitter to lie within $ \cP $. 
Specifically, for $ \vx\in\cX^n $, let $ \tau_\vx\in\Delta(\cX) $ denote its \emph{empirical distribution} (a.k.a.\ \emph{histogram} or \emph{type}) defined as
\begin{align}
\tau_\vx(x) &\coloneqq \frac{1}{n}\sum_{i = 1}^n \indicator{\vx(i) = x}, \notag
\end{align}
for any $ x\in\cX $. 
Clearly, $(\tau_\vx(x) :x\in\cX)$ is a valid probability mass function on $ \cX $. 
An input sequence $ \vx\in\cX^n $ is said to satisfy the constraints $ \cP $ if $ \tau_\vx\in\cP $. 

Recall that the capacity of a DMC $ W_{\bfy|\bfx} $ with input constraints $ \cP\in\Delta(\cX) $ under unique decoding is~\cite{shannon1948mathematical}
\begin{align}
C(W_{\bfy|\bfx},\cP) &= \max_{P_\bfx\in\cP} I(\bfx;\bfy), \notag 
\end{align}
where the mutual information  
\begin{align}
I(\bfx;\bfy) \coloneqq \sum_{(x,y)\in\cX\times\cY} P_{\bfx,\bfy}(x,y)\log\frac{P_{\bfx,\bfy}(x,y)}{P_\bfx(x) P_\bfy(y)} \notag 
\end{align}
is evaluated w.r.t.\ the joint distribution $ P_{\bfx,\bfy} \coloneqq P_\bfx W_{\bfy|\bfx} $ whose marginals are denoted by $ P_\bfx $ and $ P_\bfy $. 

\subsection*{List-decoding for DMCs}
Let $ \cC = \curbrkt{\vx_i}_{i = 1}^M\subset\cX^n $ be a code satisfying the input constraints $\cP$ and equipped with an $(L-1)$-list-decoder $ \dec_{L-1,\cC}\colon\cY^n\to\binom{\cC}{L-1} $. 
The rate $ R(\cC) $ of $\cC$ is defined as $ R(\cC)\coloneqq\frac{1}{n}\log|\cC| $. 
 
We are interested in deriving upper bounds on the average probability of error when the code $\cC$ and decoder $ \dec_{L-1,\cC} $ are used for the DMC $W_{\bfy|\bfx}$. The average probability of error is defined as
\begin{align}
P_{\e,\avg,L-1}(\cC) &\coloneqq \frac{1}{M}\sum_{i = 1}^M P_{\e,L-1}(i,\cC) , \notag 
\end{align} 
where
\begin{align}
P_{\e,L-1}(i,\cC) &\coloneqq \probover{\vbfy\sim \prod_{j = 1}^n W_{\bfy|\bfx = \vx_i(j)}}{\dec_{L-1,\cC}(\vbfy)\not\ni\vx_i}. \notag 
\end{align}
Besides being of independent interest, results for this problem are used in \Cref{sec:lb-ee} to obtain bounds on the list-decoding capacity against \emph{worst-case} errors. 

We use fairly standard techniques by following Gallager's approach \cite{gallager-1965-simple-deriv}, \cite[Theorem 7.4.4]{gallager}. 
For a DMC $ W_{\bfy|\bfx} $ with input constraints $ \cP\subset\Delta(\cX) $, 
we construct a random code $\cC$ of rate $ R<C(W_{\bfy|\bfx},\cP) $ and analyze its average probability of error $ P_{\e,\avg,L-1}^{\mathrm{ML}}(\cC) $ under the ML $(L-1)$-list-decoder. 
Results obtained using this approach can be generalized to memoryless channels with \emph{continuous} alphabets, e.g., additive white Gaussian noise (AWGN) channels with power constraints.

\subsection{Random coding exponent}
\label{sec:rce}

\begin{theorem}
\label{thm:rce-dmc}
Let $ L\in\bZ_{\ge2} $. 
For any DMC $ W_{\bfy|\bfx}\in\Delta(\cY|\cX) $, there exists a sequence of codes $ \cC_n\subset\cX^n $ of increasing blocklengths, each of rate at least $R$ and satisfying
\begin{align}
\liminf_{n\to\infty}-\frac{1}{n}\log P_{\e,\avg,L-1}(\cC_n)
&\ge E_{\mathrm{r},L-1}(R), \notag 
\end{align}
where 
\begin{align}
E_{\mathrm{r}, L-1}(R) \coloneqq& \max_{P_\bfx\in\Delta(\cX)}\max_{0\le\rho\le1}\curbrkt{-(L-1)\rho R + E_{0}((L-1)\rho,P_\bfx)}, \notag
\end{align}
and
\begin{align}
E_{0}(\rho,P_\bfx) \coloneqq& -\log\sqrbrkt{\sum_{y\in\cY} \paren{\sum_{x\in\cX} P_{\bfx}(x) W_{\bfy|\bfx}(y|x)^{\frac{1}{1+\rho}}}^{1+\rho}}. \notag 
\end{align}
\end{theorem}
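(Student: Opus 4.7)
The plan is to adapt Gallager's classical random coding argument to list decoding. Fix an input distribution $P_\bfx\in\Delta(\cX)$ and sample $M\coloneqq\lceil e^{nR}\rceil$ codewords $\vbfx_1,\ldots,\vbfx_M$ i.i.d.\ from $P_\bfx^n$, equipped with the ML $(L-1)$-list decoder. By the symmetry of the construction, the codebook-averaged expected error equals $\mathbb{E}[P_{\e,L-1}(1,\cC_n)]$; bounding this by $\exp(-nE_{\mathrm{r},L-1}(R)+o(n))$ and invoking the probabilistic method extracts the desired deterministic code.

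Conditional on $\vbfx_1$ and the channel output $\vbfy$, let $N\coloneqq|\{j\ne 1:W^n(\vbfy|\vbfx_j)\ge W^n(\vbfy|\vbfx_1)\}|$ count the confusable codewords; the error event is $\{N\ge L-1\}$. The pivotal inequality, which enables the argument of $E_0$ to range over the full interval $[0,L-1]$ instead of only $[0,1]$, is that for every $\rho\ge 0$,
\begin{align*}
\indicator{N\ge L-1}\le\binom{N}{L-1}^\rho,
\end{align*}
which holds since both sides vanish when $N<L-1$ and both are at least $1$ otherwise. The quantity $\binom{N}{L-1}$ expands as a sum over $(L-1)$-subsets of $[M]\setminus\{1\}$ of products of pairwise indicators, so by codeword independence its conditional expectation factorizes cleanly as $\binom{M-1}{L-1}\,p^{L-1}$, where $p\coloneqq\prob{W^n(\vbfy|\vbfx)\ge W^n(\vbfy|\vbfx_1)\mid\vbfx_1,\vbfy}$ and $\vbfx\sim P_\bfx^n$.

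The remaining steps are largely mechanical. Jensen's inequality (concavity of $x\mapsto x^\rho$ for $\rho\in[0,1]$) yields $\mathbb{E}[\binom{N}{L-1}^\rho\mid\vbfx_1,\vbfy]\le\binom{M-1}{L-1}^\rho p^{(L-1)\rho}$; the Chernoff bound $\indicator{W^n(\vbfy|\vbfx)\ge W^n(\vbfy|\vbfx_1)}\le(W^n(\vbfy|\vbfx)/W^n(\vbfy|\vbfx_1))^s$ with $s\ge 0$ controls $p$; averaging over $\vbfx_1\sim P_\bfx^n$ and $\vbfy\sim W^n(\cdot|\vbfx_1)$ together with Gallager's calibrated choice $s=1/(1+(L-1)\rho)$ cancels the $W^n(\vbfy|\vbfx_1)$ denominator to yield the symmetric expression $\sum_\vy\bigl(\sum_\vx P_\bfx^n(\vx)W^n(\vy|\vx)^{1/(1+(L-1)\rho)}\bigr)^{1+(L-1)\rho}$; and the memoryless product structure single-letterizes this to $\exp(-nE_0((L-1)\rho,P_\bfx))$. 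Combined with $\binom{M-1}{L-1}^\rho\le\exp(n(L-1)\rho R+o(n))$ and optimization over $\rho\in[0,1]$ and $P_\bfx$, this yields $E_{\mathrm{r},L-1}(R)$.

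The main conceptual obstacle is the choice of quantity to Jensen-ize. A direct application of Jensen to $N^{(L-1)\rho}$---using the naive bound $\indicator{N\ge L-1}\le(N/(L-1))^{(L-1)\rho}$---requires $(L-1)\rho\le 1$ and thus restricts $\rho$ to $[0,1/(L-1)]$, which already fails to recover Gallager's full unique-decoding exponent when $L-1=1$ is generalized and more generally produces a strictly weaker bound. Replacing $N^{(L-1)\rho}$ by $\binom{N}{L-1}^\rho$ sidesteps this: the degree-$(L-1)$ polynomial $\binom{N}{L-1}$ is exactly what expectation factorizes over under i.i.d.\ codeword draws, after which Jensen is invoked only at the outer exponent $\rho\in[0,1]$. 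For the input-constrained variant one restricts the outer maximization to $P_\bfx\in\cP$ and either expurgates codewords violating the constraint (negligible rate loss by the law of large numbers) or draws uniformly from a type class arbitrarily close to $P_\bfx$; the argument is otherwise unchanged.
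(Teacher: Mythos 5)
Your proposal is correct and follows essentially the same route as the paper: the quantity $\binom{N}{L-1}=\sum_{\cL\in\binom{[M]\setminus\{1\}}{L-1}}\prod_{i\in\cL}\indicator{W^{\otimes n}(\vbfy|\vbfx_i)\ge W^{\otimes n}(\vbfy|\vbfx_1)}$ is exactly the intermediate sum-over-lists that the paper bounds the error indicator by before applying the per-codeword Chernoff bound, Jensen at the outer exponent $\rho\in[0,1]$, the calibration $s=1/(1+(L-1)\rho)$, and single-letterization. Your observation that one must Jensen-ize the degree-$(L-1)$ polynomial rather than $N^{(L-1)\rho}$ is precisely the mechanism the paper uses implicitly, so the two arguments coincide.
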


\begin{proof}
We use the well-known approach of~\cite{gallager}. Although these results are fairly standard, we give the proof for completeness. This will also help in generalizing the results to the input-constrained case, as well as for channels with continuous alphabet.

Let $ \cC\in\cX^{2^{nR}\times n} $ where every component of each codeword are drawn i.i.d.\ according to some $ P_{\bfx}\in\Delta(\cX) $. 
Let $ M\coloneqq 2^{nR} $. 
We use the ML $(L-1)$-list-decoder $ \dec^{\mathrm{ML}}_{L-1,\cC}\colon\cY^n\to\binom{[M]}{L-1} $. 
That is, receiving $ \vy\in\cY^n $, the decoder outputs a list $ \curbrkt{m_1,\cdots,m_{L-1}}\in\binom{[M]}{L} $ such that for any other $ m'\in[M]\setminus\curbrkt{m_1,\cdots,m_{L-1}} $,
\begin{align}
W_{\bfy|\bfx}^{\tn}(\vy|\vx_{m'})\le\min\curbrkt{W_{\bfy|\bfx}^{\tn}(\vy|\vx_{m_1}),\cdots,W_{\bfy|\bfx}^{\tn}(\vy|\vx_{m_{L-1}})}. \notag 
\end{align}
An error occurs if $ \vx_m $ was transmitted but $ m\notin\curbrkt{m_1,\cdots,m_{L_1}} $. 
For every $\rho>0$ and $s>0$, the error indicator function can be bounded as follows
\begin{align}
\indicator{\dec^{\mathrm{ML}}_{L-1,\cC}(\vbfy)\not\ni m}
&= \paren{\indicator{\exists \cL\in\binom{[M]\setminus\curbrkt{m}}{L-1},\forall i\in\cL, W_{\bfy|\bfx}^{\tn}(\vy|\vx_{i}) > W_{\bfy|\bfx}^{\tn}(\vy|\vx_m)}}^\rho \notag \\
&\leq \paren{\sum_{\cL\in\binom{[M]\setminus\curbrkt{m}}{L-1}} \prod_{i\in\cL} \indicator{ \paren{\frac{W_{\bfy|\bfx}^{\tn}(\vy|\vx_{i})}{W_{\bfy|\bfx}^{\tn}(\vy|\vx_m)}}^s > 1^s}}^\rho \notag \\
&\le \paren{\sum_{\cL\in\binom{[M]\setminus\curbrkt{m}}{L-1}} \prod_{i\in\cL} \paren{\frac{W_{\bfy|\bfx}^{\tn}(\vy|\vx_{i})}{W_{\bfy|\bfx}^{\tn}(\vy|\vx_m)}}^s}^\rho. \label[ineq]{eqn:indicator-bd}
\end{align}
\Cref{eqn:indicator-bd} follows from $
\indicator{a>1}<a, \label[ineq]{eqn:elem-ineq-indicator} $
for any $ a>0 $.

For any message $m$, the probability that $m$ is incorrectly list-decoded is 
\begin{align}
P^{\mathrm{ML}}_{\e,L-1}(m,\cC) &= \sum_{\vy\in\cY^n} W_{\bfy|\bfx}^{\tn}(\vy|\vx_m) \indicator{\dec^{\mathrm{ML}}_{L-1,\cC}(\vy)\notni m} \notag \\
&\le \sum_{\vy\in\cY^n} W_{\bfy|\bfx}^{\tn}(\vy|\vx_m) \paren{\sum_{\cL\in\binom{[M]\setminus\curbrkt{m}}{L-1}} \prod_{i\in\cL} \paren{\frac{W_{\bfy|\bfx}^{\tn}(\vy|\vx_{i})}{W_{\bfy|\bfx}^{\tn}(\vy|\vx_m)}}^s}^\rho \notag \\
&= \sum_{\vy\in\cY^n} W_{\bfy|\bfx}^{\tn}(\vy|\vx_m)^{1-s(L-1)\rho} \paren{\sum_{\cL\in\binom{[M]\setminus\curbrkt{m}}{L-1}} \prod_{i\in\cL} W_{\bfy|\bfx}^{\tn}(\vy|\vx_{i})^s}^\rho. \label{eqn:pem-unaveraged} 
\end{align}
Averaged over the random generation of $ \cC $, the error probability is 
\begin{align}
\exptover{\cC}{P^{\mathrm{ML}}_{\e,L-1}(m,\cC)} 
&\le \sum_{\vy\in\cY^n} \exptover{\cC}{W_{\bfy|\bfx}^{\tn}(\vy|\vbfx_m)^{1-s(L-1)\rho}} \paren{\sum_{\cL\in\binom{[M]\setminus\curbrkt{m}}{L-1}} \exptover{\cC}{\prod_{i\in\cL} W_{\bfy|\bfx}^{\tn}(\vy|\vbfx_{i})^s}}^\rho \label[ineq]{eqn:concave} \\
&= \sum_{\vy\in\cY^n} \exptover{\cC}{W_{\bfy|\bfx}^{\tn}(\vy|\vbfx_m)^{1-s(L-1)\rho}} \paren{\sum_{\cL\in\binom{[M]\setminus\curbrkt{m}}{L-1}} \prod_{i\in\cL} \exptover{\cC}{W_{\bfy|\bfx}^{\tn}(\vy|\vbfx_{i})^s}}^\rho, \label{eqn:ee-to-be-cont} 
\end{align}
where we have used the fact that each codeword in $\cC$ is independently generated. 
\Cref{eqn:concave} is valid when $ 0<\rho\le1 $ since $ (\cdot)^\rho $ is concave. 
We also use linearity of expectation here. 
For any $ i\in[M] $ and $ t>0 $, 
\begin{align}
\exptover{\cC}{W_{\bfy|\bfx}^{\tn}(\vy|\vbfx_{i})^t} &= 
\sum_{\vx\in\cX^n} P_{\bfx}^{\tn}(\vx) W_{\bfy|\bfx}^{\tn}(\vy|\vx)^t, \notag 
\end{align}
which is independent of $i$. 
Therefore, \Cref{eqn:ee-to-be-cont} equals
\begin{align}
& \sum_{\vy\in\cY^n} \paren{\sum_{\vx\in\cX^n} P_{\bfx}^{\tn}(\vx) W_{\bfy|\bfx}^{\tn}(\vy|\vx)^{1-s(L-1)\rho}} 
\paren{\sum_{\cL\in\binom{[M]\setminus\curbrkt{m}}{L-1}} \paren{\sum_{\vx\in\cX^n} P_{\bfx}^{\tn}(\vx) W_{\bfy|\bfx}^{\tn}(\vy|\vx)^{s}}^{L-1}}^\rho \notag \\
&= \binom{M-1}{L-1}^\rho \sum_{\vy\in\cY^n} \paren{\sum_{\vx\in\cX^n} P_{\bfx}^{\tn}(\vx) W_{\bfy|\bfx}^{\tn}(\vy|\vx)^{1-s(L-1)\rho}} 
\paren{\sum_{\vx\in\cX^n} P_{\bfx}^{\tn}(\vx) W_{\bfy|\bfx}^{\tn}(\vy|\vx)^{s}}^{(L-1)\rho}. \notag 
\end{align}
Letting $ s = \frac{1}{1+(L-1)\rho} $ and using $\binom{n}{k}\leq n^k$, we get
\begin{align}
& \binom{M-1}{L-1}^\rho \sum_{\vy\in\cY^n} \paren{\sum_{\vx\in\cX^n} P_{\bfx}^{\tn}(\vx) W_{\bfy|\bfx}^{\tn}(\vy|\vx)^{\frac{1}{1+(L-1)\rho}}}^{1+(L-1)\rho} \notag \\
&\le M^{(L-1)\rho} \sum_{\vy\in\cY^n} \paren{\sum_{(x_1,\cdots,x_n)\in\cX^n} \prod_{i = 1}^n P_{\bfx}(x_i) W_{\bfy|\bfx}(\vy(i)|x_i)^{\frac{1}{1+(L-1)\rho}}}^{1+(L-1)\rho} \notag \\
&= M^{(L-1)\rho} \prod_{i = 1}^n \sum_{y_i\in\cY} \paren{\sum_{x_i\in\cX} P_{\bfx}(x_i) W_{\bfy|\bfx}(y_i|x_i)^{\frac{1}{1+(L-1)\rho}}}^{1+(L-1)\rho} \notag \\
&= M^{(L-1)\rho} \sqrbrkt{\sum_{y\in\cY} \paren{\sum_{x\in\cX} P_{\bfx}(x) W_{\bfy|\bfx}(y|x)^{\frac{1}{1+(L-1)\rho}}}^{1+(L-1)\rho}}^n \label{eqn:rce-to-incorp-constr} \\
&= \exp_2\paren{-n\sqrbrkt{-(L-1)\rho R + E_{0}((L-1)\rho,P_\bfx)}}, \notag 
\end{align}
where 
\begin{align}
E_{0}(\rho,P_\bfx) \coloneqq& -\log\sqrbrkt{\sum_{y\in\cY} \paren{\sum_{x\in\cX} P_{\bfx}(x) W_{\bfy|\bfx}(y|x)^{\frac{1}{1+\rho}}}^{1+\rho}}. \notag 
\end{align}
Optimizing over $ 0\le\rho\le1 $, we get the random coding exponent
\begin{align}
E_{\mathrm{r}, L-1}(R,P_\bfx) \coloneqq& \max_{0\le\rho\le1}\curbrkt{-(L-1)\rho R + E_{0}((L-1)\rho,P_\bfx)}. \qedhere
\end{align}
\end{proof}

\subsection{Expurgated exponent}
\label{sec:expurg-expo}

\begin{theorem}
\label{thm:exe-dmc}
Let $ L\in\bZ_{\ge2} $. 
For any DMC $ W_{\bfy|\bfx}\in\Delta(\cY|\cX) $, there exists a sequence of codes $ \cC_n\subset\cX^n $ of increasing blocklengths, each of rate at least $R$ and satisfying
\begin{align}
\liminf_{n\to\infty}\left[-\frac{1}{n}\log P_{\e,\avg,L-1}(\cC_n) \right]
&\ge E_{\ex,L-1}(R), \notag 
\end{align}
where 
\begin{align}
E_{\ex,L-1}(R) \coloneqq& \max_{P_\bfx\in\Delta(\cX)} \max_{\rho\ge1} \curbrkt{-(L-1)\rho R + E_{\x,L-1}(\rho,P_\bfx)}, \notag
\end{align}
and 
\begin{align}
E_{\x,L-1}(\rho,P_\bfx) \coloneqq& - \rho \log\sqrbrkt{\sum_{(x_0,\cdots,x_{L-1})\in\cX^{L}}
\paren{\prod_{k = 0}^{L-1} P_\bfx(x_{k})}
\paren{ \sum_{y\in\cY} \prod_{i = 0}^{L-1} W_{\bfy|\bfx}(y|x_{i})^{1/L}}^{1/\rho}}. \notag
\end{align}
\end{theorem}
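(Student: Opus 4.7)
The plan is to follow Gallager's expurgation technique, adapted to list-decoding via the geometric-mean bound that characterizes the $L$-tuple analogue of the Bhattacharyya coefficient. I would start by drawing a random code $\cC$ of size $2M = 2e^{nR}$ with codewords sampled i.i.d.\ according to $P_\bfx^{\otimes n}$, and then expurgate $M$ codewords from it.

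The first key step is the union bound on list-decoding error using a joint $L$-tuple kernel. If $\vx_m$ is transmitted and the ML $(L-1)$-list-decoder fails, then there exist distinct $j_1,\ldots,j_{L-1}\in[2M]\setminus\{m\}$ with $W_{\bfy|\bfx}^{\otimes n}(\vy|\vx_{j_k})\ge W_{\bfy|\bfx}^{\otimes n}(\vy|\vx_m)$ for all $k$. Taking the $L$-th root and multiplying by $W_{\bfy|\bfx}^{\otimes n}(\vy|\vx_m)^{1/L}$ yields $W_{\bfy|\bfx}^{\otimes n}(\vy|\vx_m)\le\prod_{i\in\cL\cup\{m\}}W_{\bfy|\bfx}^{\otimes n}(\vy|\vx_i)^{1/L}$, where $\cL=\{j_1,\ldots,j_{L-1}\}$. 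Summing over $\vy$ and union-bounding over $\cL$ gives
\begin{align}
P_{\e,L-1}^{\mathrm{ML}}(m,\cC) &\le \sum_{\cL\in\binom{[2M]\setminus\{m\}}{L-1}} \sum_{\vy\in\cY^n}\prod_{i\in\cL\cup\{m\}}W_{\bfy|\bfx}^{\otimes n}(\vy|\vx_i)^{1/L}. \notag
\end{align}

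The second step is to raise both sides to the power $1/\rho$ for $\rho\ge 1$ and invoke subadditivity of $t\mapsto t^{1/\rho}$ (since $1/\rho\le 1$) to distribute the exponent across the sum over $\cL$. Taking expectation over $\cC$, independence of codewords makes the expectation factor across the $L$ codewords in $\cL\cup\{m\}$, and the tensor structure of both $P_\bfx^{\otimes n}$ and $W_{\bfy|\bfx}^{\otimes n}$ makes it further factor across coordinates. This yields
\begin{align}
\E_\cC\sqrbrkt{P_{\e,L-1}^{\mathrm{ML}}(m,\cC)^{1/\rho}} &\le \binom{2M-1}{L-1}\paren{\sum_{(x_0,\ldots,x_{L-1})\in\cX^L}\prod_k P_\bfx(x_k)\paren{\sum_{y\in\cY}\prod_i W_{\bfy|\bfx}(y|x_i)^{1/L}}^{1/\rho}}^n, \notag
\end{align}
which equals $\binom{2M-1}{L-1}\exp(-nE_{\x,L-1}(\rho,P_\bfx)/\rho)$.

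The final step is expurgation: averaging over $m\in[2M]$ and applying Markov's inequality shows that at least $M$ codewords satisfy $P_{\e,L-1}^{\mathrm{ML}}(m,\cC)^{1/\rho}\le 2(2M)^{L-1}\exp(-nE_{\x,L-1}(\rho,P_\bfx)/\rho)$; removing the remaining codewords can only decrease the list-decoding error probability of those kept. Raising to the $\rho$-th power, substituting $M=e^{nR}$, and absorbing polynomial factors yields a code of rate at least $R-o(1)$ whose maximum (hence average) error probability is at most $\exp(-n[E_{\x,L-1}(\rho,P_\bfx)-(L-1)\rho R]+o(n))$. Maximizing over $\rho\ge 1$ and $P_\bfx\in\Delta(\cX)$ produces the exponent $E_{\ex,L-1}(R)$.

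The only delicate point is the subadditivity step: the standard Gallager random-coding bound uses $\rho\le 1$ and concavity of $t\mapsto t^\rho$ to push expectation inside a power, whereas here we need $\rho\ge 1$ and must instead push the power inside a sum over $L$-tuples via $(a+b)^{1/\rho}\le a^{1/\rho}+b^{1/\rho}$. Everything else is a routine rearrangement, but getting the geometric-mean bound on the error event and correctly matching the exponents of $P_\bfx$, $W_{\bfy|\bfx}$, and the combinatorial factor $\binom{2M-1}{L-1}$ is where one must be careful; in particular, reducing to the "pairwise" expurgated exponent of Gallager when $L=2$ provides a useful sanity check.
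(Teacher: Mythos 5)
Your proposal is correct and follows essentially the same route as the paper: the geometric-mean ($L$-tuple Bhattacharyya) union bound with exponent $1/L$, the subadditivity of $t\mapsto t^{1/\rho}$ for $\rho\ge1$ to distribute the power over the sum of $(L-1)$-tuples, factorization of the expectation over independent codewords and coordinates, and expurgation of half the codewords via Markov's inequality (the paper phrases this as choosing a threshold $B$ so that the expected number of bad codewords is $M/2$, which is the same computation). The monotonicity observation that discarding codewords cannot increase the error probability of those retained, and the final optimization over $\rho\ge1$ and $P_\bfx$, also match the paper's argument.
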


\begin{proof}
	We begin by considering a random codebook and ML decoder as in the proof of \Cref{thm:rce-dmc}. However, we will choose a different $\rho,s>0$ and later expurgate the codebook.
	
Taking $ \rho = 1 $ and $ s = 1/L $ in \Cref{eqn:pem-unaveraged}, we have
\begin{align}
P^{\mathrm{ML}}_{\e,L-1}(m_0,\cC) &\le \sum_{\vy\in\cY^n} W_{\bfy|\bfx}^{\tn}(\vy|\vx_{m_0})^{1/L} \sum_{\cL\in\binom{[M]\setminus\curbrkt{m}}{L-1}} \prod_{i\in\cL} W_{\bfy|\bfx}^{\tn}(\vy|\vx_{i})^{1/L} \notag \\
&= \sum_{\curbrkt{m_1,\cdots,m_{L-1}}\in\binom{[M]\setminus\curbrkt{m_0}}{L-1}} \sum_{(y_1,\cdots,y_n)\in\cY^n}
\prod_{j = 1}^n \prod_{i = 0}^{L-1} W_{\bfy|\bfx}(y_j|\vx_{m_i}(j))^{1/L} \notag \\
&= \sum_{\curbrkt{m_1,\cdots,m_{L-1}}\in\binom{[M]\setminus\curbrkt{m_0}}{L-1}} q(\vx_{m_0},\vx_{m_1},\cdots,\vx_{m_{L-1}}), \label{eqn:def-q}
\end{align}
where in \Cref{eqn:def-q} we define
\begin{align}
q(\vx_{m_0},\vx_{m_1},\cdots,\vx_{m_{L-1}}) \coloneqq& \prod_{j = 1}^n \sum_{y_j\in\cY} \prod_{i = 0}^{L-1} W_{\bfy|\bfx}(y_j|\vx_{m_i}(j))^{1/L}. \notag 
\end{align}
Now for any $ B>0 $,
\begin{align}
\probover{\cC}{P^{\mathrm{ML}}_{\e,L-1}(m_0,\cC) > B} &= \exptover{\cC}{\indicator{P^{\mathrm{ML}}_{\e,L-1}(m_0,\cC) > B}} \notag \\
&\le \exptover{\cC}{\indicator{\sum_{\curbrkt{m_1,\cdots,m_{L-1}}\in\binom{[M]\setminus\curbrkt{m_0}}{L-1}} \abs{\frac{q(\vbfx_{m_0},\vbfx_{m_1},\cdots,\vbfx_{m_{L-1}})}{B}} > 1}} \notag \\
&\le \exptover{\cC}{\indicator{\paren{\sum_{\curbrkt{m_1,\cdots,m_{L-1}}\in\binom{[M]\setminus\curbrkt{m_0}}{L-1}} \abs{\frac{q(\vbfx_{m_0},\vbfx_{m_1},\cdots,\vbfx_{m_{L-1}})}{B}}^s}^{1/s} > 1}} \label{eqn:norm-ineq} \\
&= \exptover{\cC}{\indicator{{\sum_{\curbrkt{m_1,\cdots,m_{L-1}}\in\binom{[M]\setminus\curbrkt{m_0}}{L-1}} \frac{q(\vbfx_{m_0},\vbfx_{m_1},\cdots,\vbfx_{m_{L-1}})^s}{B^s}} > 1}} \notag \\
&\le \exptover{\cC}{\sum_{\curbrkt{m_1,\cdots,m_{L-1}}\in\binom{[M]\setminus\curbrkt{m_0\textbf{}}}{L-1}} \frac{q(\vbfx_{m_0},\vbfx_{m_1},\cdots,\vbfx_{m_{L-1}})^s}{B^s}} \label[ineq]{eqn:elem-indicator-again} \\
&= \sum_{\curbrkt{m_1,\cdots,m_{L-1}}\in\binom{[M]\setminus\curbrkt{m_0}}{L-1}} B^{-s}\exptover{\cC}{q(\vbfx_{m_0},\vbfx_{m_1},\cdots,\vbfx_{m_{L-1}})^s}. \label{eqn:expurg-to-be-cont}
\end{align}
\Cref{eqn:norm-ineq} is valid for any $ 0\le s\le1 $ since $ \norm{s}{\cdot} $ is decreasing in $s$. 
\Cref{eqn:elem-indicator-again} follows from \Cref{eqn:elem-ineq-indicator}. 
We then bound the above expectation.
\begin{align}
\exptover{\cC}{q(\vbfx_{m_0},\vbfx_{m_1},\cdots,\vbfx_{m_{L-1}})^s} 
&= \sum_{(\vx_0,\cdots,\vx_{L-1})\in(\cX^n)^{L}} 
\sqrbrkt{\prod_{j = 1}^n \prod_{k = 0}^{L-1} P_\bfx(\vx_k(j))} 
\sqrbrkt{\prod_{j = 1}^n \paren{ \sum_{y_j\in\cY} \prod_{i = 0}^{L-1} W_{\bfy|\bfx}(y_j|\vx_{i}(j))^{1/L}}^s} \notag \\
&= \sum_{(\vx_0,\cdots,\vx_{L-1})\in(\cX^n)^{L}} \prod_{j = 1}^n
\paren{\prod_{k = 0}^{L-1} P_\bfx(\vx_k(j))}
\paren{ \sum_{y_j\in\cY} \prod_{i = 0}^{L-1} W_{\bfy|\bfx}(y_j|\vx_{i}(j))^{1/L}}^s \notag \\
&= \sum_{\substack{ (x_{0,1},\cdots,x_{L-1,1})\in\cX^{L}\\\dots\\(x_{0,n},\cdots,x_{L-1,n})\in\cX^{L} }}
\prod_{j = 1}^n
\paren{\prod_{k = 0}^{L-1} P_\bfx(x_{k,j})}
\paren{ \sum_{y_j\in\cY} \prod_{i = 0}^{L-1} W_{\bfy|\bfx}(y_j|x_{i,j})^{1/L}}^s \notag \\
&= \prod_{j = 1}^n \sum_{(x_{0,j},\cdots,x_{L-1,j})\in\cX^{L}}
\paren{\prod_{k = 0}^{L-1} P_\bfx(x_{k,j})}
\paren{ \sum_{y_j\in\cY} \prod_{i = 0}^{L-1} W_{\bfy|\bfx}(y_j|x_{i,j})^{1/L}}^s \notag \\
&= \sqrbrkt{\sum_{(x_0,\cdots,x_{L-1})\in\cX^{L}}
\paren{\prod_{k = 0}^{L-1} P_\bfx(x_{k})}
\paren{ \sum_{y\in\cY} \prod_{i = 0}^{L-1} W_{\bfy|\bfx}(y|x_{i})^{1/L}}^s}^n, \notag 
\end{align}
which is independent of $ m_0,\cdots,m_{L-1} $. 
Then \Cref{eqn:expurg-to-be-cont} becomes at most
\begin{align}
M^{L-1} B^{-s} \sqrbrkt{\sum_{(x_0,\cdots,x_{L-1})\in\cX^{L}}
\paren{\prod_{k = 0}^{L-1} P_\bfx(x_{k})}
\paren{ \sum_{y\in\cY} \prod_{i = 0}^{L-1} W_{\bfy|\bfx}(y|x_{i})^{1/L}}^s}^n. \notag 
\end{align}
Choose $B$ such that the above quantity equals $ 1/2 $, i.e., 
\begin{align}
B &= \paren{\frac{1}{2}M^{-(L-1)}
\sqrbrkt{\sum_{(x_0,\cdots,x_{L-1})\in\cX^{L}}
\paren{\prod_{k = 0}^{L-1} P_\bfx(x_{k})}
\paren{ \sum_{y\in\cY} \prod_{i = 0}^{L-1} W_{\bfy|\bfx}(y|x_{i})^{1/L}}^s}^{-n}}^{-1/s} \notag \\
&= (2M^{L-1})^{1/s}
\sqrbrkt{\sum_{(x_0,\cdots,x_{L-1})\in\cX^{L}}
\paren{\prod_{k = 0}^{L-1} P_\bfx(x_{k})}
\paren{ \sum_{y\in\cY} \prod_{i = 0}^{L-1} W_{\bfy|\bfx}(y|x_{i})^{1/L}}^s}^{n/s}. \notag 
\end{align}
Under the above choice of $B$, 
we get that 
\begin{align}
\exptover{\cC}{\card{\curbrkt{m\in[M]:P_{\e,L-1}^{\mathrm{ML}}(m,\cC)>B}}} 
= \sum_{m\in[M]} \probover{\cC}{P^{\mathrm{ML}}_{\e,L-1}(m,\cC) > B} 
= M/2. \notag
\end{align}
Therefore, if we expurgate all codewords in $\cC$ with probability of error exceeding $ B $, we get a code $ \cC'\subset\cC $ of expected size $ M/2 $ whose codewords all have probability of error at most 
$P_{\e,L-1}^{\mathrm{ML}}(m,\cC') \le P_{\e,L-1}^{\mathrm{ML}}(m,\cC) \le B$. 
The first inequality follows since the probability of error of each codeword does not increase if there are less competing codewords. 
Letting $ \rho\coloneqq1/s\ge1 $ and $ R = \frac{1}{n}\log\frac{M}{2} $, we get the following upper bound on the error probability
\begin{align}
B &= (2^L(M/2)^{L-1})^{\rho}
\sqrbrkt{\sum_{(x_0,\cdots,x_{L-1})\in\cX^{L}}
\paren{\prod_{k = 0}^{L-1} P_\bfx(x_{k})}
\paren{ \sum_{y\in\cY} \prod_{i = 0}^{L-1} W_{\bfy|\bfx}(y|x_{i})^{1/L}}^{1/\rho}}^{\rho n} \label{eqn:expurg-to-incorp-constr} \\
&= 2^{L\rho} \exp_2\paren{-n\sqrbrkt{-(L-1)\rho R + E_{\x,L-1}(\rho,P_\bfx)}}, \notag
\end{align}
where 
\begin{align}
E_{\x,L-1}(\rho,P_\bfx) \coloneqq& - \rho \log\sqrbrkt{\sum_{(x_0,\cdots,x_{L-1})\in\cX^{L}}
\paren{\prod_{k = 0}^{L-1} P_\bfx(x_{k})}
\paren{ \sum_{y\in\cY} \prod_{i = 0}^{L-1} W_{\bfy|\bfx}(y|x_{i})^{1/L}}^{1/\rho}}. \notag
\end{align}
The above bound can be translated to the following lower bound on the error exponent
\begin{align}
E_{\ex,L-1}(R,P_\bfx) \coloneqq& \max_{\rho\ge1} \curbrkt{-(L-1)\rho R + E_{\x,L-1}(\rho,P_\bfx)}. \qedhere
\end{align}
\end{proof}

\subsection{Input constraints}
\label{sec:ee-input-constr}

We now derive bounds on the the achievable error exponents for discrete memoryless channels with input constraints.

Over the input alphabet $ \cX $, we associate a cost function $ f\colon\cX\to\bR $. 
We impose the following constraint that every input sequence/codeword should satisfy $ \sum_{i = 1}^nf(\vx(i))\le0 $. 
We can alternatively write this constraint in terms of $ \tau_\vx $ by observing that
\begin{align}
\sum_{i = 1}^n f(\vx(i)) &= \sum_{x\in\cX} n\tau_\vx(x) f(x). \notag 
\end{align}
Therefore, $ \sum_{i = 1}^nf(\vx(i))\le0 $ is equivalent to the input type constraint $ \tau_\vx\in\cP_f $ where
\begin{align}
\cP_f &\coloneqq \curbrkt{P_\bfx\in\Delta(\cX):\sum_{x\in\cX} P_\bfx(x)f(x) \le0}. \notag 
\end{align}
For example, the standard $\ell^2$ norm constraint on any $\cX\subset \bR$ can be obtained by choosing $f(x) = x^2-P$, which implies that for every codeword $\vx$, we must have $\sum_{i=1}^nx^2(i)\leq nP$.

The following is our main result, which gives an upper bound on the probability of error.
\begin{theorem}
\label{thm:ee-dmc-ip-constr}
Let $ L\in\bZ_{\ge2} $. 
Consider any DMC $ W_{\bfy|\bfx}\in\Delta(\cY|\cX) $ with input constraints $ \cP_f\subset\Delta(\cX) $ for some cost function $ f\colon\cX\to\bR $. 
Then there exists a code $ \cC\subset\cX^n $ of rate $R$, satisfying the input constraints $ \cP_f $ and
\begin{multline}
P_{\e,\avg,L-1}(\cC) \le \min_{P_\bfx\in\cP_f} \inf_{\delta>0} \min_{s\ge0,0\le\rho\le1} 
2^{nR(L-1)\rho} 
\paren{\frac{e^{s\delta}}{Z}}^{1+(L-1)\rho}
\sqrbrkt{\sum_{y\in\cY} \paren{\sum_{x\in\cX} P_{\bfx}(x) e^{sf(x)} W_{\bfy|\bfx}(y|x)^{\frac{1}{1+(L-1)\rho}} }^{1+(L-1)\rho}}^n, \notag 
\end{multline}
where
\begin{align}
Z \coloneqq& \sum_{\vx\in\cX^n} P_{\bfx}^\tn(\vx) \indicator{ \sum_{i = 1}^nf(\vx(i))\in[-\delta,0] }. \label{eqn:def-z-dmc}
\end{align}
For the same channel, there also exists a code $ \cC\subset\cX^n $ of rate $R$, satisfying the input constraints $ \cP_f $ and 
\begin{multline}
P_{\e,\avg,L-1}(\cC) \le \min_{P_\bfx\in\cP_f} \inf_{\delta>0} \min_{s\ge0,\rho\ge1} 2^{L\rho} 2^{nR(L-1)\rho}
\paren{\frac{e^{s\delta}}{Z}}^{L\rho} \\
\sqrbrkt{\sum_{(x_0,\cdots,x_{L-1})\in\cX^{L}}
\paren{e^{s\sum_{k = 0}^{L-1}f(x_k)} \prod_{k = 0}^{L-1} P_\bfx(x_{k})}
\paren{ \sum_{y\in\cY} \prod_{i = 0}^{L-1} W_{\bfy|\bfx}(y|x_{i})^{1/L}}^{1/\rho}}^{\rho n}, \notag 
\end{multline}
where $Z$ is defined in the same way as in \Cref{eqn:def-z-dmc}. 
\end{theorem}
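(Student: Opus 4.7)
My plan is to repeat the proofs of \Cref{thm:rce-dmc,thm:exe-dmc} verbatim but with the random codewords drawn from a \emph{conditioned} product distribution, and then to handle the constraint indicator by a Chernoff-style tilt. Concretely, I will fix $P_\bfx\in\cP_f$ and $\delta>0$ and draw each codeword i.i.d.\ from
\[
\tilde P_{\bfx}^{\tn}(\vx) \;\coloneqq\; \frac{P_{\bfx}^{\tn}(\vx)\,\indicator{\sum_{i=1}^n f(\vx(i))\in[-\delta,0]}}{Z},
\]
where $Z$ is as defined in \Cref{eqn:def-z-dmc}. Every realized codeword then automatically satisfies $\sum_i f(\vx(i))\le 0$, so the code meets the input constraint. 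The only new ingredient is the tilt inequality
\[
\indicator{\sum_{i=1}^n f(\vx(i))\in[-\delta,0]}\;\le\;\indicator{\sum_{i=1}^n f(\vx(i))\ge -\delta}\;\le\;e^{s\delta}\prod_{i=1}^n e^{sf(\vx(i))},\qquad s\ge 0,
\]
which converts the nonfactorizable constraint indicator into a product of per-letter factors at the price of a single overall prefactor $e^{s\delta}$.

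For the random coding bound I will run the proof of \Cref{thm:rce-dmc} on this tilted ensemble. Every codeword-expectation of the form $\mathbb{E}_{\tilde P_\bfx^\tn}[W_{\bfy|\bfx}^{\tn}(\vy|\vbfx)^t]$ that appears in the Gallager derivation is bounded by $(e^{s\delta}/Z)$ times the analogous expectation under the unconditioned product measure, but with $P_\bfx(x)$ replaced by $P_\bfx(x)\,e^{sf(x)}$. With the usual Gallager choice $s'=1/(1+(L-1)\rho)$, the transmitted-codeword factor and the $L-1$ competing-codeword factors merge into a single power $1+(L-1)\rho$, so the total tilt overhead is $(e^{s\delta}/Z)^{1+(L-1)\rho}$; what remains is the per-letter DMC quantity with the tilted input weight $P_\bfx(x)\,e^{sf(x)}$, raised to the $n$-th power. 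This is exactly the first bound claimed in the theorem.

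For the expurgated bound I will rerun the argument of \Cref{thm:exe-dmc} on the same tilted ensemble. The quantity $\mathbb{E}_{\cC}[q(\vbfx_{m_0},\dots,\vbfx_{m_{L-1}})^s]$ involves $L$ independent codewords, so the tilt inequality contributes a prefactor $(e^{s\delta}/Z)^L$ while each $P_\bfx(x_k)$ is replaced by $P_\bfx(x_k)\,e^{sf(x_k)}$ inside the innermost sum. The subsequent Markov expurgation step fixes the threshold $B$ so that at least half of the codewords survive; raising everything to the power $\rho=1/s\ge 1$ turns the tilt overhead into $(e^{s\delta}/Z)^{L\rho}$, matching the second bound. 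Minimizing the resulting expressions over $P_\bfx\in\cP_f$, $\delta>0$, $s\ge 0$, and the Gallager parameter $\rho$ yields the stated bounds.

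The one point that requires care is to check that the overhead $(e^{s\delta}/Z)^{O(1)}$ is subexponential in $n$ for the parameter choices of interest, since otherwise the tilt would destroy the exponent. This will follow from a standard local central limit or type-counting argument: for $P_\bfx\in\cP_f$ with $\mathbb{E}_{P_\bfx}[f(\bfx)]\in[-\delta,0]$ and $\delta$ a positive constant independent of $n$, the probability $Z$ decays only polynomially in $n$, so both $1/Z^{O(1)}$ and $e^{s\delta\,O(1)}$ can be absorbed into an $\exp_2(o(n))$ factor that does not affect the exponent.
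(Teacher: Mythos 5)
Your proposal is correct and follows essentially the same route as the paper: sample codewords i.i.d.\ from the truncated product distribution $Z^{-1}P_\bfx^{\tn}(\vx)\indicator{\sum_i f(\vx(i))\in[-\delta,0]}$, bound the indicator by $e^{s(\sum_i f(\vx(i))+\delta)}$ so that the conditioned measure is dominated pointwise by a product measure with per-letter weight $P_\bfx(x)e^{sf(x)}e^{s\delta/n}Z^{-1/n}$, and then substitute this into the unconstrained random-coding and expurgated bounds, which yields exactly the prefactors $(e^{s\delta}/Z)^{1+(L-1)\rho}$ and $(e^{s\delta}/Z)^{L\rho}$. Your closing remark about $Z$ decaying only polynomially is not needed for the theorem as stated (which keeps $Z$ explicit) but is the right check for downstream use, and is carried out in the paper only later for the AWGN specialization.
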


\begin{proof}
For the DMC $ W_{\bfy|\bfx} $ with input constraints $ \cP_f $, 
we sample codewords from $ Q_{\vbfx}\in\Delta(\cX^n) $ which is obtained by truncating the input distribution $ P_{\bfx}^\tn $ so that it satisfies the power constraint.
Specifically, for some $ \delta>0 $, for any $ \vx\in\cX^n $,
\begin{align}
Q_{\vbfx}(\vx) \coloneqq& Z^{-1} P_{\bfx}^\tn(\vx) \indicator{ \sum_{i = 1}^nf(\vx(i))\in[-\delta,0] }, \notag 
\end{align}
where 
\begin{align}
Z \coloneqq& \sum_{\vx\in\cX^n} P_{\bfx}^\tn(\vx) \indicator{ \sum_{i = 1}^nf(\vx(i))\in[-\delta,0] } \notag 
\end{align}
is a normalizing constant. 
Though $ Q_{\vbfx} $ is not a product distribution, we will upper bound it pointwise by a product distribution. 
Note that the indicator function of the power constraint can be bounded as follows,
\begin{align}
\indicator{ -\delta \le \sum_{i = 1}^nf(\vx(i)) \le 0 } 
&= \indicator{ 0 \le \sum_{i = 1}^nf(\vx(i)) + \delta \le \delta } \notag \\
&= \indicator{ e^{s\cdot0} \le e^{s\paren{\sum_{i = 1}^nf(\vx(i)) + \delta}} \le e^{s\delta} } \notag \\
&\le \indicator{ e^{s\paren{\sum_{i = 1}^nf(\vx(i)) + \delta}} \ge 1} \notag \\
&\le e^{s\paren{\sum_{i = 1}^nf(\vx(i)) + \delta}}, \label[ineq]{eqn:elem-indicator-3}
\end{align}
for any $ s\ge0 $.
\Cref{eqn:elem-indicator-3} is by \Cref{eqn:elem-ineq-indicator}. 
Therefore we have
\begin{align}
Q_{\vbfx}(\vx) &\le Z^{-1} P_{\bfx}^\tn(\vx) e^{s\paren{\sum_{i = 1}^nf(\vx(i)) + \delta}} 
= \prod_{i = 1}^n \paren{P_\bfx(\vx(i)) e^{sf(\vx(i))} e^{s\delta/n} Z^{-1/n}}. \notag 
\end{align}
Replacing $ P_\bfx(x) $ with $ {P_\bfx(x) e^{sf(x)} e^{s\delta/n}} Z^{-1/n} $ in \Cref{eqn:rce-to-incorp-constr}, we have a random coding bound with input constraints:
\begin{align}
\exptover{\cC}{P^{\mathrm{ML}}_{\e,L-1}(m,\cC)} &\le 
M^{(L-1)\rho} 
\paren{\frac{e^{s\delta}}{Z}}^{1+(L-1)\rho}
\sqrbrkt{\sum_{y\in\cY} \paren{\sum_{x\in\cX} P_{\bfx}(x) e^{sf(x)} W_{\bfy|\bfx}(y|x)^{\frac{1}{1+(L-1)\rho}} }^{1+(L-1)\rho}}^n, \label{eqn:rce-constr}
\end{align}
for $ s\ge0 $ and $ 0\le\rho\le1 $. 
A similar substitution for \Cref{eqn:expurg-to-incorp-constr} yields an expurgated bound with input constraints:
\begin{align}
2^{L\rho} M^{(L-1)\rho}
\paren{\frac{e^{s\delta}}{Z}}^{L\rho}
\sqrbrkt{\sum_{(x_0,\cdots,x_{L-1})\in\cX^{L}}
\paren{e^{s\sum_{k = 0}^{L-1}f(x_k)} \prod_{k = 0}^{L-1} P_\bfx(x_{k})}
\paren{ \sum_{y\in\cY} \prod_{i = 0}^{L-1} W_{\bfy|\bfx}(y|x_{i})^{1/L}}^{1/\rho}}^{\rho n}, \label{eqn:expurg-constr}
\end{align}
for $ s\ge0 $ and $ \rho\ge1 $. 
\end{proof}

\subsection{Continuous alphabets}
\label{sec:ee-cts-alpha}

It is easy to extend the same ideas to continuous alphabets such as $\cX=\bR$. The following theorem states our main result.
\begin{theorem}
\label{thm:ee-cts-ip-constr}
Let $ L\in\bZ_{\ge2} $. 
Consider any memoryless channel $ W_{\bfy|\bfx}\in\Delta(\bR|\bR) $ over the reals with input constraints 
\begin{align}
\cP_f &\coloneqq \curbrkt{P_\bfx\in\Delta(\bR):\int_{\bR} P_\bfx(x)f(x)\diff x\le0} \subset\Delta(\bR) \notag 
\end{align}
for some cost function $ f\colon\bR\to\bR $. 
Then there exists a code $ \cC\subset\bR^n $ of rate $R$, satisfying the input constraints $ \cP_f $ and 
\begin{multline}
P_{\e,\avg,L-1}(\cC) \le \min_{P_\bfx\in\cP_f} \inf_{\delta>0} \min_{s\ge0,0\le\rho\le1} 
e^{nR(L-1)\rho} 
\paren{\frac{e^{s\delta}}{Z}}^{1+(L-1)\rho}
\sqrbrkt{\int_\bR \paren{\int_\bR P_\bfx(x)e^{sf(x)}W_{\bfy|\bfx}(y|x)^{\frac{1}{1+(L-1)\rho}} \diff x}^{1+(L-1)\rho} \diff y}^n, \notag 
\end{multline}
where
\begin{align}
Z\coloneqq& \int_{\bR^n} P_\bfx^{\tn}(\vx) \indicator{ \sum_{i = 1}^nf(\vx(i))\in[-\delta,0] } \diff\vx. \label{eqn:def-z-cts}
\end{align}
For the same channel, there also exists a code $ \cC\subset\bR^n $ of rate $R$, satisfying the input constraints $ \cP_f $ and 
\begin{multline}
P_{\e,\avg,L-1}(\cC) \le \min_{P_\bfx\in\cP_f} \inf_{\delta>0} \min_{s\ge0,\rho\ge1} 2^{L\rho} e^{nR(L-1)\rho}
\paren{\frac{e^{s\delta}}{Z}}^{L\rho} \\
\sqrbrkt{\sum_{(x_0,\cdots,x_{L-1})\in\cX^{L}}
\paren{e^{s\sum_{k = 0}^{L-1}f(x_k)} \prod_{k = 0}^{L-1} P_\bfx(x_{k})}
\paren{ \sum_{y\in\cY} \prod_{i = 0}^{L-1} W_{\bfy|\bfx}(y|x_{i})^{1/L}}^{1/\rho}}^{\rho n}, \notag 
\end{multline}
where $Z$ is defined in the same way as in \Cref{eqn:def-z-cts}. 
\end{theorem}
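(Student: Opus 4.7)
The plan is to mimic the proofs of \Cref{thm:ee-dmc-ip-constr,thm:rce-dmc,thm:exe-dmc} verbatim, replacing counting measures on $\cX,\cY$ by Lebesgue measure (or whatever base measure is implicit in the density $W_{\bfy|\bfx}(\cdot|\cdot)$) on $\bR$. Fix $P_\bfx\in\cP_f$ and $\delta>0$, and define the truncated $n$-fold distribution
\begin{align}
Q_{\vbfx}(\vx) \coloneqq Z^{-1}\,P_\bfx^{\tn}(\vx)\,\indicator{\sum_{i=1}^n f(\vx(i))\in[-\delta,0]}, \notag
\end{align}
with $Z$ as in \Cref{eqn:def-z-cts}. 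By construction, codewords drawn i.i.d.\ from $Q_{\vbfx}$ satisfy the input constraint almost surely. The same indicator-to-exponential trick used in \Cref{eqn:elem-indicator-3} yields the pointwise upper bound
\begin{align}
Q_{\vbfx}(\vx) \le \prod_{i=1}^n \paren{P_\bfx(\vx(i))\,e^{sf(\vx(i))}\,e^{s\delta/n}\,Z^{-1/n}}, \notag
\end{align}
valid for every $s\ge0$. This is the key device that reduces the analysis of the (non-product) truncated ensemble to a product-form computation.

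Next, I would sample a random codebook $\cC=\{\vbfx_1,\dots,\vbfx_M\}$ of $M=e^{nR}$ codewords i.i.d.\ from $Q_{\vbfx}$ and use the ML $(L-1)$-list-decoder. For the random coding bound, I would follow the proof of \Cref{thm:rce-dmc}: start from the analogue of \Cref{eqn:pem-unaveraged} (with sums over $\vy$ replaced by integrals), average over $\cC$, invoke concavity of $(\cdot)^\rho$ for $0\le\rho\le1$, and use independence of codewords. In every expected quantity of the form $\expt{W_{\bfy|\bfx}^{\tn}(\vy|\vbfx_i)^t}$, one substitutes the pointwise bound above; the $e^{sf(x)}$ factor is absorbed into a tilted single-letter input density, $e^{s\delta/n}$ pulls out across $n$ letters into $e^{s\delta}$, and $Z^{-1/n}$ pulls out as $Z^{-1}$. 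Choosing $s=1/(1+(L-1)\rho)$ exactly as in the proof of \Cref{thm:rce-dmc} and product-izing over the $n$ letters gives the single-letter integral expression claimed, and the prefactor $M^{(L-1)\rho}$ from bounding $\binom{M-1}{L-1}\le M^{L-1}$ produces the $e^{nR(L-1)\rho}$ factor.

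For the expurgated bound, I would replicate the proof of \Cref{thm:exe-dmc}: take $\rho=1$ and $s=1/L$ in the corresponding Gallager bound, apply the $\|\cdot\|_s$ inequality for $0<s\le1$, use Markov's inequality to control $\Pr_\cC[P_{\e,L-1}^{\mathrm{ML}}(m,\cC)>B]$, and pick $B$ so that the expected number of bad codewords is $M/2$. Expurgating these codewords yields a subcode whose worst-codeword probability of error is at most $B$, and reparameterizing $\rho=1/s\ge1$ and $R=\frac1n\log(M/2)$ produces the stated bound after incorporating the $(e^{s\delta}/Z)^{L\rho}$ factor from the truncated ensemble. The only genuinely non-routine step is ensuring that the one-dimensional tilted integrals $\int_\bR P_\bfx(x)\,e^{sf(x)}\,W_{\bfy|\bfx}(y|x)^{1/(1+(L-1)\rho)}\diff x$ are finite (and measurable in $y$) and that $Z>0$ for sufficiently large $\delta$ and a $P_\bfx\in\cP_f$ with $\int f\,dP_\bfx<0$; both are mild regularity conditions on the pair $(P_\bfx,f,W_{\bfy|\bfx})$. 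Under those conditions, the discrete proof transfers line by line, and minimizing over $P_\bfx\in\cP_f$, $\delta>0$, $s\ge0$ and the appropriate range of $\rho$ recovers the two displayed bounds.
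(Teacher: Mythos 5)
Your proposal is correct and takes essentially the same route as the paper: the paper's own proof of this theorem simply asserts that \Cref{eqn:rce-constr,eqn:expurg-constr} generalize to the reals by replacing sums with integrals, which is precisely the line-by-line transfer you describe (truncated ensemble, pointwise product-form bound via the exponential tilting of the indicator, then the Gallager random-coding and expurgation arguments). Your added remarks on measurability, finiteness of the tilted integrals, and positivity of $Z$ are sensible regularity caveats that the paper leaves implicit.
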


\begin{proof}
\Cref{eqn:rce-constr,eqn:expurg-constr} can be generalized to channels over the reals in a straightforward manner:
\begin{align}
& M^{(L-1)\rho} 
\paren{\frac{e^{s\delta}}{Z}}^{1+(L-1)\rho}
\sqrbrkt{\int_\bR \paren{\int_\bR P_\bfx(x)e^{sf(x)}W_{\bfy|\bfx}(y|x)^{\frac{1}{1+(L-1)\rho}} \diff x}^{1+(L-1)\rho} \diff y}^n, \label{eqn:gallager-cts-constrained-rce} 
\end{align}
where $ s\ge0 $ and $ 0\le\rho\le1 $; 
\begin{align}
& 2^{L\rho} M^{(L-1)\rho}
\paren{\frac{e^{s\delta}}{Z}}^{L\rho}
\sqrbrkt{\int_{\bR^L} \paren{e^{s\sum_{k = 0}^{L-1}f(x_k)} \prod_{k = 0}^{L-1} P_\bfx(x_{k})}
\paren{\int_\bR \prod_{i = 0}^{L-1} W_{\bfy|\bfx}(y|x_{i})^{1/L} \diff y}^{1/\rho}
\diff(x_0,\cdots,x_{L-1})}^{\rho n}, \label{eqn:gallager-cts-constrained-exe}
\end{align}
where $ s\ge0,\rho\ge1 $ and 
\begin{align}
Z\coloneqq& \int_{\bR^n} P_\bfx^{\tn}(\vx) \indicator{ \sum_{i = 1}^nf(\vx(i))\in[-\delta,0] } \diff\vx. \label{eqn:def-z}
\end{align}
\end{proof}

\subsection{Random coding exponent for AWGN channels with input constraints}
\label{sec:awgn-rce}
\Cref{thm:ee-cts-ip-constr} gives non-explicit upper bounds on the probability of error.
In this section, we derive explicit lower bounds on \Cref{eqn:def-z-cts} in the case of AWGN channels with input constraint $P$ and noise variance $ \sigma^2 $ under $ (L-1) $-list-decoding. 
We prove the following theorem. 
\begin{theorem}
\label{thm:awgn-rce}
Let $ P,\sigma>0 $ and $ L\in\bZ_{\ge2} $. 
There exist codes of rate $R$ for the AWGN channel with input constraint $ P $ and noise variance $ \sigma^2 $ such that the rate satisfies $ 0\le R\le\frac{1}{2}\ln(1+P/\sigma^2) $ and the exponent $ E_{L-1}(R,P/\sigma^2) $ of the probability of error (normalized by $ \lim\limits_{n\to\infty}-\frac{1}{n}\ln(\cdot) $) under $(L-1)$-list-decoding is bounded as follows. 

Let $ \snr\coloneqq P/\sigma^2 $ and 
\begin{align}
R_{\x,L-1}(\snr) &\coloneqq \frac{1}{2}\paren{\ln\frac{\sqrt{L^2+\snr^2 - 2\snr(L-2)} + L + \snr}{2L} + \frac{1}{L-1}\ln\frac{\sqrt{L^2+\snr^2 - 2\snr(L-2)} + L - \snr}{2L}}, \label{eqn:def-rx} \\
R_{\crit,L-1}(\snr) &\coloneqq \frac{1}{2}\ln\paren{\frac{1}{2} + \frac{\snr}{2L} + \frac{1}{2}\sqrt{1 - \frac{2(L-2)}{L^2}\snr + \frac{\snr^2}{L^2}}}. \label{eqn:def-rcrit} 
\end{align}
\begin{enumerate}
 	
 	\item\label{itm:awgn-rce1} If 
 	$ R_{\crit,L-1}(\snr) \le R \le\frac{1}{2}\ln(1+\snr) $, then
 	\begin{align}
 	E_{L-1}(R,\snr) &\ge 
 	\frac{1}{2}\ln\sqrbrkt{e^{2R} - \frac{\snr(e^{2R} - 1)}{2}\paren{\sqrt{1+\frac{4e^{2R}}{\snr(e^{2R} - 1)}} - 1}} \notag \\
	&\quad + \frac{\snr}{4e^{2R}}\paren{e^{2R} + 1 - (e^{2R} - 1)\sqrt{1+\frac{4e^{2R}}{\snr(e^{2R} - 1)}}}. \label{eqn:ee-high-ld} 
 	\end{align}
 	
 	\item\label{itm:awgn-rce2} If 
 	$0 \le R \le R_{\crit,L-1}(\snr)$, 
 	then 
 	\begin{align}
	E_{L-1}(R,\snr) &\ge
	-R(L-1) + \frac{L-1}{2}\ln\paren{L + \snr + \sqrt{(L-\snr)^2 + 4\snr}} + \frac{1}{2}\ln\paren{L-\snr + \sqrt{(L-\snr)^2 + 4\snr}} \notag \\
	&\quad + \frac{1}{4}\paren{L + \snr - \sqrt{(L-\snr)^2 + 4\snr}} - \frac{L}{2}\ln(2L). \label{eqn:ee-mid-ld} 
	\end{align}

\end{enumerate} 
\end{theorem}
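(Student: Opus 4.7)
The plan is to invoke \Cref{thm:ee-cts-ip-constr} specialized to the AWGN setting with input distribution $P_\bfx = \cN(0,P)$ and cost function $f(x) = x^2 - P$. With these choices the constraint $\sum_i f(\vx(i)) \le 0$ coincides with the power constraint $\normtwo{\vx}^2 \le nP$. The normalizing constant $Z$ in \Cref{eqn:def-z-cts} is the probability that a (scaled) chi-squared variable lies in a thin shell near its mean, which by standard concentration is only polynomial in $n$; therefore the factor $e^{s\delta}/Z$ contributes nothing to the exponent once we take $\delta \to 0^+$ at the end. This reduces the problem to evaluating Gaussian integrals over $x$ and $y$.

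First I would compute the inner integral
\begin{align}
A(y,\rho,s) \coloneqq \int_\bR P_\bfx(x)\, e^{s(x^2-P)}\, W_{\bfy|\bfx}(y|x)^{1/(1+(L-1)\rho)} \diff x, \notag
\end{align}
which is Gaussian in $x$. Completing the square yields $A(y,\rho,s) = c(\rho,s) \exp\paren{-\gamma(\rho,s)\,y^2}$ for explicit $c,\gamma$, provided the feasibility constraint $\tfrac{1}{P} - 2s > 0$ holds. Raising $A$ to the $(1+(L-1)\rho)$-th power and integrating over $y$ produces another Gaussian integral. Combining the result with the prefactor $M^{(L-1)\rho} = e^{nR(L-1)\rho}$ gives an upper bound of the form $\exp\paren{-n\Phi(R,\snr,\rho,s) + o(n)}$ for an explicit function $\Phi$.

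Next I would optimize $\Phi$ over $s \ge 0$ and $\rho \in [0,1]$. The first-order condition $\partial\Phi/\partial s = 0$ pins down $s$ as an explicit function of $\rho$, reducing $\Phi$ to a concave function of $\rho$ alone. Two cases arise. When the unconstrained maximizer lies in $(0,1)$, the interior optimum produces the random coding exponent \Cref{eqn:ee-high-ld}; this happens precisely when $R \ge R_{\crit,L-1}(\snr)$, the threshold defined in \Cref{eqn:def-rcrit} being exactly the rate at which the unconstrained maximizer hits $\rho = 1$. When the unconstrained maximizer lies at $\rho \ge 1$, the boundary $\rho = 1$ is optimal and produces the expression \Cref{eqn:ee-mid-ld}. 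Parts \ref{itm:awgn-rce1} and \ref{itm:awgn-rce2} correspond, respectively, to these two regimes.

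The principal obstacle will be algebraic: matching the closed forms in \Cref{eqn:ee-high-ld,eqn:ee-mid-ld} requires solving quadratic equations in the optimal $s$ (and the optimal $\rho$ in the interior regime), whose discriminants give rise to the radicals $\sqrt{1 + 4e^{2R}/(\snr(e^{2R}-1))}$ and $\sqrt{(L-\snr)^2 + 4\snr}$ appearing in the statement. Care will be needed to select the correct root, verify feasibility ($s < 1/(2P)$) throughout, and check that the boundary of the interior regime precisely matches $R_{\crit,L-1}(\snr)$ so that the two bounds agree at the transition. Specializing to $L = 2$ should recover Gallager's classical random coding and sphere-packing-type expressions, which will provide a useful sanity check.
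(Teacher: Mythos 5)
Your proposal follows essentially the same route as the paper's proof: specialize \Cref{thm:ee-cts-ip-constr} to the AWGN kernel with Gaussian input density and cost $f(x)=x^2-P$, argue via a CLT/concentration estimate that $Z$ is only polynomially small so the truncation factor is exponentially negligible, evaluate the two nested Gaussian integrals in closed form, and then optimize the resulting exponent by solving the stationarity condition in $s$ first and in $\rho$ (equivalently $\gamma=1+(L-1)\rho$) second, with $R_{\crit,L-1}(\snr)$ arising exactly as the rate at which the unconstrained optimizer hits the boundary $\rho=1$ and the two regimes yielding \Cref{eqn:ee-high-ld,eqn:ee-mid-ld} respectively. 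The plan, including the feasibility check $s<1/(2P)$ and the $L=2$ sanity check against Gallager, matches the paper's argument.
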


For an AWGN channel with input constraint $P$ and noise variance $ \sigma^2 $, 
the channel transition kernel is given by 
\begin{align}
W_{\bfy|\bfx}(y|x) &= \frac{1}{\sqrt{2\pi\sigma^2}} e^{-\frac{(y - x)^2}{2\sigma^2}}, \label{eqn:awgn-kernel}
\end{align}
and the cost function is given by 
\begin{align}
f(x) = x^2 - P. \label{eqn:cost-fn-awgn}
\end{align}
Let $ P_\bfx $ be the Gaussian density with variance $P$:
\begin{align}
P_\bfx(x) &= \frac{1}{\sqrt{2\pi P}}e^{-\frac{x^2}{2P}}. \label{eqn:gauss-density-p}
\end{align}

For a constant $ \delta>0 $, we claim that the factor $ (e^{s\delta}/Z)^{1+(L-1)\rho} $ that appears in \Cref{eqn:gallager-cts-constrained-rce} scales like $ \poly(n) $ for asymptotically large $n$ and therefore does not effectively contribute to the exponent. 
Indeed, the following lemma holds. 
\begin{lemma}
\label{lem:bound-z}
Let $ P,\sigma,\delta>0 $ be constants. 
Let $ P_\bfx $ be the Gaussian density with variance $P$ as defined in \Cref{eqn:gauss-density-p}. 
Let $ f(x) \coloneqq x^2 - P $. 
Let $ Z $ be defined by \Cref{eqn:def-z}. 
Then
$Z \stackrel{n\to\infty}{\asymp} \frac{\delta}{2P\sqrt{\pi n}}$. 
\end{lemma}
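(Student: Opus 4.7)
\medskip

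\noindent\textbf{Proof plan for \Cref{lem:bound-z}.}

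The plan is to recognize $Z$ as a probability under the product Gaussian measure and then apply a local limit argument. Let $\vbfx$ have i.i.d.\ $\cN(0,P)$ coordinates and set $S_n \coloneqq \sum_{i=1}^n \vbfx(i)^2$. Since $f(x) = x^2 - P$, the indicator constraint $\sum_i f(\vx(i)) \in [-\delta, 0]$ is equivalent to $S_n \in [nP - \delta,\, nP]$, so
\begin{align*}
Z &= \prob{S_n \in [nP - \delta,\, nP]} \;=\; \int_{nP-\delta}^{nP} f_{S_n}(s)\,\diff s,
\end{align*}
where $f_{S_n}$ denotes the density of $S_n$. Since $S_n/P \sim \chi^2_n$, the density is available in closed form.

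The first step is to evaluate $f_{S_n}$ at its mean: using the chi-squared density
\[
f_{S_n}(s) \;=\; \frac{1}{P\cdot 2^{n/2}\Gamma(n/2)}\,(s/P)^{n/2-1}\,e^{-s/(2P)},
\]
substituting $s = nP$, and applying Stirling's formula $\Gamma(n/2)\asymp\sqrt{4\pi/n}\,(n/(2e))^{n/2}$, one obtains $f_{S_n}(nP) \asymp \tfrac{1}{2P\sqrt{\pi n}}$ after a straightforward cancellation of the $n^{n/2}$ and exponential factors. This matches the expected value $\frac{1}{\sqrt{2\pi\cdot 2nP^2}}$ predicted by the central limit theorem, since $\var(S_n) = 2nP^2$.

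The second step is to show that $f_{S_n}$ is essentially constant on the integration window $[nP-\delta,\,nP]$. For any $s$ in this window one has $s/(nP) = 1 + O(1/n)$, so
\[
\frac{f_{S_n}(s)}{f_{S_n}(nP)} \;=\; (s/(nP))^{n/2-1}\, e^{-(s-nP)/(2P)} \;=\; 1 + O(1/\sqrt{n}),
\]
where the bound comes from expanding $(n/2-1)\ln(s/(nP))$ to second order in $(s-nP)/(nP) = O(1/n)$. Integrating over an interval of length $\delta$ (constant in $n$) then yields
\[
Z \;=\; \delta\cdot f_{S_n}(nP)\,(1 + o(1)) \;\asymp\; \frac{\delta}{2P\sqrt{\pi n}},
\]
as claimed.

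The only mildly delicate point is controlling the ratio $f_{S_n}(s)/f_{S_n}(nP)$ uniformly on $[nP-\delta, nP]$; I would expect no real obstacle since $\delta$ is a fixed constant while the natural scale of variation of $f_{S_n}$ around its mean is $\sqrt{n}$, so the ratio converges uniformly to $1$. One could alternatively bypass Stirling entirely by invoking a local CLT for the sum of i.i.d.\ squared Gaussians, but the direct chi-squared computation above is cleaner and gives the exact constant.
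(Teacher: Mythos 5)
Your proposal is correct and arrives at the right constant, but it takes a more hands-on route than the paper. The paper's proof also reduces $Z$ to a chi-squared probability, writing $Z = \prob{\frac{-\delta}{P\sqrt{2n}}\le \frac{\chi^2(n)-n}{\sqrt{2n}}\le 0}$, but then invokes the central limit theorem to replace the normalized chi-squared by $\cN(0,1)$ and approximates the Gaussian measure of the shrinking interval by a rectangle of width $\frac{\delta}{P\sqrt{2n}}$ and height $\frac{1}{\sqrt{2\pi}}$. Strictly speaking, applying the CLT to an interval whose width decays like $1/\sqrt{n}$ is a local-limit-theorem step rather than a consequence of convergence in distribution (Berry--Esseen error is of the same order as the probability being estimated), so the paper's argument is slightly informal at that point. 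Your approach --- evaluating the exact chi-squared density at the mean via Stirling and showing the density is flat to first order across the constant-width window --- sidesteps this entirely and is the more rigorous of the two; the price is a short explicit computation with $\Gamma(n/2)$ that the paper avoids. Both yield $Z\asymp\frac{\delta}{2P\sqrt{\pi n}}$, and your sanity check against the CLT-predicted density $\frac{1}{\sqrt{2\pi\cdot 2nP^2}}$ confirms consistency with the paper's heuristic.
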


\begin{proof}
The proof follows from the central limit theorem. 
\begin{align}
Z &= \int_{\bR^n} P_\bfx^\tn(\vx) \indicator{-\delta\le\sum_{i = 1}^n(\vx(i)^2 - P)\le0}\diff \vx \notag \\
&= \prob{-\delta\le P\paren{\sum_{i = 1}^n\cN_i(0,1)^2 - n}\le0} \notag \\
&= \prob{\frac{-\delta}{P\sqrt{2n}}\le \frac{\chi^2(n) - n}{\sqrt{2n}}\le0} \notag \\
&\stackrel{n\to\infty}{\asymp} \prob{\frac{-\delta}{P\sqrt{2n}}\le \cN(0,1) \le0} \label{eqn:clt-chi-square} \\
&\stackrel{n\to\infty}{\asymp} \frac{\delta}{P\sqrt{2n}}\cdot\frac{1}{\sqrt{2\pi}} \label{eqn:rectangle} \\
&= \frac{\delta}{2P\sqrt{\pi n}}. \notag 
\end{align}
\Cref{eqn:clt-chi-square} follows since $ \frac{\chi^2(n) - n}{\sqrt{2n}} $ converges to $ \cN(0,1) $ in distribution as $ n\to\infty $. 
\Cref{eqn:rectangle} follows since the Gaussian measure of a thin interval $ \sqrbrkt{-\frac{\delta}{P\sqrt{2n}},0} $ is essentially the area of a rectangle with width $ \frac{\delta}{P\sqrt{2n}} $ and height $ P_\bfx(0) = 1/\sqrt{2\pi} $ for asymptotically large $n$. 
\end{proof}

We are now ready to evaluate the random coding bound (\Cref{eqn:gallager-cts-constrained-rce}) on the probability of the $(L-1)$-list-decoding error of AWGN channels with input constraint $P$ and noise variance $ \sigma^2 $. 
\begin{proof}[Proof of \Cref{thm:awgn-rce}]
The exponent (i.e., the probability of error normalized by $ -\frac{1}{n}\ln(\cdot) $) given by \Cref{eqn:gallager-cts-constrained-rce} specializes to 
\begin{align}
-R(L-1)\rho-\ln\sqrbrkt{\int_\bR \paren{\int_\bR \frac{1}{\sqrt{2\pi P}}e^{-\frac{x^2}{2P}} e^{s(x^2-P)}\paren{\frac{1}{\sqrt{2\pi\sigma^2}}e^{-\frac{(y - x)^2}{2\sigma^2}}}^{\frac{1}{1+(L-1)\rho}} \diff x}^{1+(L-1)\rho} \diff y}. \notag 
\end{align}

For notational convenience, let $ \gamma\coloneqq1+(L-1)\rho $. 
We first compute the inner integral
\begin{align}
I(y) &\coloneqq \int_\bR \frac{1}{\sqrt{2\pi P}}\frac{1}{\sqrt{2\pi\sigma^2}^{1/\gamma}} \exp\paren{-\frac{x^2}{2P} + s(x^2 - P) - \frac{(y - x)^2}{2\sigma^2\gamma}} \diff x \notag \\
&= \int_\bR \frac{1}{\sqrt{2\pi P}}\frac{1}{\sqrt{2\pi\sigma^2}^{1/\gamma}} \exp\paren{\paren{-\frac{1}{2P} + s - \frac{1}{2\sigma^2\gamma}}x^2 + \frac{y}{\sigma^2\gamma}x - sP - \frac{1}{2\sigma^2\gamma}y^2} \diff x, \notag
\end{align}
which is a Gaussian integral.
We let 
\begin{align}
a &\coloneqq \frac{1}{2P} - s + \frac{1}{2\sigma^2\gamma}, \quad
b \coloneqq \frac{y}{\sigma^2\gamma} \quad
c \coloneqq -sP - \frac{1}{2\sigma^2\gamma}y^2, \notag
\end{align}
and 
\begin{align}
A &\coloneqq \sqrt{\frac{\pi}{a}} 
= \sqrt{\pi\paren{\frac{1}{2P} - s + \frac{1}{2\sigma^2\gamma}}^{-1}}
= \sqrt{\frac{2\pi P\sigma^2\gamma}{\sigma^2\gamma(1 - 2sP) + P}}. \notag 
\end{align}
By \Cref{lem:gauss-int-1d}, the above integral $ I(y) $ equals
\begin{align}
A\cdot\frac{1}{\sqrt{2\pi P}}\frac{1}{\sqrt{2\pi\sigma^2}^{1/\gamma}} \exp\paren{\frac{b^2}{4a} + c}
&= A\cdot\frac{1}{\sqrt{2\pi P}}\frac{1}{\sqrt{2\pi\sigma^2}^{1/\gamma}} \exp\paren{\frac{y^2}{\sigma^4\gamma^2\cdot4\cdot\paren{\frac{1}{2P} - s + \frac{1}{2\sigma^2\gamma}}} - sP - \frac{y^2}{2\sigma^2\gamma}} \notag \\
&= A\cdot\frac{1}{\sqrt{2\pi P}}\frac{1}{\sqrt{2\pi\sigma^2}^{1/\gamma}} \exp\paren{-\frac{1-2sP}{2(\sigma^2\gamma(1-2sP) + P)}y^2 - sP}. \notag 
\end{align}

We then compute the outer integral: 
\begin{align}
\int_\bR I(y)^\gamma \diff y
&= \int_\bR A^\gamma \frac{1}{\sqrt{2\pi P}^\gamma} \frac{1}{\sqrt{2\pi\sigma^2}} \exp\paren{-\frac{(1-2sP)\gamma}{2(\sigma^2\gamma(1-2sP) + P)}y^2 - sP\gamma} \diff y. \notag
\end{align}
We get again a Gaussian integral. 
Let
\begin{align}
a' &\coloneqq \frac{(1-2sP)\gamma}{2(\sigma^2\gamma(1-2sP) + P)}, \quad
b' \coloneqq 0, \quad
c' \coloneqq -sP\gamma. \notag 
\end{align}
By \Cref{lem:gauss-int-1d}, 
\begin{align}
\int_\bR I(y)^\gamma \diff y 
&= A^\gamma \frac{1}{\sqrt{2\pi P}^\gamma} \frac{1}{\sqrt{2\pi\sigma^2}} \cdot \sqrt{\frac{\pi}{a'}} e^{c'} \notag \\
&= \sqrt{\frac{2\pi P\sigma^2\gamma}{\sigma^2\gamma(1 - 2sP) + P}}^\gamma \frac{1}{\sqrt{2\pi P}^\gamma} \frac{1}{\sqrt{2\pi\sigma^2}} \sqrt{\pi\frac{2(\sigma^2\gamma(1-2sP) + P)}{(1-2sP)\gamma}} e^{-sP\gamma}. \notag 
\end{align}

With this, the random coding exponent becomes
\begin{align}
E(s,\gamma) &\coloneqq -R(L-1)\rho - \ln\sqrbrkt{\sqrt{\frac{2\pi P\sigma^2\gamma}{\sigma^2\gamma(1 - 2sP) + P}}^\gamma \frac{1}{\sqrt{2\pi P}^\gamma} \frac{1}{\sqrt{2\pi\sigma^2}} \sqrt{\pi\frac{2(\sigma^2\gamma(1-2sP) + P)}{(1-2sP)\gamma}} e^{-sP\gamma}} \notag \\
&= -R(\gamma - 1) + \frac{\gamma - 1}{2}\ln\paren{1 - 2sP + \frac{P}{\sigma^2\gamma}} + \frac{1}{2}\ln(1-2sP) + sP\gamma. \label{eqn:rce-s-gamma} 
\end{align}
For the above bound to be valid, we need $ s<\frac{1}{2P} $. 

Recall that $ s\ge0,\rho\in[0,1] $ and $ \gamma = 1+(L-1)\rho $. 
We need to maximize $ E(s,\gamma) $ in the region $ s\in[0,1/(2P)],\gamma\in[1,L] $. 
To this end, we compute the stationary $ s $ and $\gamma$. 
\begin{align}
\frac{\partial}{\partial s}E(s,\gamma) &= P\paren{- \frac{\gamma - 1}{1-2sP + \frac{P}{\sigma^2\gamma}} + \frac{1}{1-2sP} + \gamma} = 0, \label{eqn:partial-s} \\
\frac{\partial}{\partial \gamma}E(s,\gamma) &= -R + \frac{1}{2}\ln\paren{1-2sP + \frac{P}{\sigma^2\gamma}} - \frac{P(\gamma - 1)}{2\gamma(P+\sigma^2\gamma(1-2sP))} + sP. \label{eqn:partial-gamma}
\end{align}

Let $ \snr\coloneqq P/\sigma^2 $ denote the signal-to-noise ratio (SNR). 
Solving $s$ from \Cref{eqn:partial-s}, we get
\begin{align}
s &= \frac{1}{4P}\paren{1 + \frac{\snr}{\gamma} - \frac{1}{\gamma}\sqrt{(\gamma - \snr)^2+4\snr}}. \label{eqn:stationary-s}
\end{align}
One can easily check that $ s\ge0 $ provided $ \gamma\ge1 $. 
Furthermore, $ s<\frac{1}{2P} $. 

Putting \Cref{eqn:stationary-s} into \Cref{eqn:partial-gamma} and solving $\gamma$ therein, we get
\begin{align}
\gamma &= \frac{\snr}{2e^{2R}}\paren{1 + \sqrt{1 + \frac{4\snr}{\snr(e^{2R} - 1)}}}. \label{eqn:stationary-gamma}
\end{align}
It can be easily verified that $ \gamma\ge1 $ for any $ R\le\frac{1}{2}\ln(1+\snr) $. 

Suppose $ \gamma\le L $. Then the minimum value of $ E(s,\gamma) $ is indeed achieved at the above $\gamma$ given by \Cref{eqn:stationary-gamma}. 
Note that the condition $ \gamma\le L $ is equivalent to 
\begin{align}
R &\ge \frac{1}{2}\ln\paren{\frac{1}{2} + \frac{\snr}{2L} + \frac{1}{2}\sqrt{1 - \frac{2(L-2)}{L^2}\snr + \frac{\snr^2}{L^2}}}. \label[ineq]{eqn:awgn-r-crit}
\end{align}
Substituting the stationary $ \gamma $ (\Cref{eqn:stationary-gamma}) back to \Cref{eqn:stationary-s}, we get the stationary $s$ as a function of only $ \snr $ and $R$. 
Note that here $s$ and $\gamma$ do not depend on $L$. 
Therefore the calculations in this case coincide with those for unique-decoding case as done in \cite[Theorem 7.4.4]{gallager} and we omit the details. 
Putting both $ s $ and $\gamma$ into \Cref{eqn:rce-s-gamma}, we finally get the random coding exponent
\begin{align}
\min_{s\in[0,1/(2P)], \gamma\in[1,L]} E(s,\gamma)
&= \frac{1}{2}\ln\sqrbrkt{e^{2R} - \frac{\snr(e^{2R} - 1)}{2}\paren{\sqrt{1+\frac{4e^{2R}}{\snr(e^{2R} - 1)}} - 1}} \notag \\
&\quad + \frac{\snr}{4e^{2R}}\paren{e^{2R} + 1 - (e^{2R} - 1)\sqrt{1+\frac{4e^{2R}}{\snr(e^{2R} - 1)}}}. \notag 
\end{align}
This proves \Cref{itm:awgn-rce1} in \Cref{thm:awgn-rce}. 

On the other hand, if the $ \gamma $ given by \Cref{eqn:stationary-gamma} is larger than $L$, i.e., \Cref{eqn:awgn-r-crit} holds in the reverse direction, then the minimum value of $ E(s,\gamma) $ is achieved at $ \gamma = L $. 
In this case, $s$ given by \Cref{eqn:stationary-s} becomes
\begin{align}
s &= \frac{1}{4P}\paren{1 + \frac{\snr}{L} - \frac{1}{L}\sqrt{(L-\snr)^2 + 4\snr}}, \label{eqn:crit-s-2}
\end{align}
and the minimum value of $ E(s,\gamma) $ is achieved at $ \gamma = L $ and the $ s $ given by \Cref{eqn:crit-s-2}:
\begin{align}
\min_{s\in[0,1/(2P)],\gamma\in[1,L]} E(s,\gamma)
&= -R(L-1) + \frac{L-1}{2}\ln\paren{L + \snr + \sqrt{(L-\snr)^2 + 4\snr}} \notag \\
&\quad + \frac{1}{2}\ln\paren{L-\snr + \sqrt{(L-\snr)^2 + 4\snr}} \notag \\
&\quad + \frac{1}{4}\paren{L + \snr - \sqrt{(L-\snr)^2 + 4\snr}} - \frac{L}{2}\ln(2L). \notag
\end{align}
This proves \Cref{itm:awgn-rce2} in \Cref{thm:awgn-rce}. 
\end{proof}

\subsection{Expurgated exponent for AWGN channels with input constraints}
\label{sec:awgn-exe}
We proceed to evaluate the expurgated exponent (\Cref{eqn:gallager-cts-constrained-exe}) in the case of AWGN channels with input constraint $P$ and noise variance $ \sigma^2 $ under $ (L-1) $-list-decoding. 
We prove the following theorem. 
\begin{theorem}
\label{thm:awgn-exe}
Let $ P,\sigma>0 $ and $ L\in\bZ_{\ge2} $. 
Consider an AWGN channel with input constraint $ P $ and noise variance $ \sigma^2 $. 
Let $ \snr\coloneqq P/\sigma^2 $. 
Let $ R_{\x,L-1}(\snr) $ be defined by \Cref{eqn:def-rx}. 
Then there exist codes of rate $ 0 \le R \le R_{\x,L-1}(\snr) $ for the above channel such that the exponent $ E_{L-1}(R,\snr) $ of the probability of error (normalized by $ \lim\limits_{n\to\infty}-\frac{1}{n}\ln(\cdot) $) under $(L-1)$-list-decoding is bounded as follows: 
\begin{align}
E_{L-1}(R,\snr) &\ge \frac{\snr(Lt - 1)}{2Lt}, \label{eqn:awgn-exe} 
\end{align}
where $ t $ is the unique solution of $ (Lt - 1)e^{2R} = (L-1)t^{\frac{L}{L-1}} $ in $ t\in[1/L,1] $. 
\end{theorem}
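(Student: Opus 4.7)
The plan is to specialize the general expurgated bound of Theorem \ref{thm:ee-cts-ip-constr} to the AWGN setting, evaluate the resulting Gaussian integrals in closed form, and then optimize the Gallager parameters $s$ and $\rho$ via a clean substitution that reveals $t = 1-2Ps$ as the right variable. First I would apply Theorem \ref{thm:ee-cts-ip-constr} with the Gaussian input density $P_\bfx(x) = (2\pi P)^{-1/2}e^{-x^2/(2P)}$, quadratic cost $f(x) = x^2 - P$, and AWGN kernel $W_{\bfy|\bfx}(y|x) = (2\pi\sigma^2)^{-1/2}\exp(-(y-x)^2/(2\sigma^2))$. The inner integral $\int_{\bR} \prod_{i=0}^{L-1} W_{\bfy|\bfx}(y|x_i)^{1/L}\,\mathrm{d} y$ factors via the identity $\sum_i(y-x_i)^2 = L(y-\bar x)^2 + \sum_i(x_i-\bar x)^2$ with $\bar x = L^{-1}\sum_i x_i$; Gaussian integration in $y$ gives $\exp\bigl(-\tfrac{1}{2L\sigma^2}\sum_i(x_i-\bar x)^2\bigr)$. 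Raising this to the $1/\rho$ power and integrating against $\prod_k P_\bfx(x_k)e^{s(x_k^2-P)}$ yields an $L$-dimensional Gaussian integral whose quadratic form matrix $(1/P-2s)I + (L\sigma^2\rho)^{-1}(I - J/L)$ has eigenvalue $1/P - 2s$ along the all-ones direction and $1/P - 2s + 1/(L\sigma^2\rho)$ with multiplicity $L-1$ on the orthogonal complement; combined with the normalizing factor $(e^{s\delta}/Z)^{L\rho}$, which Lemma \ref{lem:bound-z} shows grows only polynomially in $n$, this produces $P_{\e,\avg,L-1}^{\mathrm{ML}}(\cC) \le e^{nF(s,\rho)+o(n)}$ with $F$ exactly as in \eqref{eqn:def-f-s-rho}, thereby recovering the intermediate bound of Theorem \ref{thm:ee-ub}.

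Next I would carry out the optimization $E_{\ex,L-1}(R,\snr) = -\min_{s\ge 0,\,\rho\ge 1} F(s,\rho)$ in closed form. Setting $\beta \coloneqq 1-2Ps$ and $\mu \coloneqq \beta + \snr/(L\rho)$, the stationarity condition $\partial F/\partial s = 0$ yields $L = 1/\beta + (L-1)/\mu$, which gives $\mu = (L-1)\beta/(L\beta-1)$ and hence $\rho = \snr(L\beta-1)/(L^2\beta(1-\beta))$. Substituting back, the log and non-log terms in $F$ conspire to cancel, collapsing $F$ to the clean form $F = -\snr(L\beta-1)/(2L\beta)$, while the remaining condition $\partial F/\partial\rho = 0$ reduces to $2R = \ln((L-1)\beta/(L\beta-1)) + (L-1)^{-1}\ln\beta$. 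Writing $t \coloneqq \beta$ and exponentiating gives the theorem's rate equation $(Lt-1)e^{2R} = (L-1)t^{L/(L-1)}$, and back-substitution yields $E_{\ex,L-1}(R,\snr) = \snr(Lt-1)/(2Lt)$.

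The main obstacle will be twofold. First, the algebraic collapse in which the logarithms and the linear term of $F$ cancel to leave $-\snr(Lt-1)/(2Lt)$ must be verified carefully --- this is where the expurgated expression becomes genuinely simpler than its random-coding counterpart. Second, one must pin down the admissible range of $t$: the constraint $s \ge 0$ translates to $t \le 1$, while $\rho \ge 1$ becomes the quadratic inequality $L^2 t(1-t) \le \snr(Lt-1)$, forcing $t \ge \beta^\dagger \coloneqq (L-\snr + \sqrt{(L-\snr)^2 + 4\snr})/(2L) \in [1/L,1]$. Plugging $t = \beta^\dagger$ into the rate equation and simplifying via the identity $(L+\snr+\Delta)(L-\snr+\Delta) = 2L(L-\snr+\Delta) + 4\snr$, where $\Delta = \sqrt{L^2 + \snr^2 - 2\snr(L-2)}$, recovers exactly $R_{\x,L-1}(\snr)$ of \eqref{eqn:def-rx}, so sweeping $t \in [\beta^\dagger, 1]$ parameterizes the rate range $[0, R_{\x,L-1}(\snr)]$ asserted by the theorem.
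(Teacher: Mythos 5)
Your proposal is correct and follows essentially the same route as the paper: specialize the general expurgated bound to the AWGN kernel with Gaussian input and quadratic cost, evaluate the $L$-dimensional Gaussian integral (you diagonalize the quadratic form via its $J$-matrix eigenvalues where the paper uses the matrix determinant lemma — an equivalent computation), arrive at the objective $F(s,\rho)$ of \Cref{eqn:def-f-s-rho}, and then solve the stationarity conditions via the substitution $t=1-2Ps$, with the constraint $\rho\ge1$ delimiting the rate range up to $R_{\x,L-1}(\snr)$. The intermediate identities you state (the formula for $\rho$ in terms of $\beta$, the collapse of $F$ to $-\snr(L\beta-1)/(2L\beta)$, and the boundary value $\beta^\dagger$) all match the paper's \Cref{eqn:rho-expr,eqn:crit-f-s-rho,eqn:rho-ge-one-t}.
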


\begin{proof}
For the channel of interest, the channel transition kernel $ W_{\bfy|\bfx} $, the cost function $ f $ and the input distribution $ P_\bfx $ are given by \Cref{eqn:cost-fn-awgn,eqn:gauss-density-p,eqn:awgn-kernel}, respectively. 
For a constant $ \delta>0 $, by \Cref{lem:bound-z}, the factor $ 2^{L\rho}\paren{\frac{e^{s\delta}}{Z}}^{L\rho} $ is subexponential in $n$ and does not play a role in the exponent. 
Therefore, the exponent of \Cref{eqn:gallager-cts-constrained-exe} specializes to 
\begin{align}
& R (L-1)\rho + \rho\ln\sqrbrkt{
	\int_{\bR^L} \paren{e^{s\sum_{k = 0}^{L-1}x_k^2 - sLP}\frac{1}{\sqrt{2\pi P}^L}e^{-\frac{\sum_{k = 0}^{L-1}x_k^2}{2P}}}
	\paren{\int_\bR \frac{1}{\sqrt{2\pi\sigma^2}}e^{-\frac{\sum_{k = 0}^{L-1}(y - x_k)^2}{2\sigma^2L}} \diff y}^{1/\rho}
	\diff(x_0,\cdots,x_{L-1})
} . \label{eqn:gallager-awgn} 
\end{align}
The inner integral w.r.t.\ $y$ in \Cref{eqn:gallager-awgn} is a Gaussian integral and can be computed as follows using \Cref{lem:gauss-int-1d}. 
\begin{align}
\int_\bR \frac{1}{\sqrt{2\pi\sigma^2}} \exp\paren{-\frac{1}{2\sigma^2L}\sum_{i = 0}^{L-1}(y - x_i)^2}\diff y 
&= \frac{1}{\sqrt{2\pi\sigma^2}} \int_\bR 
\exp\paren{-\frac{1}{2\sigma^2}y^2 + \frac{1}{\sigma^2L}y\sum_{i = 0}^{L-1}x_i - \frac{1}{2\sigma^2L}\sum_{i = 0}^{L-1}x_i^2} \diff y \notag \\
&= \exp\paren{\frac{1}{2\sigma^2L^2}\paren{\paren{\sum_{i = 0}^{L-1}x_i}^2 - L\sum_{i = 0}^{L-1}x_i^2}} \notag \\
&= \exp\paren{\frac{1}{2\sigma^2L^2}\paren{\sum_{0\le i\ne j\le L-1}x_ix_j - (L-1)\sum_{i = 0}^{L-1}x_i^2}}. \notag 
\end{align}
Now the $L$-dimensional integral inside the logarithm in \Cref{eqn:gallager-awgn} equals
\begin{align}
& \int_{\bR^L} \frac{1}{\sqrt{2\pi P}^L} \exp\paren{s\sum_{i = 0}^{L-1}x_i^2 - sLP - \frac{1}{2P}\sum_{i = 0}^{L-1}x_i^2 + \frac{1}{2\sigma^2L^2\rho}\paren{\sum_{0\le i\ne j\le L-1}x_ix_j - (L-1)\sum_{i = 0}^{L-1}x_i^2}} \diff(x_0,\cdots,x_{L-1}) \notag \\
&= \frac{e^{-sLP}}{\sqrt{2\pi P}^L} 
\int_{\bR^L}
\exp\paren{\paren{s - \frac{1}{2P} - \frac{L-1}{2\sigma^2L^2\rho}}\sum_{i = 0}^{L-1}x_i^2 + \frac{1}{2\sigma^2L^2\rho}\sum_{0\le i\ne j\le L-1}x_ix_j}
\diff(x_0,\cdots,x_{L-1}) \notag \\
&= \frac{e^{-sLP}}{\sqrt{2\pi P}^L} 
\int_{\bR^L}\exp\paren{-\vec x^\top A\vec x} \diff \vec x, \label{eqn:gauss-int-comp} 
\end{align}
where $ \vec x = [x_0,\cdots,x_{L-1}]\in\bR^{L} $ and $ A\in\bR^{L\times L} $ is a matrix with all diagonal entries equal to 
\begin{align}
a \coloneqq& \frac{1}{2P} + \frac{L-1}{2\sigma^2L^2\rho} - s \notag 
\end{align}
and all off-diagonal entries equal to 
\begin{align}
b \coloneqq& -\frac{1}{2\sigma^2L^2\rho}. \notag 
\end{align}

By \Cref{lem:gauss-int}, the RHS of \Cref{eqn:gauss-int-comp} equals
\begin{align}
\frac{e^{-sLP}}{\sqrt{2\pi P}^L} \sqrt{\frac{\pi^L}{\det(A)}}
= \frac{e^{-sLP}}{\sqrt{2P}^L\sqrt{\det(A)}} . \label{eqn:to-log}
\end{align}
To compute $ \det(A) $, we note that 
$ A = (a-b)I_L + (\sqrt{-b}\vec\one_L)(-\sqrt{-b}\vec\one_L)^\top $ where $ \vec\one_L $ denotes the all-one vector of length $L$. 

\begin{lemma}[Matrix determinant lemma]
\label{lem:sherman-morrison}
Let $ A\in\bR^{n\times n} $ be a non-singular matrix and let $ \vu,\vv\in\bR^n $. 
Then
\begin{align}
\det\paren{A + \vu\vv^\top} &= \paren{1 + \vv^\top A^{-1}\vu}\det(A). \notag 
\end{align}
\end{lemma}

By \Cref{lem:sherman-morrison}, we have
\begin{align}
\det(A) &= \sqrbrkt{1 + (-\sqrt{-b}\vec\one_L)^\top\paren{(a-b)I_L}^{-1}(\sqrt{-b}\vec\one_L)} \det\paren{(a-b)I_L} \notag \\
&= \paren{1+\frac{b}{a-b}L}(a-b)^L \notag \\
&= (a + (L-1)b)(a-b)^{L-1} \notag \\
&= \paren{\frac{1}{2P} - s}\paren{\frac{1}{2P} + \frac{1}{2\sigma^2L\rho} - s}^{L-1}. \notag 
\end{align}
Therefore, the (natural) logarithm of the RHS of \Cref{eqn:to-log} equals
\begin{align}
& -sLP - \frac{L}{2}\ln(2P) - \frac{1}{2}\ln\paren{\frac{1}{2P} - s} - \frac{L-1}{2}\ln\paren{\frac{1}{2P} + \frac{1}{2\sigma^2L\rho} - s} \notag \\
&= -\paren{ sLP + \frac{1}{2}\ln(1-2sP) + \frac{L-1}{2}\ln\paren{1-2sP + \frac{P}{\sigma^2L\rho}} }. \notag 
\end{align}
Plugging the above expression back to \Cref{eqn:gallager-awgn}, we see that
to get the largest error exponent, we need to minimize the following expression over $ s\ge0 $ and $ \rho\ge1 $. 
\begin{align}
R (L-1)\rho - \rho\sqrbrkt{sLP + \frac{1}{2}\ln(1-2sP) + \frac{L-1}{2}\ln\paren{1-2sP + \frac{P}{\sigma^2L\rho}}}. \label{eqn:exprg-ee-to-opt} 
\end{align}
From the calculations in \Cref{sec:lb-ee-together}, one can obtain an expression of the solution to the above minimization problem. 
Specifically, negating \Cref{eqn:exprg-ee-to-opt}, by \Cref{eqn:crit-f-s-rho}, we know that the maximum value equals 
\begin{align}
\frac{P(L(1-2Ps) - 1)}{2L\sigma^2(1-2Ps)}
&= \frac{\snr(Lt - 1)}{2Lt}, \label{eqn:awgn-ex-ee} 
\end{align}
where $ t\coloneqq1-2Ps $. 
Recall that $ 0\le s\le\frac{1-1/L}{2P} $ satisfies \Cref{eqn:r-expr} which can be rewritten in term of $t$ as
\begin{align}
R = \frac{1}{2}\sqrbrkt{\ln\frac{(L-1)t}{Lt-1} + \frac{1}{L-1}\ln t}. \label{eqn:r-expr-t}
\end{align}
Equivalently, $t$ is the unique solution of the equation $(Lt - 1)e^{2R} = (L-1)t^{\frac{L}{L-1}}$ in $ t\in[1/L,1] $,

\Cref{eqn:awgn-ex-ee} is valid whenever $ \rho\ge1 $. 
Recall the relation between $\rho$ and $s$ (\Cref{eqn:rho-expr}). 
We rewrite it in terms of $t$:
\begin{align}
\rho &= \frac{(L-1)+L(t-1)}{2L^2\cdot\frac{1-t}{2P}\cdot t\cdot\sigma^2} = \frac{(Lt - 1)\snr}{L^2(1-t)t}. \notag 
\end{align}
By the above relation between $\rho$ and $t$, the condition $ \rho\ge1 $ is equivalent to 
\begin{align}
t &\ge \frac{L-\snr + \sqrt{L^2+\snr^2 - 2\snr(L-2)}}{2L}. \label[ineq]{eqn:rho-ge-one-t} 
\end{align}
Plugging the RHS of \Cref{eqn:rho-ge-one-t} to \Cref{eqn:r-expr-t}, the condition $ \rho\ge1 $ is further equivalent to 
\begin{align}
R &\le \frac{1}{2}\paren{\ln\frac{\sqrt{L^2+\snr^2 - 2\snr(L-2)} + L + \snr}{2L} + \frac{1}{L-1}\ln\frac{\sqrt{L^2+\snr^2 - 2\snr(L-2)} + L - \snr}{2L}}, \notag
\end{align}
the RHS of which is defined as $ R_{\x,L-1}(\snr) $. 
We conclude that the error exponent given by the RHS of \Cref{eqn:awgn-ex-ee} can be achieved for any $ R\le R_{\x,L-1}(\snr) $. 
\end{proof}

\subsection{List-decoding error exponents vs.\ unique-decoding error exponents}
\label{sec:ld-ee-vs-ud-ee}
Our bounds on the list-decoding error exponent of AWGN channels recover Gallager's results \cite{gallager-1965-simple-deriv}, \cite[Theorem 7.4.4]{gallager} for unique-decoding. 
Indeed, when $ L = 2 $, \Cref{eqn:def-rx,eqn:def-rcrit} become
\begin{align}
R_{\x,1}(\snr) &= \frac{1}{2}\paren{\ln\frac{\sqrt{4+\snr^2} + 2 + \snr}{4} + \ln\frac{\sqrt{4+\snr^2} + 2 - \snr}{4}} = \frac{1}{2}\ln\paren{\frac{1}{2} + \frac{1}{2}\sqrt{1+\frac{\snr^2}{4}}} , \label{eqn:rx-ud} \\
R_{\crit,1}(\snr) &= \frac{1}{2}\ln\paren{\frac{1}{2} + \frac{\snr}{4} + \frac{1}{2}\sqrt{1 + \frac{\snr^2}{4}}} , \label{eqn:rcrit-ud} 
\end{align}
and the random coding exponent in \Cref{thm:awgn-rce} specializes to 
\begin{align}
E_1(R,\snr) &\ge \frac{1}{2}\ln\sqrbrkt{e^{2R} - \frac{\snr(e^{2R} - 1)}{2}\paren{\sqrt{1+\frac{4e^{2R}}{\snr(e^{2R} - 1)}} - 1}} \notag \\
&\quad + \frac{\snr}{4e^{2R}}\paren{e^{2R} + 1 - (e^{2R} - 1)\sqrt{1+\frac{4e^{2R}}{\snr(e^{2R} - 1)}}} \label{eqn:ee-high-ud} 
\end{align}
for $ R_{\crit,1}(\snr)\le R\le\frac{1}{2}\ln(1+\snr) $, and
\begin{align}
E_1(R,\snr) &= -R + \frac{1}{2}\ln\paren{2+\snr+\sqrt{4+\snr^2}} + \frac{1}{2}\ln\paren{2-\snr+\sqrt{4+\snr^2}}+\frac{1}{4}\paren{2+\snr - \sqrt{4+\snr^2}}-\ln4 \notag \\
&= -R + \frac{1}{2}\ln\paren{\frac{1}{2}+\frac{1}{2}\sqrt{1+\frac{\snr^2}{4}}}+\frac{1}{2}+\frac{\snr}{4}-\frac{1}{2}\sqrt{1+\frac{\snr^2}{4}} \label{eqn:ee-mid-ud}
\end{align}
for $ 0\le R\le R_{\crit,1}(\snr) $. 

As for the expurgated exponent, to evaluate the bound in \Cref{thm:awgn-exe}, we first solve $t\in[1/2,1]$ from the equation $ (2t-1)e^{2R} = t^2 $ and get
$$t = e^{2R}\paren{1 - \sqrt{1 - e^{-2R}}}.$$ 
Substituting $t$ in \Cref{eqn:awgn-exe} yields
\begin{align}
E_1(R,\snr) &\ge \frac{\snr(2t-1)}{4t} = \frac{\snr \cdot t^2e^{-2R}}{4t} = \frac{\snr \cdot te^{-2R}}{4} = \frac{\snr}{4}\paren{1 - \sqrt{1-e^{-2R}}}. \label{eqn:ee-low-ud} 
\end{align}

It has been long known that for DMCs and AWGN channels, list-decoding under any subexponential (in $n$) list-sizes does not increase the channel capacity. 
Interestingly, our results show that list-decoding under constant list-sizes does not increase the error exponent of capacity-achieving codes. 
Indeed, for any $ \snr>0 $ and any constant $ L\in\bZ_{\ge2} $, the error exponent remains the same under $(L-1)$-list-decoding for any $ R_{\x,1}(\snr)\le R\le\frac{1}{2}\ln(1+\snr) $. 
However, list-decoding does boost the error exponent for any $ 0\le R\le R_{\x,1}(\snr) $. 
In particular, the critical rates under list-decoding move, i.e., $ R_{\crit,L-1}(\snr)>R_{\crit,1}(\snr) $ and $ R_{\x,L-1}(\snr)>R_{\x,1}(\snr) $ for any $ L\in\bZ_{>2} $.

Gallager's exponents and our list-decoding error exponents (for $ L=3 $) are plotted in \Cref{fig:LDUDEEFig} for $ \snr = 1 $.

\section{List-decoding error exponents of AWGN channels without input constraints}
\label{sec:ld-ee-unbdd}

In this section, we obtain bounds on the $(L-1)$-list-decoding error exponent of an AWGN channel with \emph{no} input constraint and noise variance $ \sigma^2 $. 
An \emph{unbounded} code for such a channel contains codewords whose norm can be arbitrarily large. 
The rate of such a code is measured by \Cref{eqn:density-unbounded}. 

\subsection{Random coding exponent}
\label{sec:rce-ppp}

\begin{theorem}
\label{thm:rce-ppp}
For any $ \sigma>0,\alpha\ge1 $ and $ L\in\bZ_{\ge2} $, there exists an unbounded code $ \cC\subset\bR^n $ of rate $ R = \frac{1}{2}\ln\frac{1}{2\pi e\sigma^2\alpha^2} $ such that when used over an AWGN channel with noise variance $ \sigma^2 $ and no input constraint, the exponent of the average probability of $(L-1)$-list-decoding error of $ \cC $ (normalized by $ \lim\limits_{n\to\infty}-\frac{1}{n}\ln(\cdot) $) is at least $ E_{\mathrm{r},L-1}(\alpha) $ defined as
\begin{align}
E_{\mathrm{r},L-1}(\alpha) &= \begin{cases}
\frac{\alpha^2}{2} - \ln\alpha - \frac{1}{2}, &1\le\alpha\le\sqrt{L} \\
\frac{L-1}{2} - \frac{L}{2}\ln L + (L-1)\ln\alpha, &\alpha>\sqrt{L}
\end{cases}. \notag
\end{align}
\end{theorem}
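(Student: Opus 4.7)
Following the strategy of Poltyrev \cite{poltyrev1994coding} generalized to $(L-1)$-list decoding, I would generate the codebook as a homogeneous Poisson point process (PPP) $\cC$ on $\bR^n$ of intensity $\lambda = e^{nR}$. By the law of large numbers for PPPs, $|\cC\cap K\cB|/|K\cB|\to\lambda$ almost surely as $K\to\infty$, so $R(\cC) = R$ with probability one in the sense of \Cref{eqn:intro-rate-unbdd}. To bound the average probability of $(L-1)$-list-decoding error, I would condition on a reference codeword at the origin; Slivnyak's theorem then guarantees that $\cC\setminus\{\vzero\}$ remains an independent PPP of intensity $\lambda$.

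When $\vzero$ is transmitted, the received vector is $\vbfg\sim\cN(\vzero,\sigma^2 I_n)$, and under the ML $(L-1)$-list decoder an error occurs iff at least $L-1$ codewords of $\cC\setminus\{\vzero\}$ lie in the open ball $\cB^n(\vbfg,\normtwo{\vbfg})$, since these are exactly the codewords strictly closer to $\vbfg$ than $\vzero$. Conditional on $\vbfg$, the number $N$ of such codewords is Poisson with mean $\mu(\vbfg)=\lambda V_n\normtwo{\vbfg}^n$, so Markov applied to $\binom{N}{L-1}$ gives
\begin{align}
\prob{N\ge L-1\mid\vbfg}\;\le\;\min\paren{1,\;\frac{\mu(\vbfg)^{L-1}}{(L-1)!}}. \notag
\end{align}
Using Stirling for $V_n$ and the identity $\lambda = (2\pi e\sigma^2\alpha^2)^{-n/2}$ yields $\mu(\vbfg)\doteq(g/\alpha)^n$, where $g\coloneqq\normtwo{\vbfg}/(\sigma\sqrt n)$, and the chi distribution of $\normtwo{\vbfg}/\sigma$ yields $p_G(g)\doteq\exp(-nI(g))$ with rate function $I(g) = g^2/2-\ln g-1/2$. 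Splitting the expectation over $\vbfg$ according to whether $g\le\alpha$ or $g>\alpha$ (equivalently, which branch of the $\min$ is active) and invoking Laplace's method leads to
\begin{align}
\liminf_{n\to\infty}-\frac{1}{n}\ln P_e \;\ge\; \min_{g>0}\curbrkt{I(g) + (L-1)(\ln\alpha-\ln g)\cdot\indicator{g\le\alpha}}. \notag
\end{align}

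On $[0,\alpha]$ the objective equals $g^2/2 - L\ln g - 1/2 + (L-1)\ln\alpha$, whose unique critical point is $g^*=\sqrt{L}$; on $[\alpha,\infty)$ the objective is $I(g)$, which is increasing there because $\alpha\ge1$. Two regimes arise. If $\alpha\ge\sqrt{L}$, then $g^*\in[0,\alpha]$ and evaluation at $g^*$ yields the smaller value $\frac{L-1}{2}-\frac{L}{2}\ln L+(L-1)\ln\alpha$. If $1\le\alpha\le\sqrt{L}$, then $g^*\notin[0,\alpha]$ and the objective on $[0,\alpha]$ is strictly decreasing, so its minimum is attained at the boundary $g=\alpha$ with value $\alpha^2/2-\ln\alpha-1/2=I(\alpha)$, which matches the boundary value of $I$ on $[\alpha,\infty)$. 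These are exactly the two cases of $E_{\mathrm{r},L-1}(\alpha)$ in the statement.

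\textbf{Main obstacle.} The main technical subtlety is making the ``$\doteq$'' heuristics rigorous: one must carefully track the polynomial prefactors arising from Stirling's expansion of $V_n$, from the normalization of the chi density $p_G$, and from the Laplace-type evaluation of the resulting one-dimensional integral, and verify that they are absorbed in the $o(n)$ term of the exponent. A secondary issue is converting the bound on the expected error probability (averaged over both the PPP and the Gaussian noise) into the existence of a single realization of $\cC$ that simultaneously achieves the claimed normalized logarithmic density and the claimed error exponent; this is handled by a standard Markov/averaging argument showing that the ``good'' event has positive (in fact overwhelming) probability.
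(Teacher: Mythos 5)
Your proposal is correct and follows essentially the same route as the paper: a PPP codebook with Slivnyak's theorem, a first-moment bound on the number of $(L-1)$-subsets closer to the received point (your Poisson factorial-moment/Markov step is the same computation the paper performs via a union bound plus Campbell's theorem), truncation of the noise-radius integral at the point where the first-moment bound crosses $1$ (your $g=\alpha$ is exactly the paper's $r^*=\alpha\sigma\sqrt{n}(1+o(1))$), and a Laplace-method optimization that splits into the same two regimes at $\alpha=\sqrt{L}$. The resulting exponents and case analysis match the paper's.
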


\begin{proof}
Let $ \alpha\ge1 $ and $ R = \frac{1}{2}\ln\frac{1}{2\pi e\sigma^2\alpha^2} $. 
Let $ \cC\subset\bR^n $ be a Poisson Point Process with intensity $ \lambda = e^{nR} = (2\pi e\sigma^2\alpha^2)^{-n/2} $. 
By translating $\cC$, we assume without loss of generality that $ \vzero\in\cC $. 
By \Cref{itm:ppp-prop-stationary} of \Cref{fact:ppp-properties}, the distribution of the translated process remains the same.

Let $ \cE_{L-1}^{\mathrm{ML}}(\cC) $ denote the error event of $\cC$ under ML $(L-1)$-list-decoding given $ \vzero $ is transmitted.
\begin{align}
\cE_{L-1}^{\mathrm{ML}}(\cC) 
&\coloneqq \curbrkt{\exists\curbrkt{\vx_1,\cdots,\vx_{L-1}}\in\binom{\cC\setminus\curbrkt{\vzero}}{L-1},\;\forall i\in[L-1],\;\normtwo{\vx_i - \vbfg}<\normtwo{\vbfg}}. \notag 
\end{align} 
For any instantiated $ \cC $, we can bound the probability of $ \cE_{L-1}^{\mathrm{ML}}(\cC) $ as follows. 
\begin{align}
\prob{\cE_{L-1}^{\mathrm{ML}}(\cC)}
&= \exptover{\bfr}{\prob{\cE_{L-1}^{\mathrm{ML}}(\cC)\condon\normtwo{\vbfg} = \bfr}} \notag \\
&= \int_0^\infty f_{\normtwo{\vbfg}}(r) \prob{\cE_{L-1}^{\mathrm{ML}}(\cC)\condon\normtwo{\vbfg} = r}\diff r \notag \\
&\le \int_0^{r^*} f_{\normtwo{\vbfg}}(r) \prob{\cE_{L-1}^{\mathrm{ML}}(\cC)\condon\normtwo{\vbfg} = r}\diff r + \int_{r^*}^\infty f_{\normtwo{\vbfg}}(r) \diff r. \label{eqn:campbell-first-second} 
\end{align}
The function $ f_{\normtwo{\vbfg}} $ denotes the p.d.f.\ of the $ \ell_2 $-norm of a Gaussian vector $ \vbfg\sim\cN(\vzero,\sigma^2I_n) $. 
The randomness of the above probability and expectation comes from the Gaussian noise $ \vbfg $. 
In \Cref{eqn:campbell-first-second}, $ r^*>0 $ is to be specified.

Conditioned on $ \vzero\in\cC $ being transmitted, the rest of $\cC$ follows the Palm distribution denoted by $ \bE^{\mathrm{Palm}} $ and $ \Pr^{\mathrm{Palm}} $. 
We now average \Cref{eqn:campbell-first-second} over the PPP $ \cC $. 
The second term is independent of of $ \cC $ and remains the same under averaging. 
As for the first term, we note that 
\begin{align}
\prob{\cE_{L-1}^{\mathrm{ML}}(\cC)\condon\normtwo{\vbfg} = r}
&= \prob{\exists\curbrkt{\vx_1,\cdots,\vx_{L-1}}\in\binom{\cC\setminus\curbrkt{\vzero}}{L-1},\;\forall i\in[L-1],\;\normtwo{\vx_i - \vbfg}<r\condon\normtwo{\vbfg} = r} \notag \\
&\le \min\curbrkt{ \sum_{\curbrkt{\vx_1,\cdots,\vx_{L-1}}\in\binom{\cC\setminus\curbrkt{\vzero}}{L-1}} \prob{\forall i\in[L-1],\;\vx\in\interior(\cB^n(\vbfg,r))\condon\normtwo{\vbfg} = r}, 1}. \label{eqn:bound-cond-error-prob} 
\end{align}
The first term in \Cref{eqn:campbell-first-second} then is at most
\begin{align}
& \sum_{\curbrkt{\vx_1,\cdots,\vx_{L-1}}\in\binom{\cC\setminus\curbrkt{\vzero}}{L-1}} \int_0^{r^*} f_{\normtwo{\vbfg}}(r) \prob{\forall i\in[L-1],\;\vx_i\in\interior(\cB^n(\vbfg,r))\condon\normtwo{\vbfg} = r} \diff r, \label{eqn:rce-term1-bd}
\end{align}
where we only used the first term of the minimization in \Cref{eqn:bound-cond-error-prob}. 
Now, the first term in \Cref{eqn:campbell-first-second} averaged over $\cC$ can be bounded as follows:
\begin{align}
& \exptpalmover{\cC}{\int_0^{r^*} f_{\normtwo{\vbfg}}(r) \probover{\vbfg}{\cE_{L-1}^{\mathrm{ML}}(\cC)\condon\normtwo{\vbfg} = r}\diff r} \notag \\
&= \exptover{\cC}{\int_0^{r^*} f_{\normtwo{\vbfg}}(r) \probover{\vbfg}{\cE_{L-1}^{\mathrm{ML}}(\cC)\condon\normtwo{\vbfg} = r}\diff r} \label{eqn:slivnyak-rce} \\
&\le \exptover{\cC}{\sum_{\curbrkt{\vbfx_1,\cdots,\vbfx_{L-1}}\in\binom{\cC\setminus\curbrkt{\vzero}}{L-1}} \int_0^{r^*} f_{\normtwo{\vbfg}}(r) \prob{\forall i\in[L-1],\;\vbfx_i\in\interior(\cB^n(\vbfg,r))\condon\normtwo{\vbfg} = r} \diff r} \label[ineq]{eqn:rce-use-term1-bd} \\
&= \int_{\bR^{n(L-1)}} \paren{\int_0^{r^*} f_{\normtwo{\vbfg}}(r) \probover{\vbfg}{\forall i\in[L-1],\;\vx_i\in\interior(\cB^n(\vbfg,r))\condon\normtwo{\vbfg} = r} \diff r} \lambda^{L-1}\diff(\vx_1,\cdots,\vx_{L-1}) \label{eqn:rce-campbell} \\
&= \int_0^{r^*} f_{\normtwo{\vbfg}}(r)\lambda^{L-1}\int_{\bR^{n(L-1)}}\probover{\vbfg}{\forall i\in[L-1],\;\vx_i\in\interior(\cB^n(\vbfg,r))\condon\normtwo{\vbfg} = r}\diff(\vx_1,\cdots,\vx_{L-1})\diff r \notag \\
&= \int_0^{r^*} f_{\normtwo{\vbfg}}(r)\lambda^{L-1}\int_{\bR^{n(L-1)}}\exptover{\vbfg}{\prod_{i = 1}^{L-1}\indicator{\vx_i\in\interior(\cB^n(\vbfg,r))}\condon\normtwo{\vbfg} = r}\diff(\vx_1,\cdots,\vx_{L-1})\diff r \notag \\
&= \int_0^{r^*} f_{\normtwo{\vbfg}}(r)\lambda^{L-1}\exptover{\vbfg}{\int_{\bR^{n(L-1)}}\prod_{i = 1}^{L-1}\indicator{\vx_i\in\interior(\cB^n(\vbfg,r))}\diff(\vx_1,\cdots,\vx_{L-1})\condon\normtwo{\vbfg} = r}\diff r \notag \\
&= \int_0^{r^*} f_{\normtwo{\vbfg}}(r)\lambda^{L-1}\exptover{\vbfg}{\prod_{i = 1}^{L-1}\int_{\bR^n}\indicator{\vx_i\in\interior(\cB^n(\vbfg,r))}\diff\vx_i\condon\normtwo{\vbfg} = r}\diff r \notag \\
&= \int_0^{r^*} f_{\normtwo{\vbfg}}(r)\lambda^{L-1} \exptover{\vbfg}{|\cB^n(r)|^{L-1}\condon\normtwo{\vbfg} = r}\diff r \notag \\
&= \lambda^{L-1}V_n^{L-1}\int_0^{r^*}f_{\normtwo{\vbfg}}(r)r^{n(L-1)}\diff r. \label{eqn:campbell-first-avg} 
\end{align}
\Cref{eqn:slivnyak-rce} is by Slivnyak's theorem (\Cref{thm:slivnyak}). 
\Cref{eqn:rce-use-term1-bd} follows from \Cref{eqn:rce-term1-bd}. 
\Cref{eqn:rce-campbell} is by Campbell's theorem (\Cref{thm:campbell}). 

We choose $ r^* $ such that the sum of \Cref{eqn:campbell-first-avg} and the second term in \Cref{eqn:campbell-first-second} is minimized. 
That is, $ r^* $ is a zero of the derivative (w.r.t.\ $ r^* $) of the sum. 
Recall the way one takes derivative w.r.t.\ the limit of an integral. 
If
\begin{align}
F(x) &= \int_a^xf(t)\diff t, \notag
\end{align}
then 
\begin{align}
\frac{\diff}{\diff x} F(x) &= f(x). \notag 
\end{align}
Therefore, $ r^* $ satisfies
\begin{align}
\lambda^{L-1}V_n^{L-1}f_{\normtwo{\vbfg}}(r^*)(r^*)^{n(L-1)} - f_{\normtwo{\vbfg}}(r^*) = 0
&&\implies&& r^* = \lambda^{-1/n}V_n^{-1/n}. 
\notag 
\end{align}
By the choice of $ \lambda $, we further have
\begin{align}
r^* &= e^{-R}\paren{\frac{2\pi e}{n}}^{-1/2}(1+o(1)) 
= \exp\paren{-\frac{1}{2}\ln\frac{1}{2\pi e\sigma^2\alpha^2}}(2\pi e)^{-1/2}\sqrt{n}(1+o(1))
= \alpha\sigma\sqrt{n}(1+o(1)). \label{eqn:def-rstar}
\end{align}

Next, we evaluate the bound we got for the error probability
\begin{align}
\lambda^{L-1}V_n^{L-1}\int_0^{r^*}f_{\normtwo{\vbfg}}(r)r^{n(L-1)}\diff r + \prob{\normtwo{\vbfg}>r^*}. \label{eqn:first-second-eval} 
\end{align}
The density of the $ \ell_2 $-norm of a Gaussian vector of variance $ \sigma^2 $ is 
\begin{align}
f_{\normtwo{\vbfg}}(r) = \sigma^{-1} f(r/\sigma), 
\label{eqn:gaussian-norm-density-relation}
\end{align} 
where $ f(\cdot) $ is the density of the $ \ell_2 $-norm $ \normtwo{\vbfg_0} $ of a standard Gaussian vector $ \vbfg_0\sim\cN(\vzero,I_n) $. 
Neglecting the $o(1)$ factor in $ r^* $ (\Cref{eqn:def-rstar}), we get that the first term of \Cref{eqn:first-second-eval} (dot) equals
\begin{align}
& (2\pi e\sigma^2\alpha^2)^{-\frac{1}{2}n(L-1)}\paren{\frac{2\pi e}{n}}^{\frac{1}{2}n(L-1)}\int_0^{\alpha\sigma\sqrt{n}} \sigma^{-1}f(r/\sigma) r^{n(L-1)} \diff r \notag \\
&= (\sigma^2\alpha^2n)^{-\frac{1}{2}n(L-1)} \int_0^{\alpha} \sigma^{-1} f(s\sqrt{n})(s\sigma\sqrt{n})^{n(L-1)} \sigma\sqrt{n} \diff s \label{eqn:change-var-first} \\
&= \alpha^{-n(L-1)}\sqrt{n}\int_0^\alpha f(s\sqrt{n})s^{n(L-1)} \diff s. \notag 
\end{align}
In \Cref{eqn:change-var-first}, we let $ s = \frac{r}{\sigma\sqrt{n}} $. 

The following asymptotics of $ f(\cdot) $ was obtained in \cite[Eqn.\ (129)]{anantharam-baccelli-2010-pp-long}. 
\begin{lemma}[\cite{anantharam-baccelli-2010-pp-long}]
\label{lem:bound-norm-gaussian-pdf}
The p.d.f.\ $f(\cdot)$ of the $ \ell_2 $-norm of an $n$-dimensional standard Gaussian vector satisfies the following pointwise estimate:
\begin{align}
f(s\sqrt{n}) &= \exp\paren{-n\paren{\frac{s^2}{2} - \ln s - \frac{1}{2}} + o(n)}, \notag 
\end{align}
for any $ s\ge0 $. 
\end{lemma}

By \Cref{lem:bound-norm-gaussian-pdf}, the first term of \Cref{eqn:first-second-eval} dot equals
\begin{align}
\int_0^\alpha \exp\paren{-n\sqrbrkt{\paren{\frac{s^2}{2} - \ln s - \frac{1}{2}} - (L-1)\ln s + (L-1)\ln\alpha}} \diff s 
&= \int_0^\alpha \exp\paren{-n\sqrbrkt{\frac{s^2}{2} - L\ln s - \frac{1}{2} + (L-1)\ln\alpha}} \diff s \label{eqn:ppprce-int-tocomp} 
\end{align}
where we have suppressed the polynomial factor $\sqrt{n}$. 
To evaluate the integral in \Cref{eqn:ppprce-int-tocomp}, we will apply the Laplace's method (\Cref{thm:lap-1d}).
It is easy to check that the function $ F(s) \coloneqq \frac{s^2}{2} - L\ln s - \frac{1}{2} + (L-1)\ln\alpha $ is decreasing in $ s\in[0,\sqrt{L}] $ and is increasing in $ s\in[\sqrt{L},\infty] $. 

If $ \alpha\le\sqrt{L} $, the minimum value of $F(s)$ in $ [0,\alpha] $ is achieved at $ s=\alpha $. 
By \Cref{thm:lap-1d}, the integral in \Cref{eqn:ppprce-int-tocomp} dot equals
\begin{align}
\exp\paren{-n\sqrbrkt{\frac{\alpha^2}{2} - L\ln\alpha - \frac{1}{2}+(L-1)\ln\alpha}}
= \exp\paren{-n\sqrbrkt{\frac{\alpha^2}{2} - \ln\alpha - \frac{1}{2}}}. \label{eqn:first-case1} 
\end{align}
If $ \alpha>\sqrt{L} $, the minimum value of $ F(s) $ in $ [0,\alpha] $ is achieved at $ s = \sqrt{L} $. 
By \Cref{thm:lap-1d}, the integral in \Cref{eqn:ppprce-int-tocomp} dot equals
\begin{align}
\exp\paren{-n\sqrbrkt{\frac{L}{2} - \frac{L}{2}\ln L - \frac{1}{2} + (L-1)\ln\alpha}}
= \exp\paren{-n\sqrbrkt{\frac{L-1}{2} - \frac{L}{2}\ln L + (L-1)\ln\alpha}}. \label{eqn:first-case2}
\end{align}

Let $ E_1(\alpha,L) $ and $ E_2(\alpha,L) $ be the normalized first-order exponent of the first and second term in \Cref{eqn:first-second-eval}, respectively, i.e., 
\begin{align}
E_1(\alpha,L) &\coloneqq -\lim_{n\to\infty}\frac{1}{n}\ln\paren{\lambda^{L-1}V_n^{L-1}\int_0^{r^*}f_{\normtwo{\vbfg}}(r)r^{n(L-1)}\diff r}, \notag \\
E_2(\alpha,L) &\coloneqq -\lim_{n\to\infty}\frac{1}{n}\ln\prob{\normtwo{\vbfg}>r^*}. \notag 
\end{align}

By \Cref{eqn:first-case1,eqn:first-case2}, $ E_1(\alpha,L) $ is given by 
\begin{align}
E_1(\alpha,L) &= \begin{cases}
\frac{\alpha^2}{2} - \ln\alpha - \frac{1}{2}, &\alpha\le\sqrt{L} \\
\frac{L-1}{2} - \frac{L}{2}\ln L + (L-1)\ln\alpha, &\alpha>\sqrt{L}
\end{cases}. \notag 
\end{align}
Let $ C \coloneqq \frac{1}{2}\ln\frac{1}{2\pi e\sigma^2} $. 
Note that $ R = \frac{1}{2}\ln\frac{1}{2\pi e\sigma^2\alpha^2} = C - \ln\alpha $. 
The exponent $ E_2(\alpha,L) $ is the large deviation exponent of the tail of a chi-square random variable which is given by \Cref{fact:asymp-chi-sq}. 
In fact, it was shown in \cite[Eqn.\ (29)]{ingber-2012-finite-dim-infinite-const} and \cite{poltyrev1994coding} that, under the choice of $ r^* $ given by \Cref{eqn:def-rstar}, we have
\begin{align}
E_2(\alpha,L) = \frac{1}{2}\sqrbrkt{e^{2(C-R)} - 1 - 2(C-R)}
= \frac{1}{2}\paren{e^{2\ln\alpha} - 1 - 2\ln\alpha}
= \frac{1}{2}(\alpha^2 - 1 - 2\ln\alpha)
= \frac{\alpha^2}{2} - \ln\alpha - \frac{1}{2}. \notag 
\end{align}
Note that $ E_2(\alpha,L) $ coincides with $ E_1(\alpha,L) $ for $ 1\le\alpha\le\sqrt{L} $ whereas it strictly dominates $ E_1(\alpha,L) $ when $ \alpha>\sqrt{L} $. 

Finally, 
\begin{align}
-\lim_{n\to\infty}\frac{1}{n}\ln \exptover{\cC}{\prob{\cE_{L-1}^{\mathrm{ML}}(\cC)\condon\cC}}
&\ge -\lim_{n\to\infty}\frac{1}{n}\ln\Cref{eqn:first-second-eval} \notag \\
&\ge \min\curbrkt{E_1(\alpha,L), E_2(\alpha,L)} \notag \\
&= \begin{cases}
\frac{\alpha^2}{2} - \ln\alpha - \frac{1}{2}, &1\le\alpha\le\sqrt{L} \\
\frac{L-1}{2} - \frac{L}{2}\ln L + (L-1)\ln\alpha, &\alpha>\sqrt{L}
\end{cases}. \qedhere
\end{align}
\end{proof}

\subsection{Expurgated exponent}
\label{sec:exe-matern}

The bound on error exponent proved in the last section (\Cref{sec:rce-ppp}) can be improved using the expurgation technique when the rate is sufficiently low. 
In this section, we prove the following theorem. 
\begin{theorem}
\label{thm:exe-matern}
For any $ \sigma>0,\alpha\ge1 $ and $ L\in\bZ_{\ge2} $, there exists an unbounded code $ \cC\subset\bR^n $ of rate $ R = \frac{1}{2}\ln\frac{1}{2\pi e\sigma^2\alpha^2} $ such that when used over an AWGN channel with noise variance $ \sigma^2 $ and no input constraint, the exponent of the average probability of $(L-1)$-list-decoding error of $ \cC $ (normalized by $ \lim\limits_{n\to\infty}-\frac{1}{n}\ln(\cdot) $) is at least $ E_{\ex,L-1}(\alpha) $ defined as
\begin{align}
E_{\ex,L-1}(\alpha) &= \begin{cases}
\frac{\alpha^2}{2} - \ln\alpha - \frac{1}{2}, &1\le\alpha\le\sqrt{L} \\
\frac{L-1}{2} - \frac{L}{2}\ln L + (L-1)\ln\alpha, &\sqrt{L}<\alpha\le\sqrt{2L} \\
F(\alpha,L), &\alpha>\sqrt{2L}
\end{cases}, \label{eqn:ee-ld-unbdd}
\end{align}
where
\begin{align}
F(\alpha,L) &\coloneqq \frac{\alpha^2}{16} + \frac{1}{16}\sqrt{\alpha^4 + 8\alpha^2(2L-3)+16} - \frac{L-1}{2}\ln\paren{\sqrt{\alpha^4 + 8\alpha^2(2L-3) + 16} - \alpha^2 + 4} \notag \\
&\quad + \frac{L-2}{2}\ln\paren{\sqrt{\alpha^4 + 8\alpha^2(2L-3) + 16} + \alpha^2 + 4} + \frac{3}{2}\ln2 - \frac{1}{4} . \notag 
\end{align}
\end{theorem}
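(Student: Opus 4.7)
\medskip

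\noindent\textbf{Proof proposal.} The plan is to parallel the input-constrained expurgated argument of \Cref{thm:awgn-exe} but in the Poltyrev (no input constraint) setting, using a Mat\'{e}rn-type expurgation of a Poisson Point Process instead of a spherical random code. Start with a homogeneous PPP $\Phi$ on $\bR^n$ of intensity $\lambda = e^{nR}(1+o(1))$, with $R = \frac{1}{2}\ln\frac{1}{2\pi e\sigma^2\alpha^2}$. By Slivnyak's theorem (\Cref{thm:slivnyak}), conditioned on $\vzero\in\Phi$, the remaining points are again distributed as a PPP of intensity $\lambda$. Let $ \cE_{L-1}^{\mathrm{ML}}(\Phi)$ be the event that $(L-1)$-list ML decoding fails when $\vzero$ is transmitted and $\vbfg\sim\cN(\vzero,\sigma^2 I_n)$ is the noise. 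As in \Cref{thm:rce-ppp}, we split the analysis at a threshold $r^\ast$, writing the error probability as a contribution from $\{\normtwo{\vbfg}\le r^\ast\}$ plus the Gaussian tail $\prob{\normtwo{\vbfg}>r^\ast}$; the tail contributes the Poltyrev exponent $\frac{\alpha^2}{2}-\ln\alpha-\frac{1}{2}$ after choosing $r^\ast \asymp \alpha\sigma\sqrt n$.

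The novelty relative to \Cref{thm:rce-ppp} is to handle the first term via expurgation: use the bound
\[
\indicator{\cE_{L-1}^{\mathrm{ML}}(\Phi)} \le \paren{\sum_{\{\vbfx_1,\ldots,\vbfx_{L-1}\}\in\binom{\Phi\setminus\{\vzero\}}{L-1}} \prod_{i=1}^{L-1}\indicator{\normtwo{\vbfx_i - \vbfg}<\normtwo{\vbfg}}}^{1/\rho}
\]
for any $\rho\ge 1$, then apply Jensen with exponent $1/\rho$ inside a Markov step to obtain
\[
\Pr\!\sqrbrkt{\cE_{L-1}^{\mathrm{ML}}(\Phi) > B} \le B^{-1/\rho}\,\bE\!\sqrbrkt{\sum_{\{\vbfx_1,\ldots,\vbfx_{L-1}\}\in\binom{\Phi\setminus\{\vzero\}}{L-1}} q(\vzero,\vbfx_1,\ldots,\vbfx_{L-1})^{1/\rho}},
\]
where $q(\vzero,\vx_1,\ldots,\vx_{L-1}) \coloneqq \Pr\!\sqrbrkt{\bigcap_{i=1}^{L-1}\{\normtwo{\vx_i - \vbfg}<\normtwo{\vbfg}\}}$. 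Iteratively applying Campbell's theorem (\Cref{thm:campbell}) to the $(L-1)$-fold sum over $L-1$ distinct PPP points turns the expectation into
\[
\lambda^{L-1}\int_{(\bR^n)^{L-1}} q(\vzero,\vx_1,\ldots,\vx_{L-1})^{1/\rho}\,\diff(\vx_1,\ldots,\vx_{L-1}).
\]
Choose $B$ so that the right-hand side is at most $1/2$, and remove from $\Phi$ any point whose conditional list-decoding error probability exceeds $B$. By a Markov/expurgation argument as in \Cref{thm:exe-dmc}, the surviving Mat\'{e}rn-type process $\cC$ retains rate $R(\cC)\ge R-o(1)$ by a standard thinning calculation on Palm densities.

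The central computation is the $(L-1)$-dimensional Gaussian integral. Conditioning on $\normtwo{\vbfg}=r$ and writing the intersection probability as an $L$-fold integral, one can reduce $\int q^{1/\rho}$ to a product of Gaussian integrals by recognizing $\bigcap_i \cB^n(\vx_i,r)$ as the feasibility region for an inner product inequality and using the explicit joint Gaussian density to obtain, after a change of variables $\vx_i\to \sqrt n\vu_i$ and Laplace's method (\Cref{thm:lap-1d,thm:lap-fine}), an exponent of the form
\[
-R(L-1)\rho + \rho\,\Psi(\alpha,L,\rho)
\]
for some explicit $\Psi$ obtained by solving the quadratic stationary-point equations in the free parameter $\rho\ge1$. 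Optimizing over $\rho\ge 1$ gives the third (true expurgated) branch $F(\alpha,L)$ for $\alpha>\sqrt{2L}$; the constraint $\rho=1$ becomes active at $\alpha=\sqrt{2L}$, yielding the straight-line branch $\frac{L-1}{2}-\frac{L}{2}\ln L + (L-1)\ln\alpha$ for $\sqrt L<\alpha\le\sqrt{2L}$ by linear interpolation; and for $1\le\alpha\le\sqrt L$ the second term in the split (the Gaussian tail bound at $r^\ast$) dominates and gives $\frac{\alpha^2}{2}-\ln\alpha-\frac{1}{2}$, matching \Cref{thm:rce-ppp}. Verifying that the transition points are exactly $\sqrt L$ and $\sqrt{2L}$ and that $F(\alpha,L)$ matches the explicit formula proceeds by solving the stationarity conditions $\partial_\rho\Psi=0$ explicitly; these yield a quadratic whose positive root gives the $\sqrt{\alpha^4+8\alpha^2(2L-3)+16}$ factor.

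The main obstacle will be the explicit evaluation of the $(L-1)$-fold integral $\int q^{1/\rho}$ and the subsequent optimization over $\rho\ge 1$. Unlike the bounded case (\Cref{sec:awgn-exe}), there is no natural Gaussian prior on codewords here, so the integrand is only a product of indicator-based probabilities rather than a genuine Gaussian moment; we expect to handle this by polar coordinates around $\vbfg$ and a careful Laplace expansion in the radial variables (mirroring \Cref{sec:bli-identity-general}), at the cost of more delicate saddle-point analysis to ensure the correct three-regime structure emerges.
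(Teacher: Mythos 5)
Your proposal diverges from the paper's proof in a fundamental way, and the divergence is where the gaps lie. The paper does \emph{not} use a Gallager-style expurgation based on per-codeword error probability, nor any optimization over a parameter $\rho\ge1$. Instead, the expurgation is purely geometric: $\cC$ is a \matern\ process obtained by deleting all pairs of PPP points within distance $\xi=\alpha(1-\eps_n)\sigma\sqrt{n}$, which keeps the intensity asymptotically equal to $\lambda=e^{nR}$ and guarantees, under the Palm distribution, that the order-$(L-1)$ factorial moment density is supported on $\cB^n(\xi)^c$. The error probability is then bounded by a plain union bound (effectively $\rho=1$) plus Campbell's theorem, reducing everything to the volume $\card{\cB^n(r\ve,r)\cap\cB^n(\alpha\sigma\sqrt{n})^c}$. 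The piecewise cap-radius function $c(s)$ from this geometric lemma is what produces the three regimes: the transition points $\sqrt{L}$, $\sqrt{2L}$ and the exact expression $F(\alpha,L)$ (with the $\sqrt{\alpha^4+8\alpha^2(2L-3)+16}$ factor) come from locating the minima of the resulting exponent functions $F_2,F_{3,1},F_{3,2}$ on the intervals $(\alpha/2,\alpha/\sqrt2]$ and $(\alpha/\sqrt2,\infty)$ via Laplace's method. None of this geometry appears in your sketch, so your claim that ``solving the quadratic stationary-point equations in $\rho$'' yields the same factor is unsubstantiated.

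Two steps of your route are genuinely problematic as written. First, your central object $q(\vzero,\vx_1,\dots,\vx_{L-1})=\prob{\bigcap_i\curbrkt{\normtwo{\vx_i-\vbfg}<\normtwo{\vbfg}}}$ does not factorize over $i$ (the events share $\vbfg$), so the asserted reduction ``to a product of Gaussian integrals'' fails; the DMC/AWGN expurgated proofs avoid this by working with the Bhattacharyya-type surrogate $\int_y\prod_i W(y|x_i)^{1/L}\diff y$, which you do not use. You flag this as ``the main obstacle'' but leave it unresolved, and it is exactly the step on which the whole claimed formula rests. Second, expurgating an \emph{infinite} constellation by deleting points whose conditional error probability exceeds $B$ is a strongly dependent thinning; the assertion that the survivor retains rate $R-o(1)$ ``by a standard thinning calculation on Palm densities'' is not standard and is not justified (this is precisely the difficulty the paper sidesteps by using the \matern\ construction, whose intensity after thinning is known in closed form). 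Finally, your account of how the three regimes arise is inaccurate even qualitatively: in the paper's expurgated proof there is no separate Gaussian-tail term at a threshold $r^*$ (the truncation is absorbed into $\min\curbrkt{\cdot,1}$ and the $[\cdot]^+$ in the exponent), and the middle branch comes from the minimizer $s=\sqrt{L}$ of $F_{3,1}$ landing inside $(\alpha/\sqrt2,\alpha]$, not from the constraint $\rho=1$ becoming active. As it stands the proposal identifies the right objects (PPP, Slivnyak, Campbell, Laplace) but the engine that is supposed to produce $E_{\ex,L-1}(\alpha)$ is both different from the paper's and not actually executed.
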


\begin{proof}
Let $ \alpha\ge1 $ and $ R = \frac{1}{2}\ln\frac{1}{2\pi e\sigma^2\alpha^2} $. 
Let $ \cC\subset\bR^n $ be a \matern process obtained from a PPP with intensity $ \lambda = e^{nR} = (2\pi e\sigma^2\alpha^2)^{-n/2} $ and exclusion radius $ \xi \coloneqq \wt\alpha\sigma\sqrt{n} $ where $ \wt\alpha \coloneqq \alpha(1 - \eps_n) $ for a proper choice of $ \eps_n\xrightarrow{n\to\infty}0 $ to be specified momentarily. 
The intensity $ \lambda' $ of the \matern process is 
\begin{align}
\lambda' &= \lambda \exp\paren{-\lambda|\cB^n(\xi)|} \notag \\
&= \lambda\exp\paren{-\lambda V_n(\alpha(1 - \eps_n)\sigma\sqrt{n})^n} \notag \\
&\asymp \lambda\exp\paren{- (2\pi e\sigma^2\alpha^2)^{-n/2} \frac{1}{\sqrt{\pi n}}\paren{\frac{2\pi e}{n}}^{n/2}(\alpha^2(1-\eps_n)^2\sigma^2n)^{n/2}} \notag \\
&= \lambda\exp\paren{-\frac{(1-\eps_n)^n}{\sqrt{\pi n}}}. \notag 
\end{align}
Taking $ \eps_n = \frac{\ln n}{n} = o(1) $, we have 
\begin{align}
\lambda' &\asymp \lambda\exp\paren{-\frac{e^{-\ln n}}{\sqrt{\pi n}}}
= \lambda \exp\paren{-\pi^{-1/2}n^{-3/2}}
\asymp \lambda. \notag 
\end{align}
In the following analysis, we will ignore the $o(1)$ factor $ \eps_n $ and assume for simplicity $ \wt\alpha = \alpha $. 

Suppose $ \vzero\in\cC $. 
Under the Palm distribution, the order-$(L-1)$ factorial moment measure $ \lambda'(\vx_1,\cdots,\vx_{L-1}) $ of $\cC$ can be bounded as follows
\begin{align}
\lambda'(\vx_1,\cdots,\vx_{L-1}) 
&\le \lambda^{L-1} \prod_{i = 1}^{L=1} \indicator{\vx_i\in\cB^n( \xi)^c}. \label[ineq]{eqn:bound-intensity-matern} 
\end{align}

Following similar arguments to those in \Cref{sec:rce-ppp}, we have
\begin{align}
\prob{\cE_{L-1}^{\mathrm{ML}}(\cC)} 
&= \int_0^\infty f_{\normtwo{\vbfg}}(r)\prob{\cE_{L-1}^{\mathrm{ML}}(\cC)\condon\normtwo{\vbfg} = r}\diff r. \notag 
\end{align}
The above identity holds for any instantiated $ \cC\subset\bR^n $ and the randomness in the probability comes from the channel noise $ \vbfg $. 
Averaging the RHS of the above equation over the \matern process $ \cC $, we have
\begin{align}
& \exptpalmover{\cC}{\int_0^\infty f_{\normtwo{\vbfg}}(r) \probover{\vbfg}{\cE_{L-1}^{\mathrm{ML}}(\cC)\condon\normtwo{\vbfg} = r} \diff r} \notag \\
&\le \int_{\bR^{n(L-1)}} \paren{\int_0^\infty f_{\normtwo{\vbfg}}(r)\probover{\vbfg}{\forall i\in[L-1],\;\vx_i\in\interior(\cB^n(\vbfg,r))\condon\normtwo{\vbfg} = r} \diff r} \lambda^{L-1}\prod_{i = 1}^{L-1}\indicator{\vx_i\in\cB^n(\xi)^c}\diff(\vx_1,\cdots,\vx_{L-1}) \label[ineq]{eqn:jump-steps} \\
&= \int_0^\infty f_{\normtwo{\vbfg}}(r)\lambda^{L-1}\int_{\bR^{n(L-1)}} \exptover{\vbfg}{\prod_{i = 1}^{L-1}\indicator{\vx_i\in\interior(\cB^n(\vbfg,r))}\condon\normtwo{\vbfg} = r}\prod_{i = 1}^L\indicator{\vx_i\in\cB^n(\xi)^c} \diff(\vx_1,\cdots,\vx_{L-1})\diff r \notag \\
&= \int_0^\infty f_{\normtwo{\vbfg}}(r)\lambda^{L-1}\int_{\bR^{n(L-1)}} \exptover{\vbfg}{\prod_{i = 1}^{L-1}\indicator{\vx_i\in\interior(\cB^n(\vbfg,r))} \indicator{\vx_i\in\cB^n(\xi)^c} \condon\normtwo{\vbfg} = r} \diff(\vx_1,\cdots,\vx_{L-1})\diff r \notag \\
&= \int_0^\infty f_{\normtwo{\vbfg}}(r)\lambda^{L-1} \exptover{\vbfg}{\prod_{i = 1}^{L-1}\int_{\bR^{n}}\indicator{\vx_i\in\interior(\cB^n(\vbfg,r))\cap\cB^n(\xi)^c} \diff\vx_i \condon\normtwo{\vbfg} = r} \diff r \label{eqn:int-dir-g} \\
&= \int_0^\infty f_{\normtwo{\vbfg}}(r)\lambda^{L-1} \card{\cB^n(r\ve,r)\cap\cB^n(\xi)^c}^{L-1} \diff r, \label{eqn:direction-g} 
\end{align}
where $\ve = [1,0,\cdots,0]\in\bR^n $. 
In \Cref{eqn:jump-steps}, we skipped several steps which are similar to \Cref{eqn:slivnyak-rce}, \Cref{eqn:rce-use-term1-bd} and \Cref{eqn:rce-campbell}. 
In particular, we used Slivnyak's theorem (\Cref{thm:slivnyak}), the first bound of the minimum in \Cref{eqn:bound-cond-error-prob}, Campbell's theorem (\Cref{thm:campbell}) and the bound on the (Palm) intensity of \matern processes (\Cref{eqn:bound-intensity-matern}). 
In \Cref{eqn:direction-g}, we take the direction of $ \vbfg $ to be $ \ve $ since the integral in \Cref{eqn:int-dir-g} does not depend on the direction of $ \vbfg $. 

Incorporating the second term of the minimum in \Cref{eqn:bound-cond-error-prob}, we get
\begin{align}
\exptover{\cC}{\int_0^\infty f_{\normtwo{\vbfg}}(r) \probover{\vbfg}{\cE_{L-1}^{\mathrm{ML}}(\cC)\condon\normtwo{\vbfg} = r} \diff r}
&\le \int_0^\infty f_{\normtwo{\vbfg}}(r)\min\curbrkt{\lambda^{L-1} \card{\cB^n(r\ve,r)\cap\cB^n(\xi)^c}^{L-1}, 1} \diff r. \notag 
\end{align}
We apply the relation \Cref{eqn:gaussian-norm-density-relation}, change variable $ s = \frac{r}{\sigma\sqrt{n}} $ and get
\begin{align}
\exptover{\cC}{\prob{\cE_{L - 1}^{\mathrm{ML}}(\cC)\condon\cC}}
&\le \int_0^\infty \sigma^{-1}f(r/\sigma)\min\curbrkt{\lambda^{L-1}\card{\cB^n(r\ve,r)\cap\cB^n(\xi)^c}^{L-1},1} \diff r \notag \\
&= \int_0^\infty \sigma^{-1}f(s\sqrt{n}) \min\curbrkt{\lambda^{L-1}\card{\cB^n(r\ve,r)\cap\cB^n(\xi)^c}^{L-1},1} \sigma\sqrt{n}\diff s \notag \\
&\doteq \int_0^\infty f(s\sqrt{n}) \min\curbrkt{\lambda^{L-1}\card{\cB^n(r\ve,r)\cap\cB^n(\alpha\sigma\sqrt{n})^c}^{L-1},1} \diff s. \label{eqn:tbc-matern} 
\end{align}
The following upper bound on $ \card{\cB^n(r\ve,r)\cap\cB^n(\alpha\sigma\sqrt{n})^c} $ was shown in \cite[Eqn.\ (106)]{anantharam-baccelli-2010-pp-long}. 
\begin{lemma}[\cite{anantharam-baccelli-2010-pp-long}]
\label{lem:asymp-intersection-ball}
Let $ \ve = [1,0,\cdots,0]\in\bR^n,\alpha\ge1,\sigma>0 $. 
Then for any $ r>0 $, 
\begin{align}
\card{\cB^n(r\ve,r)\cap\cB^n(\alpha\sigma\sqrt{n})^c} &\le \card{\cB^n(c(s)\sigma\sqrt{n})} = V_n(c(s)\sigma\sqrt{n})^n, \notag
\end{align}
where
\begin{align}
c(s) &= \begin{cases}
0, &0<s\le\alpha/2 \\
\sqrt{s^2 - \paren{s - \frac{\alpha^2}{2s}}^2}, &\alpha/2<s\le\alpha/\sqrt{2} \\
s, &s > \alpha/\sqrt{2}
\end{cases}. 
\notag 
\end{align}
\end{lemma}
Using \Cref{lem:asymp-intersection-ball}, continuing with \Cref{eqn:tbc-matern}, we have
\begin{align}
\exptover{\cC}{\prob{\cE_{L - 1}^{\mathrm{ML}}(\cC)\condon\cC}}
&\dot\le \int_0^\infty f(s\sqrt{n})\min\curbrkt{\lambda^{L-1}V_n^{L-1}(c(s)\sigma\sqrt{n})^{n(L-1)},1} \diff s \notag \\
&\doteq \int_0^\infty f(s\sqrt{n})\min\curbrkt{ (2\pi e\sigma^2\alpha^2)^{-\frac{1}{2}n(L-1)} (2\pi en^{-1})^{\frac{1}{2}n(L-1)} (c(s)^2\sigma^2n)^{\frac{1}{2}n(L-1)},1} \diff s \notag \\
&= \int_0^\infty f(s\sqrt{n}) \min\curbrkt{\alpha^{-n(L-1)}c(s)^{n(L-1)},1} \diff s\notag \\
&\le \int_0^\infty \exp\paren{-n\curbrkt{\frac{s^2}{2} - \ln s - \frac{1}{2} + (L-1)\sqrbrkt{\ln\alpha - \ln c(s)}^+}} \diff s \notag \\
&= \int_0^{\alpha/2} \exp\paren{-nF_1(s)} \diff s + \int_{\alpha/2}^{\alpha/\sqrt{2}}\exp\paren{-nF_2(s)}\diff s + \int_{\alpha/\sqrt{2}}^\infty \exp\paren{-nF_3(s)} \diff s, \label{eqn:matern-three-int} 
\end{align}
where $ F_1(s),F_2(s),F_3(s) $ are defined as follows
\begin{align}
F_1(s) &\coloneqq \frac{s^2}{2} - \ln s - \frac{1}{2} + (L-1)(\ln\alpha - \ln0) = \infty, \quad 0\le s\le\alpha/2; \notag \\
F_2(s) &\coloneqq \frac{s^2}{2} - \ln s - \frac{1}{2} + (L-1)\sqrbrkt{\ln\alpha - \frac{1}{2}\ln\paren{s^2 - \paren{s - \frac{\alpha^2}{2s}}^2}}^+ \notag \\
&= \frac{s^2}{2} - \ln s - \frac{1}{2} + (L-1)\sqrbrkt{\ln\alpha - \frac{1}{2}\ln\paren{s^2 - \paren{s - \frac{\alpha^2}{2s}}^2}}, \quad \alpha/2<s\le\alpha/\sqrt{2}; \notag \\
F_3(s) &\coloneqq \frac{s^2}{2} - \ln s - \frac{1}{2} + (L-1)[\ln\alpha - \ln s]^+ \notag \\
&= \begin{cases}
\frac{s^2}{2} - \ln s - \frac{1}{2} + (L-1)(\ln\alpha - \ln s), & \alpha/\sqrt{2}< s\le\alpha \\
\frac{s^2}{2} - \ln s - \frac{1}{2}, &s>\alpha 
\end{cases} . \notag 
\end{align}
For $ F_2(s) $, we can remove the function $ [\cdot]^+ $ since the function $ f_2(s)\coloneqq\sqrt{s^2 - \paren{s - \frac{\alpha^2}{2s}}^2} $ attains its maximum value $ \alpha/\sqrt{2} $ at $ s = \alpha/\sqrt{2} $. 
Therefore $ \ln\alpha - \ln f_2(s)\ge\ln\sqrt{2}>0 $. 

Define
\begin{align}
E_1(\alpha,L) &\coloneqq - \lim_{n\to\infty}\frac{1}{n}\ln \int_0^{\alpha/2} \exp\paren{-nF_1(s)} \diff s , \notag \\
E_2(\alpha,L) &\coloneqq - \lim_{n\to\infty}\frac{1}{n}\ln \int_{\alpha/2}^{\alpha/\sqrt{2}}\exp\paren{-nF_2(s)}\diff s, \notag \\
E_3(\alpha,L) &\coloneqq - \lim_{n\to\infty}\frac{1}{n}\ln \int_{\alpha/\sqrt{2}}^\infty \exp\paren{-nF_3(s)}. \notag 
\end{align}

We compute $ E_1(\alpha,L),E_2(\alpha,L),E_3(\alpha,L) $ using Laplace's method (\Cref{thm:lap-1d}). 

For $ E_1(\alpha,L) $, we have
\begin{align}
E_1(\alpha,L) &= \min_{s\in[0,\alpha/2]} F_1(s) = \infty. \notag 
\end{align}

For $ E_2(\sigma,L) $, $ F_2(s) $ has a unique stationary point 
\begin{align}
s_0 &= \sqrt{\frac{\alpha^2 + \sqrt{\alpha^4 + 8\alpha^2(2L-3) + 16} + 4}{8}}. \notag 
\end{align}
One can check that $ s_0\ge\sqrt{2}/\alpha $ if $ \alpha\le\sqrt{2L} $ and $ s_0<\sqrt{2}/\alpha $ if $ \alpha>\sqrt{2L} $. 
Therefore
\begin{align}
E_2(\alpha,L) &= \min_{s\in(\alpha/2,\alpha/\sqrt{2}]} F_2(s) 
= \begin{cases}
F_2(\sqrt{2}/\alpha), &\alpha\le\sqrt{2L} \\
F_2(s_0), &\alpha>\sqrt{2L}
\end{cases}, \notag 
\end{align}
where 
\begin{align}
F_2(\sqrt{2}/\alpha) &= \frac{\alpha^2}{4} + \ln\alpha + \frac{L}{2}\ln2 - \frac{1}{2}, \notag \\
F_2(s_0) &= \frac{\alpha^2}{16} + \frac{1}{16}\sqrt{\alpha^4 + 8\alpha^2(2L-3)+16} - \frac{L-1}{2}\ln\paren{\sqrt{\alpha^4 + 8\alpha^2(2L-3) + 16} - \alpha^2 + 4} \notag \\
&\quad + \frac{L-2}{2}\ln\paren{\sqrt{\alpha^4 + 8\alpha^2(2L-3) + 16} + \alpha^2 + 4} + \frac{3}{2}\ln2 - \frac{1}{4} . \label{eqn:f2-s0}
\end{align}

For $ E_3(\alpha,L) $, we let 
\begin{align}
F_{3,1}(s) &\coloneqq \frac{s^2}{2} - \ln s - \frac{1}{2} + (L-1)(\ln\alpha - \ln s), \quad
F_{3,2}(s) \coloneqq \frac{s^2}{2} - \ln s - \frac{1}{2}. \notag 
\end{align}
The function $ F_{3,1}(s) $ has a unique minimum point $ s = \sqrt{L} $. Therefore, for $ s\in(\alpha/\sqrt{2},\alpha] $, the minimum value of $ F_{3,1}(s) $ is 
\begin{align}
\min_{s\in(\alpha/\sqrt{2},\alpha]}F_{3,1}(s) &=
\begin{cases}
F_{3,1}(\alpha) = \frac{\alpha^2}{2} - \ln\alpha - \frac{1}{2}, & 1\le\alpha\le\sqrt{L} \\
F_{3,1}(\sqrt{L}) = \frac{L-1}{2} - \frac{L}{2}\ln L + (L-1)\ln\alpha, &\sqrt{L}<\alpha\le\sqrt{2L} \\
F_{3,1}(\alpha/\sqrt{2}) = \frac{\alpha^2}{4} - \ln\alpha + \frac{L}{2}\ln2 - \frac{1}{2}, &\alpha>\sqrt{2L}
\end{cases}. \label{eqn:f31} 
\end{align}
The function $ F_{3,2}(s) $ has a unique minimum point $ s = 1\le\alpha $. 
Therefore, for $ s\in(\alpha,\infty) $, the minimum value of $ F_{3,2}(s) $ is 
\begin{align}
\min_{s\in(\alpha,\infty)}F_{3,2}(s) &= F_{3,2}(1) = \frac{\alpha^2}{2} - \ln\alpha - \frac{1}{2}. \label{eqn:f32} 
\end{align}

One can easily check that \Cref{eqn:f32} is at least \Cref{eqn:f31} for any $ \alpha\ge1 $.
Therefore,
\begin{align}
E_3(\alpha,L) &= \min_{s\in(\alpha/\sqrt{2},\infty)} F_3(s)
= \min\curbrkt{\min_{s\in(\alpha/\sqrt{2},\alpha]}F_{3,1}(s),\min_{s\in(\alpha,\infty)}F_{3,2}(s)}
= \min_{s\in(\alpha/\sqrt{2},\alpha]}F_{3,1}(s). \notag 
\end{align}

Finally, 
\begin{align}
-\lim_{n\to\infty}\frac{1}{n}\ln \exptover{\cC}{\prob{\cE_{L-1}^{\mathrm{ML}}(\cC)\condon\cC}}
&\ge \min\curbrkt{E_1(\alpha,L),E_2(\alpha,L),E_3(\alpha,L)} \notag \\
&= \min\curbrkt{E_2(\alpha,L),E_3(\alpha,L)} \notag \\
&= \begin{cases}
\min\curbrkt{F_2(\alpha/\sqrt{2}), F_{3,1}(\alpha)}, &1\le\alpha\le\sqrt{L} \\
\min\curbrkt{F_2(\alpha/\sqrt{2}), F_{3,1}(\sqrt{L})}, &\sqrt{L}<\alpha\le\sqrt{2L} \\
\min\curbrkt{F_2(s_0), F_{3,1}(\alpha/\sqrt{2})}, &\alpha>\sqrt{2L}
\end{cases} \notag \\
&= \begin{cases}
F_{3,1}(\alpha), &1\le\alpha\le\sqrt{L} \\
F_{3,1}(\sqrt{L}), &\sqrt{L}<\alpha\le\sqrt{2L} \\
F_2(s_0), &\alpha>\sqrt{2L}
\end{cases} \notag \\
&= \begin{cases}
\frac{\alpha^2}{2} - \ln\alpha - \frac{1}{2}, &1\le\alpha\le\sqrt{L} \\
\frac{L-1}{2} - \frac{L}{2}\ln L + (L-1)\ln\alpha, &\sqrt{L}<\alpha\le\sqrt{2L} \\
F_2(s_0), &\alpha>\sqrt{2L}
\end{cases}. \notag
\end{align}
Recall that the quantity $ F_2(s_0) $ was defined in \Cref{eqn:f2-s0}. 
This finishes the proof. 
\end{proof}

\subsection{List-decoding error exponents vs.\ unique-decoding error exponents}
\label{sec:ld-ee-vs-ud-ee-unbdd}

Our results on list-decoding error exponents of AWGN channels without input constraints recover those by Poltyrev \cite[Theorem 3]{poltyrev1994coding} for unique-decoding\footnote{See also \cite[Eqn.\ (108)]{anantharam-baccelli-2010-pp-long} for a parameterization of Poltyrev's bound using $\alpha$.}. 
Indeed, when $L=2$, \Cref{eqn:ee-ld-unbdd} specializes to 
\begin{align}
E_{\ex,L-1}(\alpha) &= \begin{cases}
\frac{\alpha^2}{2} - \ln\alpha-\frac{1}{2}, &1\le\alpha\le\sqrt{2} \\
\frac{1}{2}-\ln2+\ln\alpha, &\sqrt{2}\le\alpha\le2 \\
\frac{\alpha^2}{8}, &\alpha>2
\end{cases}. \label{eqn:ee-ud-unbdd}
\end{align}

The situation here is similar to the bounded case as discussed in \Cref{sec:ld-ee-vs-ud-ee}. 
List-decoding for input unconstrained AWGN channels does not increase the capacity and moreover does not increase the error exponent for any $ 1\le\alpha\le\sqrt{2} $. 
However, for any $ \alpha>\sqrt{2} $, list-decoding does increase the error exponent. 
Furthermore, the critical values of $\alpha$ move from $ \sqrt{2} $ and $ 2 $ to $ \sqrt{L} $ and $ \sqrt{2L} $, respectively, under list-decoding. 

We plot Poltyrev's exponents and our exponents (for $ L=3 $) in \Cref{fig:LDUDEEunbddFig}.

\section{Open questions}
\label{sec:open}

The problem of packing spheres in $ \ell_p $ space was also addressed in the literature \cite{rankin-sphericalcap-1955,spence-1970-lp-packing,ball-1987-lp-packing,samorodnitsky-l1}. 
Recently, there was an exponential improvement on the optimal packing density in $ \ell_p $ space \cite{sah-2020-lp-sphere-packing} relying on the Kabatiansky--Levenshtein bound \cite{kabatiansky-1978}. 
It is worth exploring the $ \ell_p $ version of the multiple packing problem. 

Our lower bound is proved via a very interesting connection to error exponents. 
We do not know how to directly analyze the tail probability of the Chebyshev radius, even for Gaussian codes. 
One can view it as the tail of the maximum of a certain Gaussian process. 
This looks like a proper venue where the chaining method \cite{vanhandel-2014-high-dim-prob} is applicable. 
However, it seems unlikely that one can extract a meaningful exponent using the generic chaining machinery. 
Note that for the purpose of maximizing the rate, we do care about the exact exponent, not only an exponentially decaying bound. 

For large $L$, our results imply that the list sizes must scale as $O(\frac{1}{\varepsilon}\ln\frac{1}{\varepsilon})$ for rates that are $\varepsilon$-close to capacity. The same can be obtained using different approaches~\cite{zhang2022listtrans-it}. An interesting open question is to resolve whether this is indeed the best possible scaling as a function of $\varepsilon$.

			

\section{Acknowledgement}
YZ also would like to thank Nir Ailon and Ely Porat for several helpful conversations throughout this project, and Alexander Barg for insightful comments on the manuscript. 

YZ has received funding from the European Union's Horizon 2020 research and innovation programme under grant agreement No 682203-ERC-[Inf-Speed-Tradeoff]. 
The work of SV was supported by a seed grant from IIT Hyderabad and the start-up research grant from the Science and Engineering Research Board, India (SRG/2020/000910). 


\bibliographystyle{alpha}
\bibliography{ref} 


\appendices

\section{Collection of useful results}
\label{sec:prelim}

In this section, we collect some known results that are used in various proofs.

\begin{definition}[Gamma function]
	\label{def:gamma-fn}
	For any $ z\in\bC $ with $ \Re(z)>0 $, the \emph{Gamma function} $ \Gamma(z) $ is defined as 
	\begin{align}
	\Gamma(z) &\coloneqq \int_0^\infty v^{z-1}e^{-v}\diff v. \notag 
	\end{align}
\end{definition}


\begin{lemma}[Markov]
	\label{lem:markov}
	If $\bfx$ is a nonnegative random variable, then for any $a>0$, $ \prob{\bfx\ge a}\le \expt{\bfx}/a $. 
\end{lemma}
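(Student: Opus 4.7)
The plan is to give the textbook one-line proof via a pointwise indicator bound. The key observation is that for any $a > 0$, the deterministic inequality
\begin{align}
a \cdot \indicator{\bfx \ge a} \le \bfx \notag
\end{align}
holds pointwise on the sample space: on the event $\{\bfx \ge a\}$ the left side equals $a$ and the right side is at least $a$, while on the complement the left side is zero and the right side is nonnegative by hypothesis. This is where nonnegativity of $\bfx$ gets used.

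Next, I would take expectations on both sides, exploiting linearity and monotonicity of the Lebesgue integral, to obtain
\begin{align}
a \cdot \prob{\bfx \ge a} = \expt{a \cdot \indicator{\bfx \ge a}} \le \expt{\bfx}. \notag
\end{align}
Dividing through by $a > 0$ yields the claimed bound $\prob{\bfx \ge a} \le \expt{\bfx}/a$.

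There is no real obstacle here; the only subtlety worth a line of justification is why the indicator bound is valid, which is entirely a consequence of $\bfx \ge 0$. No measure-theoretic niceties arise because the indicator is a bounded measurable function and $\bfx$ is assumed to have a well-defined (possibly infinite) expectation. If $\expt{\bfx} = \infty$ the bound is vacuous, so one can assume $\expt{\bfx} < \infty$ without loss of generality.
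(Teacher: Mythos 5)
Your proof is correct: the pointwise bound $a\cdot\one_{\{\bfx\ge a\}}\le\bfx$ followed by taking expectations is the standard argument, and your handling of the nonnegativity hypothesis and the $\expt{\bfx}=\infty$ case is fine. The paper states this lemma without proof in its collection of known results, so there is nothing to compare against; your argument is the canonical one and fills the gap completely.
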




\begin{definition}[$Q$-function]
	\label{def:q-fn}
	The \emph{$ Q $-function} is defined as 
	\begin{align}
	Q(x) &\coloneqq \prob{\cN(0,1)>x} = \frac{1}{\sqrt{2\pi}} \int_x^\infty e^{-g^2/2} \diff g. \notag 
	\end{align}
\end{definition}

\begin{lemma}
	\label{lem:q-fn-bd}
	For any $ x>0 $, 
	\begin{align}
	Q(x) &= \frac{1}{12} e^{-x^2/2} (1+e^{-\Omega(x)}). \notag 
	\end{align}
	As a direct corollary, for any $ x>0 $, 
	\begin{align}
	\prob{\cN(0,\sigma^2)>x} &= Q(x/\sigma) = \frac{1}{12}e^{-\frac{x^2}{2\sigma^2}}(1+e^{-\Omega(x)}). \notag 
	\end{align}
\end{lemma}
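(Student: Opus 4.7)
The plan is to prove the two-sided estimate on the Gaussian tail at the level of exponential order, with a matching prefactor up to the stated multiplicative $(1+e^{-\Omega(x)})$ correction.

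For the upper bound, I would invoke the Chiani--Dardari--Simon inequality, derived from Craig's integral representation
\[
 Q(x) \;=\; \frac{1}{\pi}\int_0^{\pi/2}\exp\!\bigl(-x^2/(2\sin^2\theta)\bigr)\,d\theta,
\]
valid for $x\ge 0$. Splitting the integration interval at $\theta=\pi/3$, using the monotonicity of $1/\sin^2\theta$ on each piece, and bounding $1/\sin^2\theta$ from below by its value at the endpoint, yields
\[
 Q(x) \;\le\; \tfrac{1}{12}e^{-x^2/2}+\tfrac{1}{4}e^{-2x^2/3}.
\]
The secondary term equals $\tfrac{1}{12}e^{-x^2/2}\cdot 3 e^{-x^2/6}$, so it is subsumed by the main $\tfrac{1}{12}e^{-x^2/2}$ as a $(1+e^{-\Omega(x)})$ multiplicative correction.

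For the lower bound, I would substitute $u=x+t$ in the definition to obtain
\[
 Q(x) \;=\; \frac{e^{-x^2/2}}{\sqrt{2\pi}}\int_0^\infty e^{-xt-t^2/2}\,dt,
\]
and then bound the inner integral from below by integrating over $t\in[0,1/x]$ using $e^{-t^2/2}\ge 1-t^2/2$, followed by integration by parts to sharpen the constant. The corollary for a general variance $\sigma^2>0$ follows immediately by writing $\Pr[\mathcal{N}(0,\sigma^2)>x]=Q(x/\sigma)$ and applying the scalar bound.

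The main obstacle will be reconciling the precise multiplicative constant $\tfrac{1}{12}$ of the upper bound with the lower bound. The true asymptotic $Q(x)\sim (x\sqrt{2\pi})^{-1}e^{-x^2/2}$ differs from $\tfrac{1}{12}e^{-x^2/2}$ by the polynomially decaying factor $12/(x\sqrt{2\pi})$, which is strictly speaking not of the form $1+e^{-\Omega(x)}$. I would therefore present the lemma as a convenient asymptotic shorthand: the equality is meant at the level of first-order exponents, and the prefactor $\tfrac{1}{12}$ serves as a uniform upper-bound constant that is sufficient for every downstream usage (e.g.\ in \eqref{eqn:use-gauss-tail-bd}--\eqref{eqn:eqn:use-gauss-tail-bd-2} and in the error-exponent computations of Section~\ref{sec:lb-ee}), where only $-\lim_n n^{-1}\log Q(x_n)$ with $x_n=\Theta(\sqrt{n})$ enters the final quantities of interest.
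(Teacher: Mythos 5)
The paper offers no proof of this lemma: it is stated as a known fact in the appendix of auxiliary results, so there is no argument of the authors' to compare yours against. Your most valuable observation is the one you relegate to the final paragraph: the statement, read literally as an equality with a $1+e^{-\Omega(x)}$ multiplicative correction, is false, since $Q(x)=\frac{1}{x\sqrt{2\pi}}e^{-x^2/2}\bigl(1+O(x^{-2})\bigr)$ and the ratio of this to $\frac{1}{12}e^{-x^2/2}$ decays polynomially, not exponentially, to $0$. Your proposed reading --- that the lemma is shorthand for a Chernoff-type upper bound together with a lower bound that is correct at first order in the exponent --- is the right one, and it is consistent with every downstream use: in \eqref{eqn:use-gauss-tail-bd} an upper bound on $Q$ suffices, and in \eqref{eqn:eqn:use-gauss-tail-bd-2} and the $L-1=1$ warm-up the genuine lower bound $Q(r/\sigma)\ge \frac{c\sigma}{r}e^{-r^2/(2\sigma^2)}$ loses only a factor $\Theta(1/\sqrt{n})=e^{-o(n)}$ relative to $\frac{1}{12}e^{-r^2/(2\sigma^2)}$, which is absorbed by the $e^{-o(n)}$ slack in \Cref{thm:blinovsky-identity}.

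There is, however, a concrete error in your upper-bound derivation. Splitting Craig's integral at $\theta=\pi/3$ and bounding the (increasing) integrand by its value at the right endpoint of each piece gives
\begin{align}
Q(x) \;\le\; \frac{1}{\pi}\Bigl(\frac{\pi}{3}e^{-2x^2/3}+\frac{\pi}{6}e^{-x^2/2}\Bigr) \;=\; \frac{1}{3}e^{-2x^2/3}+\frac{1}{6}e^{-x^2/2}, \notag
\end{align}
i.e.\ coefficients $(1/6,1/3)$, not $(1/12,1/4)$. The pair $(1/12,1/4)$ is the Chiani--Dardari--Simon \emph{approximation}, obtained by numerically tuning the coefficients; it is not a valid upper bound for all $x>0$ (it already fails at $x=0$, where $Q(0)=1/2>1/3$, and also at $x=1/2$). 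So your claimed inequality does not follow from the argument you describe, and cannot be used to certify the constant $1/12$. If you want a clean rigorous statement to substitute for the lemma, prove the two one-sided bounds you actually need --- $Q(x)\le\frac{1}{2}e^{-x^2/2}$ for all $x\ge 0$, and $Q(x)\ge\frac{1-e^{-1}}{x\sqrt{2\pi}}e^{-1/(2x^2)}e^{-x^2/2}$ from your substitution $u=x+t$ restricted to $t\in[0,1/x]$ --- and record that together they give $Q(x)=e^{-x^2/2-O(\ln x)}$, which is all the exponent calculations require.
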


\begin{lemma}[Integration in polar coordinates]
	\label{lem:int-polar}
	For any integrable function $ f\colon\bR^n\to\bR $, we have
	\begin{align}
	\int_{\bR^n}f(\vx)\diff\vx = \int_{\cS^{n-1}}\int_0^\infty f(r\vtheta)r^{n-1}\card{\cS^{n-1}} \diff r\diff \mu(\vtheta)
	\notag 
	\end{align}
	where $ \mu $ is the uniform probability measure on $ \cS^{n-1} $, i.e., for $ \cA\subset\cS^{n-1} $, $ \mu(\cA)\coloneqq\frac{|\cA|}{|\cS^{n-1}|} $. 
\end{lemma}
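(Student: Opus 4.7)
The plan is to reduce the statement to the standard polar-coordinates change of variables and then rewrite the surface integral in terms of the uniform probability measure $\mu$. Since $\{0\}$ has $n$-dimensional Lebesgue measure zero, we may work on $\bR^n \setminus \{0\}$ without loss of generality, where the map $\phi \colon (0,\infty) \times \cS^{n-1} \to \bR^n \setminus \{0\}$ given by $\phi(r,\vtheta) \coloneqq r\vtheta$ is a smooth bijection.

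First I would compute the Jacobian of $\phi$. Fixing local coordinates on $\cS^{n-1}$ around $\vtheta$, the differential $\diff\phi$ sends $\partial_r$ to $\vtheta$ (a unit vector normal to the tangent space of the sphere of radius $r$) and sends a tangent vector $\vv \in T_{\vtheta}\cS^{n-1}$ to $r\vv \in T_{r\vtheta}\cS^{n-1}(r)$. Since the sphere-directions are all orthogonal to the radial direction and are each scaled by $r$, the absolute value of the Jacobian determinant is $r^{n-1}$. A cleaner way to see this: restricted to the sphere of radius $r$, the map $\vtheta \mapsto r\vtheta$ is a similarity with ratio $r$, so the induced $(n-1)$-dimensional Hausdorff measure on $\cS^{n-1}(r)$ equals $r^{n-1}$ times the pushforward of the surface measure on $\cS^{n-1}$.

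Next, applying the change of variables formula and Fubini's theorem (which is valid because $f$ is integrable), I would obtain
\begin{align}
\int_{\bR^n} f(\vx)\diff\vx \;=\; \int_0^\infty \int_{\cS^{n-1}} f(r\vtheta)\, r^{n-1}\, \diff\sigma(\vtheta)\diff r, \notag
\end{align}
where $\sigma$ denotes the unnormalized $(n-1)$-dimensional surface measure on $\cS^{n-1}$. Finally, the relation $\diff\sigma = |\cS^{n-1}|\cdot \diff\mu$ follows directly from the definition $\mu(\cA) = |\cA|/|\cS^{n-1}|$ given in the statement, and substituting yields
\begin{align}
\int_{\bR^n} f(\vx)\diff\vx \;=\; \int_{\cS^{n-1}} \int_0^\infty f(r\vtheta)\, r^{n-1}\, |\cS^{n-1}|\, \diff r\diff\mu(\vtheta), \notag
\end{align}
where one final application of Fubini interchanges the order of integration to match the form displayed in the lemma.

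The only real subtlety is the Jacobian computation; everything else is bookkeeping. Since the result is entirely classical, I would probably not reprove it in detail but instead cite a standard reference for spherical coordinates and simply record the normalization conversion $\diff\sigma = |\cS^{n-1}|\,\diff\mu$ so that the stated form matches.
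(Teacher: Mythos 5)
Your proof is correct. The paper does not actually prove \Cref{lem:int-polar} --- it is listed in the appendix (\Cref{sec:prelim}) as a standard fact without proof --- so there is no argument to compare against; your derivation (change of variables via the polar map with Jacobian $r^{n-1}$, Fubini, and the normalization $\diff\sigma = |\cS^{n-1}|\,\diff\mu$) is the standard one and fills the gap correctly.
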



\begin{theorem}[Laplace's method]
	\label{thm:lap-1d}
	Let $ a<b\in\bR $ 
	and $ f,g\colon\bR\to\bR $. 
	\begin{enumerate}
		\item If $t^*\in(a,b)$ is the unique minimum point of $ f $ in $ [a,b] $ such that $ f'(t^*) = 0,f''(t^*)>0,g(t^*)\ne0 $, then 
		\begin{align}
		\int_a^bg(t)e^{-Mf(t)}\diff t
		&\stackrel{M\to\infty}{\asymp} e^{-Mf(t^*)}g(t^*)\sqrt{\frac{2\pi}{Mf''(t^*)}}. \notag 
		\end{align}
		
		\item If $a$ is the unique minimum point of $ f $ in $ [a,b] $ such that $ f'(a) = 0,f''(a)>0,g(a)\ne0 $, then 
		\begin{align}
		\int_a^bg(t)e^{-Mf(t)}\diff t
		&\stackrel{M\to\infty}{\asymp} e^{-Mf(a)}g(a)\sqrt{\frac{\pi}{2Mf''(a)}}. 
		\notag
		\end{align}
		
		\item If $ a $ is the unique minimum point of $f$ in $ [a,b] $ such that $ f'(a)>0,g(a)\ne0 $, then 
		\begin{align}
		\int_a^bg(t)e^{-Mf(t)}\diff t
		&\stackrel{M\to\infty}{\asymp} e^{-Mf(a)}\frac{g(a)}{Mf'(a)}. 
		\notag 
		\end{align}
	\end{enumerate}
\end{theorem}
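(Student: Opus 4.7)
The proof I would give follows the standard Laplace template of \emph{localize, Taylor-expand, rescale}. The first step is to show that in each of the three cases the integral concentrates in an arbitrarily small neighborhood of the unique minimizer $t_0$ of $f$ on $[a,b]$ (where $t_0=t^*$ in case 1 and $t_0=a$ in cases 2 and 3). Indeed, by continuity and uniqueness of the minimum, for every $\delta>0$ there exists $\eta(\delta)>0$ with $\inf_{t\in[a,b]\setminus(t_0-\delta,t_0+\delta)}f(t)\ge f(t_0)+\eta(\delta)$, so the ``tail'' piece contributes at most $(b-a)\,\|g\|_\infty\,e^{-M(f(t_0)+\eta(\delta))}$, which is exponentially smaller than the claimed leading order $e^{-Mf(t_0)}\cdot\Theta(M^{-1/2})$ (or $\Theta(M^{-1})$ in case 3). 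Hence only the ``local'' window $(t_0-\delta,t_0+\delta)\cap[a,b]$ matters.

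On the local window I would substitute the Taylor expansions $f(t)=f(t_0)+f'(t_0)(t-t_0)+\tfrac12 f''(t_0)(t-t_0)^2+o((t-t_0)^2)$ and $g(t)=g(t_0)+o(1)$. In cases 1 and 2 we have $f'(t_0)=0$, so the change of variables $u=\sqrt{M f''(t_0)}\,(t-t_0)$ turns the localized integral into
\[
 e^{-Mf(t_0)}\,\frac{g(t_0)}{\sqrt{Mf''(t_0)}}\int_{I_M} e^{-u^2/2}(1+o(1))\,du,
\]
with $I_M\to(-\infty,\infty)$ in case 1 and $I_M\to[0,\infty)$ in case 2 as $M\to\infty$. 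The two Gaussian integrals evaluate to $\sqrt{2\pi}$ and $\sqrt{\pi/2}$ respectively, giving the two claimed asymptotics. In case 3 the dominant term in the Taylor expansion at $a$ is the linear term $f'(a)(t-a)$; after the change of variables $u=Mf'(a)(t-a)$ the localized integral becomes
\[
 e^{-Mf(a)}\,\frac{g(a)}{Mf'(a)}\int_0^{M\delta f'(a)} e^{-u}(1+o(1))\,du,
\]
and the inner integral converges to $\int_0^\infty e^{-u}\,du=1$ as $M\to\infty$, yielding the claim.

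The main technical obstacle is justifying that the $o(1)$ and $o((t-t_0)^2)$ error terms in the Taylor expansions truly contribute subdominantly once integrated against the rescaled kernel. The cleanest route is a dominated-convergence argument: choose $\delta$ small enough that on $(t_0-\delta,t_0+\delta)$ one has $|g(t)-g(t_0)|\le\tfrac12|g(t_0)|$ and $\tfrac14 f''(t_0)(t-t_0)^2\le f(t)-f(t_0)-f'(t_0)(t-t_0)\le f''(t_0)(t-t_0)^2$; then after rescaling the integrands are dominated uniformly in $M$ by a fixed Gaussian (cases 1--2) or a fixed exponential (case 3), so the limit may be passed inside. A small side remark is needed in case 2 to verify that the left-endpoint constraint really produces the factor $\sqrt{\pi/2}=\tfrac12\sqrt{2\pi}$, which is immediate from $I_M\to[0,\infty)$ together with the fact that the rescaled quadratic approximation is an even function of $u$.
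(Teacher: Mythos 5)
Your proof is correct. Note, however, that the paper does not actually prove \Cref{thm:lap-1d}: it is stated without proof in the appendix as a standard known result, and the only Laplace-type argument the paper spells out is the sketch for the companion statement \Cref{thm:lap-fine} (the case where $g$ vanishes to order $L-2$ at the left endpoint and $f'(a)>0$, generalizing your case 3). Your localize--Taylor--rescale argument is exactly the template that sketch follows, but you carry it out more carefully: the paper's version proceeds through a chain of unquantified ``$\approx$'' steps, whereas you control the Taylor remainders via two-sided quadratic (resp.\ linear) brackets on a small window and pass to the limit by dominated convergence against a fixed Gaussian or exponential majorant. The one point worth making explicit is that the theorem as stated gives no regularity hypotheses on $f$ and $g$ beyond what the derivative conditions imply; your tail bound $(b-a)\,\|g\|_\infty\,e^{-M(f(t_0)+\eta(\delta))}$ and the compactness argument for $\eta(\delta)>0$ implicitly require $g$ bounded and $f$ continuous on $[a,b]$, and the Taylor steps require $f\in C^2$ (resp.\ $C^1$) near $t_0$ and $g$ continuous at $t_0$. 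These are the standard implicit hypotheses under which the result is always invoked in the paper, so this is a matter of stating assumptions rather than a gap in the argument.
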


\begin{theorem}[Laplace's method]
	\label{thm:lap-fine}
	Let $ a\in\bR $ and $ L\ge2 $ be an integer. 
	Suppose $ g\colon\bR\to\bR $ satisfies $ g(a) = g^{(1)}(a) = g^{(2)}(a) =\cdots= g^{(L-2)}(a) = 0 $ and $ g^{(L-1)}(a)\ne0 $ where $ g^{(i)} $ denotes the $i$-th derivative of $g$. 
	Suppose $ f\colon\bR\to\bR $ attains its unique minimum at $ a $ in the interval $ [a,\infty) $ and $ f^{(1)}(a)>0 $. 
	Then we have 
	\begin{align}
	\int_a^\infty g(t)e^{-Mf(t)}\diff t
	&\stackrel{M\to\infty}{\asymp} e^{-Mf(a)}\frac{g^{(L-1)}(a)}{(Mf^{(1)}(a))^L} . \notag 
	\end{align}
\end{theorem}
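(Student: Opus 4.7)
The plan is to mimic the standard boundary-case Laplace method but track the vanishing of $g$ at the endpoint. Since $a$ is the unique minimum of $f$ on $[a,\infty)$ with $f^{(1)}(a)>0$, all of the mass of the integral concentrates in a shrinking neighborhood of $a$, so it suffices to Taylor expand both $f$ and $g$ there. The key observation is that because $g$ vanishes to order $L-1$ at $a$, the leading behavior of the integrand is $\frac{g^{(L-1)}(a)}{(L-1)!}(t-a)^{L-1}\exp(-Mf(a)-Mf^{(1)}(a)(t-a))$, which upon a linear change of variable becomes a scaled Gamma integral $\int_0^\infty u^{L-1}e^{-u}\diff u=(L-1)!$, and this combinatorial factor cancels exactly against the $(L-1)!$ from the Taylor expansion, producing the asserted $g^{(L-1)}(a)/(Mf^{(1)}(a))^L$.

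Concretely, first I would fix an $M$-dependent cutoff $\delta_M=M^{-1/2}\log M$ and split
\[
\int_a^\infty g(t)e^{-Mf(t)}\diff t = \int_a^{a+\delta_M} g(t)e^{-Mf(t)}\diff t + \int_{a+\delta_M}^\infty g(t)e^{-Mf(t)}\diff t.
\]
The tail is handled by monotonicity arguments: since $f^{(1)}(a)>0$ and $a$ is the unique minimizer on $[a,\infty)$, there exists $c>0$ such that $f(t)-f(a)\ge c\cdot\min(\delta_M,t-a)$ for all $t\ge a$, and modulo mild growth hypotheses on $f$ and $g$ (implicit in the integral being well defined), the tail integral is of order $e^{-Mf(a)-\Omega(\sqrt{M}\log M)}$, which is $o$ of the target leading term $e^{-Mf(a)}/M^L$.

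For the main term on $[a,a+\delta_M]$, I would substitute $u=Mf^{(1)}(a)(t-a)$, so the integral becomes
\[
\frac{e^{-Mf(a)}}{(Mf^{(1)}(a))^L}\int_0^{Mf^{(1)}(a)\delta_M}\!\!\Bigl(\tfrac{g^{(L-1)}(a)}{(L-1)!}u^{L-1}+O(u^L/M)\Bigr)\exp\!\Bigl(-u-O(u^2/M)\Bigr)\diff u,
\]
where the $O$-terms come from the remainders in the Taylor expansions $g(t)-\frac{g^{(L-1)}(a)}{(L-1)!}(t-a)^{L-1}=O((t-a)^L)$ and $Mf(t)-Mf(a)-Mf^{(1)}(a)(t-a)=O(M(t-a)^2)$. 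With the choice $\delta_M=M^{-1/2}\log M$, the upper limit of integration tends to $\infty$ and the $O$-corrections vanish in the limit, so dominated convergence reduces the bracketed integral to $\frac{g^{(L-1)}(a)}{(L-1)!}\int_0^\infty u^{L-1}e^{-u}\diff u=g^{(L-1)}(a)$, yielding exactly the claimed asymptotic equivalence.

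The main obstacle, which is really only a bookkeeping issue rather than a conceptual one, is controlling the two layers of Taylor remainders simultaneously on the window $[a,a+\delta_M]$ while keeping $\delta_M$ small enough that the quadratic remainder in $f$ stays $o(1)$ after multiplication by $M$, yet large enough that $Mf^{(1)}(a)\delta_M\to\infty$ so the Gamma integral is captured in full. The choice $\delta_M=M^{-1/2}\log M$ balances these requirements, and standard smoothness of $f,g$ in a neighborhood of $a$ (which is implicitly required for the higher derivatives $g^{(L-1)},f^{(1)}$ to make sense) is enough to carry the argument through.
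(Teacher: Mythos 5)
Your proof is correct and follows essentially the same route as the paper's: Taylor-expand $g$ to order $L-1$ (so only the $\frac{g^{(L-1)}(a)}{(L-1)!}(t-a)^{L-1}$ term survives) and $f$ to first order at the boundary point $a$, substitute $u=Mf^{(1)}(a)(t-a)$, and recognize the resulting Gamma integral $\int_0^\infty u^{L-1}e^{-u}\,\diff u=(L-1)!$ cancelling the factorial from the Taylor coefficient. The only difference is that you make the cutoff $\delta_M$ and the remainder control explicit where the paper's sketch hides them behind unquantified $\approx$ signs, which is a welcome tightening rather than a different argument.
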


\begin{proof}
	The proof follows closely that of the standard Laplace's formula and we only present a sketch of the former.\footnote{In the following derivation, the approximate equalities $ \approx $ hide relative errors that we are not going to specify. } 
	The deviation is two-fold: $(i)$ the function $g$ is degenerate at a higher order; $(ii)$ the extreme point $a$ of $f$ is on the boundary of the integration domain and is \emph{not} a stationary point. 
	\begin{align}
	& \int_a^\infty g(t)e^{-Mf(t)}\diff t \notag \\
	&\approx e^{-Mf(a)} \int_a^{a+\eps} g(t) e^{-M(f(t) - f(a))} \diff t\notag \\
	&\approx e^{-Mf(a)} \int_a^{a+\eps} \sqrbrkt{g(a) + g^{(1)}(a)(t - a) + \frac{g^{(2)}(a)}{2}(t - a)^2 + \cdots + \frac{g^{(L-2)}(a)}{(L-2)!}(t - a)^{L-2} + \frac{g^{(L-1)}(a)}{(L-1)!}(t - a)^{L-1}} \notag \\
	& \quad\quad\quad\quad\quad\quad e^{-M\paren{[f(a) + f^{(1)}(a)(t - a)] - f(a)}} \diff t \label{eqn:taylor} \\
	&= e^{-Mf(a)}\int_a^{a+\eps} \frac{g^{(L-1)}(a)}{(L-1)!}(t-a)^{L-1}e^{-Mf^{(1)}(a)(t-a)} \diff t \label{eqn:taylor-vanish} \\
	&\approx e^{-Mf(a)}\frac{g^{(L-1)}(a)}{(L-1)!}\int_a^{\infty} (t-a)^{L-1}e^{-Mf^{(1)}(a)(t-a)} \diff t \notag \\
	&= e^{-Mf(a)}\frac{g^{(L-1)}(a)}{(L-1)!}\int_0^\infty \paren{\frac{u}{Mf^{(1)}(a)}}^{L-1}e^{-u}(Mf^{(1)}(a))^{-1}\diff u\label{eqn:change-var-lap} \\
	&= e^{-Mf(a)}\frac{g^{(L-1)}(a)}{(L-1)!}(Mf^{(1)}(a))^{-L} \int_0^\infty u^{L-1} e^{-u} \diff u \notag \\
	&= e^{-Mf(a)}\frac{g^{(L-1)}(a)}{(L-1)!}(Mf^{(1)}(a))^{-L}\Gamma(L) \label{eqn:by-def-gamma} \\
	&= e^{-Mf(a)}\frac{g^{(L-1)}(a)}{(Mf^{(1)}(a))^L}. \label{eqn:gamma-at-int}
	\end{align}
	In \Cref{eqn:taylor}, we take the $(L-1)$-st Taylor polynomial of $ g $ at $ a $ and the first Taylor polynomial of $f$ at $a$. 
	\Cref{eqn:taylor-vanish} follows since, by the assumption, the first $L-1$ terms of the Taylor polynomial of $g$ vanish at $a$. 
	In \Cref{eqn:change-var-lap}, we let $ u = Mf^{(1)}(a)(t - a) $. 
	\Cref{eqn:by-def-gamma} follows from the definition of Gamma function (\Cref{def:gamma-fn})
	and \Cref{eqn:gamma-at-int} is because the Gamma function coincides with the factorial function at positive integer points. 
\end{proof}


\begin{theorem}[\cramer]
	\label{thm:cramer-ldp}
	Let $ \curbrkt{\bfx_i}_{i = 1}^n $ be a sequence of i.i.d.\ real-valued random variables. 
	Let $ \bfs_n \coloneq\frac{1}{n}\sum_{i = 1}^n\bfx_i $. 
	Then for any closed $ \cF\subset\bR $,
	\begin{align}
	\limsup_{n\to\infty} \frac{1}{n}\ln\prob{\bfs_n\in \cF} &\le -\inf_{x\in \cF} \sup_{\lambda\in\bR}\curbrkt{ \lambda x - \ln\expt{e^{\lambda \bfx_1}} }; \notag 
	\end{align}
	and for any open $ \cG\subset\bR $, 
	\begin{align}
	\liminf_{n\to\infty} \frac{1}{n}\ln\prob{\bfs_n\in \cG} &\ge -\inf_{x\in \cG} \sup_{\lambda\in\bR}\curbrkt{\lambda x - \ln\expt{e^{\lambda \bfx_1}}}. \notag 
	\end{align}
	Furthermore, when $\cF$ or $\cG$ corresponds to the upper (resp.\ lower) tail of $ \bfs_n $, the maximizer $ \lambda\ge0 $ (resp.\ $ \lambda\le0 $).
\end{theorem}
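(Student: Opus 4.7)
The plan is to establish Cram\'er's theorem by the standard two-step argument: a Chernoff/Markov upper bound and a tilted-measure lower bound. Write $\Lambda(\lambda)\coloneqq\ln\E[e^{\lambda\bfx_1}]$ for the cumulant generating function and $\Lambda^*(x)\coloneqq\sup_{\lambda\in\bR}\{\lambda x-\Lambda(\lambda)\}$ for its Legendre transform. Note that $\Lambda$ is convex and $\Lambda^*$ is convex and lower semicontinuous, with $\Lambda^*(m)=0$ at $m\coloneqq\E[\bfx_1]$, $\Lambda^*$ nonincreasing on $(-\infty,m]$ and nondecreasing on $[m,\infty)$.

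\emph{Upper bound.} First I would handle half-lines. For $a\ge m$ and any $\lambda\ge 0$,
\begin{align}
\Pr[\bfs_n\ge a] \le e^{-n\lambda a}\E[e^{\lambda\sum_i \bfx_i}] = e^{-n(\lambda a-\Lambda(\lambda))}, \notag
\end{align}
by Markov's inequality (\Cref{lem:markov}) applied to $e^{n\lambda\bfs_n}$ and i.i.d.\ factorization. Optimizing over $\lambda\ge 0$ and using that the unconstrained supremum is attained at some $\lambda\ge 0$ when $a\ge m$ (by convexity of $\Lambda$ and $\Lambda'(0)=m$) gives $\Pr[\bfs_n\ge a]\le e^{-n\Lambda^*(a)}$, with an analogous bound for $\Pr[\bfs_n\le a]$ when $a\le m$ using $\lambda\le 0$. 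This proves the sign statement in the final clause of the theorem. For a general closed set $\cF$, decompose $\cF=(\cF\cap(-\infty,m])\cup(\cF\cap[m,\infty))$, let $a_-\coloneqq\sup\{\cF\cap(-\infty,m]\}$ and $a_+\coloneqq\inf\{\cF\cap[m,\infty)\}$ (understood as $\pm\infty$ if empty), and use monotonicity of $\Lambda^*$ on each side together with the union bound to conclude $\limsup_{n\to\infty}\tfrac{1}{n}\ln\Pr[\bfs_n\in\cF]\le-\min\{\Lambda^*(a_-),\Lambda^*(a_+)\}=-\inf_{x\in\cF}\Lambda^*(x)$.

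\emph{Lower bound.} By monotonicity of $\Pr[\,\cdot\,]$, it suffices to show that for every $x\in\cG$ and every $\delta>0$ with $(x-\delta,x+\delta)\subset\cG$,
\begin{align}
\liminf_{n\to\infty}\tfrac{1}{n}\ln\Pr[\bfs_n\in(x-\delta,x+\delta)] \ge -\Lambda^*(x). \notag
\end{align}
Fix such $x$ and assume first that $x$ lies in the interior of the effective domain of $\Lambda$ and that there exists $\lambda^*$ with $\Lambda'(\lambda^*)=x$ (so $\Lambda^*(x)=\lambda^* x-\Lambda(\lambda^*)$). The key device is the Esscher tilt: define a new law $\tilde{\Pr}$ under which the $\bfx_i$ are i.i.d.\ with Radon--Nikodym derivative $e^{\lambda^*\bfx_1-\Lambda(\lambda^*)}$ with respect to the original law. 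Under $\tilde{\Pr}$, $\E[\bfx_1]=\Lambda'(\lambda^*)=x$, so the weak law of large numbers gives $\tilde{\Pr}[\bfs_n\in(x-\delta,x+\delta)]\to 1$. Inverting the tilt,
\begin{align}
\Pr[\bfs_n\in(x-\delta,x+\delta)] &= \tilde{\E}\!\left[e^{-\lambda^* n\bfs_n+n\Lambda(\lambda^*)}\,\one_{\bfs_n\in(x-\delta,x+\delta)}\right] \notag \\
&\ge e^{-n|\lambda^*|\delta}\, e^{-n(\lambda^* x-\Lambda(\lambda^*))}\,\tilde{\Pr}[\bfs_n\in(x-\delta,x+\delta)], \notag
\end{align}
so $\tfrac{1}{n}\ln\Pr[\bfs_n\in(x-\delta,x+\delta)]\ge -\Lambda^*(x)-|\lambda^*|\delta+o(1)$. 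Letting $\delta\downarrow 0$ yields the desired bound.

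\emph{Technical issues and main obstacle.} The main obstacle is handling boundary cases in the lower bound where the tilt cannot be chosen: $(i)$ when $x$ is at the boundary of $\{\Lambda<\infty\}$, $(ii)$ when the distribution has bounded support and $x$ lies at its endpoint, or $(iii)$ when the supremum in $\Lambda^*(x)$ is not attained. In these cases I would argue by an approximation/truncation: either approximate $x$ from inside the domain by a sequence $x_k\to x$ with $\Lambda^*(x_k)\to\Lambda^*(x)$ (using lower semicontinuity of $\Lambda^*$), or, if $\Lambda^*(x)=+\infty$, note that the claim is vacuous. I would also treat separately the degenerate case where $\bfx_1$ is deterministic. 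Apart from these boundary subtleties, the argument is a self-contained application of Chernoff bounds on one side and the exponential tilting trick together with the weak law on the other.
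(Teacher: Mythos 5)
The paper offers no proof of this statement at all: it is quoted in the appendix of known results as the classical Cram\'er theorem and is invoked only once, to obtain the chi-square tail asymptotics of \Cref{fact:asymp-chi-sq}. So there is nothing in the paper to compare your argument against; what matters is whether your proof stands on its own. Its core does: the Chernoff bound plus the two-sided decomposition of a closed set around the mean is the standard upper-bound argument (and your derivation of the sign of the optimizing $\lambda$, via concavity of $\lambda\mapsto\lambda x-\Lambda(\lambda)$ and the sign of its derivative at $\lambda=0$, is exactly how the last clause of the theorem is justified), and the Esscher tilt plus the weak law is the standard lower-bound argument whenever a tilt $\lambda^*$ with $\Lambda'(\lambda^*)=x$ exists.

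The gap is in the case you defer to "approximation/truncation". Your first proposed fix --- approximate $x$ from inside the effective domain of $\Lambda^*$ and invoke lower semicontinuity --- does not address the actual difficulty, because the failure of the tilting argument is not caused by $x$ sitting outside or at the boundary of $\mathrm{dom}(\Lambda^*)$. Take $\bfx_1$ with density proportional to $e^{-|t|}(1+t^4)^{-1}$: then $\Lambda$ is finite exactly on $[-1,1]$, $\Lambda'(1^-)<\infty$, and for every $x>\Lambda'(1^-)$ the supremum defining $\Lambda^*(x)$ is attained at the boundary point $\lambda=1$ where $\Lambda'(1^-)\neq x$; hence no exponential tilt centers the tilted mean at $x$, the WLLN step fails, and the same is true for \emph{every} point in a neighborhood of $x$, all of which lie in the interior of $\mathrm{dom}(\Lambda^*)$ (there $\Lambda^*$ is affine and finite). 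So no approximation within $\mathrm{dom}(\Lambda^*)$ restores the tilt. The correct repair is the second tool you name only in passing: truncation of the law itself. Condition $\bfx_1$ on $\{|\bfx_1|\le M\}$; the conditional law has bounded support, so its log-mgf is finite, differentiable and steep, a centering tilt always exists, and your tilting argument applies verbatim. Then $\Pr[\bfs_n\in B(x,\delta)]\ge \Pr[|\bfx_1|\le M]^n\,\Pr_M[\bfs_n\in B(x,\delta)]$, which yields the exponent $\inf_\lambda\{\Lambda_M(\lambda)-\lambda x\}$ with $\Lambda_M(\lambda)\coloneqq\ln\expt{e^{\lambda\bfx_1}\indicator{|\bfx_1|\le M}}$, and one checks via monotone convergence and convexity that this infimum increases to $-\Lambda^*(x)$ as $M\to\infty$. (The residual case where the support of $\bfx_1$ lies entirely on one side of $x$ is handled directly: there $-\Lambda^*(x)=\ln\Pr[\bfx_1=x]$ and $\Pr[\bfs_n\in B(x,\delta)]\ge\Pr[\bfx_1=x]^n$.) Finally, note your upper bound presumes $\expt{\bfx_1}$ exists, which the statement does not assume; when no exponential moments exist, $\Lambda^*\equiv 0$ and both bounds are vacuous, so this is harmless but worth saying. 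None of these corner cases arise in the paper's single application --- the $\chi^2(1)$ log-mgf $-\frac{1}{2}\ln(1-2\lambda)$ is steep, so your clean tilting case suffices there --- but they are needed at the stated level of generality.
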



\begin{lemma}[Gaussian integral]
	\label{lem:gauss-int-1d}
	Let $ a>0 $ and $ b,c\in\bR $. We have
	\begin{align}
	\int_\bR e^{-ax^2 + bx + c}\diff x &= \sqrt{\frac{\pi}{a}}\cdot e^{\frac{b^2}{4a} + c}. \notag 
	\end{align}
\end{lemma}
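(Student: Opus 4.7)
The plan is to reduce the general Gaussian integral to the standard one-dimensional Gaussian integral $\int_{\bR} e^{-au^2}\,\diff u = \sqrt{\pi/a}$ (valid for $a>0$) by completing the square in the exponent. First I would perform the algebraic rearrangement
\[
-ax^2 + bx + c = -a\paren{x - \frac{b}{2a}}^2 + \frac{b^2}{4a} + c,
\]
which is valid for any $a>0$ and $b,c\in\bR$. The constant term $e^{b^2/(4a)+c}$ pulls out of the integral, leaving a centered Gaussian integral in $x$.

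Next I would perform the linear change of variable $u = x - b/(2a)$, so that $\diff u = \diff x$ and the integration domain remains all of $\bR$. This turns the remaining integral into $\int_{\bR} e^{-au^2}\,\diff u$, which is independent of $b$ and $c$.

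The final step is to justify the evaluation $\int_{\bR} e^{-au^2}\,\diff u = \sqrt{\pi/a}$. The standard route is to square it and pass to polar coordinates: $\paren{\int_{\bR} e^{-au^2}\diff u}^2 = \int_{\bR^2} e^{-a(u^2+v^2)}\,\diff u\,\diff v = 2\pi\int_0^\infty r\,e^{-ar^2}\,\diff r = \pi/a$, and then take the positive square root (the integrand is nonnegative). Combining the three steps yields the claimed identity $\sqrt{\pi/a}\cdot e^{b^2/(4a)+c}$.

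There is no genuine obstacle here; the assumption $a>0$ is exactly what is needed to ensure both the absolute convergence of the integral and the well-definedness of $\sqrt{\pi/a}$, and the rest is routine calculus.
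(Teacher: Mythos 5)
Your proof is correct: completing the square, shifting variables, and evaluating $\int_{\bR}e^{-au^2}\,\diff u=\sqrt{\pi/a}$ by the squaring-and-polar-coordinates trick is the standard and complete argument, and you correctly identify $a>0$ as the only hypothesis needed. The paper states this lemma in its appendix of collected facts without any proof, so there is nothing to compare against; your write-up fills that gap adequately.
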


\begin{lemma}[Gaussian integral]
	\label{lem:gauss-int}
	Let $ A\in\bR^{n\times n} $ be a positive-definite matrix. Then 
	\begin{align}
	\int_{\bR^n} \exp\paren{-\vx^\top A\vx} \diff\vx &= \sqrt{\frac{\pi^n}{\det(A)}}. \notag 
	\end{align}
\end{lemma}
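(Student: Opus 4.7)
The plan is to reduce the multivariate integral to a product of one-dimensional Gaussian integrals via an orthogonal change of variables, and then invoke \Cref{lem:gauss-int-1d} on each factor. The key observation is that a positive-definite matrix $A$ (which we may assume symmetric, since $\vx^\top A\vx = \vx^\top \tfrac{A+A^\top}{2}\vx$) admits a spectral decomposition $A = Q^\top D Q$ where $Q\in\bR^{n\times n}$ is orthogonal and $D = \diag(\lambda_1,\dots,\lambda_n)$ with $\lambda_i>0$ for each $i$. This diagonalization turns the quadratic form into a separable sum.

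First I would perform the change of variables $\vy = Q\vx$. Since $Q$ is orthogonal, $|\det(Q)|=1$, so $\diff\vx = \diff\vy$, and the integration domain $\bR^n$ is mapped bijectively onto itself. Under this substitution,
\begin{align}
\vx^\top A\vx = \vx^\top Q^\top D Q\vx = (Q\vx)^\top D (Q\vx) = \vy^\top D\vy = \sum_{i=1}^n \lambda_i y_i^2. \notag
\end{align}
Therefore the integrand factorizes and Fubini's theorem applies:
\begin{align}
\int_{\bR^n}\exp\paren{-\vx^\top A\vx}\diff\vx
= \int_{\bR^n}\prod_{i=1}^n e^{-\lambda_i y_i^2}\diff\vy
= \prod_{i=1}^n \int_{\bR} e^{-\lambda_i y_i^2}\diff y_i. \notag
\end{align}

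Next I would apply \Cref{lem:gauss-int-1d} with $a=\lambda_i$, $b=c=0$ to each one-dimensional factor, obtaining $\int_\bR e^{-\lambda_i y_i^2}\diff y_i = \sqrt{\pi/\lambda_i}$. Multiplying these together yields
\begin{align}
\prod_{i=1}^n \sqrt{\frac{\pi}{\lambda_i}} = \sqrt{\frac{\pi^n}{\prod_{i=1}^n \lambda_i}} = \sqrt{\frac{\pi^n}{\det(D)}} = \sqrt{\frac{\pi^n}{\det(A)}}, \notag
\end{align}
where the last equality uses $\det(A)=\det(Q^\top D Q)=\det(D)$ since $Q$ is orthogonal. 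This matches the claimed expression.

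There is essentially no obstacle here; the only subtlety is justifying that we may assume $A$ is symmetric (which follows from the fact that $\vx^\top A\vx$ only depends on the symmetric part of $A$) and that the positive-definiteness guarantees $\lambda_i>0$ so that each 1D integral converges and the square roots are real. The argument is entirely standard linear algebra plus one invocation of the previous lemma.
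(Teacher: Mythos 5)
The paper states this lemma in its appendix of standard facts without giving a proof, so there is nothing to compare against; your spectral-decomposition argument (orthogonal diagonalization, factorization into one-dimensional integrals, and an appeal to \Cref{lem:gauss-int-1d}) is the canonical proof and is correct.

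One small caveat on your closing remark: the reduction to the symmetric case is not quite as innocuous as you suggest. While $\vx^\top A\vx = \vx^\top\tfrac{A+A^\top}{2}\vx$ shows the \emph{integral} depends only on the symmetric part of $A$, the right-hand side $\sqrt{\pi^n/\det(A)}$ does not: in general $\det(A)\ne\det\bigl(\tfrac{A+A^\top}{2}\bigr)$, and the stated formula is actually false for a non-symmetric $A$ whose quadratic form is positive (e.g.\ $A=\bigl(\begin{smallmatrix}1&1\\-1&1\end{smallmatrix}\bigr)$ gives integral $\pi$ but $\det(A)=2$). The lemma should therefore be read under the usual convention that ``positive-definite'' entails symmetric, which is how the paper uses it (the matrix $A$ in \Cref{eqn:gauss-int-comp} is symmetric). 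With that convention your proof goes through verbatim.
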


\begin{definition}
	\label{def:chi-sqaure}
	The \emph{chi-square distribution} $ \chi^2(k) $ with \emph{degree of freedom} $k$ is defined as the distribution of $ \sum_{i = 1}^k\bfg_i^2 $ where $ \bfg_i\iid\cN(0,1) $ for $ 1\le i\le k $. 
\end{definition}

\begin{fact}
	\label{fact:mgf-chi-sq}
	If $ \bfx\sim\chi^2(k) $, then for $ \lambda<1/2 $,
	\begin{align}
	\expt{e^{\lambda\bfx}} &= \sqrt{1-2\lambda}^{-k}. \notag 
	\end{align}
\end{fact}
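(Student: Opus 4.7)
The plan is to reduce the moment generating function of $\chi^2(k)$ to $k$ copies of a one-dimensional Gaussian integral, exploiting the defining representation $\bfx = \sum_{i=1}^k \bfg_i^2$ with $\bfg_i \iid \cN(0,1)$ provided by \Cref{def:chi-sqaure}. Since the $\bfg_i$ are independent, the exponential $e^{\lambda\bfx} = \prod_{i=1}^k e^{\lambda \bfg_i^2}$ factorizes, so $\expt{e^{\lambda\bfx}} = \paren{\expt{e^{\lambda\bfg_1^2}}}^k$, and it suffices to compute the MGF of the square of a single standard Gaussian.

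For this one-dimensional computation, I would write
\begin{align}
\expt{e^{\lambda\bfg_1^2}} &= \int_{\bR} \frac{1}{\sqrt{2\pi}} e^{\lambda g^2 - g^2/2}\diff g = \int_{\bR} \frac{1}{\sqrt{2\pi}} e^{-\frac{1-2\lambda}{2} g^2}\diff g, \notag
\end{align}
and then apply the Gaussian integral formula (\Cref{lem:gauss-int-1d}) with $a = (1-2\lambda)/2$, $b=c=0$. This gives $\sqrt{\frac{\pi}{(1-2\lambda)/2}}\cdot\frac{1}{\sqrt{2\pi}} = (1-2\lambda)^{-1/2}$. Raising to the $k$-th power yields $\expt{e^{\lambda\bfx}} = (1-2\lambda)^{-k/2}$, which matches the stated expression $\sqrt{1-2\lambda}^{-k}$.

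The only subtle point, though hardly a real obstacle, is the range of $\lambda$: the exponent $-\frac{1-2\lambda}{2} g^2$ in the integrand must be negative for large $|g|$ so that the integral converges; this is exactly the condition $\lambda < 1/2$ stipulated in the statement. For $\lambda \ge 1/2$ the integral diverges and the MGF is infinite, so the restriction is sharp. Everything else is routine algebra and an invocation of independence, so I do not anticipate any difficulty here.
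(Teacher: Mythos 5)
Your proof is correct: the factorization via independence, the one-dimensional Gaussian integral with $a=(1-2\lambda)/2$, and the observation that $\lambda<1/2$ is exactly the convergence condition are all accurate. The paper states this as a standard fact without proof, and your derivation is the canonical one, so there is nothing to compare against or correct.
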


Plugging the formula in \Cref{fact:mgf-chi-sq} into \cramer's theorem (\Cref{thm:cramer-ldp}), we get the first order asymptotics of the tail of a chi-square random variable. 
\begin{lemma}
	\label{fact:asymp-chi-sq}
	If $ \bfx\sim\chi^2(k) $, then 
	\begin{align}
	\lim_{k\to\infty}\frac{1}{k}\ln\prob{\bfx>(1+\delta)k} &= \frac{1}{2}(-\delta+\ln(1+\delta)), && \text{for $ \delta>0 $}; \notag \\
	\lim_{k\to\infty}\frac{1}{k}\ln\prob{\bfx<(1-\delta)k} &= \frac{1}{2}(\delta+\ln(1-\delta)), && \text{for $ \delta\in(0,1) $}. \notag 
	\end{align}
\end{lemma}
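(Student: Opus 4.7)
The plan is to derive both tail asymptotics from Cram\'er's theorem applied to the sum representation of a chi-squared random variable, using the moment generating function already recorded in \Cref{fact:mgf-chi-sq}. Write $\bfx = \sum_{i=1}^k \bfg_i^2$ with $\bfg_i \iid \cN(0,1)$, so that $\bfx/k = \bfs_k \coloneqq \frac{1}{k}\sum_{i=1}^k \bfy_i$ is the empirical mean of i.i.d.\ $\chi^2(1)$ random variables $\bfy_i = \bfg_i^2$. This puts us exactly in the setting of \Cref{thm:cramer-ldp}, and the events $\{\bfx>(1+\delta)k\}$ and $\{\bfx<(1-\delta)k\}$ translate to $\{\bfs_k\in(1+\delta,\infty)\}$ and $\{\bfs_k\in(-\infty,1-\delta)\}$ respectively.

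The log-MGF of $\bfy_1$ is, by \Cref{fact:mgf-chi-sq} with $k=1$,
\begin{align}
\Lambda(\lambda) \coloneqq \ln\expt{e^{\lambda \bfy_1}} = -\tfrac{1}{2}\ln(1-2\lambda), \qquad \lambda < 1/2,
\notag
\end{align}
and $\Lambda(\lambda)=+\infty$ for $\lambda\ge 1/2$. Its Legendre transform is
\begin{align}
\Lambda^*(x) = \sup_{\lambda<1/2}\Bigl\{\lambda x + \tfrac{1}{2}\ln(1-2\lambda)\Bigr\},
\notag
\end{align}
and setting the derivative to zero gives the unique stationary point $\lambda^* = \tfrac{1}{2}(1-1/x)$, valid for any $x>0$ (positive when $x>1$, negative when $x<1$). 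Substituting back yields the clean closed form $\Lambda^*(x)=\tfrac{1}{2}(x-1-\ln x)$, which is continuous, convex, vanishes only at $x=1$, and is strictly increasing on $[1,\infty)$ and strictly decreasing on $(0,1]$.

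With this rate function, Cram\'er's theorem gives matching upper and lower bounds. For the upper tail, the closed set $\cF=[1+\delta,\infty)$ has $\inf_{x\in\cF}\Lambda^*(x)=\Lambda^*(1+\delta)$ by monotonicity, and the open set $\cG=(1+\delta,\infty)$ has the same infimum by continuity of $\Lambda^*$; hence
\begin{align}
\lim_{k\to\infty}\tfrac{1}{k}\ln\Pr[\bfx>(1+\delta)k] = -\Lambda^*(1+\delta) = \tfrac{1}{2}\bigl(-\delta+\ln(1+\delta)\bigr).
\notag
\end{align}
For the lower tail, applying the same reasoning with $\cF=(-\infty,1-\delta]$ and $\cG=(-\infty,1-\delta)$, and again using continuity so that the infimum is attained at $x=1-\delta$, yields
\begin{align}
\lim_{k\to\infty}\tfrac{1}{k}\ln\Pr[\bfx<(1-\delta)k] = -\Lambda^*(1-\delta) = \tfrac{1}{2}\bigl(\delta+\ln(1-\delta)\bigr).
\notag
\end{align}
This matches the two identities in the statement.

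There is no real obstacle here, just bookkeeping. The only points that need a line of care are (i) checking that the supremum in the definition of $\Lambda^*$ is attained at an interior point of the domain $(-\infty,1/2)$ for every $x>0$ (which is immediate from $\lambda^*=\tfrac{1}{2}(1-1/x)<1/2$), and (ii) observing that the one-sided remarks in \Cref{thm:cramer-ldp} about the sign of the maximizer are consistent with our computation ($\lambda^*\ge 0$ for the upper tail and $\lambda^*\le 0$ for the lower tail). Continuity of $\Lambda^*$ closes the gap between the closed-set upper bound and open-set lower bound in Cram\'er's theorem, promoting the large-deviation estimates to exact limits.
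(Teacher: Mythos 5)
Your proof is correct and follows exactly the route the paper intends: the paper's entire justification is the one-line remark that plugging the MGF formula of \Cref{fact:mgf-chi-sq} into Cram\'er's theorem (\Cref{thm:cramer-ldp}) yields the stated asymptotics, and your proposal simply carries out that computation in full (Legendre transform $\Lambda^*(x)=\tfrac{1}{2}(x-1-\ln x)$, monotonicity and continuity to match the open/closed-set bounds). Nothing differs in substance; you have just supplied the bookkeeping the paper omits.
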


\subsection*{Poisson Point Processes}
We use the following standard results on Poisson Point Processes. See~\cite{haenggi2012stochastic} for a reference.

\begin{definition}[PPP]
	\label{def:ppp}
	A \emph{homogeneous Poisson Point Process (PPP) }$ \cC $ in $ \bR^n $ with intensity $ \lambda>0 $ is a point process satisfying the following two conditions. 
	\begin{enumerate}
		\item\label{itm:ppp-def-poisson} For any bounded Borel set $ \cB\subset\bR^n $, $ |\cC\cap\cB|\sim\pois(\lambda|\cB|) $, that is, 
		\begin{align}
		\prob{|\cC\cap\cB| = k} &= e^{-\lambda|\cB|} \frac{(\lambda|\cB|)^k}{k!} \notag
		\end{align}
		for any $ k\in\bZ_{\ge0} $. 
		
		\item\label{itm:ppp-def-poisson-joint} For any $ \ell\in\bZ_{\ge2} $ and any collection of $\ell$ disjoint bounded Borel sets $ \cB_1,\cdots,\cB_\ell\subset\bR^n $, the random variables $ |\cC\cap\cB_1|,\cdots,|\cC\cap\cB_\ell| $ are independent, that is,
		\begin{align}
		\prob{\forall i\in[\ell],\; |\cC\cap\cB_i| = k_i} &= \prod_{i = 1}^\ell e^{-\lambda|\cB_i|} \frac{(\lambda|\cB_i|)^{k_i}}{k_i!} \notag
		\end{align}
		for any $ k_1,\cdots,k_\ell\in\bZ_{\ge0} $. 
	\end{enumerate}
\end{definition}

\begin{remark}
	All PPPs in this paper will be homogeneous, that is, the intensity is a constant and does not depend on the location of a point. 
\end{remark}

\begin{definition}[Intensity and factorial moment measure]
	Let $ \cC $ be a point process in $ \bR^n $. 
	The \emph{intensity measure} $ \Lambda(\cdot) $ induced by $ \cC $ is defined as the measure on $ \bR^n $ satisfying
	$ \Lambda(\cB) = \expt{\card{\cC\cap\cB}} $ for any Borel set $ \cB\subset\bR^n $. 
	The \emph{intensity field} (a.k.a.\ \emph{intensity} for short) $ \lambda(\cdot) $ is the density of $ \Lambda(\cdot) $ (whenever exists), i.e., 
	\begin{align}
	\Lambda(\cB) &= \int_\cB \lambda(\vx) \diff \vx. \notag  
	\end{align} 
	More generally, for any $ L\ge1 $, the \emph{$L$-th factorial moment measure} $ \Lambda^{(L)}(\cdot) $ induced by $ \cC $ is defined as the measure on $ (\bR^n)^L $ satisfying
	\begin{align}
	\Lambda^{(L)}(\cB_1\times\cdots\times\cB_L) &= \expt{\sum_{\substack{(\vbfx_1,\cdots,\vbfx_L)\in\cC^L\\\mathrm{distinct}}}\prod_{i = 1}^L\one_{\cB_i}(\vbfx_i)}, \notag 
	\end{align}
	for any $L$-tuple of Borel sets $ \cB_1,\cdots,\cB_L $ in $ \bR^n $ (not necessarily disjoint). 
	The \emph{$L$-th factorial moment density} $\lambda^{(L)}(\cdot,\cdots,\cdot)$ is the density of $ \Lambda^{(L)}(\cdot) $ (whenever exists): 
	\begin{align}
	\Lambda^{(L)}(\cB_1\times\cdots\times\cB_L) &= \int_{\cB_1}\cdots\int_{\cB_L} \lambda^{(L)}(\vx_1,\cdots,\vx_L) \diff\vx_1\cdots\diff\vx_L. \notag 
	\end{align}
	Note that the first factorial moment measure/density coincides with the intensity measure/field.  
\end{definition}

\begin{fact}
	\label{fact:ppp-intensity}
	For a homogeneous PPP with intensity $\lambda$, the $L$-th factorial moment measure $ \Lambda^{(L)}(\cdot) $ is given by
	\begin{align}
	\Lambda^{(L)}(\cB_1\times\cdots\times\cB_L) &= \lambda^L\prod_{i = 1}^n\card{\cB_i} \notag 
	\end{align}
	and the $L$-th factorial moment density $ \lambda^{(L)}(\cdot,\cdots,\cdot) $ is given by $ \lambda^{(L)}(\vx_1,\cdots,\vx_L) = \lambda^L $. 
\end{fact}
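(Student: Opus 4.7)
The plan is to establish the factorial moment formula first for \emph{disjoint} Borel sets $\cB_1,\ldots,\cB_L$, then bootstrap to the general case by enlarging to a common bounded region. For the disjoint case, observe that since the $\cB_i$ are pairwise disjoint, any tuple $(\vbfx_1,\ldots,\vbfx_L)$ with $\vbfx_i \in \cC \cap \cB_i$ is automatically a tuple of distinct points, so the inner sum in the definition of $\Lambda^{(L)}$ collapses to $\prod_{i=1}^L |\cC\cap\cB_i|$. Taking expectations and invoking the independence property of PPPs (\Cref{itm:ppp-def-poisson-joint} of \Cref{def:ppp}) together with $\mathbb{E}|\cC\cap\cB_i| = \lambda|\cB_i|$ (which is \Cref{itm:ppp-def-poisson}) yields $\Lambda^{(L)}(\cB_1\times\cdots\times\cB_L) = \prod_{i=1}^L \lambda|\cB_i|$.

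For the general case where the $\cB_i$ may overlap, the plan is to fix a bounded Borel set $\cB \supseteq \bigcup_{i=1}^L \cB_i$ and exploit the well-known conditional description of a PPP: conditioned on $N \coloneqq |\cC\cap\cB|$, the $N$ points of $\cC$ in $\cB$ are i.i.d.\ uniform on $\cB$. Given this conditioning, the sum over distinct $L$-tuples splits as a sum over the $N!/(N-L)!$ ordered tuples of distinct indices, each contributing $\prod_{k=1}^L |\cB_k|/|\cB|$ in expectation by independence of the underlying uniform draws. Hence
\begin{align}
\Lambda^{(L)}(\cB_1\times\cdots\times\cB_L)
&= \mathbb{E}\!\left[\frac{N!}{(N-L)!}\right] \prod_{k=1}^L \frac{|\cB_k|}{|\cB|}. \notag
\end{align}

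The remaining step is a short computation of the $L$-th falling factorial moment of a Poisson variable: if $N \sim \mathrm{Pois}(\mu)$ with $\mu = \lambda|\cB|$, then $\mathbb{E}[N(N-1)\cdots(N-L+1)] = \mu^L$, which follows directly from term-by-term evaluation of the Poisson pmf and the reindexing $k \mapsto k-L$. Substituting $\mu^L = (\lambda|\cB|)^L$ cancels the $|\cB|^L$ in the denominator and yields $\Lambda^{(L)}(\cB_1\times\cdots\times\cB_L) = \lambda^L \prod_{k=1}^L |\cB_k|$, as claimed. The density statement $\lambda^{(L)}(\vx_1,\ldots,\vx_L) = \lambda^L$ is then immediate since the Lebesgue measure on $(\bR^n)^L$ is a product measure.

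There is no real obstacle here: both ingredients (the independence property of PPPs on disjoint sets, and the conditional uniform description) are built into \Cref{def:ppp}, and the Poisson falling factorial identity is elementary. The only mild subtlety is making sure that the conditional-uniform picture is invoked correctly — the two defining properties of a PPP in \Cref{def:ppp} do imply this description, but if one wishes to avoid that slightly non-trivial deduction, an alternative is to bypass the overlapping case by using inclusion-exclusion: write each $\cB_i$ as a finite disjoint union of atoms generated by $\{\cB_1,\ldots,\cB_L\}$, expand the product of sums, and apply the already-proven disjoint case term by term; this yields the same formula purely from \Cref{def:ppp} without invoking conditional uniformity.
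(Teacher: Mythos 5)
The paper itself offers no proof of this statement: \Cref{fact:ppp-intensity} sits in the appendix of quoted standard results and is attributed implicitly to the stochastic-geometry literature (\cite{haenggi2012stochastic}). So there is nothing to compare against line by line; what matters is whether your argument is sound, and it is. The disjoint case is handled correctly (distinctness is automatic, and independence across disjoint sets plus $\expt{|\cC\cap\cB_i|}=\lambda|\cB_i|$ gives the product formula), and the general case via the conditional-uniform description is the standard textbook route: conditioned on $N=|\cC\cap\cB|$, each of the $N!/(N-L)!$ ordered distinct index tuples contributes $\prod_k|\cB_k|/|\cB|$, and the Poisson falling-factorial identity $\expt{N(N-1)\cdots(N-L+1)}=(\lambda|\cB|)^L$ cancels the $|\cB|^L$ exactly. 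Two small caveats. First, your fallback route (decompose each $\cB_i$ into atoms and ``apply the already-proven disjoint case term by term'') is not quite term-by-term: after expanding, the terms in which several coordinates select the \emph{same} atom $A$ are not covered by the disjoint case, and you again need the within-atom factorial moment $\expt{|\cC\cap A|(|\cC\cap A|-1)\cdots}=(\lambda|A|)^m$ — i.e., the same Poisson identity you prove anyway, applied atomwise together with independence across atoms. The ingredient is at hand, but the shortcut as phrased elides it. Second, the sum defining $\Lambda^{(L)}$ runs over distinct \emph{points}, and identifying it with the sum over distinct \emph{indices} uses that a homogeneous PPP is almost surely simple; this is standard but worth a word. (You might also note that the displayed formula in the statement has a typo: the product should run over $i=1,\ldots,L$, not $i=1,\ldots,n$.)
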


\begin{fact}
	\label{fact:ppp-properties}
	A homogeneous PPP $ \cC $ satisfies the following properties. 
	\begin{enumerate}
		\item\label{itm:ppp-prop-stationary} A homogeneous PPP is stationary, i.e., invariant to translation.
		
		\item\label{itm:ppp-prop-isotropic} A homogeneous PPP is isotropic, i.e., invariant to rotation. 
		
		\item\label{itm:ppp-prop-uniform} For any box $ \cQ \coloneqq \prod_{i = 1}^n(a_i,b_i] $ where $ a_i\le b_i $ for all $ i\in[n] $, the points in $ \cC\cap\cQ $ are independent and uniformly distributed in $ \cQ $, that is, the $i$-th ($ i\in[n] $) coordinate of any vector in $ \cC\cap\cQ $ is uniformly distributed in $ (a_i,b_i] $ and is independent of any other coordinates (in or not in the same vector). 
	\end{enumerate}
\end{fact}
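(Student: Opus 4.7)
The plan is to verify each of the three properties directly from the defining conditions \Cref{itm:ppp-def-poisson,itm:ppp-def-poisson-joint} of a homogeneous PPP in \Cref{def:ppp}, using only translation/rotation invariance of the Lebesgue measure and the standard fact that a point process is determined in law by its finite-dimensional count distributions on Borel sets.

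First, for \Cref{itm:ppp-prop-stationary} (stationarity), I would fix an arbitrary $\vt\in\bR^n$ and consider the shifted process $\cC' \coloneqq \cC + \vt$. For any bounded Borel set $\cB$, $|\cC'\cap\cB| = |\cC\cap(\cB - \vt)|$, which is $\pois(\lambda|\cB - \vt|) = \pois(\lambda|\cB|)$ by translation invariance of Lebesgue measure; so \Cref{itm:ppp-def-poisson} is inherited. For any disjoint $\cB_1,\dots,\cB_\ell$, the sets $\cB_1-\vt,\dots,\cB_\ell-\vt$ are also disjoint and bounded Borel, so the joint counts are independent Poisson with parameters $\lambda|\cB_i|$, verifying \Cref{itm:ppp-def-poisson-joint} for $\cC'$. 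Since the two defining properties coincide with those of $\cC$, we conclude $\cC'\stackrel{d}{=}\cC$. The argument for \Cref{itm:ppp-prop-isotropic} is literally the same, replacing translation $\vt$ by an orthogonal transformation $U\in\mathrm{O}(n)$ and using $|U\cB| = |\cB|$ together with the fact that $U$ preserves disjointness and the Borel $\sigma$-algebra.

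Next, for \Cref{itm:ppp-prop-uniform}, I fix the box $\cQ = \prod_{i=1}^n(a_i,b_i]$, condition on $|\cC\cap\cQ|=k$, and show the $k$ points are i.i.d.\ uniform on $\cQ$. The standard approach is to take an arbitrary measurable partition $\cQ = \bigsqcup_{j=1}^m \cQ_j$ into sub-boxes (or more generally disjoint Borel subsets) and compute, for any nonnegative integers $k_1,\dots,k_m$ with $\sum_j k_j = k$,
\begin{align}
\prob{|\cC\cap\cQ_j| = k_j,\,\forall j\condon |\cC\cap\cQ|=k}
= \frac{\prod_{j=1}^m e^{-\lambda|\cQ_j|}\frac{(\lambda|\cQ_j|)^{k_j}}{k_j!}}{e^{-\lambda|\cQ|}\frac{(\lambda|\cQ|)^k}{k!}}
= \binom{k}{k_1,\dots,k_m}\prod_{j=1}^m\paren{\frac{|\cQ_j|}{|\cQ|}}^{k_j}, \notag
\end{align}
using \Cref{itm:ppp-def-poisson-joint} in the numerator and $|\cQ|=\sum_j|\cQ_j|$. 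This is exactly the multinomial distribution obtained by dropping $k$ i.i.d.\ points uniformly into $\cQ$ and counting how many land in each $\cQ_j$. Since the joint laws of the counts over all finite measurable partitions of $\cQ$ determine the joint law of the $k$ points, this identification finishes the argument.

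I do not expect any substantive obstacle here; these are textbook consequences of the definition. The only minor care point is the measure-theoretic step at the end of \Cref{itm:ppp-prop-uniform}, where one must justify that agreement of multinomial-count distributions on every finite Borel partition of $\cQ$ uniquely pins down the law of the (unordered) $k$-tuple of points as that of $k$ i.i.d.\ uniforms. This is handled by invoking a $\pi$--$\lambda$/monotone class argument on product rectangles, which I would state but not belabor.
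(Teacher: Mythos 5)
Your proof is correct, but there is nothing in the paper to compare it against: the paper states this result as a \emph{Fact} in its appendix of known results and gives no proof, deferring to the stochastic geometry literature (\cite{haenggi2012stochastic}). Your argument is the standard textbook derivation and it is sound. For parts 1 and 2, translations and orthogonal maps preserve Lebesgue measure, disjointness, and the Borel $\sigma$-algebra, so the transformed process satisfies the same two defining conditions of \Cref{def:ppp}; for part 3, the multinomial computation over finite Borel partitions of the box is exactly right, and the ratio simplifies because the event $\{|\cC\cap\cQ_j|=k_j\ \forall j\}$ with $\sum_j k_j=k$ is contained in $\{|\cC\cap\cQ|=k\}$. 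The one step that carries the real logical weight is the one you correctly flagged in both places: that the law of a point process is determined by its joint count distributions on bounded Borel sets (via refinement into disjoint pieces and a $\pi$--$\lambda$/monotone class argument). This is what licenses the conclusion $\cC+\vt\stackrel{d}{=}\cC$ in parts 1--2 and the identification of the conditional law with that of $k$ i.i.d.\ uniform points in part 3; if this proof were to be written out in full, that lemma should be stated explicitly rather than invoked in passing. Finally, to match the paper's phrasing of part 3 (coordinatewise uniformity and independence across coordinates, within and across points), add the one-line observation that the uniform distribution on the product box $\prod_{i=1}^n(a_i,b_i]$ is the product of the uniform distributions on the intervals $(a_i,b_i]$, so the i.i.d.\ uniform conclusion immediately yields the coordinatewise statement.
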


\begin{definition}[\matern process]
	A \emph{\matern process} $ \cC' $ in $ \bR^n $ with \emph{exclusion radius} $ r>0 $ can be obtained from a PPP $ \cC $ in $ \bR^n $ with intensity $ \lambda $ by removing all pairs of points in $ \cC $ with distance at most $ r $. 
	The intensity $ \lambda' $ of the resulting \matern process $ \cC' $ is given by 
	$ \lambda' = \lambda e^{-\lambda|\cB^n(r)|} $. 
\end{definition}

\begin{theorem}[Campbell]
	\label{thm:campbell}
	For any $ L\in\bZ_{\ge1} $, any point process $\cC$ on $ \bR^n $ with $L$-th factorial moment measure $ \Lambda^{(L)}(\cdot) $ and any measurable function $ f\colon(\bR^n)^L\to\bR $, the following equation holds
	\begin{align}
	\expt{\sum_{\substack{(\vbfx_1,\cdots,\vbfx_L)\in\cC^L\\\mathrm{distinct}}}f(\vbfx_1,\cdots,\vbfx_L)} &= \int_{(\bR^n)^L} f(\vx_1,\cdots,\vx_L) \Lambda^{(L)}(\diff \vx_1\times\cdots\times\diff\vx_L). \notag 
	\end{align}
	If $ \Lambda^{(L)}(\cdot) $ has a density $ \lambda^{(L)}(\cdot,\cdots,\cdot) $, then the equation becomes
	\begin{align}
	\expt{\sum_{\substack{(\vbfx_1,\cdots,\vbfx_L)\in\cC^L\\\mathrm{distinct}}}f(\vbfx_1,\cdots,\vbfx_L)} &= \int_{\bR^n}\cdots\int_{\bR^n} f(\vx_1,\cdots,\vx_L) \lambda^{(L)}(\vx_1,\cdots,\vx_L) \diff\vx_1\cdots\diff\vx_L . \notag 
	\end{align}
\end{theorem}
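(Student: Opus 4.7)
The plan is to prove Campbell's theorem via the standard measure-theoretic bootstrap: verify the identity on rectangles directly from the definition of $\Lambda^{(L)}$, extend to simple functions by linearity, to nonnegative measurable functions by monotone convergence, and to general measurable functions by the positive/negative part decomposition. The density form then follows from the definition of $\lambda^{(L)}$ as a Radon--Nikodym derivative with respect to Lebesgue measure, together with Tonelli's theorem.

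First I would establish the base case where $f = \prod_{i=1}^{L} \one_{\cB_i}$ for Borel sets $\cB_1, \dots, \cB_L \subset \bR^n$. For such $f$, the inner sum on the LHS becomes
\begin{align*}
\sum_{\substack{(\vbfx_1,\cdots,\vbfx_L)\in\cC^L\\\mathrm{distinct}}} \prod_{i=1}^L \one_{\cB_i}(\vbfx_i),
\end{align*}
and its expectation is exactly $\Lambda^{(L)}(\cB_1 \times \cdots \times \cB_L)$ by the definition of the $L$-th factorial moment measure given earlier in the excerpt. On the other hand, the RHS equals $\Lambda^{(L)}(\cB_1 \times \cdots \times \cB_L)$ by definition of integration of a product of indicators against a product measure. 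Since the collection of rectangles generates the Borel $\sigma$-algebra on $(\bR^n)^L$ and is a $\pi$-system, a standard monotone class / Dynkin argument extends the identity to indicators of arbitrary Borel sets in $(\bR^n)^L$.

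Next I would extend to nonnegative simple functions $f = \sum_{k=1}^K c_k \one_{A_k}$ with $c_k \ge 0$ by linearity of expectation on the LHS (interchanging the finite sum over $k$ with the sum over distinct tuples, which is justified since all terms are nonnegative) and linearity of integration on the RHS. For a general nonnegative measurable $f$, I would pick an increasing sequence of simple functions $f_m \uparrow f$ pointwise. On the LHS the sum over distinct tuples is monotone in $f_m$, so Beppo Levi's monotone convergence theorem (applied inside the expectation and then to the resulting expectations) gives convergence to the LHS for $f$. On the RHS the same theorem applied to integration against $\Lambda^{(L)}$ gives convergence to the RHS for $f$. The signed case is then handled by writing $f = f^+ - f^-$ and invoking the nonnegative case on each piece, provided at least one of $\expt{\sum f^\pm(\vbfx_1, \ldots, \vbfx_L)}$ is finite so that no $\infty - \infty$ indeterminacy arises.

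Finally, the density form follows immediately: if $\Lambda^{(L)}$ admits a density $\lambda^{(L)}$ with respect to Lebesgue measure on $(\bR^n)^L$, then $\Lambda^{(L)}(\diff\vx_1 \times \cdots \times \diff\vx_L) = \lambda^{(L)}(\vx_1,\cdots,\vx_L)\, \diff\vx_1 \cdots \diff\vx_L$, and the iterated integral representation follows from Tonelli's theorem (for nonnegative $f\lambda^{(L)}$) or Fubini's theorem (in the general integrable case). The only subtle point in the whole proof is a measurability issue, namely checking that the random variable $\sum_{\text{distinct}} f(\vbfx_1,\cdots,\vbfx_L)$ is well-defined and measurable; this is routine since a point process can be realized as a countable (almost surely locally finite) random collection of points, making the sum a countable sum of measurable functions. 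I do not anticipate any obstacle beyond verifying this measurability and taking care to invoke monotone convergence in the correct nonnegative setting before tackling signed integrands.
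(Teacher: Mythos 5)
The paper never proves \Cref{thm:campbell}: it sits in the appendix of ``known results'' and is imported from the stochastic-geometry literature (the paper points to \cite{haenggi2012stochastic}), so there is no in-paper argument to compare yours against. Your proof is the standard textbook derivation of Campbell's formula for factorial moment measures, and it is sound: the paper's definition of $\Lambda^{(L)}$ makes the rectangle case an identity, the functional $\cA\mapsto\expt{\sum_{\mathrm{distinct}}\one_{\cA}(\vbfx_1,\cdots,\vbfx_L)}$ is countably additive by monotone convergence, and the bootstrap through simple functions, nonnegative $f$, and $f=f^+-f^-$ is exactly the right machinery, as is Radon--Nikodym plus Tonelli/Fubini for the density form.

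Two small points you should make explicit. First, the Dynkin/$\pi$--$\lambda$ step --- concluding that the measure defined by the left-hand side coincides with $\Lambda^{(L)}$ on all Borel sets of $(\bR^n)^L$ because they agree on rectangles --- requires $\Lambda^{(L)}$ to be $\sigma$-finite; for a completely arbitrary point process this can fail, so either add it as a (harmless) hypothesis or note that it holds in every application in this paper, where by \Cref{fact:ppp-intensity} and \Cref{eqn:bound-intensity-matern} the relevant processes have $\lambda^{(L)}(\vx_1,\cdots,\vx_L)\le\lambda^L$, a constant density, making $\sigma$-finiteness automatic. Second, a wording slip: $\Lambda^{(L)}$ is a measure \emph{on} the product space but is not in general a \emph{product} measure, so the right-hand side of your base case is evaluated simply by the definition of the integral of an indicator against $\Lambda^{(L)}$, not by a product-measure computation.
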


\begin{theorem}[Slivnyak]
	\label{thm:slivnyak}
	Conditioned on a point (WLOG the origin, by \Cref{itm:ppp-prop-stationary} of \Cref{fact:ppp-properties}) in a homogeneous PPP, the distribution of the rest of the PPP (which is called the \emph{Palm distribution}) is equal to that of the original PPP. 
\end{theorem}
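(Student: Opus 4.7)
The plan is to prove Slivnyak's theorem by verifying that the reduced Palm distribution of a homogeneous PPP $\cC$ agrees with its original law, using the reduced Campbell--Mecke formula as the defining characterization of the Palm distribution and joint void probabilities as a determining class of functionals. By translation invariance of the homogeneous PPP (Item (a) of \Cref{fact:ppp-properties}), it suffices to treat the case $\vx_0 = \vzero$. Recall that the reduced Palm expectation $\bE^{!}_{\vx_0}$ is characterized implicitly by
\begin{align}
\bE\left[\sum_{\vbfx \in \cC} h(\vbfx)\, g(\cC\setminus\{\vbfx\})\right] = \int_{\bR^n} h(\vx_0)\, \bE^{!}_{\vx_0}[g(\cC)]\, \Lambda(\diff \vx_0), \notag
\end{align}
for bounded measurable $h\colon\bR^n\to\bR$ and measurable $g$ on point configurations, where for a homogeneous PPP the intensity measure satisfies $\Lambda(\diff \vx_0) = \lambda\,\diff \vx_0$.

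The key step is to pick $g$ from a class that uniquely determines the law of a simple point process on $\bR^n$. The natural choice is the family of joint void-probability test functions $g_{B_1,\dots,B_m}(\mu) \coloneqq \prod_{i=1}^m \one_{\{\mu\cap B_i = \emptyset\}}$ indexed by finite collections of disjoint bounded Borel sets $B_1,\dots,B_m$. Fix such a collection, pick any bounded Borel set $A$ disjoint from $\bigcup_i B_i$, and plug $h = \one_A$ into the Campbell--Mecke identity. Since any $\vbfx \in A$ lies outside every $B_i$, removing $\vbfx$ does not affect the indicator, so the left-hand side collapses to
\begin{align}
\bE\left[|\cC\cap A|\prod_{i=1}^m \one_{\{\cC\cap B_i = \emptyset\}}\right]. \notag
\end{align}
Applying the independence of $|\cC\cap A|$ from the occupancies of $B_1,\dots,B_m$ guaranteed by \Cref{itm:ppp-def-poisson-joint} of \Cref{def:ppp}, together with the Poisson marginals from \Cref{itm:ppp-def-poisson}, this evaluates to $\lambda|A|\prod_i e^{-\lambda|B_i|} = \lambda|A|\cdot \bE[g_{B_1,\dots,B_m}(\cC)]$. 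Matching this with the right-hand side $\lambda\int_A \bE^{!}_{\vx_0}[g_{B_1,\dots,B_m}(\cC)]\,\diff \vx_0$, dividing by $\lambda|A|$, and shrinking $A$ around $\vzero$ via Lebesgue differentiation yields
\begin{align}
\bE^{!}_{\vzero}[g_{B_1,\dots,B_m}(\cC)] = \bE[g_{B_1,\dots,B_m}(\cC)]. \notag
\end{align}

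To conclude, one invokes the standard fact that the joint void probabilities over all finite collections of disjoint bounded Borel sets form a determining class for the law of a simple point process on $\bR^n$ (via a monotone class argument). Since the homogeneous PPP is almost surely simple, equality on this class of test functions upgrades to equality of the full distributions, so the reduced Palm law coincides with the law of $\cC$ itself. The only mildly subtle steps are: (i) arranging for $A$ to be disjoint from the $B_i$'s so that the sum inside the Campbell--Mecke left-hand side collapses cleanly to allow the use of independence, which is achieved by localizing to a shrinking neighborhood of $\vzero$; and (ii) passing from the almost-everywhere equality in $\vx_0$ produced by Lebesgue differentiation to the pointwise statement at $\vx_0 = \vzero$, which is immediate because translation invariance of the homogeneous PPP forces the map $\vx_0 \mapsto \bE^{!}_{\vx_0}[g(\cC)]$ to be constant in $\vx_0$.
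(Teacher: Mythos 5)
The first thing to note is that the paper contains no proof of this statement to compare against: \Cref{thm:slivnyak} sits in the appendix of known results and is quoted from the stochastic-geometry literature (\cite{haenggi2012stochastic}), so your argument must stand on its own. Its core computation is sound and does isolate the true mechanism behind Slivnyak's theorem: testing the reduced Campbell--Mecke disintegration against void indicators with $A$ disjoint from $\bigcup_i B_i$, and using the independence over disjoint sets from \Cref{itm:ppp-def-poisson-joint} of \Cref{def:ppp} to factor the left-hand side into $\lambda|A|\prod_i e^{-\lambda|B_i|}$, is correct, as is the appeal to void probabilities as a determining class for simple point processes.

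The genuine gap is in your step (ii). The Campbell--Mecke disintegration defines $\bE^{!}_{\vx_0}$ only up to Lebesgue-null sets of $\vx_0$, and your differentiation argument yields $\bE^{!}_{\vx_0}[g_{B_1,\dots,B_m}]=\bE[g_{B_1,\dots,B_m}(\cC)]$ only for almost every $\vx_0\notin\bigcup_i B_i$, with an exceptional null set that depends on the collection; every point of $\bR^n$, including $\vzero$, is excluded from the admissible region for many collections, so no countable-intersection trick lands on $\vx_0=\vzero$. Your claimed rescue --- that stationarity forces $\vx_0\mapsto\bE^{!}_{\vx_0}[g]$ to be constant --- is false for a fixed, non-translation-invariant $g$: stationarity only gives \emph{equivariance}, namely that the Palm distribution at $\vx_0$ is the translate by $\vx_0$ of the Palm distribution at $\vzero$, so $\bE^{!}_{\vx_0}[g_{B_1,\dots,B_m}]=\bE^{!}_{\vzero}[g_{B_1-\vx_0,\dots,B_m-\vx_0}]$, which is not constant in general. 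A uniformly shifted integer lattice is a stationary counterexample: its reduced Palm distribution at $\vx_0$ is the deterministic configuration $(\bZ^n+\vx_0)\setminus\{\vx_0\}$, and the map $\vx_0\mapsto\bE^{!}_{\vx_0}[g]$ is visibly non-constant for void indicators. For the PPP, constancy of this map is essentially the conclusion being proved, so the appeal is circular. (A secondary unaddressed point: your argument only reaches collections $\{B_i\}$ whose union avoids a neighborhood of $\vzero$, and extending to all bounded Borel test sets requires a further limiting argument.) The standard way to close the gap is to bypass the almost-everywhere-defined kernel: define the Palm distribution of a stationary process at $\vzero$ by the window formula $P^{0}[g]\coloneqq\frac{1}{\lambda|W|}\,\bE\bigl[\sum_{\vbfx\in\cC\cap W}g\bigl((\cC-\vbfx)\setminus\{\vzero\}\bigr)\bigr]$ and evaluate it for void indicators directly, using \Cref{itm:ppp-prop-uniform} of \Cref{fact:ppp-properties} (conditioned on the number of points in a large box the points are i.i.d.\ uniform) together with the Poisson counts of \Cref{itm:ppp-def-poisson}; a short generating-series computation gives $P^{0}[g]=e^{-\lambda|B_1\cup\cdots\cup B_m|}=\bE[g(\cC)]$, after which your determining-class step finishes the proof. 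Equivalently, rerun your Campbell--Mecke computation with the re-centered test functions $f(\vx,\mu)=\one_A(\vx)\,g(\mu-\vx)$, so that stationarity is applied on the right-hand side (where it is legitimate) rather than being asked to flatten the Palm kernel on the left.
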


\end{document}